\newcommand{\C}{\mathbb{C}}
\newcommand{\R}{\mathbb{R}}
\newcommand{\Q}{\mathbb{Q}}
\newcommand{\Z}{\mathbb{Z}}
\newcommand{\cptwo}{\C\textup{P}^2}
\newcommand{\cpone}{\C\textup{P}^1}
\newcommand{\cptwobar}{\overline{\C\textup{P}}\,\!^2}
\newcommand{\rptwo}{\R\textup{P}^2}
\newcommand{\ti}{\tilde}
\renewcommand{\d}{\partial}
\DeclareMathOperator{\ord}{ord}
\renewcommand{\Im}{\operatorname{Im}}
\renewcommand{\Re}{\operatorname{Re}}
\DeclareMathOperator{\Ker}{ker}
\newtheorem{theorem}{Theorem}[section]
\newtheorem{lemma}[theorem]{Lemma}
\newtheorem{prop}[theorem]{Proposition}
\newtheorem{cor}[theorem]{Corollary}
\newtheorem{question}[theorem]{Question}
\theoremstyle{definition}
\newtheorem{definition}[theorem]{Definition}
\newtheorem{remark}[theorem]{Remark}
\newtheorem{example}[theorem]{Example}
\numberwithin{equation}{section}
\newtheorem{observation}[theorem]{Observation}
\title[Unexpected Stein fillings and plane curve arrangements]{Unexpected Stein fillings, rational surface singularities, \\ and plane curve arrangements}
\author{Olga Plamenevskaya}
\address{Deparment of Mathematics, University of California, Davis, 1 Shields Avenue, Davis, CA, 95616, U.S.A.}
\email{lstarkston@math.ucdavis.edu}
\author{Laura Starkston}
\address{Department of Mathematics, Stony Brook University, Stony Brook, NY,
11794,  U.S.A.}
\email{olga@math.stonybrook.edu}
\begin{document}

\begin{abstract}
 We compare Stein fillings and Milnor fibers for rational surface singularities with 
reduced fundamental cycle.  Deformation theory for this class of singularities was studied
by de Jong--van Straten in \cite{dJvS}; they associated a germ of a singular 
plane curve to each singularity and described Milnor fibers via deformations of this singular curve.  

We consider links of surface singularities, equipped with their canonical contact structures, 
and develop a symplectic analog of  de Jong--van Straten's construction.  Using planar open books and Lefschetz fibrations, we describe
all Stein fillings of the links via certain arrangements of symplectic disks, related by a homotopy to the plane curve germ of the singularity.  

As a consequence, we show that many rational singularities in this class admit Stein fillings that are not strongly diffeomorphic to any Milnor fibers. 
This contrasts with previously known cases, such as simple and 
quotient surface singularities, where Milnor fibers are known to give rise to all Stein fillings. On the other hand, 
we show that  if for a  singularity with reduced fundamental cycle, the self-intersection of each exceptional curve is at
most $-5$ in the minimal resolution, then the link has a unique Stein filling (given by a Milnor fiber).

\end{abstract}

\maketitle

\section{Introduction}

The goal of this paper is to compare and contrast deformation theory and symplectic topology of certain rational surface singularities.  Using topogical tools, 
we examine symplectic fillings for links of rational surface singularities with reduced fundamental cycle and compare these fillings to Milnor fibers of the singularities. 
Each Milnor fiber carries a Stein structure and thus gives a Stein filling of the link; however, we show that there is a plethora of Stein fillings that do not 
arise from Milnor fibers.  Milnor fibers and deformation theory are studied in the work of de Jong--van Straten \cite{dJvS} for sandwiched surface singularities (this class includes rational singularities with reduced fundamental cycle). The main feature of their construction is a reduction from 
{\em surfaces} to {\em curves}: deformations of a surface singularity in the given class can be understood via deformations of the germ of a reducible plane curve associated to 
the singularity. To describe Stein fillings,  we develop a symplectic analog of de Jong--van Straten's constructions, representing the fillings via arrangements 
of smooth (or symplectic) disks in $\C^2$. Our approach is  purely topological and thus different from de Jong--van Straten's; their algebro-geometric techniques do not apply in our more general symplectic setting.
We work with Lefschetz fibrations and open books, referring to algebraic geometry only for motivation and for the description of smoothings from \cite{dJvS}.  

Let $X\subset \C^N$ be a singular complex surface with an isolated singularity at the origin. For small $r>0$, the intersection $Y= X \cap S^{2N-1}_{r}$ with the sphere $S^{2N-1}_{r}= \{|z_1|^2+|z_2|^2+\dots +|z_N|^2 = r\}$ is 
a smooth 3-manifold called the {\em link of the singularity} $(X,0)$. The induced contact structure $\xi$ on $Y$ is the distribution of complex tangencies to $Y$, 
and is referred to as the {\em canonical} or {\em Milnor fillable} contact structure on the link. The contact manifold $(Y, \xi)$, 
which we will call the {\em contact link}, is independent of the choice of $r$, 
up to contactomorphism.

An important problem concerning the topology of a surface singularity is to compare the Milnor fibers of smoothings of
$(X,0)$ to symplectic or Stein fillings of the link $(Y, \xi)$. A {\em smoothing} is given by a deformation of $X$ to a surface (the Milnor fiber) that is no longer singular. (We discuss smoothings in more detail in Section~\ref{s:picdef}.) 
Milnor fibers themselves are Stein fillings of $(Y, \xi)$, called \emph{Milnor fillings}. An additional Stein filling can
be produced by deforming the symplectic structure on the minimal resolution of $(X, 0)$ \cite{BO}.
For rational singularities, this filling agrees with the Milnor fiber of the Artin smoothing component and need not be considered separately
(see Section~\ref{s:artin}).  An interesting question is whether the collection of these expected fillings, taken for all singularities with the same link $(Y, \xi)$, gives all possible Stein fillings of the link.
In this article, we will use the term \emph{unexpected Stein filling}  to refer to any Stein filling which does not arise as a Milnor fiber 
or the minimal resolution. 

There are very few examples of unexpected Stein fillings in the previously existing literature, none of which are simply-connected. In this article, 
we show that, in fact, unexpected Stein fillings are abundant, and in many cases simply-connected, even for the simple class of \emph{rational singularities with reduced fundamental cycle}. 
These singularities, also known as \emph{minimal singularities} \cite{Ko}, 
can be characterized by the conditions that the dual resolution graph is a tree, where each vertex $v$ corresponds to a curve of genus 0, 
and its self-intersection $v \cdot v$ and valency $a(v)$ satisfy the inequality $ - v \cdot v \geq a(v)$. (See Section~\ref{s:picdef} for more details.)
In low-dimensional topology, such graphs are often referred to as {trees} with no bad vertices. The corresponding plumbed 3-manifolds are $L$-spaces, i.e. they have the simplest possible Heegaard Floer homology~\cite{OSz}. In a sense, links of rational singularities with reduced fundamental cycle are just slightly more complicated than lens spaces. As another measure of low complexity, these contact structures admit planar open book decompositions. In the planar case, the set of Stein fillings satisfies 
a number of finiteness properties \cite{Stip, Plam, Kalo, LiWe}, which makes it rather surprising that these singularities diverge from the expected.

We construct many specific examples of unexpected Stein fillings for rational singularities with reduced fundamental cycle.
Then we show that our examples can be broadly generalized to apply to a large class of singularities with reduced fundamental cycle:
we only require that the resolution graph of the singularity contain a certain subgraph to ensure that the link has many unexpected Stein fillings. 

\begin{theorem} \label{thm:intro-examples} For any $N>0$, there is a rational singularity with reduced fundamental cycle whose contact link $(Y_N, \xi_N)$ admits at 
least $N$ pairwise non-homeomorphic simply-connected Stein fillings, none of which is diffeomorphic to a Milnor filling ({\em rel} certain boundary data). 
Examples of such $(Y_N,\xi_N)$ include Seifert fibered spaces over $S^2$ corresponding to certain star-shaped resolution graphs. 

The statement also  holds for any rational singularity with reduced fundamental cycle 
whose resolution graph has a star-shaped subgraph as above.
\end{theorem}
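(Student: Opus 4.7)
The plan is to exploit the symplectic analog of de Jong--van Straten's correspondence developed earlier in the paper, which identifies Stein fillings of the contact link $(Y,\xi)$ with arrangements of symplectic disks in (a neighborhood in) $\C^2$ that are homotopic, through symplectic arrangements with controlled intersections, to the plane curve germ $C$ assigned to the singularity. The construction passes through a planar open book on $(Y,\xi)$ whose pages retract onto a tree dual to the arrangement, so that every Stein filling bounded by the open book appears as the total space of a planar Lefschetz fibration whose vanishing cycles are encoded by a disk arrangement. Our task is therefore reduced to producing, for each $N$, a singularity whose associated germ admits at least $N$ symplectically distinct arrangements that (a) yield pairwise non-homeomorphic simply-connected fillings and (b) are \emph{not} realized by any algebraic deformation of $C$.

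\textbf{Constructing the examples.} I would first pick a star-shaped graph $\Gamma_N$ centered at a vertex of very negative self-intersection with sufficiently many arms (each arm being a short chain), so that the associated germ $C_N$ is a bouquet of many smooth branches through the origin whose tangent directions and higher-order contacts are prescribed by the arm data. De Jong--van Straten's algebraic smoothing components are then indexed by \emph{picture deformations}, i.e.\ deformations of $C_N$ into transverse collections of smooth arcs obeying certain incidence constraints (multiple points of $C_N$ must persist as allowable incidence patterns). The crucial point is that symplectic disks enjoy no classical incidence theorems: configurations that violate Pappus/Desargues-type relations and similar projective obstructions can still be realized as symplectic disk arrangements. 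By choosing $\Gamma_N$ so that the germ contains a growing family of concurrent smooth branches, I would exhibit at least $N$ distinct symplectic arrangement types whose intersection patterns collectively violate the algebraic incidence constraints recorded by de Jong--van Straten. Invariants of the resulting Lefschetz fibrations (e.g.\ the second Betti number, or the intersection form, read off directly from the combinatorics of the arrangement) will distinguish the $N$ fillings pairwise, and simple connectivity follows because a planar Lefschetz fibration with connected regular fiber and a full complement of vanishing cycles has simply-connected total space.

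\textbf{Ruling out Milnor fillings.} Here is what I expect to be the main obstacle. To know that none of the $N$ fillings is diffeomorphic (rel boundary data) to a Milnor fiber, I must argue that the arrangements I built cannot be deformed symplectically into any algebraic picture deformation of $C_N$; equivalently, the combinatorial/topological invariants of the Lefschetz fibration do not appear in de Jong--van Straten's list of smoothing components for any singularity with the fixed link. This requires (i) enumerating the allowable incidence types for algebraic deformations of the star-shaped germ and bounding the resulting invariants, and (ii) arranging the symplectic incidence violations from the previous paragraph so that the invariants computed exceed these bounds or occupy forbidden values. For this, I would rely on the rigidity of algebraic incidence (e.g.\ classical projective incidence theorems for lines), together with a direct computation of $b_2$ and the intersection form for each of the $N$ constructed arrangements; these should grow unboundedly with the combinatorial parameter while the Milnor invariants for singularities with the fixed link remain bounded.

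\textbf{The subgraph extension.} For the last assertion, suppose $\Gamma$ is any resolution graph of a rational singularity with reduced fundamental cycle containing $\Gamma_N$ as a subgraph. The associated germ $C$ contains (up to a homotopy of symplectic arrangements) the germ $C_N$ in a neighborhood of the corresponding branches, and the remaining branches add disks attached away from the portion where our $N$ arrangements differ. I would show, using the locality of the symplectic arrangement-to-filling correspondence (the Lefschetz fibration is built from local models near each multiple point), that the $N$ modifications can be transplanted into arrangements homotopic to $C$, producing $N$ pairwise non-homeomorphic simply-connected fillings of the contact link of $\Gamma$. The same rigidity argument as above, applied to the invariants of these fillings, then certifies that none of them is a Milnor filling, completing the proof.
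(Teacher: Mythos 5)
Your overall architecture matches the paper's: unexpected incidence configurations built from classical projective theorems (Pappus, Desargues-type), converted into planar Lefschetz fibrations via the symplectic de Jong--van Straten correspondence, with Euler-characteristic-type invariants separating the $N$ fillings and free marked points (boundary-parallel vanishing cycles) giving simple connectivity. However, your step ``Ruling out Milnor fillings'' has two genuine gaps.

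First, you propose to rule out algebraic realizations by appealing to ``the rigidity of algebraic incidence (e.g.\ classical projective incidence theorems for lines).'' But picture deformations are deformations through germs of possibly high-degree algebraic curves, not lines, and classical incidence theorems do not constrain such germs. The paper explicitly exhibits this failure mode: the pseudo-Pappus arrangement is not realizable by complex lines, yet it \emph{is} realized by a picture deformation of the pencil using higher-degree open algebraic branches, so it does give a Milnor fiber. The missing ingredient is the degeneration argument (Lemma~\ref{get-lines}): given any $\delta$-constant algebraic deformation whose branches are not all concurrent, a rescaling $x=s^rx'$, $y=s^ry'$ and passage to the limit $s\to 0$ produces a genuine, non-pencil complex \emph{line} arrangement satisfying all the incidences of the deformed germ (possibly with extra coincidences). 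Only then do the projective theorems apply, and only if the arrangement is ``unexpected'' in the strong sense that \emph{no} non-pencil line arrangement satisfies its incidences even after allowing additional coincidences --- a condition your proposal does not formulate and which requires the case analysis via Observation~\ref{3points}. One also needs the Némethi--Popescu-Pampu cap argument (Proposition~\ref{incidence-matrix}) to show that a strong diffeomorphism of fillings forces equality of incidence matrices; computing $b_2$ and the intersection form alone does not transfer the incidence data to the hypothetical Milnor fiber, and one must quantify over Milnor fibers of \emph{all} analytic types with the given link (handled in the paper via topological equivalence of the decorated germs).

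Second, your claim that the invariants of your fillings ``grow unboundedly with the combinatorial parameter while the Milnor invariants for singularities with the fixed link remain bounded'' cannot work: these links are planar, so by the results of Plamenevskaya and Kaloti cited in the paper, $b_2$ of \emph{every} Stein filling (Milnor or not) of a fixed link is uniformly bounded by the length of positive factorizations of the fixed open book monodromy. No invariant of a Stein filling of a fixed $(Y_N,\xi_N)$ can exceed the range achieved by arbitrary positive factorizations; the separation from Milnor fibers must come from the combinatorial incidence obstruction above, not from size of invariants.
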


More precise statements are given in Section~\ref{s:examples}. Our first example which admits simply-connected unexpected Stein fillings 
corresponds to the singularity with resolution graph in Figure~\ref{fig:exampleintro}. More generally, we can find $N$ distinct unexpected
Stein fillings for singularities whose dual resolution graph is star-shaped with at least $2N+5$ sufficiently long legs, $N\geq 4$, the self-intersection of the central vertex is a large negative number, and the self-intersection of any other vertex is $-2$.

\begin{figure}[htb]
	\centering
	\includegraphics[scale=1]{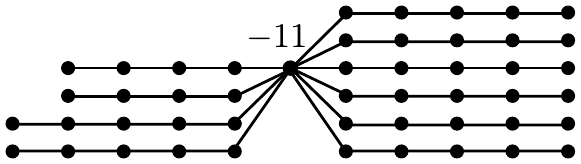}
	\caption{A resolution graph for a singularity whose link admits simply-connected unexpected fillings. (Unlabeled vertices have self-intersection $-2$.) 
	Any graph containing this as a subgraph corresponds to a singularity which also admits simply-connected unexpected fillings.}
	\label{fig:exampleintro}
\end{figure}

By contrast, previous results have indicated that for simple classes of singularities,  
all Stein fillings come from Milnor fibers or the minimal resolution (there are no unexpected fillings).
This is true for $(S^3,\xi_{std})$ \cite{Eliash}, for links of simple and simple elliptic  singularities \cite{OhtaOno1, OhtaOno2},   for lens spaces (links of  
cyclic quotient singularities) \cite{Li, NPP-cycl}, and in general for quotient singularities \cite{PPSU,Bhu-Ono}.
Theorem~\ref{thm:intro-examples} breaks this pattern and provides many unexpected fillings. 
However, we are also able to show that certain classes of rational singularities with reduced fundamental cycle do not admit any unexpected fillings:

\begin{theorem} \label{kollar-fill}
Let $(X, 0)$ 
be a rational singularity with reduced fundamental cycle with link $(Y,\xi)$, and suppose that each exceptional curve in its minimal resolution has self-intersection at most $-5$.  
Then the resolution of $(X,0)$ is the unique weak symplectic filling of $(Y,\xi)$, up to blow-up, symplectomorphism and symplectic deformation. 
\end{theorem}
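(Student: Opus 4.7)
The plan is to apply the correspondence developed earlier in the paper, which represents every weak symplectic filling of $(Y,\xi)$ (up to blow-up and symplectic deformation) by an arrangement of symplectic disks in $\C^2$ that is homotopic, through symplectic arrangements, to the de Jong--van Straten plane curve germ $C$ associated to $(X,0)$. Under this correspondence, the minimal resolution corresponds to the ``trivial'' arrangement that is isotopic to $C$ itself (its components being the branches of $C$). The theorem then reduces to a rigidity statement: when every exceptional curve has $v\cdot v \leq -5$, any symplectic disk arrangement homotopic to $C$ must in fact be symplectically isotopic to $C$.

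My first step would be to translate the self-intersection hypothesis into concrete combinatorial constraints on $C$. For a rational singularity with reduced fundamental cycle, the multiplicities of the branches of $C$ at the origin and the pattern of higher-order tangencies between them are rigidly dictated by the resolution graph. The bound $-v\cdot v \geq 5$ forces every branch to have comparatively high multiplicity and high contact order with its neighbors, so that very little slack remains for deformations of $C$ through symplectic arrangements.

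Next, given an arbitrary symplectic disk arrangement $\mathcal{A}$ in the homotopy class of $C$, I would choose a tame almost complex structure $J$ for which every disk of $\mathcal{A}$ is $J$-holomorphic. Positivity of intersection, combined with the homological data preserved along the homotopy, forces each local intersection number at the origin to be at least what it is in $C$; comparing with the globally fixed self-intersection numbers of the disks (determined by the boundary open book) shows these inequalities must be equalities. Thus the local intersection scheme of $\mathcal{A}$ at the origin coincides with that of $C$, and a standard isotopy extension argument along the symplectic homotopy upgrades this to a global symplectic isotopy $\mathcal{A}\simeq C$. Running the correspondence backwards then identifies the original filling, up to blow-up and symplectic deformation, with the resolution.

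The main obstacle is the rigidity step: one must rule out the ``branch splittings'' and retangencies that do produce the unexpected fillings of Theorem~\ref{thm:intro-examples}. Those examples contain many vertices of self-intersection $-2$, corresponding to branches of $C$ of low enough multiplicity that a tangential pair may separate into two transverse branches, producing a non-isotopic arrangement still homotopic to $C$. The $-5$ hypothesis is calibrated precisely to exclude every such local deformation, so the bulk of the work will be in enumerating the possible local perturbations of the branches of $C$ and checking that each of them demands at least one vertex with $-v\cdot v \leq 4$, contradicting the hypothesis.
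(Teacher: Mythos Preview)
Your proposal rests on a misidentification of the arrangement that represents the resolution. The minimal resolution (equivalently, the Artin smoothing) does \emph{not} correspond to the germ $\mathcal{C}$ itself with all branches concurrent at the origin; it corresponds to the \emph{Scott deformation} of $\mathcal{C}$, a specific arrangement in which the branches are separated and meet in a nested pattern of transverse multiple points (Proposition~\ref{artin-ob}). Consequently the rigidity statement you aim for---that any arrangement homotopic to $\mathcal{C}$ is symplectically isotopic to $\mathcal{C}$---is both false and not what is needed. The arrangements encoding Stein fillings never have all branches through a single point, so your positivity-of-intersection argument ``at the origin'' and the comparison of local intersection schemes there have no target to compare to. The $J$-holomorphic step you outline therefore does not get off the ground.

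The paper's proof proceeds quite differently and in two stages. First, one shows that the \emph{incidence matrix} of any curvetta arrangement producing a Stein filling is forced to coincide with the incidence matrix of the Scott deformation. This is a purely combinatorial fact, proved by de~Jong--van~Straten \cite[Theorem~6.23]{dJvS}; their argument uses only the numerical constraints on rows of the incidence matrix recorded in Lemma~\ref{where1}, and the $-5$ hypothesis enters precisely because it guarantees at least three curvettas are attached at each end vertex of the graph, which drives an inductive argument showing (among other things) that some column of the matrix must consist entirely of $1$'s. Second, one must pass from ``same incidence matrix'' to ``same Stein filling'': combinatorial equivalence only pins down the \emph{homology classes} of the vanishing cycles, not their isotopy classes. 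This upgrade is Proposition~\ref{artin-fill}, whose proof is a mapping-class-group argument exploiting the fact that the Artin fibration has disjoint vanishing cycles, together with the right-veering property of positive Dehn twists. Both steps are absent from your outline, and neither is replaced by the $J$-holomorphic isotopy you propose.
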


This theorem proves a symplectic generalization of~\cite[Theorem 6.21]{dJvS}, which establishes a special case of a conjecture of Koll\'ar, 
showing that for singularities as in Theorem~\ref{kollar-fill}, the base space of a semi-universal deformation has one component. Thus, they show there is a unique smoothing, whereas we generalize this to show there is a unique minimal symplectic filling. 
To prove Theorem~\ref{kollar-fill}, we build on the combinatorial argument of \cite{dJvS} and use mapping class group arguments 
to establish the symplectic case.

The bound of $-5$ on the self-intersection of the exceptional curves in Theorem~\ref{kollar-fill} cannot generally be improved.  Indeed, 
any singularity whose minimal resolution contains a sphere of self-intersection $-4$ has
at least two distinct Stein fillings, because a neighborhood of the $(-4)$ sphere can be rationally blown down to produce another
filling with smaller Euler characteristic \cite{Sym}.
This corresponds to the fact that the singularity has at least two smoothing components if a $(-4)$ sphere is present,~\cite{Kol2}; generally, we do not know if unexpected fillings exist in this case.  


Theorem~\ref{kollar-fill} extends the list of singularities with no unexpected Stein fillings. However, 
when complexity of the singularity increases, one should expect the unexpected: as predicted in \cite{Nem2}, more complicated singularities are likely to have 
Stein fillings that do not arise from Milnor fibers.  To our knowledge, the only previous examples of unexpected Stein fillings in the literature  
are detected by their first Betti number. By \cite{GreSt}, Milnor fibers for normal surface singularities always have $b_1=0$. 
An infinite family of Stein fillings with $b_1\neq 0$  was given in \cite{Akh-Ozbagci1, Akh-Ozbagci2} for links of certain non-rational singularities;
these links are Seifert fibered spaces over higher genus surfaces. It follows from \cite{Akh-Ozbagci1, Akh-Ozbagci2} that most of these fillings are different 
from both the Milnor fibers and the resolution of the singularity. The constructions in these papers use surgeries and produce infinite families of exotic fillings 
(which are all homeomorphic but pairwise non-diffeomorphic). Note that for rational singularities, the first Betti number cannot detect 
unexpected fillings: the link is a rational homology sphere, and a homology exact sequence argument 
shows that $b_1=0$ for any Stein filling (see Remark~\ref{b1=0}).

{Note that, in general, known results allow to find many non-rational singularities whose links have infinitely 
many Stein fillings. As an example, consider a normal surface singularity whose resolution has a unique exceptional curve of genus $g \geq 2$ with self-intersection $-d$, for $d > 0$. The resolution is the total space of the complex line bundle of degree $d$ over the corresponding Riemann surface, and the singularity can be thought of as cone point. If $g=\frac12(d-1)(d-2)$, this is the cone over the smooth projective surface of degree $d$ in $\C^3$, so $(X_d, 0)$ is simply the hypersurface singularity $x^d+y^d+z^d=0$.  For each $d\geq 5$, the results of \cite{BMVHM} produce arbitrarily long positive factorizations of the corresponding open book monodromy, which in turn yields infinitely many Stein fillings for the link $(Y_d, \xi_d)$; in particular, there are Stein fillings with arbitrarily large $b_2$. One might hope that most of these Stein fillings are unexpected: indeed, a hypersurface singularity has a unique Milnor fiber, and its topology is well understood~\cite{Milnor, Tju}. 
However, the question is more subtle: because $(X_d, 0)$ is not {\em (pseudo)taut} \cite{Lauf}, there are infinitely many singularities with the same link $(Y_d, \xi_d)$. Milnor fibers of these singularities may yield additional Stein fillings. Describing all such Milnor fibers seems to be out of reach; conceivably, they may produce all the Stein fillings given by the arbitrarily long factorizations of~\cite{BMVHM}. We will discuss
related questions in more detail in Section~\ref{s:artin}, although we do not have any answers for this case.}



Our present work gives the first examples of unexpected Stein fillings for rational singularities, 
{and} for the case where the link $Y$  is a rational homology sphere. In the case of rational singularities, the fillings must be differentiated from Milnor fibers by more subtle means than $b_1$, as all Stein fillings have $b_1=0$ in this case. For singularities with reduced fundamental cycle, the contact link admits a planar open book decomposition, \cite{NT, Scho}. By \cite{Plam, Kalo}, it follows that the number of Dehn twists in any positive monodromy factorization, and thus $b_2$ of Stein fillings, is bounded
above. This means that we cannot generate unexpected fillings by arbitrarily long positive factorizations.
{On the other hand, even though there is typically an infinite collection of singularities with the given link, the reduced fundamental cycle hypothesis, together with the De Jong--Van Straten theory, gives us certain control 
over the topology of all possible Milnor fibers.}

In general, comparing Stein fillings to Milnor fillings is a  two-fold challenge: classification is typically out of reach, both on the deformation theory side 
(smoothings and Milnor fibers) and on the symplectic side (Stein fillings). In the particular case of rational singularities with reduced fundamental cycle,
two important tools facilitate the study of fillings. On the algebraic geometry side, de Jong and van Straten reduce the study of deformations of the surface
to certain deformations of a decorated germ of a reducible singular complex curve $\mathcal{C} \subset \C^2$. 
(The germ $\mathcal{C}$ is associated to the surface as explained in Section~\ref{s:picdef}. For now, we omit the decoration from notation.) 
The construction of \cite{dJvS} works for a more general class of {\em sandwiched} rational singularities; in the case of reduced fundamental 
cycle, the associated plane curve germ has smooth irreducible components. Thus in this case, 
$\mathcal{C}$ is simply the union of smooth complex disks $C_1, C_2, \dots, C_m$, all passing through 0.
The decoration of the germ  is given by marked points, initially concentrated at the origin. 
To encode deformations of the surface singularity, one considers 1-parameter $\delta$-constant deformations of  $\mathcal{C}$, where the 
marked points are redistributed so that all singularities of the deformed curve $\mathcal{C}^s$ are marked (additional ``free'' marked points are also allowed). 
Smoothings of the corresponding singularities are given by {\em picture deformations}, where the only singularities of the deformed 
curve are transverse multiple points. While picture deformations are still hard to classify directly and thus rarely give explicit classification of smoothings, 
they do provide a lot of useful information. In certain examples, they allow us to understand the topology of Milnor fibers and compute their basic invariants.

The following theorem summarizes de Jong--van Straten's results that we use. Detailed definitions and precise statements will be given in Section~\ref{s:picdef}. 
\begin{theorem} \cite[Theorem 4.4, Lemma 4.7]{dJvS} \label{thm:intro-djvs} Let $(X, 0)$ be a rational singularity with reduced fundamental cycle, and $\mathcal{C}\subset \C^2$ its decorated germ 
of a reducible complex curve such that all the branches $C_1, \dots, C_m$ of $\mathcal{C}$ are smooth complex disks. Then smoothings of $(X, 0)$ are in one-to-one correspondence with 
{\em picture deformations} of  $\mathcal{C}$. A picture deformation gives an arrangement $\mathcal{C}^s$  of the deformed branches $C_1^s, \dots , C_m^s$, $s\neq 0$,
with marked points that include all the intersections of the branches. The Milnor fiber  $W_{\mathcal{C}^s}$ of the corresponding smoothing
can be constructed by blowing up at all marked points and taking the complement of the proper 
transforms of $C_1^s, \dots, C_m^s$. 
\end{theorem}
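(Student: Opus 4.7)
The plan is to follow the algebro-geometric framework pioneered by de Jong and van Straten, organizing the argument into two parts matching the two claims of the theorem: first, the bijection between smoothings of $(X,0)$ and picture deformations of $\mathcal{C}$; second, the explicit construction of the Milnor fiber by a blow-up-and-complement recipe.

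For the bijection, I would begin by exploiting that a rational singularity with reduced fundamental cycle is \emph{sandwiched}: it admits a birational morphism $\pi: Z \to X$ from a smooth surface $Z$ that itself factors through a sequence of point blow-ups of a neighborhood of the origin in $\mathbb{C}^2$, so $(X,0)$ sits between two smooth surfaces. The germ $\mathcal{C}$ is extracted from the image of the exceptional divisor of $\pi$ in $\mathbb{C}^2$, decorated by the multiplicities recorded at each infinitely near point. Under the reduced fundamental cycle hypothesis, a combinatorial analysis of the resolution graph (using that no vertex is ``bad'') forces the branches $C_1,\dots,C_m$ of $\mathcal{C}$ to be smooth, so $\mathcal{C}$ is a union of smooth disks through the origin. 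The central step is then to identify deformation functors: I would show that flat deformations of $(X,0)$ correspond to decorated $\delta$-constant deformations of $\mathcal{C}$, by moving the infinitely near points in $\mathbb{C}^2$ (or on its iterated blow-up) and re-contracting the exceptional configuration. Under this identification, smoothings correspond precisely to picture deformations: a total space of the deformation is smooth at the origin exactly when every infinitely near singularity of $\mathcal{C}$ has been unfolded into ordinary transverse multiple points marked in $\mathcal{C}^s$.

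For the Milnor fiber construction, once the correspondence is set up, I would argue constructively. Take a picture deformation $\mathcal{C}^s$ whose only singularities are ordinary transverse multiple points at the marked set. Blow up a small ball $B\subset \mathbb{C}^2$ containing $\mathcal{C}^s$ at each marked point as many times as needed so that the proper transforms $\widetilde C_1^s,\dots,\widetilde C_m^s$ are pairwise disjoint smooth disks meeting the exceptional divisor transversally; the number and configuration of these blow-ups is dictated by the decoration. Then $W_{\mathcal{C}^s}$ is defined as the complement of open tubular neighborhoods of the $\widetilde C_i^s$ in the blown-up ball. To see this agrees with the Milnor fiber, one verifies that $\partial W_{\mathcal{C}^s}\cong Y$ using the sandwiched model for the resolution (the boundary is essentially invariant under the deformation parameter $s$), and that letting $s\to 0$ exhibits $W_{\mathcal{C}^s}$ as the generic fiber of a one-parameter smoothing of $(X,0)$ produced from the family $\{\mathcal{C}^s\}$.

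The main obstacle I expect is the functorial comparison in the first part: proving that \emph{every} smoothing of $(X,0)$ actually arises from a picture deformation of $\mathcal{C}$, not merely that picture deformations produce some smoothings. This requires computing the cotangent cohomology $T^1_{(X,0)}$ and identifying it with the tangent space to the decorated deformation functor of the germ, and then invoking a versality argument to lift arbitrary first-order deformations of the surface to first-order deformations of the curve and promote these to flat families. The geometric step of showing that a $\delta$-constant, generically transverse deformation actually yields a smooth total space (rather than a partial smoothing with remaining rational singularities) is where the reduced fundamental cycle hypothesis is used most sharply. Once this bijection is secured, the Milnor fiber description follows by analyzing the resulting smoothing family fiberwise and recognizing the generic fiber as the explicit blow-up-and-complement.
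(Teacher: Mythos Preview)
This theorem is not proved in the paper: it is explicitly stated as a summary of de Jong--van Straten's results (cited as \cite[Theorem 4.4, Lemma 4.7]{dJvS}), and the paper uses it as a black box. Section~\ref{s:picdef} reviews the setup and definitions needed to state the theorem precisely (decorated germs, picture deformations, the compact Milnor fiber construction in Remark~\ref{rmk:milnor-ball}), but gives no proof; the paper even remarks that the precise one-to-one nature of the correspondence has not been established.

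Your sketch is a reasonable outline of the de Jong--van Straten strategy, but since there is no proof in the paper to compare against, there is nothing to match. Two small corrections to your sketch: first, in a picture deformation all singularities of $\mathcal{C}^s$ are already ordinary transverse multiple points, so a \emph{single} blow-up at each marked point suffices to separate the proper transforms---the decoration does not force iterated blow-ups at a marked point, it only records free marked points that also get blown up once. Second, the ``one-to-one correspondence'' is looser than a genuine bijection of isomorphism classes: the paper notes after the theorem that every smoothing arises from some picture deformation and vice versa, but injectivity (that distinct picture deformations give distinct smoothings) is expected rather than proved.
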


The Milnor fibers described in Theorem~\ref{thm:intro-djvs} are non-compact, but a slight modification yields  compact Milnor fillings of the contact link $(Y, \xi)$ of $(X,0)$.
We consider the germ $\mathcal{C}$ in a small closed ball $B\subset \C^2$ centered at $0$, such that all the branches of $\mathcal{C}$, and thus all the deformed branches for small $s$,
intersect $\partial B$ transversely, and $B$ contains all marked points. To obtain a smooth compact  4-manifold whose boundary is the link $Y$, we
blow up $B$ at the marked points, take  the complement of disjoint tubular neighborhoods of 
the proper transforms of  $C_1^s, \dots, C_m^s$, and smooth the corners.

In turn, on the symplectic side,  contact links of singularities with reduced fundamental cycle are more accessible because they are supported by planar open books, 
\cite{NT, Scho}. By a theorem of Wendl \cite{We}, all Stein fillings of a planar contact manifold are given by Lefschetz 
fibrations whose fiber is the page of the open book. In other words, all these Lefschetz fibrations arise from  factorizations of the monodromy 
of the {\em given} open book into a product of positive Dehn twists. In most cases, such positive factorizations cannot be explicitly classified, but
they give a combinatorial approach to Stein fillings. 

To relate the two sides of the story, we generalize the notion of picture deformation and consider {\em smooth graphical homotopies}
of the decorated germ $\mathcal{C}$ with smooth branches.  A smooth graphical homotopy of $\mathcal{C}$ 
is a real 1-parameter family of embedded disks $C_1^t,\dots, C_m^t$ such that for $t=0$ the disks $C_1^0,\dots, C_m^0$ are the branches of  $\mathcal{C}$, and 
for $t=1$, the intersections between $C_i^1$ and $C_j^1$ 
are transverse and positive for all $i,j$.  There is a collection of marked points on $C_1^1, \dots, C_m^1$, coming from a redistribution of the decoration on $\mathcal{C}$, 
such that all intersection points are marked. (See Definition~\ref{def:homotopy}.)

We prove that just as picture deformations yield smoothings in~\cite{dJvS}, every smooth graphical homotopy gives rise to a Stein filling naturally supported by a Lefschetz fibration.

\begin{theorem} \label{thm:Lefschetz} Let $(Y, \xi)$ be the contact link of a singularity $(X,0)$ with reduced fundamental cycle,
and let $\mathcal{C}$ be a decorated plane curve germ representing $(X, 0)$, with $m$ smooth components $C_1^0, \dots, C_m^0$.
For any smooth graphical homotopy, let $W$ be  the smooth 4-manifold obtained by blowing up at all marked points 
and taking the complement of the proper transforms
of $C_1^1, \dots, C_m^1$. (In the case of a picture deformation $\mathcal{C}^s$, $W$ is the Milnor fiber $W_{\mathcal{C}^s}$ from Theorem~\ref{thm:intro-djvs}). 

Then $W$  carries a planar Lefschetz fibration that supports a Stein filling of $(Y,\xi)$. 
When $W=W_{\mathcal{C}^s}$, the Lefschetz fibration is compatible with the Stein structure on the Milnor fiber.

The fiber of the Lefschetz fibration on $W$ is a disk with $m$ holes,
and the vanishing cycles can be computed directly from the decorated curve configuration $C_1^1,\dots, C_m^1$.
On $(Y, \xi)$, the Lefschetz fibration induces a planar open book decomposition, which is independent of the smooth graphical homotopy of the given decorated 
germ $\mathcal{C}$. 
\end{theorem}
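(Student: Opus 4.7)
The plan is to build the Lefschetz fibration on $W$ from a suitable holomorphic projection of the ambient $\C^2$ and to obtain the open book on $Y$ as the restriction of that same projection to the boundary. First I choose generic linear coordinates $(z_1,z_2)$ on $\C^2$ so that each branch $C_i^0$ of $\mathcal{C}$ is the graph of a holomorphic function $z_2=f_i(z_1)$ over a small disk $D\subset\C_{z_1}$, arranging that the same remains true for every disk $C_i^t$ in the smooth graphical homotopy. Picking the polydisk $B = D\times D'$ small enough to contain all marked points and so that $\mathcal{C}^t\cap\partial B$ is a transverse link for every $t$, the projection $\pi(z_1,z_2)=z_1$ restricted to $\partial B\setminus\mathcal{C}^0$ is a fibration over $S^1$ whose page is $D$ punctured at $m$ points and whose binding is the union of the $m$ meridian circles of the branches. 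Because graphical homotopies can be chosen to be constant near $\partial B$, this open book depends only on the germ $\mathcal{C}$, which will yield the asserted independence on the boundary.

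Next, using the final configuration $\mathcal{C}^1$ whose branches have only transverse, positive intersections at the marked points, I blow up $B$ at every marked point to obtain $\widetilde{B}$, so that the proper transforms $\widetilde{C}_i^1$ are pairwise disjoint smoothly embedded disks, and set $W = \widetilde{B}\setminus \nu\bigl(\bigcup_i \widetilde{C}_i^1\bigr)$. The lift $\widetilde\pi:\widetilde{B}\to D$ of $\pi$ restricts to $W$ as my candidate Lefschetz fibration; a generic fiber meets each $\widetilde{C}_i^1$ in one point, so after removing tubular neighborhoods it is a disk with $m$ holes. At a double marked point the local picture is a standard Lefschetz node with vanishing cycle encircling the two corresponding holes. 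At a multi-point of multiplicity $k$ I perturb the arrangement into $\binom{k}{2}$ nearby nodes, each of which contributes a Lefschetz singularity whose vanishing cycle encircles a pair of the $k$ associated holes, and a free marked point contributes an additional Dehn twist around the single relevant hole. All of these cycles are positive because they arise from holomorphic models, so the vanishing cycles, and hence the monodromy factorization, can be read off directly from the incidence data of $C_1^1,\dots,C_m^1$.

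With the Lefschetz fibration in hand, the Stein structure follows from the classical results of Loi--Piergallini and Akbulut--Ozbagci: a positive allowable Lefschetz fibration over $D^2$ with planar fiber supports a Stein structure filling the contact manifold induced by the boundary open book. Our vanishing cycles are positive and our fiber is planar by construction, so $W$ inherits such a Stein structure; the first step identifies its boundary open book with the one determined by the germ $\mathcal{C}$, proving it supports $\xi$ and is independent of the homotopy. When $\mathcal{C}^1$ is actually a picture deformation $\mathcal{C}^s$, the whole construction is holomorphic and standard arguments identify the resulting Stein structure with the intrinsic one on the Milnor fiber $W_{\mathcal{C}^s}$, completing the last clause of the theorem.

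The main technical obstacle will be handling marked points of multiplicity $k\geq 3$ and free marked points: in each case a single blow-up does not immediately present $\widetilde\pi$ as a Lefschetz fibration, so one must find a smooth perturbation of the configuration that yields the desired nodal Lefschetz model while not altering the symplectic deformation class of $W$. Closely related is the task of showing that any two smooth graphical homotopies of the same germ $\mathcal{C}$ produce the same open book on $Y$, so that the ambiguity is absorbed by Hurwitz moves and conjugations of the monodromy factorization rather than a change of the open book itself; this invariance is what ultimately pins down the Stein filling up to the combinatorial data of the arrangement.
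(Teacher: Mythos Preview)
Your overall architecture (project via $\pi_x$, blow up at marked points, remove proper transforms) matches the paper, but there is a genuine error in your treatment of multi-points, and the boundary-independence argument is incomplete.

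\textbf{Multi-points.} You write that at a $k$-fold transverse intersection you must perturb into $\binom{k}{2}$ nearby nodes, and you flag this as the main technical obstacle, claiming ``a single blow-up does not immediately present $\widetilde\pi$ as a Lefschetz fibration.'' This is incorrect. A single blow-up at a $k$-fold transverse point produces exactly one Lefschetz node: the singular fiber is the total transform $E\cup\widetilde F$, with a single ordinary node at $E\cap\widetilde F$; the $k$ proper transforms $\widetilde\Gamma_{i_1},\dots,\widetilde\Gamma_{i_k}$ hit $E$ at $k$ distinct smooth points away from the node. Removing their tubular neighborhoods, the vanishing cycle is the core of the neck joining the $E$-piece to the $\widetilde F$-piece, and it encircles all $k$ holes at once. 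This is the content of the paper's Lemma~\ref{construct-fibration}. Your proposed perturbation into $\binom{k}{2}$ double points would replace one marked point by $\binom{k}{2}$ marked points, change the number of blow-ups and hence the Euler characteristic of $W$, and violate the weight constraints (each of the $k$ branches would now carry $k-1$ intersection points instead of one). So the perturbation does alter the symplectic deformation class of $W$; the theorem as stated would be false under your construction.

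\textbf{Boundary independence.} Saying that graphical homotopies ``can be chosen to be constant near $\partial B$'' does not show that two \emph{different} homotopies of the same germ yield the same open book. The missing step is the one in Lemma~\ref{same-monodromy}: the boundary braids are isotopic (since both arise from the same germ), so the open book monodromies have the same image under $MCG(P_m)\to MCG(\C_m)$ and thus differ only by boundary-parallel Dehn twists; the number of twists around the $i$-th hole is then pinned down by the weight $w_i$ (the count of vanishing cycles enclosing hole $i$ equals the number of marked points on $C_i$, which is $w_i$ for both arrangements). Without this weight argument you cannot rule out discrepancies in the boundary twists.
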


Each rational singularity with reduced fundamental cycle has a distinguished \emph{Artin} smoothing component, which  
corresponds to a picture deformation called the \emph{Scott deformation} (see Section~\ref{s:artin}). 
Applying Theorem~\ref{thm:Lefschetz} to the Scott deformation yields a planar Lefschetz fibration filling $(Y,\xi)$ where the vanishing cycles are disjoint (see Proposition~\ref{artin-ob}).
This gives a natural model for the planar open book decomposition on $(Y,\xi)$. 
This open book is closely related to the braid monodromy of the singularity of $\mathcal{C}$.
Note that we need to consider  all singularities topologically equivalent to $(X,0)$ to describe all Milnor fillings for $(Y, \xi)$, 
since all such singularities have the same contact link. However, topologically equivalent singularities can be represented by topologically equivalent decorated germs and 
produce the same open book decompositions.

The process of computing the monodromy factorization resembles a known strategy for monodromy calculation for a plane algebraic curve \cite{MoTeI,MoTeII}. 
The necessary information can be encoded by a {\em braided wiring diagram} given by the intersection of $\mathcal{C}^s$ with a suitably chosen
copy of $\C\times \R \subset \C^2$. 

A reversal of the above constructions allows us to represent Stein fillings of $(Y, \xi)$  via arrangements of symplectic curves, as follows. 
Let $W$ be an arbitrary Stein filling of the link $(Y, \xi)$. We fix an open book for $(Y, \xi)$ defined by the germ $\mathcal{C}$ as above.
By Wendl's theorem, $W$ can be represented by a Lefschetz fibration with the planar fiber given by the page. 
The Lefschetz fibration corresponds to a factorization 
of the open book monodromy into a product of positive Dehn twists. We reverse-engineer a braided wiring diagram producing this factorization, and then use the diagram
to construct an arrangement $\Gamma$ 
of symplectic disks. (In fact, an arrangement of smooth graphical disks is sufficient for our constructions, but the symplectic condition can be satisfied
at no extra cost.) We require that the disks intersect transversally (multiple intersections are allowed), and equip $\Gamma$ with a collection of marked points that include all 
intersections and possibly additional ``free'' points. We also show that the resulting arrangement of disks and points is related to the decorated germ $\mathcal{C}$ by 
a smooth homotopy, which is graphical in suitable coordinates. (The homotopy moves the disks and the marked points.) This yields a symplectic analog of 
Theorem~\ref{thm:intro-djvs}.

\begin{theorem} \label{thm:intro-symp} Let $(Y, \xi)$ be the contact link of a singularity $(X,0)$ with reduced fundamental cycle that corresponds to a decorated plane curve germ 
$\mathcal{C}$.  Then any Stein filling of $(Y,\xi)$ arises from an arrangement $\Gamma$ of symplectic graphical disks with marked points, as in Theorem~\ref{thm:Lefschetz}.
The arrangement~$\Gamma$ is related to the decorated germ $\mathcal{C}$ by a smooth graphical homotopy. 
%
%
%
%
\end{theorem}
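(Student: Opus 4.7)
The plan is to reverse the three-stage construction of Theorem~\ref{thm:Lefschetz}: pass from the Stein filling to a positive monodromy factorization, then to a braided wiring diagram, and finally to a symplectic disk arrangement graphically homotopic to $\mathcal{C}$.

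First, fix the planar open book on $(Y, \xi)$ produced by applying Theorem~\ref{thm:Lefschetz} to any smooth graphical homotopy of $\mathcal{C}$; the theorem guarantees this open book depends only on $\mathcal{C}$, so it serves as a canonical target. By Wendl's theorem on planar contact manifolds, any Stein filling $W$ of $(Y, \xi)$ is supported by a planar Lefschetz fibration whose fiber is the page of this open book, a disk with $m$ holes indexed by the branches $C_1, \dots, C_m$. This Lefschetz fibration is encoded by a positive factorization of the open book monodromy into Dehn twists about simple closed curves $\gamma_1, \dots, \gamma_k$ in the planar page.

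Next, I would translate this combinatorial data into a braided wiring diagram in $\C \times \R$, inverting the procedure used earlier in the paper to compute monodromy factorizations from wiring diagrams in the style of Moishezon--Teicher. Each Dehn twist $\tau_{\gamma_i}$ about a curve enclosing the holes indexed by a subset $S_i \subset \{1, \dots, m\}$ corresponds to a height at which precisely the strands indexed by $S_i$ meet; by choosing an ordering of the factors along the real axis and inserting the corresponding multiple points (together with the accompanying half-twists needed to reproduce the braid monodromy), one obtains a braided wiring diagram whose associated Lefschetz fibration realizes the given factorization. Free marked points that appear in the decoration correspond to isolated marked points on individual strands.

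I would then realize the wiring diagram as the intersection of $\C \times \R$ with a collection of graphical disks $D_1, \dots, D_m$ over a disk in $\C$. After a $C^\infty$-small perturbation the $D_i$ can be made symplectic with respect to the standard form on $\C^2$, since graphical disks with sufficiently small derivative are automatically symplectic. Decorating the $D_i$ at the intersections prescribed by the wiring diagram and at any free points produces the arrangement $\Gamma$; by the forward direction of Theorem~\ref{thm:Lefschetz}, the Stein filling associated with $\Gamma$ matches the Lefschetz fibration and hence is $W$.

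The final and subtlest step is to exhibit a smooth graphical homotopy from $\Gamma$ to $\mathcal{C}$. Since both are graphical over a common base disk, I would linearly interpolate the graphing functions, simultaneously dragging all intersection points radially inward to the origin and sliding the marked points along their branches; matching the tangent data of $D_i$ at $0$ with that of $C_i$ by a further rotation ensures the endpoint is $\mathcal{C}$ on the nose. Throughout the interpolation the disks stay graphical by construction, and the accumulation of intersections at the origin is absorbed into the singularity of $\mathcal{C}$. The main obstacle, and the point requiring most care, is the combinatorial bookkeeping of marked points along this homotopy: one must verify that the redistribution of the decoration of $\mathcal{C}$ produced by pulling apart the intersections matches the marked points on $\Gamma$, and that no spurious intersections are created or destroyed in transit. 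This is a discrete check on the braided wiring diagram, independent of the symplectic input, and once established the homotopy satisfies Definition~\ref{def:homotopy} and completes the proof.
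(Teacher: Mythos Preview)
Your overall architecture matches the paper's (Propositions~\ref{filling-to-config} and~\ref{p:deform}): Wendl gives a positive factorization, which is reverse-engineered into a braided wiring diagram, extended to symplectic disks via Proposition~\ref{symp-config}, and then homotoped to $\mathcal{C}$. However, your second step contains a gap. The claim that ``each Dehn twist $\tau_{\gamma_i}$ about a curve enclosing the holes indexed by $S_i$ corresponds to a height at which the strands indexed by $S_i$ meet'' is not correct as stated: a vanishing cycle $V_j$ need not be isotopic to a convex curve around consecutive holes, so knowing which holes it encloses determines only its homology class, not its isotopy class. The paper fills this by a recursive choice of braids (Proposition~\ref{filling-to-config}): choose $\beta_0$ taking $V_1$ to a convex curve around a consecutive set $J_1$; then choose $\beta_1$ so that $\beta_1 \circ \Delta_1^{-1} \circ \beta_0$ takes $V_2$ to a convex curve around some $J_2$; and so on. The wiring diagram $(\beta_0, J_1, \beta_1, \dots, J_n)$ then has exactly the vanishing cycles $V_1,\dots,V_n$ by the formulas of Section~\ref{s:vanishing}. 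These $\beta_j$ are perhaps what you meant by ``accompanying half-twists,'' but the recursive mechanism producing them is the real content of this step and is not visible in your sketch.

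Your fourth step also misreads Definition~\ref{def:homotopy}: conditions (3) and (4) on intersections and marked points are imposed \emph{only} at $t=1$, so there is no bookkeeping to do in transit and your worry about ``spurious intersections created or destroyed'' is unfounded. The paper's argument (Lemmas~\ref{l:samebraid} and~\ref{l:graphicalconvex}) is correspondingly simpler than yours: since $\Gamma$ and $\mathcal{C}$ induce the same open book, Lemma~\ref{lem:braid-monodromy} shows their boundary braids are braid-isotopic; realize this braid isotopy over an outer annulus to match the boundaries exactly, then convexly interpolate the graphing functions with boundary fixed. No control over intermediate intersection patterns is needed, and the weight condition at $t=1$ holds because the number of Dehn twists enclosing the $i$-th hole in any factorization equals $w(C_i)$.
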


Theorems~\ref{thm:intro-djvs} and~\ref{thm:intro-symp} mean that both Milnor fibers and arbitrary Stein fillings of a given link of rational singularity with reduced fundamental cycle can be constructed in a similar way, starting with the  
decorated plane curve germ $\mathcal{C}$ representing the singularity. Milnor fibers arise from algebraic picture deformations of the branches of $\mathcal{C}$, 
while Stein fillings come from smooth graphical homotopies of the branches.


Once the comparison of Milnor fibers and Stein fillings is reduced to comparison of
arrangements of complex curves or smooth disks with certain properties, we can construct examples 
of arrangements that generate Stein fillings not diffeomorphic to Milnor fibers. 
We need arrangements that are related to a particular plane curve germ by a smooth graphical homotopy but 
not by an algebraic picture deformation. We build {\emph{unexpected line arrangements} satisfying this property} in Section~\ref{s:examples}, using classical projective geometry and a study of analytic deformations. 
We use these to construct unexpected Stein fillings; then we verify that they are not diffeomorphic (relative to the boundary open book data) to Milnor fillings by an argument based on \cite{NPP}.
This leads to the proof of Theorem~\ref{thm:intro-examples} and other similar examples.

At first glance, the difference between algebraic and smooth plane curve arrangements seems rather obvious.  However, because we are in 
an open situation, working with germs of curves and smooth disks with boundary as opposed to closed algebraic surfaces, the question is quite subtle. 
In particular, we cannot simply use known examples of topological or symplectic line arrangements in $\cptwo$ not realizable by complex lines. 
Indeed, in many cases the smooth surfaces can be closely approximated by high-degree polynomials, so that a Lefschetz fibration on the corresponding Stein filling 
can be realized by a Milnor fiber. We discuss the relevant features of the picture deformations and smooth (or symplectic) graphical homotopies in detail in Section~\ref{s:further}, 
and explain what makes our examples work.  

It is worth stating that while Stein fillings and Milnor fillings are the same for certain small families of singularities, the two notions are in fact fundamentally 
different. A Milnor filling is given by a smoothing of a singular complex surface, so there is a family of Stein homotopic fillings of $(Y, \xi)$ that degenerate to
the singular surface. A Stein filling of the link has no {\em a priori}  relation to the singular surface and is not part of any such family. This distinction becomes  apparent in our 
present work, by the following heuristic reasoning. A picture deformation $\mathcal{C}^s$ of the decorated germ $\mathcal{C}$ gives, for any $s\neq 0$, a Milnor filling 
$W_{\mathcal{C}^s}$, so that all these fillings are diffeomorphic and even Stein homotopic. The Milnor fillings look the same for all $s \neq 0$ because 
the arrangements of deformed branches $\{C_1^s, \dots, C_m^s\}$ have the same topology. By contrast, if the germ $\mathcal{C}$  is homotoped via a family of 
smooth disk arrangements~$\Gamma^t$, the topology of the arrangement $\{\Gamma_1^t, \dots, \Gamma_m^t\}$ may change during the homotopy.
Under certain conditions we can construct a family of Lefschetz fibrations $W_t$ that includes the given Stein filling 
and changes its diffeomorphism type at finitely many discrete times as it connects to the minimal resolution.
In other cases, at some time $t$  the homotopy gives an arrangement $\Gamma^t$ which produces an {\em achiral} Lefschetz fibration, so the 4-manifolds in the corresponding family do not necessarily carry a Stein structure. 
We return to this discussion in Section~\ref{s:further}.


One can also ask whether unexpected fillings exist for rational singularities with reduced fundamental cycle that are not covered by Theorem~\ref{thm:intro-examples} or
Theorem~\ref{kollar-fill}. For certain additional simple examples, we can use Theorem~\ref{thm:intro-symp} and pseudoholomorphic curve 
arguments to verify that there are no unexpected fillings,
even though the smoothing may not be unique. This approach only works when the germ of the singularity is a pencil of lines satisfying certain restrictive constraints.
Namely, we can consider
(1)~arrangements of 6 or fewer symplectic lines, or
(2)~arrangements of symplectic lines where one of the lines has at most two marked points where it meets all the other lines in the arrangement.
Since the boundary behavior of symplectic lines is controlled, we can cap off symplectic lines in a ball to symplectic projective lines in $\cptwo$, together with the line at infinity. The corresponding arrangements in $\cptwo$ are shown to have a unique symplectic isotopy class and are symplectically isotopic
to an actual complex algebraic line arrangement in $\cptwo$, \cite[Lemma 3.4.5]{StarkstonThesis}. It follows that every symplectic arrangement 
as above can be obtained as picture deformation of a pencil of complex lines, and therefore, the corresponding Stein fillings are given by Milnor fibers.
The links of the corresponding singularities are Seifert fibered spaces for which Stein fillings were completely classified and
presented as planar Lefschetz fibrations in \cite[Chapter 4]{StarkstonThesis}. The line arrangements appearing in that
classification precisely coincide with the symplectic disk arrangements from the perspective of this article. (Here,
gluing in the deleted neighborhood of the disk provides an embedding of the Stein filling into a blow-up of $\C^2$. In \cite{StarkstonThesis},
gluing on the cap, which augments the configuration of lines by the additional line at infinity, provides an embedding of the Stein filling in a blow-up of $\cptwo$.)
In general, Theorem~\ref{thm:intro-symp} seems to have limited applications to classification of fillings, due to complexity of arrangements of curves.

It is interesting to note that while de Jong--van Straten describe deformations of sandwiched singularities, our constructions only work for the subclass of rational singularities 
with reduced fundamental cycle. Indeed, a planar open book decomposition of the contact link plays a key role in our work because we need Wendl's theorem to describe Stein fillings.
By~\cite{GGP} the Milnor fillable contact structure on the link of a normal surface singularity is planar {\em only if} the singularity is rational 
and has reduced fundamental cycle. This means that our methods in the present paper cannot be used for classification for any other  surface singularities. However, for future work, we are investigating extensions of these methods to produce examples of unexpected fillings for more general surface singularities. Finally, recall that 
 all weak symplectic fillings of a planar contact 3-manifold are in fact given by planar Lefschetz fibrations, up to blow-ups and symplectic deformation \cite{NiWe}. 
 It follows that Theorem~\ref{thm:intro-symp} and related results apply to describe all minimal weak symplectic fillings. 
 However, we focus on Stein fillings and will give all statements, with the exception of Theorem~\ref{kollar-fill}, 
 only for the Stein case. 

\subsection*{Organization of the paper} In Section~\ref{s:picdef} we review the definitions of rational singularities with reduced fundamental
cycle as well as their deformation theory from \cite{dJvS}, and prove some of their properties from the topological perspective.
In Section~\ref{s:homotopy} we prove the first direction of the symplectic correspondence, namely Theorem~\ref{thm:Lefschetz}.
In Section~\ref{s:artin} we explain the smoothing in the Artin component from the perspective of symplectic topology, discuss the corresponding open books, and also {raise some questions related to open book factorizations and non-rational singularities}. 
In Section~\ref{find-curvettas-for-filling} we prove the other half of the correspondence, establishing Theorem~\ref{thm:intro-symp} 
using braided wiring diagrams and Wendl's theorem~\cite{We}. In Section~\ref{topology} we prove Theorem~\ref{kollar-fill} and explain how to calculate algebraic topological
invariants of the fillings, which we will use to distinguish our examples of unexpected Stein fillings from Milnor fillings.
In Section~\ref{s:examples} we prove that there are many examples of unexpected Stein fillings for links of rational surface 
singularities with reduced fundamental cycle, establishing Theorem~\ref{thm:intro-examples}. Finally, in Section~\ref{s:further} 
we explain what key differences between picture deformations and smooth graphical homotopies contributed to the distinction between expected and unexpected Stein fillings.

\subsection*{Acknowledgments} We are grateful to Stepan Orevkov for suggesting Example~\ref{example-orevkov} to us. 
This example played a crucial role in our understanding of arrangements that produce unexpected fillings. 
We thank Roger Casals, Eugene Gorsky, and Marco Golla for their interest to this project and numerous motivating and 
illuminating discussions at the early stages of this work. In particular, Eugene helped us understand some of the results of \cite{dJvS}.
We are grateful to Eugene and Marco for their comments on the preliminary version of this article, and to Jonathan Wahl, Jeremy Van Horn-Morris, and Patrick Popescu-Pampu for interesting correspondence. Many thanks to Inanc Baykur for illuminating correspondence and discussion on the higher genus case.
LS would also like to thank Sari Ogami who learned and explained 
to me a great deal about the monodromy of braided wiring diagrams. OP is also grateful to John Etnyre, Jonny Evans,  Mark McLean, and Oleg Viro for a few helpful discussions. We thank the referees for their thoughtful comments and suggestions. 
OP was supported by NSF grants DMS-1510091 and DMS-1906260 and a Simons Fellowship. LS was supported by NSF grant DMS-1904074.  

\section{Rational singularities with reduced fundamental cycle,  \\ their decorated curve germs, and relation to deformations} \label{s:picdef}

In this section, we collect some facts about rational singularities with reduced fundamental cycle and state de Jong--van Straten's results on their smoothings, 
\cite{dJvS}. De Jong--van Straten's results are in fact more general: they fully describe deformation theory for a wider class of {\em sandwiched} singularities. We state only 
the results we need. Some of our statements are slighly different from \cite{dJvS}: we describe their constructions from the topological
perspective and set the stage for our work.
Although we aim for a mostly self-contained discussion, the reader may find it useful to consult~\cite{Nem} 
for a general survey on topology of surface singularities. {The survey~\cite{PPP} focuses on the interplay between singularity theory and contact topology and provides very helpful background.} 
Additionally, a brief survey of the key results of~\cite{dJvS} from the topological perspective can be found in~\cite{NPP}.

\subsection{Resolutions and smoothings.} We begin with some general facts about surface singularities.
{Let $(X, 0)$ be a normal surface singularity. Its resolution $\pi: \ti{X} \to X$ is a proper birational morphism such that $\ti{X}$ is smooth. The {\em exceptional divisor} $\pi^{-1} (0)$ is the inverse image of the singular point. For a given singularity $(X, 0)$, the resolution is not unique, as one can aways make additional blow-ups; however, for a surface singularity, there is a unique {\em minimal} resolution \cite{Lauf2}. 
The minimal resolution is characterized by the fact that $\ti X$ contains no embedded smooth complex curves of genus 0 and self-intersection $-1$ (thus it does not admit a blow-down).} 

{After performing additional blow-ups if necessary, we can assume that 
the exceptional divisor $\pi^{-1} (0)$ has normal crossings. This means that $\pi^{-1} (0)= \cup_{v \in G} E_v$, where
the irreducible components $E_v$ are smooth complex curves that intersect transversally at double points only. 
A resolution with this property is  called a {\em good} resolution. For a surface singularity, a minimal good resolution is also unique~\cite{Lauf2}.}

The topology of a {good} resolution is encoded by the (dual) resolution graph $G$.  
The vertices  $v\in G$ correspond to the exceptional curves $E_v$ and are weighted by the {genus and} self-intersection $E_v \cdot E_v$ of the corresponding curve. We will often refer 
to $E_v \cdot E_v$   as the self-intersection of the vertex $v$ and use notation $v \cdot v$ for brevity. 
The edges of $G$ record intersections of different irreducible components. 
Note that the link of the singularity is the boundary of the plumbing of disk bundles over surfaces according to $G$.
{In this paper, we focus on {\em rational} singularities; in this case
 $G$ is always a tree, and each exceptional 
curve $E_v$ has genus 0. {(Genus $0$ curves are also called \emph{rational curves}.)} Therefore we will typically omit the genus from the markings on the vertices and only record the self-intersection numbers.}
  
{The link of the singularity determines the dual graph of the minimal good resolution, and vice versa. 
By a result of W. Neumann~\cite{Neu}, the links of two normal surface singularities have the same oriented diffeomorphism type if and only if their dual resolution 
graphs are related  by a finite sequence of blow-ups/blow-downs along rational $(-1)$ curves. Moreover, 
the links of two normal surface singularities are orientation-preserving diffeomorphic if and only if their  minimal good resolutions have the same dual graphs. Minimal good resolutions are easy to recognize: if a good resolution is not minimal, its graph will have a vertex representing a genus 0 curve with self-intersection $-1$.
(This follows from~\cite{Neu}; see also~\cite[Lemma 5.2]{GGP} for a direct proof 
that any possible blow-downs can be seen directly from the graph.)}   
  
{The local topological type of the singularity $(X, 0)$ can be understood from its link $Y$, as a cone on the corresponding 3-manifold. We will say that two singularities are {\em topologically equivalent} if they have the same link. It is important to note that the analytic type of the singularity is not uniquely determined by the link; typically, 
many analytically different singularities have diffeomorphic links.
It is known that the canonical contact structures are all isomorphic for different singularities of the same topological type~\cite{CNPP}; thus, the dual resolution graph encodes the canonical contact structure. Indeed, 
this contact structure can be recovered as the convex boundary of the plumbing, according to the graph, of the standard neighborhoods of the corresponding symplectic surfaces.}

{We now turn attention to deformations and Milnor fibers.} {A deformation of a surface singularity $(X,0)$ is any flat map $\lambda: (\mathcal{X},0)\to (\mathcal{T},0)$ 
such that $\lambda^{-1}(0) = (X,0)$. A versal (or semi-universal) deformation $f:(\mathcal{X},0)\to (B,0)$ parameterizes 
all possible deformations of $(X,0)$. The base space $(B,0)$ generally has multiple irreducible components, which may have different dimensions.
It is generally difficult to understand the space $B$, its irreducible components, and the dimensions of these components.

A deformation $\lambda: (\mathcal{X},0)\to (D,0)$ over the disk in $\C$ is called a (1-parameter) \emph{smoothing} of
$(X,0)$ if $X_s:=\lambda^{-1}(s)$ is smooth for all $s\neq 0$. 
{For any smoothing all such $X_s$ are diffeomorphic,
and we call $X_s$ the \emph{Milnor fiber} of the smoothing. 
For example, for a hypersurface $X=\{f(x,y,z)=0\} \subset \C^3$ with $f(0)=0$ and $df(0)=0$, a smoothing of the singularity at $0$ 
can be given by $f:\C^3\to \C$, with Milnor fiber  $X_\epsilon =\{f(x,y,z)= \epsilon\}$ for a small $\epsilon\neq 0$.}
Each Milnor fiber is endowed with a Stein structure,
and for different $t_0,t_1\in D\setminus 0$, $X_{s_0}$ and $X_{s_1}$ are Stein homotopic (the Stein homotopy is obtained
by choosing a path from $s_0$ to $s_1$ in $D$ which avoids $0$). 

We will consider the compact version of the Milnor fiber to obtain a Stein filling 
of the link $Y$ of $(X,0)$, as follows. For a sufficiently small radius $r>0$,  the surface $X \subset \C^N$ is transverse to the sphere $S^{N-1}_r$. We fix an
ball $B^N_r\subset \C^N$ centered at $0$, sometimes called {\em a Milnor ball}, and consider $X \cap B^N_r$ as the {\em Milnor representative} of $X$. The boundary  
$\partial(X_r \cap B^N_r)$ is the link $Y$ of $(X,0)$, and the complex  structure on $X$ induces the canonical contact structure $\xi$ on $Y$. For sufficiently small $s \neq 0$, 
the Milnor fiber $X_s$ also intersects $S^{N-1}_r$ transversally. Then $X_s \cap B^N_r$ is a compact 4-manifold with boundary,  $\partial (X_s \cap B^N_r)$ is diffeomorphic  to $Y$, 
and the contact structure induced by the Stein structure on $X_s$ is isomorphic to $\xi$, so that  $X_s \cap B^N_r$ is a Stein filling of $(Y, \xi)$.

For  a semi-universal deformation $f: (\mathcal{X},0) \to (B,0)$ of the surface singularity $(X,0)$, an irreducible component $B_i$ of $B$ 
is called a \emph{smoothing component} of $(X,0)$ if the general fiber over $B_i$ is smooth. We note that $B_i$ may have lower
(complex) dimensional strata where the fibers over these strata are not smooth. For example, these non-general strata could arise
from singularities in the component $B_i$ or intersections of $B_i$ with other irreducible components of $B$. Nevertheless, these non-general strata have positive complex co-dimension, so the subset of $B_i$ over which the fiber is smooth will be connected. Any $1$-parameter smoothing of $(X,0)$ lies in a unique smoothing component $B_i$.

In general, not every surface singularity admits a smoothing. However, for rational singularities every irreducible component of $B$ is a smoothing component.
Moreover, there is one distinguished component, called the \emph{Artin component}. This component is associated to the minimal resolution
$\widetilde{X}$ of $(X,0)$. (For rational singularities, deformations of $\widetilde{X}$ come from deformations of $(X,0)$ 
and these deformations of $(X,0)$ form the Artin component.) 
We discuss Milnor fibers in this component in greater detail in Section~\ref{s:artin}.}

{In this paper, we study Stein fillings for the contact link $(Y, \xi)$ of a surface singularity, and compare them to Milnor fillings.  As explained above, in general the link determines only the topological, but not the analytic, type of the singularity. Normal surface singularities whose topological type admits a unique analytic type 
are called {\em taut}; if there are only finitely many analytic types, the singularity is {\em pseudotaut}. Taut and pseudotaut singularities were classified by Laufer~\cite{Lauf}: there are several very restrictive lists for the dual resolution graphs, in particular, the graphs cannot have any vertices of valency greater than 3. Thus, most singularities are not (pseudo)taut,  even if we restrict to a  very special kind that we consider in this paper, rational singularities with reduced fundamental cycle.
If we are to compare Stein fillings and Milnor fillings of the link,  
we need to consider Milnor fibers for {\em all} possible singularities of the given topological type. In principle, it is quite possible that topologically equivalent singularities have non-diffeomorphic Milnor fibers: for example, the hypersurface singularities $x^2+y^7+z^{14}=0$ and $x^3+y^4+z^{12}=0$ have the same topological type, but their (unique) Milnor fibers have different $b_2$, \cite{Lauf3}, see also the discussion in \cite[Section 6.2]{PPP}. Fortunately, in the case of reduced fundamental cycle we will have some control over the topology of Milnor fibers for different analytic types, thanks to the de Jong--van Straten construction.}

\subsection{Sandwiched singularities, extended graphs, and decorated germs.}

\begin{definition} \label{def-RFC} $(X, 0)$ is a {\em rational singularity with reduced fundamental cycle} if it admits a normal crossing resolution such that all exceptional curves have genus 0, 
the dual resolution graph $G$ is a tree, and for each vertex $v \in G$,  the valency $a(v)$ of $v$ and the self-intersection $v \cdot v$ satisfy the inequality
\begin{equation} \label{valency-weight}
a(v) \leq - v \cdot v.
\end{equation}
\end{definition}

It follows from~\eqref{valency-weight} that the graph as above can only have vertices with self-intersection $-1$ as the leaves of the tree. Blowing down all such vertices, we obtain a graph that still satisfies~\eqref{valency-weight} and represents the minimal resolution of  $(X, 0)$. 

To explain the terminology of Definition~\ref{def-RFC}, we recall the definition of a fundamental cycle. For a given resolution, consider the set of divisors
{$$
\{ Z= \sum_{v \in G} m_v E_v \mid Z>0,\textrm{ and }  Z \cdot E_v \leq 0 \text{ for all } E_v \}.
$$}
This set has a partial order, defined by    $\sum m_v E_v    \geq \sum n_v E_v$   if $m_v \geq n_v$ for all $v$. There is a minimal element 
with respect to this partial order, denoted $Z_{min}$ and called Artin's fundamental cycle. {Since the resolution dual graph is connected, different components $E_v$ intersect positively, and $Z>0$, any element in the set has $m_v>0$. Therefore, $Z_{min} \geq \sum_{v \in G} E_v$.} It is easy to see 
that $(\sum_{v \in G} E_v) \cdot E_v \leq 0$ for all $E_v$ if and only if Condition~(\ref{valency-weight}) is satisfied. In this case 
$Z_{min}=  \sum_{v \in G} E_v$, and since each exceptional curve enters with multiplicity 1, we say that the fundamental cycle $Z_{min}$ is reduced.  

In \cite{dJvS}, de Jong and van Straten work with {\em sandwiched} singularities. By definition, a sandwiched singularity $(X, 0)$ is analytically isomorphic to the germ of an algebraic singular surface which admits a birational morphism to $(\C^2, 0)$.
For a resolution $\pi: \ti{X} \to X$,  we get a diagram $(\ti{X}, \pi^{-1}(0)) \dashrightarrow (X, 0) \dashrightarrow (\C^2, 0)$. In particular, X is sandwiched between
two smooth spaces via birational maps. Sandwiched singularities are rational and can be characterized by their  
resolution graphs as follows, by translating the sandwiched condition. The graph $G$ is {\em sandwiched} if we can add to it 
a number of edges and their end vertices with self-intersections $(-1)$, so that the resulting graph $G'$ gives a 
plumbing whose boundary represents $S^3$. In other words, $G'$ gives a configuration of rational curves that can be blown down to a smooth point. 
The choice of the graph $G'$ is not unique.  
It is not hard to see that every rational singularity with reduced fundamental cycle is sandwiched.  In Proposition~\ref{p:resolution} below, we discuss in detail the construction of the 
possible graphs $G'$ for this case.

Any sandwiched singularity can be associated to a  (germ of) a complex plane curve singularity, constructed as follows. The choice of the graph $G'$ corresponds 
to an embedding of the tubular neighborhood of the exceptional set of the resolution $\ti{X}$ into some blow-up of $\C^2$.  This blown-up surface also has a distinguished  
collection of $(-1)$ curves, so that the configuration of these $(-1)$ together with the exceptional set can can be completely blown down. For each distinguished $(-1)$ curve, 
choose a transverse complex disk (called a {\em curvetta}) through a generic point. Now, contract the curve configuration corresponding to $G'$. The union of the curvettas 
becomes a germ of a reducible curve $\mathcal{C}$ in $\C^2$, with components passing through 0. Let $C_i$, $i=1, 2, \dots, m$ be the irreducible components of 
$\mathcal{C}$; following \cite{dJvS}, we also refer to $C_i$ as {\em curvettas}. 
We emphasize that only the germ of $\mathcal{C}$ at the origin is defined; when we use the notation
$\mathcal{C}\subset \C^2$, we only consider a small neighborhood of $0\in \C^2$. In particular,  we are only interested in the singularity of the reducible curve 
$\mathcal{C}$ at 0. In this paper, we will focus on the case where the components $C_i$ are smooth at $0$, so that locally $C_i$ is a smooth disk. This suffices to study rational singularities with reduced fundamental cycle, as we will soon see. This disk 
may be locally parameterized by a high-degree algebraic curve in $\C^2$, but the global topology of this curve is unimportant to us, because we only use the part of the curve in a neighborhood of the origin.

\begin{figure}[htb]
		\centering
		\includegraphics[scale=.75]{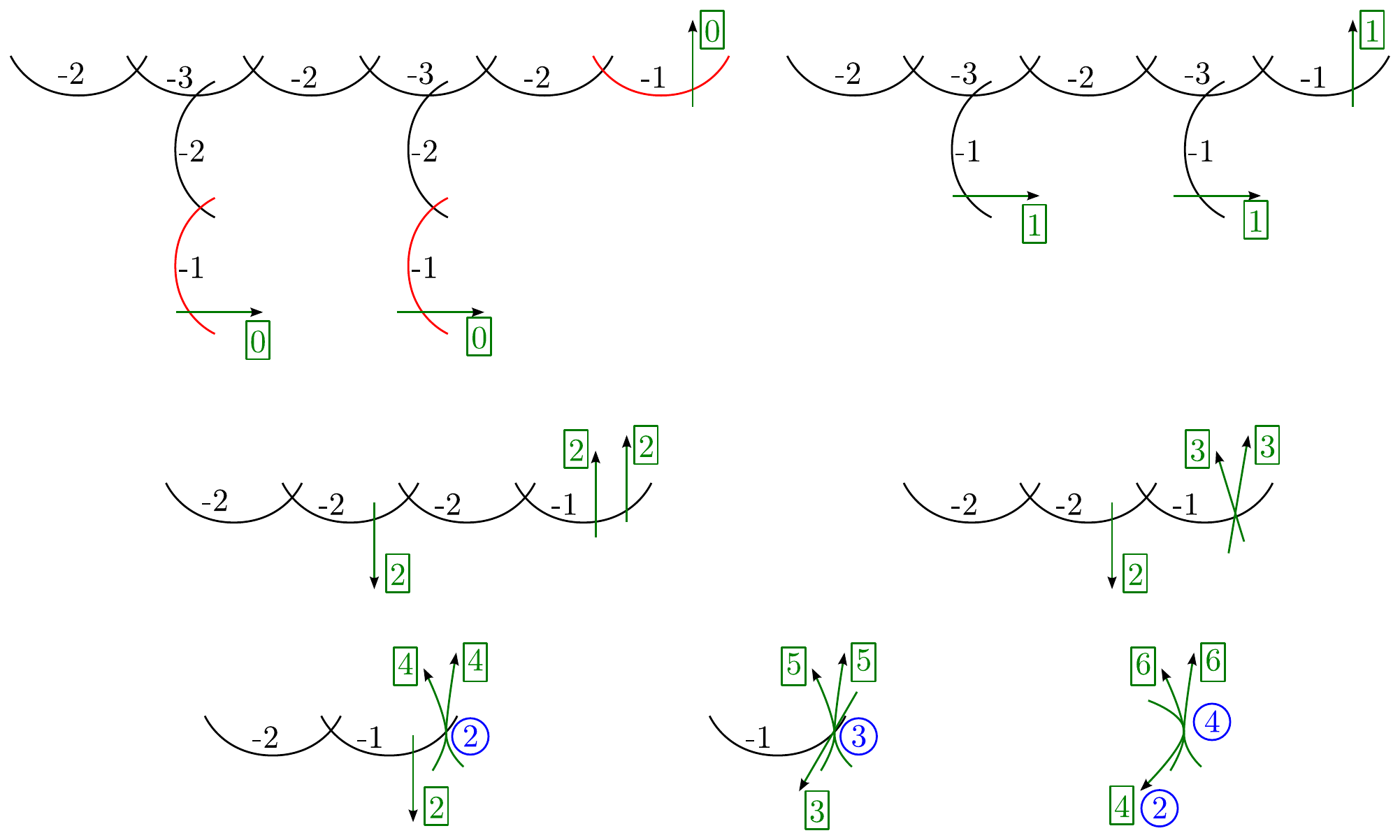}
		\caption{An example of a sandwiched singularity and a choice of corresponding curvettas (green arrows).
		The first diagram shows the resolution curves together with extra (red) $(-1)$ exceptional curves attached.
		Then there is a sequence of blow-downs. We keep track of the weights $w(C_i)$ in rectangular boxes next to each green curvetta arrow.
		The multiplicities of tangencies between bunches are recorded in blue circled numbers.}
		\label{fig:example}
	\end{figure}

Each  curvetta $C_i$  comes with a weight $w_i=w(C_i)$,
given by the number of exceptional spheres that intersect the corresponding curve in the blow-down process from $G'$ to the empty graph. In other words, $w_i$ is the number
of blow-down steps that affect the corresponding curvetta before it become $C_i$. The weighted curve $(\mathcal{C}, w)$ is called a {\em  decorated germ} corresponding 
to $(X, 0)$. An example of this process, and the resulting decorated germ for the given singularity, is shown in Figure~\ref{fig:example}.

It is convenient to start the process with the minimal normal crossings resolution of $(X, 0)$. For rational singularities with reduced fundamental cycle, it is easy to see that the graph of the
minimal normal crossings resolution has no $(-1)$ vertices. (From~(\ref{valency-weight}), 
only vertices of valency 1 can have self-intersection $-1$ in any resolution graph, and these can be blown down to get the minimal graph.) If $G$ has no $(-1)$ vertices, then {\em all} the 
$(-1)$ vertices of $G'$ are those that come from the extension: each $(-1)$ vertex is a leaf of $G'$, connected by an edge to a unique vertex of $G$. The transverse 
curvetta slices are added to all these $(-1)$ vertices.

In what follows, we will only consider decorated germs that arise from the above construction. 
(These are called  {\em standard} decorated germs  in \cite{NPP}. Some statements in~\cite{dJvS} allow for more general decorated germs.)

The singularity $(X, 0)$ can be reconstructed from $(\mathcal{C}, w)$. We iteratively blow up  points infinitely near 0 on proper transforms of curvettas $C_1, \dots, C_k$ until 
we obtain a minimal embedded resolution of $\mathcal{C}$. Then we perform additional blow-ups at the intersection of $C_i$ with the corresponding exceptional curve, so that the sum of multiplicities of proper transforms of 
$C_i$ at the blow-up points is exactly $w_i$. The union of the exceptional curves that {\em do not} meet the proper transforms of the curvettas is then contracted to form 
$(X, 0)$. 

We emphasize that $\mathcal{C}$ depends on the choice of the graph $G'$, i.e. on the particular extension of the resolution graph of $(X, 0)$ by $(-1)$ curves. Any of these choices can be used to classify Milnor fillings as in \cite{dJvS}.
In general, the branches of $\mathcal{C}$ are singular curves. However, if $(X, 0)$ is a rational singularity with reduced fundamental cycle, an appropriate choice of 
$G'$ ensures that $\mathcal{C}$ has smooth branches. We will always work in this setting and only consider decorated germs with smooth components. In the following 
proposition, we establish a necessary and sufficient condition for smoothness purely in terms of the graph $G'$. 
Although similar questions were studied in \cite{dJvS,dJvSmin}, we formulate the condition here in a way that seems simplest from the topological point of view. In the 
next section, we will reinterpret the statement for open book decompositions.

\begin{prop} \label{p:resolution} Let the graph $G'$ be a negative definite plumbing tree, and $P'$ 
the corresponding plumbing of disk bundles over rational curves. Suppose that the boundary of the plumbing $P'$  
is $S^3$; equivalently, $G'$ encodes a configuration of rational curves that can be blown down to a smooth point.
For each $(-1)$ vertex, let $\ti C_j$ be a complex disk intersecting the corresponding $(-1)$ sphere in $P'$ 
transversally once. Let $C_1,\dots, C_m$ be the {images} of $\ti C_1, \dots, \ti C_m$ {under} blowing down the configuration $G'$.
Then the following are equivalent:
	\begin{enumerate}
		\item Each $C_j$ is smooth.
		\item \label{cond:valence} There exists exactly one $v'_0 \in G'$ such that $v'_0 \cdot v'_0 +a(v_0')=-1$,
		 and $v'\cdot v'+a(v')=0$ for all $v'\neq v_0'$.
	\end{enumerate}
As before,  $v' \cdot v'$ denotes 
the self-intersection of a vertex $v' \in G'$, and $a(v')$ its valence.
\end{prop}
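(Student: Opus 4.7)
My plan is to analyze the blow-up sequence $\pi \colon Y \to \C^2$ realizing $G'$ as the dual graph of its exceptional divisor, tracking two parallel quantities: the multiplicity $m_j$ of each exceptional curve $E_j$ in the total transform of the origin, and the invariant $\delta(v') := v' \cdot v' + a(v')$. In this language, condition (2) reads $\delta(v'_0) = -1$ and $\delta(v') = 0$ for all $v' \neq v'_0$, and in particular forces $\sum_v \delta(v) = -1$.

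First I would establish a smoothness criterion via multiplicity. Writing the total transform of $\{0\}$ in $Y$ as $D_0 = \sum_i m_i E_i$ and observing that $\tilde{C}_j$ meets only $E_j$, transversely at one point, I would use $\mathrm{mult}_0(C_j) = \tilde{C}_j \cdot D_0 = m_j$ to conclude that $C_j$ is smooth iff $m_j = 1$. The standard recursion $m_k = \sum_{i \colon p_k \in E_i} m_i$, with $m_1 = 1$ at the first blow-up of the origin, then shows that a blow-up at a smooth point of a single curve with $m = 1$ produces $m_k = 1$, whereas a blow-up at an intersection $E_i \cap E_j$ produces $m_k = m_i + m_j \geq 2$; this inflated multiplicity propagates to every descendant of $E_k$ in the blow-up tree, at least one of which is a $(-1)$ vertex of the final graph. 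Hence every $C_j$ is smooth if and only if every blow-up is performed at a smooth point of a single exceptional divisor.

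Next I would show that condition (2) characterizes the same ``all blow-ups at smooth points'' property. A smooth-point blow-up on $E_i$ leaves $\delta(E_i)$ unchanged (self-intersection drops by $1$, valence rises by $1$) and creates a new vertex with $\delta = 0$, hence preserves both $\sum_v \delta(v)$ and the structure of condition (2); an intersection blow-up at $E_i \cap E_j$ decreases both $\delta(E_i)$ and $\delta(E_j)$ by $1$ and creates a new vertex with $\delta = 1$, so it strictly decreases $\sum_v \delta(v)$ by $1$. Starting from the single initial $(-1)$ vertex (which satisfies (2) with $\sum \delta = -1$), induction on the number of blow-ups shows that smooth-point blow-ups preserve condition (2); since $\sum_v \delta(v)$ cannot be restored once an intersection blow-up has occurred, condition (2) on the final $G'$ forces the entire blow-up sequence to be smooth-point. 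Combined with the first step this gives $(1) \Leftrightarrow (2)$.

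For the direction $(2) \Rightarrow (1)$ it is also illuminating to run the blow-down explicitly: under (2), every $(-1)$ vertex must be a leaf (otherwise $\delta \geq 1$), and since $G'$ is a tree with at least two leaves while $v'_0$ itself cannot be a $(-1)$ leaf (as $\delta(v'_0) = -1 \neq 0$), a $(-1)$ leaf always exists. Blowing such a leaf down preserves condition (2), and iterating reduces $G'$ to the isolated $(-1)$ vertex $v'_0$, which contracts to a smooth point. The main bookkeeping obstacle is handling the cases in which the neighbor of a blown-down leaf itself becomes a new $(-1)$ leaf (possibly equal to $v'_0$, which eventually becomes the terminal isolated vertex), and verifying at each step that $\delta$-values are updated consistently with (2).
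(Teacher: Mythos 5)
Your proof is correct, and it packages the argument differently from the paper's. The paper proves $(1)\Rightarrow(2)$ by running the blow-up construction on the curvettas and checking that Condition (2) is preserved at each stage (the key input being that a smooth branch meets each exceptional divisor with multiplicity at most $1$, so a single blow-up at each intersection separates it), and proves $(2)\Rightarrow(1)$ by blowing down $(-1)$ leaves and observing that smoothness of the proper transforms is preserved. You instead reduce both conditions to the single statement ``every blow-up center after the first is a free point (a smooth point of exactly one exceptional curve),'' via two tools the paper does not introduce: the multiplicity divisor $D_0=\sum m_iE_i$ with $\operatorname{mult}_0(C_j)=\ti C_j\cdot D_0=m_j$, and the observation that $\sum_{v}(v\cdot v+a(v))$ is conserved under free blow-ups but drops by $1$ under satellite (intersection-point) blow-ups. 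This buys a cleaner, order-independent argument for the harder implication: Condition (2) forces $\sum_v(v\cdot v+a(v))=-1$, its initial value, so no satellite blow-up can ever have occurred. (This identity is consistent with the paper's later computation in Proposition~\ref{p:choices}.) Two small points you should make explicit: first, that every blow-up center after the first lies on the exceptional divisor (else $G'$ would be disconnected) and lies on at most two of its components (the configuration stays normal crossings), so the free/satellite dichotomy is exhaustive; second, your claim that an inflated multiplicity $m_k\geq 2$ reaches some $(-1)$ vertex of the final graph needs the (easy, inductive) lemma that every exceptional curve either is, or has a descendant that is, a $(-1)$ curve of the final configuration, together with the monotonicity $m_{\mathrm{descendant}}\geq m_{\mathrm{parent}}$ from your recursion. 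With those filled in, the argument is complete; your closing blow-down paragraph is a valid alternative for $(2)\Rightarrow(1)$ but is not needed given the conservation-law argument.
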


\begin{proof}	
	Consider  $\mathcal{C}=C_1 \cup \cdots \cup C_m$ with smooth branches $C_j$.
	We obtain $G'$ as described above, by blowing up repeatedly at 
	intersections of the $C_j$ with each other and with the exceptional divisors. We stop  
	when the resulting configuration of curves has the following property:  if an exceptional divisor intersects a 
	proper transform $\tilde C_j$ then it is disjoint from all other proper transforms $\tilde C_{j'}$, $j'\neq j$
	(in particular, different $\tilde C_j$ are disjoint from each other), 
	and the total number of blow-ups performed on (proper tranforms of) $C_j$ is exactly $w_j$, {the weight on $C_j$}. 

	We will show that $G'$ has the structure of a rooted tree by repeatedly applying the following procedure. For the  root $v_0'$, we 
	will have $v'_0 \cdot v'_0 +a(v_0')=-1$, and for all other vertices $v' \neq v'_0$, $v'\cdot v'+a(v')=0$. We show that this condition is satisfied at every 
	stage of the process.

	Blow up at the common intersection point of all $C_j$. The resulting exceptional divisor (and its future proper transforms) 
	gives the root of the tree. If proper transforms of all $C_j$ still have a common point, we repeatedly blow up at the same point 
	until some of the proper transforms  $\tilde C_j$ become disjoint from each other. (With a slight abuse of notation,  $\tilde C_j$ will denote 
	the proper transform of $C_j$ at any stage of the process.) Additional blow-ups create a chain of exceptional $(-2)$ spheres
	with the root at one end and the most recent exceptional $(-1)$ sphere at the other end.
	 Up to relabeling, we can assume there are distinct intersection points 
	 $\tilde C_1\cap \cdots \cap \tilde C_{a_1}=p_1^1$, $\tilde C_{a_1+1}\cap \cdots \cap \tilde C_{a_2}=p_2^1, \cdots, \tilde C_{a_{r_1}^1}\cap \cdots \cap 
	 \tilde C_m = p_{r_1}^1$ lying on the most recently introduced exceptional divisor $B_1$. 
	 
	 Assuming $m>1$, since all the $\ti C_j$ intersect $B_1$, we must blow up exactly once at each $p_i^1$ to make them all 
	 disjoint from $B_1$. Here we use smoothness of the curvettas $C_j$ (and thus of their proper transforms)
	 to ensure that they become disjoint from $B_1$ after a single blow-up:  every point on $C_j$ has multiplicity $1$, 
	 thus $\tilde C_j$ intersects each exceptional divisor with multiplicity at most $1$.
	 Note that once  $\tilde C_1, \dots, \tilde C_m$ are all disjoint from $B_1$, we will not blow up at any point on $B_1$ again, therefore  at this stage we can already 
	 compute the self-intersection and valency of the correspondung vertex in $G'$.
	 The self-intersection of the proper transform of $B_1$ in $G'$ (which we will also denote $B_1$) is $-r_1-1$. 
	 If $B_1$ is not the root, it has valency $r_1+1$, and if it is the root it has valency $r_1$. 
	 Thus, Condition \ref{cond:valence} is satisfied for the vertex of $G'$ given by $B_1$. 
	 All the other vertices in the graph at this stage are either $(-2)$ spheres in a chain of valency $2$ (if not the root) or 
	 valency $1$ (if the root), or newly introduced $(-1)$ vertices of valency $1$, so  Condition \ref{cond:valence} is satisfied at this stage.
	
	In order to obtain $G'$ we repeat this process iteratively, replacing the first exceptional sphere 
	with the exceptional sphere obtained by blowing up at some $p_i^s$. (The points $p_1^1, \dots, p_{r_1}^1$ were introduced above; after blowing up at each of these new points, 
	the new exceptional curves intersect the proper transforms of the curvettas at points $p_1^2, \dots, p_{r_2}^2$; similarly, points 
	$p_i^s, \dots, p_{r_s}^s$ are the intersections that appear at step $s$.)  Each time, Condition \ref{cond:valence} is preserved, since each curve $\ti C_j$ intersects each exceptional divisor with multiplicity at most $1$. 
	Repeating sufficiently many times, eventually all of the $\tilde C_j$ will intersect only disjoint exceptional spheres.
	After potentially blowing up more times at the intersection of $\tilde C_j$ with its intersecting exceptional sphere until the number of blow-ups is $w_j$, 
	we obtain $G'$. (The additional blow-ups create a chain of $(-2)$ vertices connecting to the last $(-1)$ vertex.)
	 Since Condition~\ref{cond:valence} is preserved at each step of this procedure, $G'$ satisfies Condition~\ref{cond:valence}.
	
	Conversely, if $G'$ satisfies Condition \ref{cond:valence}, the only $(-1)$ vertices are leaves of 
	the rooted tree (valency $1$). Blowing down a leaf preserves Condition \ref{cond:valence} because 
	it decreases the valency of the adjacent vertex by $1$ and increases the self-intersection by $1$. 
	The $\tilde C_j$ are disks which transversally intersect the $(-1)$ leaves of $G'$ with multiplicity $1$. 
	Therefore each $\tilde C_j$ intersects each exceptional divisor with multiplicity at most~$1$. 
	This property is preserved under blowing down a $(-1)$ leaf, because a multiplicity $1$ intersection of $\tilde C_j$ on a $(-1)$ 
	leaf becomes a multiplicity $1$ intersection on the adjacent exceptional divisor after blowing down.
	Blowing down an exceptional divisor which intersects $\tilde C_j$ with multiplicity $1$ preserves smoothness of $\tilde C_j$. 
	Therefore after blowing down all leaves of $G'$ and finally the root, the resulting proper transforms $C_j$ are still smooth.	
\end{proof}

\begin{remark} Another way to see that $G'$ must satisfy Condition $\ref{cond:valence}$ is to consider what happens if $G'$ has a vertex with $a(v')>-v' \cdot v'$. 
	After blowing down, eventually the vertex $v'$ will correspond to a $(-1)$ sphere with valency $\geq 2$, with at least one $\tilde C_j$ intersecting 
	it with multiplicity at least $1$. (The existence of the intersecting $\tilde C_j$ comes from the fact that intersections are transferred under 
	blow-down to the adjacent vertices. Initially, every $(-1)$ sphere in $G'$ has an intersecting curvetta. Each time that a $(-1)$ sphere is blown down,
	the curvetta intersection is transferred to the adjacent vertices, whose self-intersections are correspondingly increased. For $v'$ to reach self-intersection $-1$, one must have blown down $(-1)$ vertices adjacent to it. 
	Throughout the process of blowing down, we maintain the condition that $(-1)$ vertices always have at least one intersecting curvetta.)
	After blowing down the $(-1)$ sphere of valency $\geq 2$, we obtain a point where at least two exceptional divisors
	intersect at the same point with a $\tilde C_j$. Eventually one of these exceptional divisors will be blown down, 
	forcing $\tilde C_j$ to intersect the other exceptional divisor with multiplicity $\geq 2$. 
	Once this other exceptional divisor is blown down, the proper transform of $\tilde C_j$ becomes singular.
\end{remark}

\begin{figure}[hbt!]
	\centering
	\includegraphics[scale=.75]{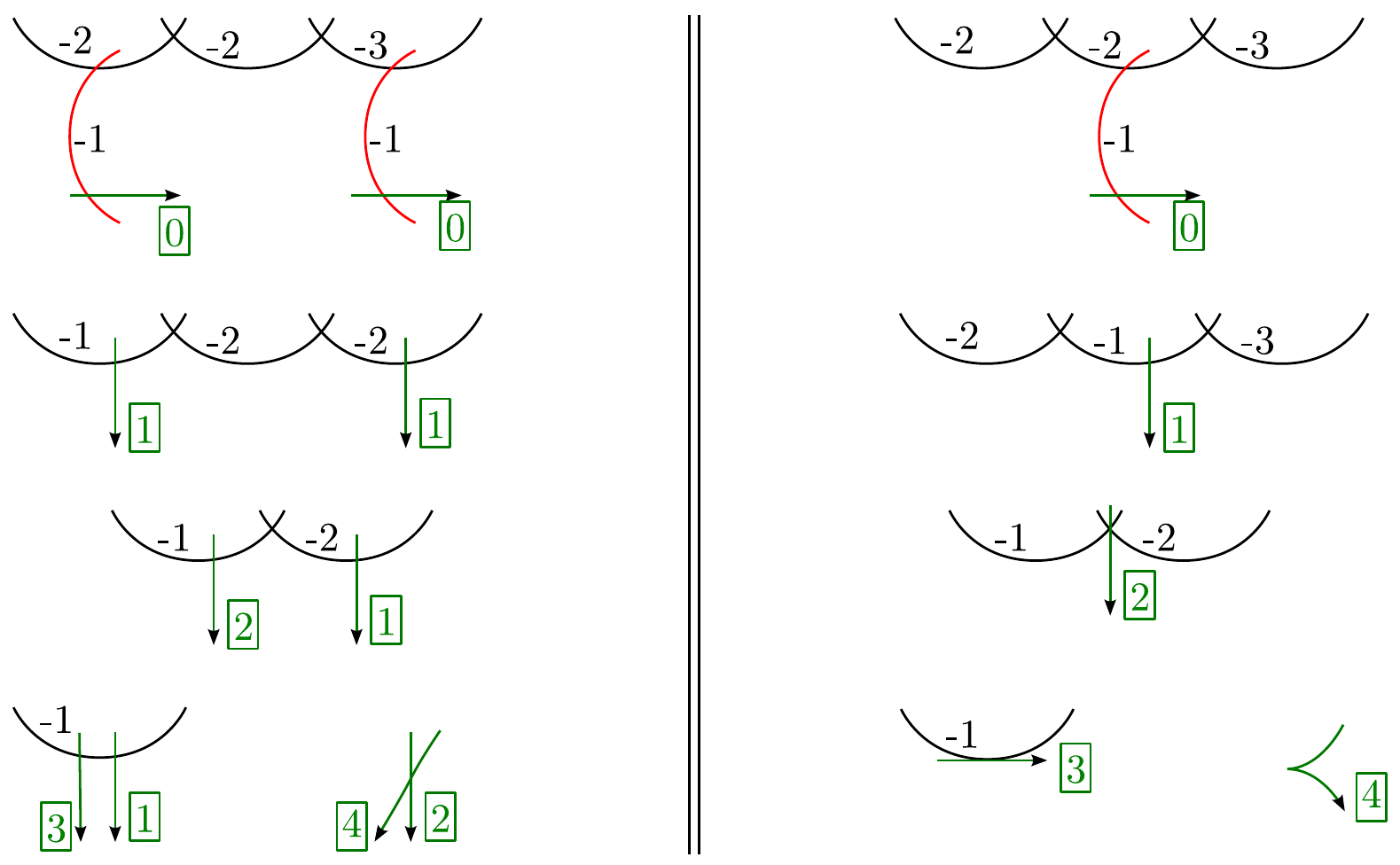}
	\caption{Two possible choices to add $-1$ curves to the same resolution graph, resulting in different curvettas, one with smooth components and another with a singular (cuspidal) component.}
	\label{fig:choices}
\end{figure}

Note that it is possible to have different choices of extension for $G$, such that one choice yields
smooth curvettas and another yields singular curvettas (see Figure \ref{fig:choices} for an example). 
In other words, some sandwiched resolution graphs $G$ have extensions both to a graph which does
satisfy Condition \ref{cond:valence} of Proposition \ref{p:resolution} and to a graph which does not.
For our classifications, we will always work with a choice of extension of $G$ which does satisfy Condition~\ref{cond:valence} and the corresponding smooth curvettas.

We can also deduce some basic numerical properties from Proposition \ref{p:resolution}. It turns out that for a rational singularity $(X, 0)$ with reduced fundamental cycle,
the multiplicity of the singular point determines the number of choices for the defining plane curve germ $\mathcal{C}$ with smooth branches, as well 
as the number of curvetta branches in each such germ. 
Assuming that $(X, 0) \subset (\C^N, 0)$ for some large $N$, recall that the multiplicity $mult\;\! X$ can be defined geometrically as the 
number of intersections $\# X\cap L$ of $X$ with a generic complex $(N-2)$-dimensional affine subspace $L\subset \C$, passing close to the origin. For rational singularities, 
multiplicity is a topological invariant, which can be computed from the resolution graph  by the formula $mult\;\! X = -Z_{min}^2$, see e.g.~\cite{Nem}. 
The statements {of the two lemmas} below are also discussed in 
\cite{dJvS} from the algebro-geometric perspective but they follow easily from the combinatorics of the resolution graph. 

\begin{prop} \label{p:choices} Let $(X, 0)$ be a rational singularity with reduced fundamental cycle, and $\mathcal{C}$ a plane curve germ corresponding to $(X, 0)$. 
If $\mathcal{C}$ has smooth branches, the number of branches is given by~$mult\;\! X -1$. 
\end{prop}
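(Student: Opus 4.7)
The plan is to translate the claim into a combinatorial count on the resolution graph, using the characterization of admissible extensions $G'$ from Proposition~\ref{p:resolution}, and to match this count against the standard formula $mult\;\! X = -Z_{min}^2$.

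First I would compute $mult\;\! X$ directly from $G$. Since the fundamental cycle is reduced, $Z_{min}=\sum_{v\in G} E_v$, so $E_v\cdot E_v=v\cdot v$ and $E_v\cdot E_w=1$ exactly when $v,w$ are adjacent in the tree $G$ give
\[
Z_{min}\cdot E_v = v\cdot v + a_G(v), \qquad Z_{min}^2=\sum_{v\in G}\bigl(v\cdot v+a_G(v)\bigr),
\]
and therefore $mult\;\! X=\sum_{v\in G}(-v\cdot v-a_G(v))$. Each summand is non-negative by the defining inequality \eqref{valency-weight}.

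Next I would count the branches by counting the $(-1)$ leaves that must be attached to $G$ to obtain an admissible $G'$. Every branch of $\mathcal{C}$ corresponds to a unique $(-1)$ leaf of $G'$, and conversely every added vertex in $G'$ is a $(-1)$ leaf attached to some $v\in G$. Let $k_v\ge 0$ be the number of $(-1)$ leaves attached to $v$, so the total number of branches equals $m=\sum_{v\in G}k_v$. Any added $(-1)$ leaf $v'$ satisfies $v'\cdot v'+a_{G'}(v')=-1+1=0$, so it cannot play the role of the distinguished vertex $v'_0$ of Condition~\ref{cond:valence}; hence $v'_0$ must lie in $G$. For any vertex $v\in G$ the valency in $G'$ is $a_G(v)+k_v$, so Condition~\ref{cond:valence} imposes
\[
v\cdot v+a_G(v)+k_v=\begin{cases} -1 & v=v'_0,\\ 0 & v\neq v'_0.\end{cases}
\]

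Finally I would sum these equations over $v\in G$ to obtain
\[
\sum_{v\in G}k_v=\sum_{v\in G}(-v\cdot v-a_G(v))-1=mult\;\! X-1,
\]
which is the desired equality $m=mult\;\! X-1$. The only subtlety is verifying that $v'_0$ is forced to lie in $G$ and that the $k_v$ determined above are non-negative; the first follows from the computation $v'\cdot v'+a(v')=0$ for any added $(-1)$ leaf, and the second from \eqref{valency-weight} together with the fact that $v'_0$ can be chosen at a vertex where the defect $-v\cdot v-a_G(v)$ is strictly positive (which must exist since $G$ is not empty and $mult\;\! X\ge 1$). I expect this last point---ensuring the bookkeeping around the root vertex $v'_0$ is consistent---to be the only place where care is needed; the rest is a direct intersection-form computation.
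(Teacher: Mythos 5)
Your proof is correct and follows essentially the same route as the paper's: both arguments count the branches as the $(-1)$ leaves added to $G$ to form an admissible $G'$, use Condition~\ref{cond:valence} of Proposition~\ref{p:resolution} to balance the sum $\sum(v\cdot v + a(v))$ over $G'$ versus $G$, and identify $\sum_{v\in G}(v\cdot v + a(v))$ with $Z_{min}^2 = -mult\;\! X$. Your per-vertex bookkeeping with the $k_v$ is just a slightly more explicit organization of the same computation, and your observations about $v'_0$ lying in $G$ and the non-negativity of the $k_v$ are consistent with the paper's construction of $G'$ from $G''$.
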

\begin{proof} The minimal normal crossings resolution graph $G$ for $(X, 0)$ has no $(-1)$ vertices. Then $G$ is obtained from (any choice of) the graph $G'$ by deleting all 
vertices $v' \in G'$ with $v' \cdot v'=-1$. The curvetta branches are obtained by putting transverse slices on each $(-1)$ sphere $v' \in G'$,
thus the number $m$ of curvetta branches is given by the number of the $(-1)$ vertices in $G'$. 
By Condition \ref{cond:valence} of Proposition \ref{p:resolution}, 
	$$\sum_{v'\in G'} (v'\cdot v' +a(v')) = -1.$$
Again by Condition~\ref{cond:valence}, each $(-1)$ vertex has valency $1$ in $G'$, so each  addition of a  $(-1)$ vertex to $G$ increases 
the sum $\sum_{v\in G} (v \cdot v+a(v))$ by $1$, thus we have
$$
\sum_{v\in G}(v \cdot v +a(v)) = \sum_{v'\in G'}(v' \cdot v' +a(v')) -m  =-1-m. 
$$
Finally, we relate this quantity to the fundamental cycle $Z_{min}$, which is the sum of homology classes of the exceptional divisors, $Z_{min}= \sum_{v \in G} E_v$:
$$
\sum_{v\in G}(v \cdot v +a(v))= \sum_v E_v^2 + \sum_{v \neq u}  E_v\cdot E_{u}= Z_{min}^2, 
$$
so $m= -1 - Z_{min}^2= mult\;\! X-1$.
\end{proof}

Decorated germs representing a given $(X, 0)$ are obtained from extensions $G'$ of the resolution graph $G$ as above. These can be thought of as combinatorial choices for the gecorated germ; in the next lemma, we compute the number of such extensions. Then, we show that the combinatorial choice, namely  the choice of vertices of $G$ on which 
the additional $(-1)$ vertices are placed to form $G'$, determines the {\em topological type} of the resulting decorated germ.  By definition, the topological 
type of a germ of a singular curve $\mathcal{C} \subset \C^2$ is given by  its link, which is the intersection of $\mathcal{C}$ with a
sufficiently small 3-sphere $S^3 \subset \C^2$ centered at the 
origin. For a decorated germ, we additionally record the weights of the curvetta components. 
Later on, we will see that the different choices of $G'$ correspond to natural different choices of data on the open book decomposition we construct in Section~\ref{s:artin}.
{
\begin{lemma} \label{lem:toptype}  Up to topological equivalence, there are at most $mult\;\! X$ choices of plane curve germs with smooth branches representing $(X, 0)$. 
\end{lemma}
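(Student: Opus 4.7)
The plan is to reduce the count to a combinatorial count of extensions of the minimal good resolution graph of $(X,0)$ and then compare that count with $mult\;\! X$. Throughout, let $G$ denote the minimal good resolution graph of $(X,0)$; for a rational singularity with reduced fundamental cycle, $G$ has no $(-1)$ vertices, so any extension $G'$ is obtained from $G$ by attaching some number $k_v \ge 0$ of new $(-1)$ leaves at each vertex $v \in G$. The reconstruction described before the lemma recovers a decorated germ with smooth branches from $G'$ via iterated blow-downs, and its topological type (including the weights $w_i$) is determined by the combinatorics of $G'$. Consequently, the number of topological equivalence classes of decorated germs with smooth branches representing $(X,0)$ is bounded above by the number of admissible extensions $G'$.

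Next, I would use Proposition~\ref{p:resolution} to constrain the integers $k_v$. Every newly attached $(-1)$ leaf has valency $1$, so it automatically satisfies $v' \cdot v' + a(v') = 0$. Hence Condition~\ref{cond:valence} forces the distinguished vertex $v'_0$ to lie in $G$; call it $v_0$. The equation $v_0 \cdot v_0 + a_{G'}(v_0) = -1$ then yields $k_{v_0} = -v_0 \cdot v_0 - a_G(v_0) - 1$, while for every $v \in G$ with $v \neq v_0$ the equation $v \cdot v + a_{G'}(v) = 0$ yields $k_v = -v \cdot v - a_G(v)$. Thus the extension $G'$ is determined by the single choice of ``root'' $v_0 \in G$, and $v_0$ is admissible (i.e.\ $k_{v_0} \ge 0$) precisely when the strict inequality $-v_0 \cdot v_0 > a_G(v_0)$ holds. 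Let $t$ denote the number of admissible roots.

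Finally, I would bound $t$ using the multiplicity formula. Since $Z_{min} = \sum_{v \in G} E_v$ and $G$ is a tree, a direct expansion gives
$$mult\;\! X \;=\; -Z_{min}^2 \;=\; -\sum_{v \in G} v \cdot v \;-\; \sum_{v \in G} a_G(v) \;=\; \sum_{v \in G} \bigl(-v\cdot v - a_G(v)\bigr).$$
By the reduced fundamental cycle inequality~\eqref{valency-weight}, each summand is a nonnegative integer, and $v$ is an admissible root exactly when its summand is at least $1$. Therefore $t \le mult\;\! X$, which yields the desired bound on the number of topological equivalence classes of decorated germs.

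The main non-computational obstacle is justifying that the topological type of the decorated germ depends only on the combinatorial data of $G'$, and not on the particular choice of transverse curvettas through the $(-1)$ spheres of $G'$. I would handle this by induction on the blow-down sequence, using that any two generic smooth transverse slices through a fixed $(-1)$ sphere are locally ambient-isotopic relative to the remaining exceptional configuration, and that each such isotopy descends through blow-down to a topological equivalence of the germ (preserving the weights $w_i$). Once this identification is in place, the bound $t \le mult\;\! X$ from the previous paragraph completes the proof.
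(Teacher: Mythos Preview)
Your proposal is correct and follows essentially the same strategy as the paper: reduce to counting extensions $G'$ of the minimal resolution graph $G$ satisfying Condition~\ref{cond:valence} of Proposition~\ref{p:resolution}, and then invoke the formula $mult\;\! X = -Z_{min}^2 = \sum_{v\in G}(-v\cdot v - a_G(v))$.

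There is a small difference in bookkeeping worth noting. You parametrize the admissible extensions directly by the choice of root $v_0\in G$ (the unique vertex with $v_0'\cdot v_0'+a(v_0')=-1$), giving the count $t=\#\{v\in G:\ -v\cdot v-a_G(v)\ge 1\}$ and then bounding $t\le\sum_v(-v\cdot v-a_G(v))=mult\;\! X$. The paper instead builds an auxiliary graph $G''$ (attaching one extra $(-1)$ leaf at the root of a fixed $G'$, so that every vertex satisfies $v\cdot v+a(v)=0$), observes via Proposition~\ref{p:choices} that $G''$ has exactly $mult\;\! X$ leaves with self-intersection $-1$, and gets all extensions $G'$ by deleting one of them. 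Your count $t$ is in fact sharper than the paper's (deleting two distinct $(-1)$ leaves of $G''$ attached to the same vertex gives the same $G'$), but both yield the bound $\le mult\;\! X$. For the step that the combinatorics of $G'$ determines the topological type of the decorated germ, the paper proceeds exactly as you suggest---via an ambient isotopy of the curvetta slices before blow-down---and additionally records explicit formulas $w(C_i)=1+l(v_0,v_i)$ and $tang(C_i,C_j)=\rho(v_i,v_j;v_0)$ in terms of the graph.
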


\begin{proof} We first show that there are at most $mult\;\! X= -Z_{min}^2$ possible {\em combinatorial choices} for  germs with smooth components representing $(X, 0)$. These correspond to choices of extensions of
$G$ to $G'$ by adding $(-1)$ vertices. If we have a minimal graph $G$ with an extension $G'$   satisfying Condition \ref{cond:valence} 
	of Proposition \ref{p:resolution}, then we can add another $(-1)$ sphere leaf adjacent to the root to get a
	new graph $G''$, such that the valency of each vertex of $G''$ equals its negative self-intersection.  All the other possible extensions of $G$ to 
	a graph satisfying Condition \ref{cond:valence}  can be obtained by deleting one of the $(-1)$ vertices of $G''$. 
	(Indeed, adding a $(-1)$ vertex to any other position in $G$ would violate Condition \ref{cond:valence}.) Since $G'$ has $(mult\;\! X-1)$ vertices
	of self-intersection $-1$, 
	we know that $G''$ has exactly $mult\;\! X$  vertices with this property, one of which must be deleted. 
Note that because of potential symmetries in the graph $G''$, some of the 
choices of $G'$ will result in isomorphic germs  $C$, but $mult\;\! X$ gives an upper bound on the number of combinatorially different curvetta configurations.
 
  Once the choice of the extension $G'$ of the graph $G$ is made, the topological 
type of the decorated germ $\mathcal{C}$ can be read off directly from $G'$. In particular, we can 
compute the relevant numerical invariants, such as linking numbers between the components of $\mathcal{C} \subset \C^2$.
As before, we assume that $G'$ satisfies Condition \ref{cond:valence} of Proposition \ref{p:resolution}, so that 
$\mathcal{C}$ has smooth branches.

Following  \cite[Definition 4.14]{dJvS}, we define the {\em length} and {\em overlap} functions on the vertices of the graph $G$. For  
$v_0, v_i \in G$, let the length $l(v_0, v_i)$ be the number
of vertices in the path from $v_i$ to $v_0$ in the tree $G$ (including endpoints). For $v_0, v_i, v_j \in G$, 
let the overlap  $\rho( v_i, v_j; v_0)$ be the number 
of common vertices in the paths from $v_i$ to $v_0$ and  $v_j$ to $v_0$.

Let $v_0 \in G \subset G'$ be the root.
Now, if the curvetta $C_i$ comes from the transverse slice on a $(-1)$ sphere corresponding to a leaf of $G'$, and this leaf is attached to the vertex 
$v_i\in G$, then the blow-down process gives $w(C_i)=1+l(v_0, v_i)$. If $C_i$, $C_j$ are the curvettas at the $(-1)$ vertices attached to $v_i$, $v_j$, 
the order of tangency $tang(C_i, C_j)$ between the corresponding 
branches of $\mathcal{C}$ is given by $tang(C_i, C_j)=\rho( v_i, v_j; v_0)$. 

The topological  type of $\mathcal{C} \subset \C^2$ is described via its link, given by the intersection
 $\mathcal{C}\cap S^3$, where $S^3$ is a small sphere cetered at the origin. As each of the curvettas $C_1, \dots, C_m$ is a smooth disk, the intersection of $C_i$ with $S^3$ is an unknot; $\mathcal{C}\cap S^3$ is a link with $m$ components
$C_1 \cap S^3, \dots, C_m \cap S^3$, each of them 
unknotted. {The components of $C_i\cap S^3$ are oriented as boundaries of $C_i\cap B^4$.}
Then, the linking number between two link components equals the order of tangency between the corresponding curvettas, 
$$ 
lk (C_i \cap S^3, C_j\cap S^3)= tang(C_i, C_j). 
$$

The topological equivalence of germs follows from the above calculations, by construction of the links of the germs that we consider; it can also be seen more directly. Any decorated germ for $(X, 0)$ comes, after a blow-down, from a particular placement of the transverse curvetta slices on the $(-1)$ curves corresponding to vertices that we added to $G$ to from the graph $G'$.  This gives a configuration of curvetta slices together with the curve configuration corresponding to the graph $G'$, embedded in a blow-up of $\C^2$. Clearly, for two different choices of the generic curvetta slices for the same graph $G'$, the two configurations of curvettas+curves can be identified by an ambient homeomorphism (in the blown-up $\C^2$). After the blow-down, the induced ambient homeomorphism will identify the links of the resulting germs, showing that the germs are topologically equivalent. 
We already know that the weights will be same, so the decorated germs have the same topological type. 
\end{proof}

The following observation will also be useful later. Let $t(C_i)= \max_j tang(C_i, C_j)$ be
the maximal order or tangency between $C_i$ and another branch of $\mathcal{C}$. Then it follows that 
\begin{equation} \label{weight-ineq}
t(C_i)< w(C_i)
\end{equation}
for all curvettas $C_i$.

\begin{remark} \label{rem:toptype}
De Jong--van Straten \cite{dJvS} study deformation theory of the surface singularity $(X, 0)$; in particular, they are interested in the analytic type of the singularity and 
its deformations. To encode the analytic type of $(X, 0)$, one needs the analytic type of 
the corresponding decorated germ $\mathcal{C}$. By contrast, our focus is on the contact link $(Y, \xi)$ of $(X, 0)$ and its Stein fillings. 
A priori there may be another surface singularity $(X', 0)$ whose link is $Y$, and  by \cite{CNPP}, the singularities $(X, 0)$ and ($X', 0)$ have contactomorphic links.   By Neumann's results \cite{Neu}, all singularities with the same link 
have the same dual graph of minimal resolution, so both $(X, 0)$ and $(X', 0)$ correspond to the same minimal graph $G$. (Note that by \cite{Lauf}, if $G$ has any vertices 
of valency greater than 3, the analytic type of the singularity is not uniquely determined, so indeed $(X, 0)$ and $(X',0)$ may be analytically different in the above scenario.)
We can compare the decorated germs that describe singularities $(X, 0)$ and $(X', 0)$: any choice of the decorated germ for $(X, 0)$   arises from an extension $G'$ of the graph 
$G$ and the corresponding placement of the curvettas. Although analytically 
the exceptional divisors of resolutions of $(X, 0)$ and $(X', 0)$ may be different, topologically they look the same, and we can choose the same extension $G'$ 
and the corresponding placement of curvettas for $(X', 0)$. By the argument above, the resulting germ for $(X', 0)$ will be topologically equivalent to the germ for $(X,0)$, even if the two germs may be analytically different.
This fact will play an important role in the proof of Theorem~\ref{thm:nonMilnor}.
In particular, the two germs will have the same number of branches, the same 
weights and the same pairwise orders of tangency for the branches. 

Of course, if we only know the combinatorics of the graph $G$, we lose analytic information on the plane curve germ $\mathcal{C}$ (such as, for example,
the angles between its transverse branches), but we will never need the analytic information.
The contact 3-manifold~$(Y, \xi)$ is fully determined by the weights and pairwise orders of tangency of the branches of the decorated germ $\mathcal{C}$.
\end{remark}
}

{\subsection{De Jong--van Straten's theory: Milnor fibers from germ deformations.} The main result of \cite{dJvS} says that deformations of the sandwiched singularity can be encoded via
deformations of the  germ $(C,0)$ satisfying certain hypotheses.  We  will state a special case of their theorem that will be relevant to us, but first we introduce some notation.}

We have defined the weights as positive integers $w_i$ associated to the irreducible components 
(curvettas) $C_i$ of $\mathcal{C}$. It will be convenient to interpret the weight $w_i$ as a collection of $w_i$ marked points concentrated at $0 \in C_i$.  
More formally, we consider a subscheme $w(i)$ of length $w_i$ at 0 in $C_i$. The normalization $\ti{\mathcal{C}}$ of the reducible curve $\mathcal{C}$ with smooth components 
is given by the disjoint union of the components $C_i$; thus we can think of the decoration $w=(w_1, w_2, \dots, w_m)$ as a subscheme of $\ti{\mathcal{C}}$, with components 
$w(i)\subset C_i$ as above. (We use notation $\tilde{\mathcal C}$ for normalization here and in the discussion below. Similar notation $\tilde C_j$ had different meaning in 
Proposition~\ref{p:resolution}, though in a sense, both uses refer to resolutions of the curve $C_j\subset \mathcal{C}$. This should not lead to confusion as normalization is only mentioned in the next few paragraphs.)

De Jong and van Straten prove that for sandwiched singularities, 1-parameter smoothings correspond to {\em picture deformations}, 
which are 1-parameter deformations of the germ $\mathcal{C}$ together with the subscheme $w$.  
In fact, de Jong--van Straten describe all deformations of $(X,0)$, but in this paper we  are only interested in  smoothings. Since we do not use their results in full generality,
we omit some technical points and give simpler versions of the definitions and statements from~\cite{dJvS}. 

Informally, picture deformations look as follows. The deformation $\mathcal{C}^s$ is given by individual deformations $C_i^s$ of the curvetta components, so that 
the deformed germ $\mathcal{C}^s$ is reduced and has irreducible smooth components $C^s_i$ corresponding to the original curvettas. 
(In the case of plane curves, any deformation is given by {\em unfolding}, i.e. by deforming the defining equation of the curve.)
The deformation is required to eliminate tangencies between 
the curvettas, so that for $s \neq 0$ all deformed curvettas $C_i^s$ intersect transversally. Thus, the only singularities of the deformed germ 
{$\mathcal{C}^s= \cup_i C_i^s$ for $s\neq 0$}
are transverse multiple points. For $s=0$, the decoration $w$ consists of $w_i$ marked points on the curvetta $C_i$ for each $i=1, \dots, m$, concentrated at $0$. During the 
deformation, these {marked points} move along the curvettas, so that for $s\neq 0$, the deformed curvetta $C^s_i$ contains exactly $w_i$ {\em distinct} marked points, and all 
intersection points $C^s_i \cap C^s_j$ {for $j\neq i$} are marked.  

More formally, deforming the curvettas $C_i$  individually means that we consider $\delta$-constant deformations of the reducible germ $\mathcal{C}= \cup_i C_i$. 
Intersection points between deformed curvettas define the total multiplicity scheme $m^s$ on the normalization $\ti{\mathcal{C}}^{s}$ for $s \neq 0$; if all intersections are transverse, 
the corresponding divisor is reduced, i.e. each point enters with multiplicity one.  The requirement that all intersection points are marked means that 
the deformation  $w^S \subset \ti{\mathcal{C}}\times S$ of the decoration $w$ must satisfy the condition $m^s \subset w^s$. The requirement that 
all marked points be distinct on each $C_i^s$ for $s \neq 0$ is the same as saying that the divisor given by {$w_i^s$} is reduced 
for $s \neq 0$. The condition $m^s \subset w^s$   then implies  automatically that all singularities of the deformed germ $\mathcal{C}^s$ are ordinary multiple points 
(i.e. the deformed curvettas intersect transversally).

\begin{definition}\label{pic-def} A picture deformation $\mathcal{C}^S$ of the decorated germ $(\mathcal{C}, w)$ with smooth components $C_1, \dots, C_m$ over a germ 
of smooth curve $(S, 0)$ is given by a  $\delta$-constant deformation $\mathcal{C}^S \to S$ of $\mathcal{C}$ and a  flat deformation
$w^S \subset  \widetilde{\mathcal{C}^S}= \ti{\mathcal{C}} \times S$ 
of the scheme $w$, such that for $s \neq 0$, the divisor $w^s$ is reduced, the only singularities of $\mathcal{C}^s$ are ordinary multiple points, and $m^s \subset w^s$. 
\end{definition}

Strictly speaking, $w^S$ lives in the normalization, but for $s\neq 0$ we can think of $w^s$ as the set of marked points $\{p_1, p_2, \dots, p_n\}\subset \cup_{i=1}^m C_i^s$, 
such that {\em all} intersection points $C_i^s \cap C_j^s$ are marked. We say that $p_i$ is a {\em free} marked point if it lies on a single $C_i^s$ (away from the intersections). ({Note that these points, and the number of such points $n$, can generally be different for different picture deformations.})

With these definitions in place, de Jong and van Straten's results on smoothings are as stated in Theorem~\ref{thm:intro-djvs}:  every picture 
deformation of $(\mathcal{C}, w)$ gives rise to a smoothing of the corresponding surface singularity $(X, 0)$, and every smoothing arises in this way. 
Specifically, the Milnor fiber of the smoothing that corresponds to the picture deformation $\mathcal{C}^s=\cup_{i=1}^m C_i^s \subset \C^2$ with marked points 
$\{p_1, p_2, \dots, p_n\}$ is obtained by blowing up $\C^2$ at all points $p_1, p_2, \dots, p_n$ and taking the complement of the proper transforms of $C_1^s, \dots, C_m^s$ 
in $\C\#_{j=1}^n \cptwobar$. Picture deformations of $\mathcal{C}$ generate all Milnor fibers, that is, each Milnor fiber of $(X,0)$ arises from some
picture deformation of $(X,0)$ via this construction. Note that Theorem~\ref{thm:intro-djvs} makes no claim of a 
precise one-to-one the correspondence between picture deformations and smoothings: one
expects  that isomorphic smoothings only come from isomorphic picture deformations (in the appropriate sense), but this has not been established.  
In certain cases, one can distinguish Milnor fibers by their topological invariants, or by comparing incidence matrices of the corresponding curvetta
arrangements, \cite[Section 5]{dJvS}, \cite{NPP}. We discuss this in Section~\ref{topology} and use similar
technique to distinguish Stein fillings.

\begin{remark} \label{rmk:milnor-ball} To be more precise, we need to consider the compact version of the construction of Milnor fibers, as follows. Fix a closed Milnor ball $B \subset \C^2$ 
for the germ $\mathcal{C}$. For sufficiently small~$s\neq 0$, the deformed arrangement
$\mathcal{C}^s$ will have a representative in $B$ which meets $\partial B=S^3$ transversally, and all marked points $p_1, \dots, p_n$ are contained 
in the interior of $B$. Let $\ti{B}$ be the blow-up of $B$ at $p_1,\dots, p_n$. Because in the picture deformation all the intersections between deformed curvettas are transverse,
the proper transforms of $C_1^s, \dots, C_m^s$ in $\ti{B}$ will be disjoint smooth disks. 
Let $T_1, \dots, T_m$ be pairwise disjoint tubular neighborhoods 
of these proper transforms.   As a compact 4-manifold with boundary, the Milnor fiber that corresponds to   $\mathcal{C}^s$ is given by 
$W=\ti{B} \setminus \cup_{i=1}^m T_i$, after corners are smoothed, and the Stein structure is homotopic to the complex structure induced from the blow-up. 
\end{remark}

\section{Graphical deformations of curvettas yield fillings} \label{s:homotopy}

Let $(X, 0)$ be a rational surface singularity with reduced fundamental cycle, and consider the associated decorated germ $(\mathcal{C}, w)$ of a reducible plane curve as in the previous
section, with smooth branches $C_1, C_2, \dots, C_m$ equipped with weights.  
Our goal is to build an analog of \cite{dJvS} in the symplectic category:  it turns out that Stein fillings the link of $(X, 0)$ can be obtained from certain
{\em smooth} homotopies of the branches of the decorated germ $\mathcal{C}$. We will restrict to graphical homotopies to streamline our definition and constructions.
(In our setting, one can always choose an appropriate coordinate system, so the graphical hypothesis leads to no loss of generality.)
%

Fix a closed Milnor ball $B$ for $\mathcal{C}$ as in Remark~\ref{rmk:milnor-ball}, so that each branch $C_i$ intersects $\partial B$ transversally. If $B$ is small enough,
the complex coordinates $(x,y)$ in $\C^2$ can be chosen so that all branches  $C_1, C_2, \dots, C_m$ are graphical in $B$: $C_i=\{y=f_i(x)\}$.
We will consider smooth graphical arrangements $\Gamma=\{\Gamma_1, \Gamma_2, \dots, \Gamma_m\}$ such that each $\Gamma_i$ is a smooth graphical 
disk, so that $\Gamma_i=\{y=g_i(x)\}$ for a smooth function $g_i$, and  $\Gamma_i$ intersects $\partial B$ transversally.

The following definition is given for homotopies of the branches defined for a real parameter $t \in [0, 1]$. Sometimes we will use the same 
notion for homotopies defined in a parameter interval $t \in [0, \tau]$, with obvious notational changes.  We assume that coordinates $(x,y)$ are chosen as above.

\begin{definition}~\label{def:homotopy} Let $(\mathcal{C}, w)$ be a decorated plane curve germ, with weights $w_i=w(C_i)$ of its smooth graphical branches $C_1, C_2, \dots , C_m$. 
		A {\em smooth graphical homotopy} of $(\mathcal{C}, w)$   is a smooth homotopy $C^t_i$
	of the branches of $\mathcal{C}$, so that   $\mathcal{C}=  \cup_{i=1}^m C^0_i$,  together with distinct marked points $p_k$, $k=1, \dots, n$ {(for some $n$)}, 
	on~$\cup_{i=1}^m C^1_i$. We assume that in a Milnor ball $B$ the following conditions are satisfied: 
	
	 (1) Each branch is given by  $C_i^t=\{y=f_i^t(x)\}$ for a function $f_i^t(x)=f_i(x,t)$ smooth in $(x,t)$, and $C_i^t$ 
	 intersects $\partial B$ transversally for all $t$. 
	
	 (2) Intersections between the branches remain in the interior of $B$ during the homotopy.
	
	 (3) At $t=1$, all intersections of any two branches $C^1_i$, $C^1_j$ are positive {and transverse}.
	
	 (4) At $t=1$, all intersection points on each branch $C^1_i$ are marked, and there may be additional free marked points.
	 Each free point lies in the interior of $B$ on a unique 
	 branch $C^1_i$. The total number of marked points on $C^1_i$ is $w_i$.
\end{definition}

The choice of Milnor ball $B$ is unimportant as all our considerations are local. For brevity, we will often 
omit $B$ from notation and talk about decorated germs 
and their homotopies in $\C^2$. In that case, we implicitly work in a fixed neighborhood of the origin, and assume that all intersections between branches which begin in this neighborhood remain in this neighborhood 
during the homotopy, and thus the components of the arrangement have controlled behavior near the boundary of the neighborhood.

Conditions (1) and (2) are automatically satisfied for ``small''  homotopies.  Indeed, if $t$ is close to~$0$, $C_i^t$ is $C^1$-close to $C_i$. 
The reducible curve $\mathcal{C}$ with smooth branches has a finite set of tangent directions at the origin, and the branches $C^t_i$ will have tangent spaces 
lying in a small neighborhood of these directions in the Grassmannian of symplectic planes in $\C^2$. Therefore we can choose coordinates so that the fiber of the projection avoids these directions. We only include the intersection of the branches of $\mathcal{C}$ at $0$ in the Milnor ball $B$, so for small $t$ intersections will remain in $B$. For larger homotopies, we require these conditions non-trivially.

Picture deformations satisfy all of the conditions (1)-(4), so a picture deformation is a special case
a smooth graphical homotopy of the germ (in appropriate coordinates). In contrast to picture deformations
of \cite{dJvS}, condition (4) on the marked points and the weight restrictions is only required at $t=1$ for homotopies. For a closer analogy with Definition~\ref{pic-def}, 
we can consider marked points $\{p_i^j(t)\}_{j\in \{1,\dots, w_i\}}$ on $C_i^t$ for all $0\leq t \leq 1$. For $t=0$, the marked points are concentrated at the origin on each branch, giving 
the decoration of $(\mathcal{C}, w)$. Suppose that $p_i^j(t)$, $0\leq t\leq 1$ are smooth functions describing the motion of marked points during homotopy, 
so that $p_i^j(t)\in C_i^t$ for all $t$.  For $t=1$, the points $p_i^j(1)=p_i^j$  satisfy condition (4) above. This implies, in particular, that at $t=1$, the branch $C^t_i$
has no more than $w_i$ intersection points with other branches.  However, for $0<t<1$, the marked points $p_i^j(t)$ are not subject 
to any restrictions and have little significance. The homotoped curvettas $C_i^t$ can have an arbitrary number of intersections, 
and intersections may be positive or negative. By contrast, for picture deformations the weights control the number of intersection points between
deformed curvettas at all times, the intersections between branches are always marked during deformation, and all intersections are positive because curvettas are deformed through complex curves.

Let $(Y, \xi)$ be the link of the singularity $(X,0)$ with the decorated germ $(\mathcal{C}, w)$.  We will show that every smooth graphical 
homotopy of the germ $\mathcal{C}$ gives rise to a Stein filling of $(Y, \xi)$. 

First, we focus on the curvetta arrangement $\{C^1_1, C^1_2, \dots, C^1_m\}$ with marked 
points, produced at the end of homotopy at the time $t=1$. Lemma~\ref{construct-fibration} below produces a certain Lefschetz fibration from this input. The lemma 
applies to any arrangement of smooth graphical disks $\{ \Gamma_1, \Gamma_2, \dots, \Gamma_m \}$ satisfying the stated hypotheses; the homotopy 
is not used at this stage. We use different notation to emphasize that $\{ \Gamma_i\}$ need not be related to $\mathcal{C}$. Then, Lemma~\ref{same-monodromy} uses the 
homotopy between the decorated germ $(\mathcal{C}, w)$ and the curvetta arrangement $\{C^1_1, C^1_2, \dots, C^1_m\}$
with its marked points $p_1, p_2, \dots, p_n$ to show that the open book on the boundary
of the Lefschetz fibration supports $(Y, \xi)$. It follows that our construction produces a Stein filling of $(Y, \xi)$.




As a smooth 4-manifold, the filling produced by Lemma~\ref{construct-fibration} is constructed   similarly to the Milnor fibers in Theorem~\ref{thm:intro-djvs}. 
Namely, we blow up at each of the intersection points of the homotoped curvettas, 
as well as at the free marked points, and  then take the complement of the 
proper transforms of the curvettas. {Even though  $C_i^1$ are smooth disks (rather than complex curves), we will assume that they are locally modelled on complex curves near the intersection point, 
so the blow-up and the proper transforms can be understood in the usual sense. Alternatively, 
one could also think about the proper transform in the smooth sense, 
as the closure of the complement of the blown-up point (see \cite[Definitions 2.2.7 and 2.2.9]{GS}).} To obtain a 4-manifold with given boundary, we consider a compact version 
of the construction in a Milnor ball, as explained in Remark~\ref{rmk:milnor-ball}. It is convenient to consider the Milnor ball of the form $B=D_x \times D_y \subset \C^2$, with corners smoothed,
where $D_x$ and $D_y$ are disks in the coordinate planes $\C_x$ and $\C_y$. 
{For every $x_0 \in D_x$,}  the graphical disks $\Gamma_i$ intersect {$\{x_0\}\times D_y$ transversally,
and the intersection with $\partial (D_x\times D_y)$ lies as a braid in $\partial D_x\times D_y$}. To simplify 
notation, we do not mention the Milnor ball $B$ explicitly in the first part of the lemma.


\begin{lemma} \label{construct-fibration}
	Let $\Gamma_1,\dots, \Gamma_m$ be smooth disks in $\C^2$ which are graphical with respect to
	the projection  $\pi_x$, $\Gamma_i=\{y=f_i(x)\}$. Assume that 
	{at each intersection point of two or more $\Gamma_i$, there exists a neighborhood $U$ of the intersection such that $\cup_i\Gamma_i$ are cut out by complex linear equations inside $U$. (Up to graphical isotopy, this only requires the $\Gamma_i$ to intersect transversally and positively with respect to the orientation on the graph $\Gamma_i$ induced from the natural orientation on $\C$.)}
	Let $p_1,\dots, p_n$ be points on the disks {$\Gamma_i$} which include all intersection points, 
	and let $\alpha:\C^2\#n\cptwobar\to \C^2$ be the blow-up at the points $p_1,\dots, p_n$. Let $\widetilde{\Gamma}_1,\dots, \widetilde{\Gamma}_m$ 
	denote the proper transforms {of $\Gamma_1,\cdots, \Gamma_m$}. 
	
	Then $\pi_x\circ \alpha: (\C^2\#n\cptwobar)\setminus (\widetilde{\Gamma}_1\cup \cdots \cup \widetilde{\Gamma}_m) \to \C$ is a Lefschetz fibration 
	whose regular fibers are punctured planes, where each puncture corresponds to a component $\widetilde{\Gamma}_i$. 
	There is one vanishing cycle for each point $p_j$, which is a curve in the fiber enclosing the punctures that
	correspond to the components~$\Gamma_i$ passing through $p_j$.
	
	Similarly,  if $B=D_x \times D_y$ is a Milnor ball that contains all the points $p_i, \dots, p_n$ {and contains $(D_x\times \C)\cap (\cup_i \Gamma_i)$}, 
	and  $T_i$ is a small tubular neighborhood of $\widetilde{\Gamma}_i$, 
	then $\pi_x\circ \alpha: (\alpha^{-1}(D_x\times D_y))\setminus (T_1\cup \cdots \cup T_m) \to D_x$ is 
	a Lefschetz fibration with compact fiber. The fiber is a disk with holes corresponding to the components $\Gamma_i$. 
	The vanishing cycles correspond to the points $p_j$ in the same way.
	
	If the curvettas $C_1^s,\dots, C_m^s$ with marked points are the result of picture deformation of the germ $(C, w)$ associated to a surface singularity, 
	the Lefschetz fibration constructed as above is compatible with the complex structure on the Milnor fiber of the corresponding smoothing.
\end{lemma}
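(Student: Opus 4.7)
The strategy is to verify the Lefschetz fibration structure by direct local analysis of $\pi_x \circ \alpha$: identify the regular and singular fibers, check that the standard Morse-type model holds at each candidate critical point, and read off the vanishing cycles. The compact version then follows by restriction to $B = D_x \times D_y$, and the Stein compatibility from standard facts about holomorphic Lefschetz fibrations.

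For the regular fibers, graphicality gives $\pi_x^{-1}(x_0) \cap \Gamma_i = \{(x_0, f_i(x_0))\}$ for each $i$. After possibly rotating the $x$-axis by a small amount (using the graphical assumption) so that no two marked points share an $x$-coordinate, $\alpha$ restricts to an isomorphism above any $x_0 \notin \{x_1,\dots, x_n\}$, so the fiber of $\pi_x\circ \alpha$ over such $x_0$ is $\{x_0\}\times \C$ with $m$ points removed, a plane with $m$ punctures labeled by the $\Gamma_i$. Over a critical value $x_j$, the blow-up replaces $p_j$ by the exceptional $\C\textup{P}^1$, call it $E_j$, and the fiber of $\pi_x \circ \alpha$ is the union of $E_j$ with the proper transform of $\{x_j\}\times \C$; these meet transversally at the single point of $E_j$ corresponding to the vertical tangent direction. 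By the local complex-linear hypothesis at $p_j$, the proper transforms of the $k$ branches passing through $p_j$ meet $E_j$ transversally at $k$ distinct points, none of which is this node, so the node survives in the complement of $\bigcup_i \widetilde{\Gamma}_i$. In standard blow-up coordinates $\pi_x \circ \alpha$ near the node has the form $(u,v)\mapsto uv$, which is exactly the Lefschetz local model; hence the restriction to the complement is a genuine Lefschetz fibration.

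The vanishing cycle attached to $p_j$ is obtained by degenerating a nearby regular fiber over $x_j + \varepsilon$ to the singular fiber and tracking the circle that pinches to the node. The punctures in the regular fiber are at $y = f_i(x_j + \varepsilon)$; by the definition of the branches through $p_j$, the $k$ values $f_{i_\ell}(x_j+\varepsilon)$ with $f_{i_\ell}(x_j) = y_j$ all converge to $y_j$ as $\varepsilon\to 0$, while the remaining $m-k$ punctures stay bounded away from $y_j$. The vanishing cycle is therefore any simple closed curve enclosing exactly the $k$ converging punctures, as claimed. For the compact version, intersecting with $B = D_x\times D_y$ and removing disjoint tubular neighborhoods $T_i$ of $\widetilde{\Gamma}_i$ replaces each punctured-plane fiber by a disk with $m$ holes. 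Transversality of $\Gamma_i$ to $\partial B$ (hence of $\widetilde{\Gamma}_i$ to $\partial \alpha^{-1}(B)$) together with small enough $T_i$ ensures that no new critical points appear near the boundary, so the same local analysis identifies the same critical points and vanishing cycles.

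For the last claim, when $\Gamma_i = C_i^s$ arises from an algebraic picture deformation, the map $\pi_x$ is holomorphic, $\alpha$ is the complex blow-up, the $\widetilde{\Gamma}_i$ are complex submanifolds, and the $T_i$ can be chosen as sublevel sets of a strictly plurisubharmonic function; the resulting holomorphic Lefschetz fibration over a disk is then compatible, in the standard sense, with the Stein structure the Milnor fiber inherits from the ambient complex manifold. The main obstacle is the analysis at a $k$-fold intersection point with $k > 1$: one must verify that a single blow-up at $p_j$ produces only one node in the complement of the proper transforms (rather than several critical points or a worse singularity), and this is precisely what the local complex-linear hypothesis guarantees by making the $k$ proper transforms meet $E_j$ at $k$ distinct points off the node. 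A secondary technical point, handled by an arbitrarily small graphical isotopy, is arranging that distinct marked points have distinct $x$-coordinates so that each critical value carries exactly one critical point.
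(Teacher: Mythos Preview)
Your proposal is correct and follows essentially the same route as the paper's proof: both analyze the blow-up in explicit local coordinates to exhibit the $(u,v)\mapsto uv$ Lefschetz model at the node of the total transform, verify that the proper transforms $\widetilde{\Gamma}_i$ are disjoint sections missing the node (using the complex-linear local model to separate the branches on $E_j$), and read off the vanishing cycle from which branches pass through $p_j$. Your degeneration description of the vanishing cycle and your remark about perturbing so that distinct $p_j$ have distinct $x$-coordinates are minor additions, but the argument is the same.
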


\begin{proof}
	Before blowing up, the projection $\pi_x:\C^2\to \C$ is clearly a fibration, and the smooth disks $\Gamma_i$ are sections of this fibration. 
	If they were disjoint sections, then their complement would be a fibration whose fiber is $\C$ with $m$ punctures. 
	Since the sections intersect, we blow up at each of the intersection points, along with blow-ups at other chosen points on the curves. 
	For each fiber containing one of the $p_j$ where we blow up, the corresponding fiber in the blow-up is the total transform, which is a 
	nodal curve containing the exceptional sphere and the proper transform of the fiber. More specifically, translating the 
	coordinates $(x,y)$ on $\C^2$ to be centered at $p_j$, the coordinates on the blow-up are
	$$\C^2\#\cptwobar_{p_i}= \{((x,y),[u:v]) \mid xv=yu  \}$$
	The {singular} fiber is the total transform of $F=\{x=0\}$ which has two irreducible components:
	$$\left(E=\{((0,0),[u:v]) \}\right)\cup \left(\widetilde{F} = \{((0,y),[0:1]) \}\right)$$
	The node occurs at the intersection of these two components at $((0,0),[0:1])$. Therefore in a neighborhood of the node we can take $v=1$, so we have local coordinates on the blow-up given by $(y,u)\in \C^2$ where $x=yu$. The projection $\pi_x\circ\alpha$ is given in these coordinates by 
	$$\pi_x\circ\alpha(y,u) = yu$$
	which is exactly the model for a Lefschetz singularity at $(y,u)=(0,0)$.
	
	In the coordinate chart on $\C^2$ centered at $p_j$, let $\Gamma_i=\{(x,f_i(x))\}$. The total transforms of the curves $\Gamma_i$ which pass 
	through $p_j$ (i.e. which have $f_i(0)=0$) are given by 
	$$\left(E=\{((0,0),[u:v]) \}\right)\cup \left(\widetilde{\Gamma}_i = \left\{\left((x,f_i(x)),\left[1:\lim_{a\to x}\frac{f_i(a)}{a}\right]\right) \right\}\right)$$
	and those which do not pass through $p_j$ ($f_i(0)\neq 0$) lift isomorphically to the blow-up:
	$$\{((x,f_i(x)),[x:f_i(x)]) \}.$$
	Note that the proper transforms do not pass through the node $((0,0),[0:1])$. Moreover, since the intersections between the 
	$\Gamma_i$ were assumed to be transverse, $\lim_{a\to 0}\frac{f_i(a)}{a}$ have different values for different values of $i$ where $f_i(0)=0$. 
	Therefore, the $\widetilde{\Gamma}_i$ are disjoint sections of the Lefschetz fibration from the blow-up of $\C^2$ to $\C$, so their
	complement gives a Lefschetz fibration with punctured fibers. Moreover, in the singular fibers, 
	the sections which intersect the exceptional sphere part of the fiber are precisely the proper transforms $\widetilde{\Gamma}_i$ such that $\Gamma_i$ 
	passed through $p_j$.
	
	Regular neighborhoods $T_i$ of the $\widetilde{\Gamma}_i$ can be chosen sufficiently small to be disjoint from each other and the
	Lefschetz singular points, thus yielding the compact Lefschetz fibration. This changes the fiber (converting the punctures into holes) but does not change 
	the fibration structure and the vanishing cycles. The total space of  a Lefschetz fibration over a disk is a compact  4-manifold with  boundary;
	the fibration induces a planar open book decomposition on the boundary.   
	
	In the case of a picture deformation deformation of the germ $(C, w)$, the deformed curvettas $C_1^s, C_2^s, \dots, C_m^s$ are smooth complex disks with marked points 
	satisfying the  hypotheses of the lemma. Then Stein structure induced by the Lefschetz fibration is compatible
	with the complex structure on the Milnor fiber, because $\pi_x\circ \alpha$ is holomorphic.
\end{proof}


Consider a smooth graphical arrangement $\Gamma=\{ \Gamma_1, \dots, \Gamma_m \}$ in a Milnor ball $B=D_x \times D_y$, such that each $\Gamma_i$ transversally intersects the
vertical part $\partial D_x  \times D_y$ of $\partial B$ and is disjoint from $D_x \times \partial D_y$. Taking the boundaries of the graphical disks,  we have an
$m$-braid $\d \Gamma= \partial \Gamma_1 \cup \d \Gamma_2 \cup \dots \cup \d \Gamma_m \subset \d B = S^3$. (Each component $\d \Gamma_i$ is an unknot, but the components are linked.) 
The monodromy of this braid is  called the monodromy of the arrangement $\Gamma$.  We can interpret the braid group on $m$ strands as the mapping class group 
$MCG(\C_m)$ of the $m$-punctured plane. Then the braid $\d \Gamma$ is identified with the monodromy $\phi_\Gamma$ of 
the $\C_m$-bundle over $S^1$, given by the projection $\pi_x: \C^2\setminus  \cup_{i=1}^m \Gamma_i \to \C$ restricted to the preimage $\pi_x^{-1}(\d D_x)$ of the 
circle  $\d D_x \subset \C$. 

To construct the Lefschetz fibration corresponding to $\Gamma$ in Lemma~\ref{construct-fibration}, we perform blow-ups at points $p_i$ 
that project to the interior of $D_x$. These blow-ups do not affect the bundle over $\partial D_x$. Therefore, the non-compact version of the Lefschetz fibration 
(with fiber $\C_m$) has the monodromy $\phi_\Gamma$ given by the braid $\d \Gamma$.

For the compact version of the Lefschetz fibration from Lemma~\ref{construct-fibration},  the general fiber $P_m$ is the disk $D_y$ with $m$ holes. The fibration 
induces an open book on its boundary, with page $P_m$.
The boundary of the total space of the fibration $\mathcal{L}^s$ is the union of two parts: the horizontal boundary $\partial P_m \times D$, which forms the binding of the open book, and the vertical boundary,  
a fiber bundle over $S^1= \partial D_x$ with fiber $P_m$,  which forms the mapping torus for the open book. This fiber bundle is given by the projection 
$(\pi_x\circ \alpha)^{-1}(\partial D_x) \to D_x$, which is the same as the projection $\pi_x: B \setminus  \cup_{i=1}^m \Gamma_i \to D_x$ 
restricted to $\pi_x^{-1}(\d D_x)$, because  the blow-up map $\alpha$ is the identity over $\partial D_x$.  Let $\phi:P_m \to P_m$ denote the monodromy of this fiber bundle (i.e. the monodromy of the open book). We then have a commutative diagram  
$$
\begin{CD}
 P_m @>\phi>> P_m\\
 @VVV @VVV\\
 \C_m @>{\phi_\Gamma}>> \C_m,
\end{CD}
$$
where the vertical maps are inclusions. This proves

\begin{lemma} \label{lem:braid-monodromy} Let $\Gamma=\{\Gamma_1, \dots, \Gamma_m\}$ be a smooth graphical arrangement with marked points $\{p_j\}$, 
and $B= D_x \times D_y$ a Milnor ball whose interior 
contains all marked points, {such that $\Gamma_i \cap (D_x\times \C) \subset B$ and} $\Gamma_i$ is transverse to $\partial B$ for all $i=1, \dots, m$. Let $\phi_\Gamma$ be the monodromy of the braid 
$\d \Gamma = \partial \Gamma_1 \cup \dots \cup \partial\Gamma_m \subset \d B=S^3$.   

Let $\phi:P_m \to P_m$ be the monodromy of the open book induced by the Lefschetz fibration constructed for $(\Gamma, \{p_j\})$ in  Lemma~\ref{construct-fibration}.   
Then $\phi_\Gamma$ is the image of $\phi$ under the projection  $$\eta: MCG(P_m)\to MCG(\C_m)$$ induced by the inclusion $P_m \hookrightarrow \C_m$ of
the compact disk with $m$  holes into 
	the $m$-punctured plane. 
\end{lemma}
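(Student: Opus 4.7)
The plan is to verify that the commutative diagram
\[
\begin{CD}
P_m @>\phi>> P_m\\
@VVV @VVV\\
\C_m @>{\phi_\Gamma}>> \C_m
\end{CD}
\]
is well-defined, with vertical arrows the fiber-inclusion inducing $\eta$. Most of the geometric input is already contained in the paragraphs immediately preceding the lemma statement, so what remains is essentially to check compatibility of monodromies under a fiber inclusion.

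First I would fix a basepoint $x_0 \in \partial D_x$ together with a local trivialization of $\pi_x$ near $x_0$. Under this choice, the non-compact fiber $\pi_x^{-1}(x_0) \setminus \bigcup_i \Gamma_i$ is identified with $\C_m$, with punctures at the points $f_i(x_0)$, while the compact fiber $(\pi_x \circ \alpha)^{-1}(x_0) \setminus \bigcup_i T_i$ is identified with $P_m$, obtained from $\{x_0\}\times D_y$ by removing small disks around the same points. The natural inclusion $P_m \hookrightarrow \C_m$, extending through the outer boundary $\partial D_y$, is precisely the one that induces the map $\eta$. Since every marked point $p_j$ lies in the interior of $B$, the blow-up $\alpha$ is the identity over a neighborhood of $\partial D_x$, so the compact and non-compact bundles agree there up to this fiber inclusion.

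Next I would interpret both monodromies tautologically. As $x$ traverses $\partial D_x$, the $m$ punctures of the non-compact fiber trace out exactly the braid $\partial \Gamma \subset \partial B = S^3$, which represents $\phi_\Gamma \in MCG(\C_m)$ by definition. The monodromy $\phi \in MCG(P_m)$ of the compact bundle is represented by a diffeomorphism of $P_m$ fixing $\partial D_y$ pointwise and permuting the inner boundary circles as prescribed by the same braid. Extending this diffeomorphism by the identity across the collar between $\partial D_y$ and the ``puncture-less'' region at infinity in $\C_m$ produces a representative of $\phi_\Gamma$, yielding $\eta(\phi) = \phi_\Gamma$.

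The only thing requiring real care is matching boundary conventions and trivializations consistently around the whole base circle, and checking that the retraction collapsing the tubular neighborhoods $T_i$ back onto the graphs $\Gamma_i$ identifies holes with punctures in a way compatible with the chosen trivializations. I do not foresee any substantive obstacle, since the essential geometric fact --- that blowing up at points projecting to the interior of $D_x$ does not affect the bundle over $\partial D_x$ --- has already been recorded.
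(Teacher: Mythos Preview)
Your proposal is correct and follows essentially the same approach as the paper: the paper's proof is precisely the discussion in the paragraphs preceding the lemma, culminating in the same commutative diagram you wrote down, with the key geometric input being that $\alpha$ is the identity over $\partial D_x$. Your version is slightly more explicit about extending $\phi$ by the identity across the collar to obtain a representative of $\phi_\Gamma$, but this is exactly what the commutative diagram encodes.
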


When the arrangement $(\Gamma, \{p_j\})$ is related to the decorated germ $(\mathcal{C}, w)$ by a smooth graphical homotopy, the monodromy $\phi_\Gamma$ of the braid 
$\partial \Gamma$ is the same as the monodromy of the braid $\partial \mathcal{C} = \d C_1 \cup \dots \cup \d C_m$, because the homotopy between disks 
gives an isotopy of the two boundary braids. By definition, the braid monodromy of $\partial \mathcal{C}$ is the monodromy $\varphi$ of the singular point of $\mathcal{C}$.

We next examine the monodromy of the open book corresponding to $\Gamma$ in the case of the compact fiber, and find its relation to the monodromy of the singular curve 
$\mathcal{C}$.


\begin{lemma} \label{same-monodromy} Let $\{\Gamma_1,\dots, \Gamma_m\}$ and $\{\Gamma_1',\dots, \Gamma_m'\}$ be two smooth graphical arrangements, such that the boundary braid of are braid-isotopic (respecting labels) and the weights on the corresponding components agree.
	Let $\mathcal{L}$ and $\mathcal{L}'$ be the corresponding Lefschetz fibrations constructed in Lemma~\ref{construct-fibration}. Then the induced open book decompositions on the boundary have the same page and same monodromy.
%
\end{lemma}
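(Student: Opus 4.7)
The page of each open book is a disk $P_m$ with $m$ holes, one per component, and since the two arrangements have the same number of components the pages are identified. It remains to compare the monodromies $\phi,\phi' \in MCG(P_m)$. By Lemma~\ref{lem:braid-monodromy}, the map $\eta: MCG(P_m) \to MCG(\C_m)$ induced by the inclusion $P_m \hookrightarrow \C_m$ sends $\phi$ (respectively $\phi'$) to the braid monodromy $\phi_\Gamma$ (resp.\ $\phi_{\Gamma'}$). By hypothesis $\partial \Gamma$ and $\partial \Gamma'$ are isotopic as labeled braids, so $\phi_\Gamma = \phi_{\Gamma'}$ and hence $\eta(\phi)=\eta(\phi')$. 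Consequently $\phi \cdot {\phi'}^{-1} \in \ker\eta$, which is the central abelian subgroup generated by the $m+1$ Dehn twists parallel to the boundary components of $P_m$ (these become trivial once the $m+1$ boundary circles are filled in with punctured disks to form $\C_m$).

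To force this difference to be trivial I would use the weight data. In the factorization $\phi = \prod_j T_{c_j}$ produced by Lemma~\ref{construct-fibration}, each marked point lying on $\Gamma_i$ contributes a positive Dehn twist whose vanishing cycle encloses the $i$-th hole, so the total number of such twists enclosing hole $i$ equals the weight $w_i$. The exponent of the $i$-th inner boundary Dehn twist appearing in $\phi$ is governed by this count; geometrically, it coincides with the self-intersection number of the proper transform $\widetilde{\Gamma}_i$ in the blown-up Milnor ball, which is $-w_i$ (a graphical disk in $\C^2$ has trivial self-intersection, and each of the $w_i$ blow-ups on $\Gamma_i$ lowers it by one). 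Equivalently, $-w_i$ is the framing of the $i$-th binding circle in the $3$-manifold boundary of $\mathcal L$, and this framing records precisely the boundary Dehn twist exponent in the open book monodromy. The outer-boundary twist exponent is likewise determined by the writhe of $\partial \Gamma$. Since both $w_i$ and the boundary braid agree for $\Gamma$ and $\Gamma'$, all of these exponents coincide, giving $\phi=\phi'$.

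The main technical hurdle is making the ``enclosure/framing'' invariant precise: I would set up a canonical splitting of the central extension $\ker\eta \to MCG(P_m) \to MCG(\C_m)$ that records the boundary Dehn twist exponents, and then verify via the local Lefschetz model of Lemma~\ref{construct-fibration} that, for any smooth graphical arrangement satisfying the stated hypotheses, these exponents are computed by the self-intersections $-w_i$ of the proper transforms together with the writhe of the outer boundary braid. With this identification in hand, the matching of weights, combined with $\eta(\phi)=\eta(\phi')$, forces $\phi=\phi'$ in $MCG(P_m)$.
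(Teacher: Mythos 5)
Your first step coincides with the paper's: by Lemma~\ref{lem:braid-monodromy} both monodromies have the same image under $\eta\colon MCG(P_m)\to MCG(\C_m)$, so they differ by an element of $\ker\eta$, which consists of boundary-parallel twists. One correction there: under the conventions in force, $\ker\eta$ is generated by the $m$ twists about the \emph{inner} boundary components only. The outer boundary of $P_m$ is not capped by a punctured disk when $P_m$ is included into the $m$-punctured plane; the outer boundary twist maps to the full twist, which is central but nontrivial in the braid group. So there is no outer exponent to pin down, and your appeal to ``the writhe of $\partial\Gamma$'' is both unnecessary and, as stated, not a defined invariant of the situation.

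The genuine gap is in how you fix the exponents $\alpha_i$ of the inner boundary twists. You correctly note that in the particular positive factorization produced by Lemma~\ref{construct-fibration}, exactly $w_i$ of the Dehn twists enclose the $i$-th hole. But ``the exponent of the $i$-th boundary twist appearing in $\phi$'' is not a priori well defined for a mapping class $\phi$ (as opposed to a chosen factorization) --- that well-definedness is precisely what must be proved --- and your proposed substitute, a canonical splitting of the central extension whose boundary exponents are read off from the framings $-w_i$ of the proper transforms, is exactly the kind of statement that is delicately convention-dependent (the paper's remark contrasting its monodromy with Hironaka's, which differ by boundary twists coming from two natural but distinct parametrizations of the tubular neighborhoods, is a warning that this dictionary cannot be taken for granted); you yourself flag it as an unproved ``technical hurdle.'' The paper closes the gap combinatorially: all relations among positive Dehn twists on a planar surface are generated by lantern relations, and a lantern substitution preserves the number of twists enclosing any fixed hole, so $n_i(\psi)$, the number of twists enclosing hole $i$ in \emph{any} positive factorization of $\psi$, is an invariant of the mapping class. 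Then $n_i(\phi)=w_i=n_i(\phi')$ together with $n_i\bigl(\phi\circ\prod_j T_j^{\alpha_j}\bigr)=n_i(\phi)+\alpha_i$ forces every $\alpha_i=0$. Without this lantern-relation input (or a completed, convention-checked version of your framing argument) the proof is incomplete.
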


We will prove the lemma after pointing out its consequences. 
Since the plane curve {arrangements at either} end of a smooth graphical homotopy have braid-isotopic boundaries, and the weights on the components remain constant during the smooth graphical homotopy, it follows that the open book decomposition induced on the boundary for any Lefschetz fibration arising in this way is independent of the choice of smooth graphical homotopy. 

	For the case where $(\Gamma, w)$ is the end point of a {\em picture deformation} of a plane curve germ $(\mathcal{C}, w)$, $\mathcal{L}'$ is a Lefschetz fibration on the (compactified) Milnor fiber of the associated smoothing of the surface singularity $(X, 0)${, as in Theorem~\ref{thm:intro-djvs}}. In this case the boundary of the Milnor fiber is the link $Y$ of the singularity $(X, 0)$,
	and the Milnor fiber gives a Stein filling of the canonical contact structure $\xi$ on the link, so 
	the open book supports  $\xi$. Because every rational singularity has a picture deformation yielding a Milnor fiber arising in such a manner (see Section~\ref{s:artin} in our case), the open book on the boundary of any Lefschetz fibration arising from the endpoint of a smooth graphical homotopy of the same germ must support the canonical contact structure on the link of the singularity.

Combining Lemma~\ref{construct-fibration} and Lemma~\ref{same-monodromy} with this discussion completes the
proof of Theorem~\ref{thm:Lefschetz} which we summarize in the following corollary. 
\begin{cor} \label{cor:stein-fill}
	A smooth graphical homotopy of the decorated germ $(\mathcal{C}, w)$
	gives rise to a Stein filling of the link $(Y, \xi)$ of the corresponding singularity.
\end{cor}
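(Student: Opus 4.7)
The plan is to assemble Corollary~\ref{cor:stein-fill} directly from the two lemmas just proved, together with one input from deformation theory: the existence of at least one picture deformation of $(\mathcal{C},w)$ (the Scott/Artin deformation discussed in Section~\ref{s:artin}).

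First I would feed the endpoint data of the smooth graphical homotopy into Lemma~\ref{construct-fibration}. Working in a Milnor ball $B=D_x\times D_y$ in which the homotopy is realized and in which all $C_i^1$ are graphical and transverse to $\partial B$ (this is guaranteed by Definition~\ref{def:homotopy}), the configuration $\{C_1^1,\dots,C_m^1\}$ with marked points $\{p_1,\dots,p_n\}$ satisfies the hypotheses of Lemma~\ref{construct-fibration}: intersections are transverse and positive, and one may assume they are locally modeled on complex lines after a graphical isotopy. Lemma~\ref{construct-fibration} then produces a compact Lefschetz fibration $\mathcal{L}:W\to D_x$ whose fiber is the $m$-holed disk $P_m$ and whose vanishing cycles encircle the punctures corresponding to the branches through each $p_j$. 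Since the total space of a positive-allowable Lefschetz fibration with bounded fiber carries a Stein structure compatible with the induced open book on the boundary (by the standard Akbulut--Ozbagci/Loi--Piergallini construction), $W$ is a Stein filling of the contact manifold $(\partial W, \xi_{\mathcal{L}})$ determined by the induced planar open book.

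It remains to identify this boundary contact manifold with $(Y,\xi)$. The strategy is to compare with a Milnor filling by invoking Lemma~\ref{same-monodromy}. Choose any picture deformation $\mathcal{C}^s$ of the decorated germ $(\mathcal{C},w)$ - such a deformation exists because every rational singularity with reduced fundamental cycle admits the Scott deformation producing a smoothing in the Artin component, realized inside the same Milnor ball $B$ for small $s\neq 0$. Apply Lemma~\ref{construct-fibration} again to $(\mathcal{C}^s,w)$ to obtain a second Lefschetz fibration $\mathcal{L}'$ on the compactified Milnor fiber $W_{\mathcal{C}^s}$; by the last statement of Lemma~\ref{construct-fibration}, this fibration is compatible with the complex structure on the Milnor fiber, so its induced open book supports the canonical contact structure $\xi$ on $Y$.

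Now I would apply Lemma~\ref{same-monodromy} to the pair $(\{C_i^1\},\{p_j\})$ and $(\{C_i^s\},\{p_j^s\})$. The weights on matching components agree by clause~(4) of Definition~\ref{def:homotopy} and the analogous requirement for picture deformations. The boundary braids also agree: the smooth graphical homotopy restricted to $\partial B$ provides a braid isotopy from $\partial\mathcal{C}=\partial C_1\cup\cdots\cup\partial C_m$ to $\partial C_1^1\cup\cdots\cup\partial C_m^1$ (intersections are kept in the interior of $B$ by clause~(2)), while the picture deformation, being a complex $\delta$-constant deformation, likewise induces a braid isotopy from $\partial\mathcal{C}$ to $\partial \mathcal{C}^s$; composing these yields the required labelled braid isotopy. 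Lemma~\ref{same-monodromy} then furnishes an identification of pages and monodromies of the two induced open books, hence a contactomorphism between $(\partial W,\xi_{\mathcal{L}})$ and $(\partial W_{\mathcal{C}^s},\xi_{\mathcal{L}'})=(Y,\xi)$. Therefore $W$ is a Stein filling of $(Y,\xi)$.

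The only nontrivial step is the braid comparison: one must verify that clauses~(1) and~(2) of Definition~\ref{def:homotopy} are genuinely enough to give a braid isotopy on $\partial B$ respecting labels, and that the Scott picture deformation can be arranged inside the same ball $B$ so as to have boundary braid-isotopic to that of $\mathcal{C}$. Both points are routine once the Milnor ball is fixed small enough that the decorated germ and a sufficiently small picture deformation are transverse to $\partial B$, but they are the crux of transporting the conclusion from the algebraic setting of \cite{dJvS} to the purely smooth setting of smooth graphical homotopies.
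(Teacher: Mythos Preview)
Your proposal is correct and follows essentially the same approach as the paper: apply Lemma~\ref{construct-fibration} to the endpoint arrangement, invoke the existence of a picture deformation (the Scott/Artin one) to obtain a comparison Lefschetz fibration on a genuine Milnor fiber, and then use Lemma~\ref{same-monodromy}, via the braid isotopy coming from the homotopy and the equality of weights, to identify the boundary open books and hence the contact structures. The paper's own argument is exactly this assembly, stated more tersely in the discussion preceding the corollary.
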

\begin{proof}[Proof of Lemma~\ref{same-monodromy}] Applying the previous discussion and Lemma~\ref{lem:braid-monodromy} to the arrangement 
$\Gamma=\{\Gamma_1, \Gamma_2, \dots, \Gamma_m\}$, we see that the  the homomorphism $\eta: MCG (P_m) \to MCG (\C_m)$  sends the
open book monodromies $\phi$ and $\phi'$ to the same braid monodromy $\varphi\in MCG(\C_m)$. The kernel of the map $\eta: MCG (P_m) \to MCG (\C_m)$ 
is generated by the boundary-parallel Dehn twists around the holes in the fiber $P_m$. (Recall that  
	the monodromy of an open book is considered {\em rel} boundary of the page, so while the twists around individual strands are trivial in the braid case, 
	the boundary twists  become non-trivial for open books.)
It follows that the monodromies $\phi$, $\phi'$ of the open books on the boundaries of $\mathcal{L}$ and $\mathcal{L}'$ can differ {\em only} by boundary twists, since $\eta(\phi)=\eta(\phi')= \varphi$.

	Let $T_i$ denote a positive Dehn twist around the $i$-th hole. Then we have 
	\begin{equation} \label{extra-twists}
	\phi'= \phi \circ T_1^{\alpha_1} \circ T_2^{\alpha_2} \circ \dots \circ T_m^{\alpha_m}
	\end{equation}
	for some integers $\alpha_1, \alpha_2, \dots, \alpha_m$. The order is unimportant since the boundary twists are in the center of $MCG(P_m)$.
	
	It remains to pin down the boundary twists around each hole, i.e. to show that $\alpha_i=0$ for every $i=1, \dots, m$. To do so, we need to take into account the blow-ups 
	at the free marked points $p_i$ (the marked points that lie on the branches away from the intersections). These correspond to boundary twists. 
	Recall a basic fact about diffeomorphisms of a planar surface {\em rel} boundary: for any two factorizations
	$\Psi$, $\Psi'$ of $\psi: P_m \to P_m$, the number of Dehn twists  that enclose a given hole $h$ is the same for $\Psi$ and $\Psi'$. (Here, we count all twists, not 
	only the boundary ones.) The above statement 
	easily follows from the fact that lantern relations generate all relations in the mapping class group of a planar surface~\cite{MM}, 
	and the number of Dehn twists enclosing a given hole is unchanged under a lantern 
	relation. This implies that the number of Dehn twists enclosing the $i$-th hole is well-defined for a monodromy $\psi: P_m \to P_m$; let $n_i= n_i (\psi)$ denote this number.  
	If two monodromies $\phi$, $\phi'$ are related by (\ref{extra-twists}), we have
	\begin{equation} \label{alpha}
	n_i(\phi')= n_i(\phi)+ \alpha_i.
	\end{equation}
	On the other hand, the number $n_i$ is determined by the vanishing cycles of the Lefschetz fibration. By construction of the fibration $\mathcal{L}^1$ associated 
	to the homotopy $\mathcal{C}^t$,  
	the number of Dehn twists enclosing the $i$-th hole is given by the number of blow-ups at the marked points on $C^1_i$, which in turn equals the weight $w_i$ of the component
	$C_i$ of the original germ $\mathcal{C}$. So  $n_i(\phi)=w_i=n_i(\phi')$, and
	$\alpha_i=0$ from~(\ref{alpha}).
\end{proof}

\begin{remark} Our description of the open book monodromy for an arrangement is somewhat similar to E.~Hironaka's results on the monodromy of complexified real 
	line arrangements in $\C^2$ \cite{Hir}. An important difference is that we consider Lefschetz fibrations on the complement of the proper transform of the 
	curves in a blow-up of $\C^2$, while Hironaka computes the monodromy of the fiber bundle over $S^1$ obtained by projecting the complement of the complex lines 
	in $\C^2$ to a circle of large radius (compare with the proof of Lemma~\ref{same-monodromy}). She also considers the setting with compactified fibers, by taking 
	the complement of tubular neighborhoods of the lines, and computes the 
	monodromy of line arrangements as an element of $MCG(P_m)$. It is important to note that even in the compactified setting, her answers are different 
	from the monodromy of the corresponding Lefschetz fibrations that we consider. (The 
	difference is given by some boundary twists.) The discrepancy appears because when the tubular neighborhoods of $C^1_i$'s are removed from $\C^2$, 
	their parametrization is 
	induced from~$\C^2$. When we blow up and take proper transforms of $C^1_i$, the parametrization of tubular neighborhoods is induced by the Lefschetz fibration structure on the blown-up 
	manifold.
	These two parametrizations are different in the two settings, affecting the choice of the meridian of the tubular neighborhood of a line and the framing of the 
	boundary of the corresponding hole.
\end{remark}

\section{The Lefschetz fibration for the Artin smoothing} \label{s:artin}

\subsection{The Scott deformation.} We can now use a specific deformation to describe the monodromy of the open book decomposition of $(Y, \xi)$. We will use a canonical deformation, 
called the \emph{Scott deformation} in \cite{dJvS}, which yields a smoothing in the Artin component. This deformation yields a particularly nice arrangement
of curvettas where the associated Lefschetz vanishing cycles are disjoint. This in turn yields a model factorization for the monodromy of the boundary open book decomposition. {In Proposition~\ref{artin-fill}, we will show that the corresponding Stein filling is uniquely recognizable from its combinatorics.}
Recall that $tang(C_i, C_j)$ stands for the order of tangency between branches $C_i, C_j$ of $\mathcal{C}$, and  
$t(C_i)= \max_j tang(C_i, C_j)$.

\begin{prop} \label{artin-ob} Let $(X, 0)$ be a rational surface singularity with reduced fundamental cycle, and $(\mathcal{C},w)$ one of its decorated reducible plane curve germs with 
	$m$ smooth irreducible components.  
	Let $(Y, \xi)$ be the contact link of $(X,0)$. Then $(Y, \xi)$ has a planar open book decomposition
	whose page is a disk with $m$ holes $h_1, \dots, h_m$, corresponding to the branches of $\mathcal{C}$. The open book monodromy admits a factorization 
	into {\em disjoint} positive Dehn twists with the following properties:
	
	1) for any  two branches $C_i, C_j$, the corresponding holes $h_i, h_j$ are enclosed by 
	exactly $tang(C_i, C_j)$ of these Dehn twists;
	
	2) there are   $w(C_i) - t(C_i)>0$ boundary Dehn twists around the hole $h_i$;
	
	3) there is at least one positive Dehn twist about the outer boundary component of the page. 
	
\end{prop}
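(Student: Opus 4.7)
The plan is to realize the Scott deformation as a specific picture deformation $\mathcal{C}^s$ whose multiple-point structure is combinatorially dictated by the rooted tree $G'$, and then read off the monodromy factorization from Lemma~\ref{construct-fibration}. Concretely, for each vertex $v \in G'$ I would assign a marked point $p_v$ of $\mathcal{C}^s$, with $C_i^s$ passing through $p_v$ exactly when $v$ lies on the unique path in $G'$ from the $-1$ leaf $e_i$ (corresponding to $C_i$) to the root $v_0'$. Counting vertices on this path (including both endpoints) yields exactly $w(C_i)=1+l(v_0,v_i)$ marked points on each $C_i^s$, matching the weight; marked points coming from interior vertices of $G$ may carry multiplicity, while $p_{e_i}$ is always a free point lying only on $C_i^s$.

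Granting this realization, Lemma~\ref{construct-fibration} attaches to each $p_v$ a vanishing cycle encircling exactly those holes $h_i$ with $v$ on the path from $v_i$ to $v_0$. For statement (1), two holes $h_i,h_j$ are jointly encircled by the vanishing cycle at $p_v$ iff $v$ lies on both paths from $v_i$ and $v_j$ to $v_0$, so the count of such vanishing cycles equals the overlap $\rho(v_i,v_j;v_0)$, which is $tang(C_i,C_j)$ by the computation in the proof of Lemma~\ref{lem:toptype}. Disjointness of all these vanishing cycles follows from the rooted-tree structure of $G'$: for any two vertices $v,v'$, either one is an ancestor of the other or their subtrees are disjoint, so the enclosures are pairwise nested or disjoint and can be realized by pairwise disjoint simple closed curves in the planar page $P_m$.

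For (2), the intersection-type marked points on $C_i^s$ are precisely the $p_v$ with $v$ on the path from $v_i$ to $v_0$ in $G$ that is shared with some other branch's path; by the definition $t(C_i)=\max_j\rho(v_i,v_j;v_0)$ there are exactly $t(C_i)$ such $v$. The remaining $w(C_i)-t(C_i)$ marked points on $C_i^s$ are free (coming from $p_{e_i}$ and from vertices $v$ on the $v_i \to v_0$ path not shared with any other branch), and each produces a boundary-parallel Dehn twist around $h_i$; strict positivity $w(C_i)-t(C_i)>0$ is exactly~\eqref{weight-ineq}. For (3), the root $v_0$ lies on every path $v_i \to v_0$, so $p_{v_0}$ is a point through which every $C_i^s$ passes; the associated vanishing cycle encloses all $m$ holes and is therefore isotopic to a Dehn twist about the outer boundary component of $P_m$.

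The main obstacle is producing the Scott deformation itself as a genuine picture deformation with the prescribed incidence pattern. This is the algebro-geometric construction from \cite{dJvS}, which one may also interpret topologically as iteratively ``undoing'' the blow-up sequence encoded by $G'$, letting the infinitely near blow-up centers separate as a real parameter $s$ varies. Once existence is established, the rest of the proof is essentially combinatorial bookkeeping on the rooted tree together with the mapping class group input of Lemma~\ref{construct-fibration}.
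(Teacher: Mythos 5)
Your proposal follows essentially the same route as the paper: realize the Scott deformation as a picture deformation whose marked points correspond to the vertices of the rooted tree $G'$, then read off nested or disjoint vanishing cycles via Lemma~\ref{construct-fibration} and count them using the length and overlap functions on the tree. The one point you gloss over is that disjointness of the actual vanishing cycles (not merely of their homology classes, which is what the nesting of the enclosed hole-sets gives you) also requires that no braiding of the branches occurs between consecutive multiple points of the deformation; the paper arranges this by taking the translations in the iterative blow-up/blow-down procedure to be arbitrarily small.
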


\begin{proof} 
	We use the picture deformation of $(\mathcal{C}, w)$ referred to as the \emph{Scott deformation} in \cite[Proposition 1.10]{dJvS}.
	This deformation arises from iteratively applying the following procedure. (We refer the reader to \cite{dJvS, ACampo} for details, including the explanation why the procedure below can be actually realized by a 1-parameter deformation.)
	
	The input of the procedure is an isolated singular point $p$ of a plane curve $C$ with multiplicity $m$. 
		In our case $C$ is a union of smooth components, 
	and the multiplicity $m$ is the number of components through the point $p$. The output of the procedure is a deformation $C'$ whose singularities are:
	\begin{enumerate}
		\item \label{i:mtuple} one $m$-fold point where $m$ branches intersect transversally, and
		\item \label{i:others} the collection of singularities occurring on the proper transform of $C$ in the blow-up of $\C^2$ at~$p$.
	\end{enumerate}
	The idea of the deformation is to blow up at $p$, perform a small deformation of the curves so that the singularities of the proper transform
	become disjoint from the exceptional divisor, and then blow down the exceptional divisor to return to the plane and obtain the curve $C'$.
	
	We demonstrate this process in an example in Figure~\ref{fig:ScottLoc}. The initial configuration in the bottom left consists of five curves.
	The  curves $C_1$ and $C_2$ are tangent with multiplicity $3$, and these two curves are tangent to $C_3$ with multiplicity $2$. 
	The curves $C_4$ and $C_5$ are transverse to $C_1, C_2, C_3$ but tangent to each other with multiplicity $4$. 
	After blowing up at the common intersection point, we obtain the proper transforms together with an exceptional divisor as 
	shown in the top left of Figure~\ref{fig:ScottLoc}. Now $C_1$ and $C_2$ are tangent with multiplicity 2 and 
	transversally intersect $C_3$ at the same point on the exceptional divisor. The curves $C_4$ and $C_5$ 
	become disjoint from $C_1$, $C_2$, $C_3$,  and they are tangent to each other with multiplicity 3 at another point on the exceptional divisor. 
	Next we perform the deformation of the curves, fixing the exceptional divisor, but translating the proper transforms $\ti{C}_1, \ti{C}_2, \dots, \ti{C}_5$
	of the curvettas slightly so that the intersection of the exceptional divisor with the proper transforms now occurs away from the intersections of the proper 
	transforms with each other, as shown in the top right of Figure~\ref{fig:ScottLoc}.
	Finally, we blow down the exceptional divisor, which results in a transverse intersection of the
	resulting curvettas $C_1^s, C_2^s, \dots, C_5^s$ together with the singularities (intersections) of the proper transforms as required.
	
	\begin{figure}
		\centering
		\bigskip
		\labellist
		\small\hair 2pt
		\pinlabel $C_1$ at 151 148
		\pinlabel $C_2$ at 149 106
		\pinlabel $C_3$ at 136 33
		\pinlabel $C_4$ at 79 174
		\pinlabel $C_5$ at 115 174
		\pinlabel $\ti{C}_2$ at 147 296
		\pinlabel $\ti{C}_3$ at 147 244
		\pinlabel $\ti{C}_1$ at 150 345
		\pinlabel $\ti{C}_5$ at 150 404
		\pinlabel $\ti{C}_4$ at 125 427
		\pinlabel ${C}^s_1$ at 395 165
		\pinlabel ${C}^s_2$ at 395 108
		\pinlabel $C^s_3$ at 395 50
		\pinlabel $C^s_4$ at 375 213
		\pinlabel $C^s_5$ at 395 190
		\endlabellist
		\includegraphics[scale=.7]{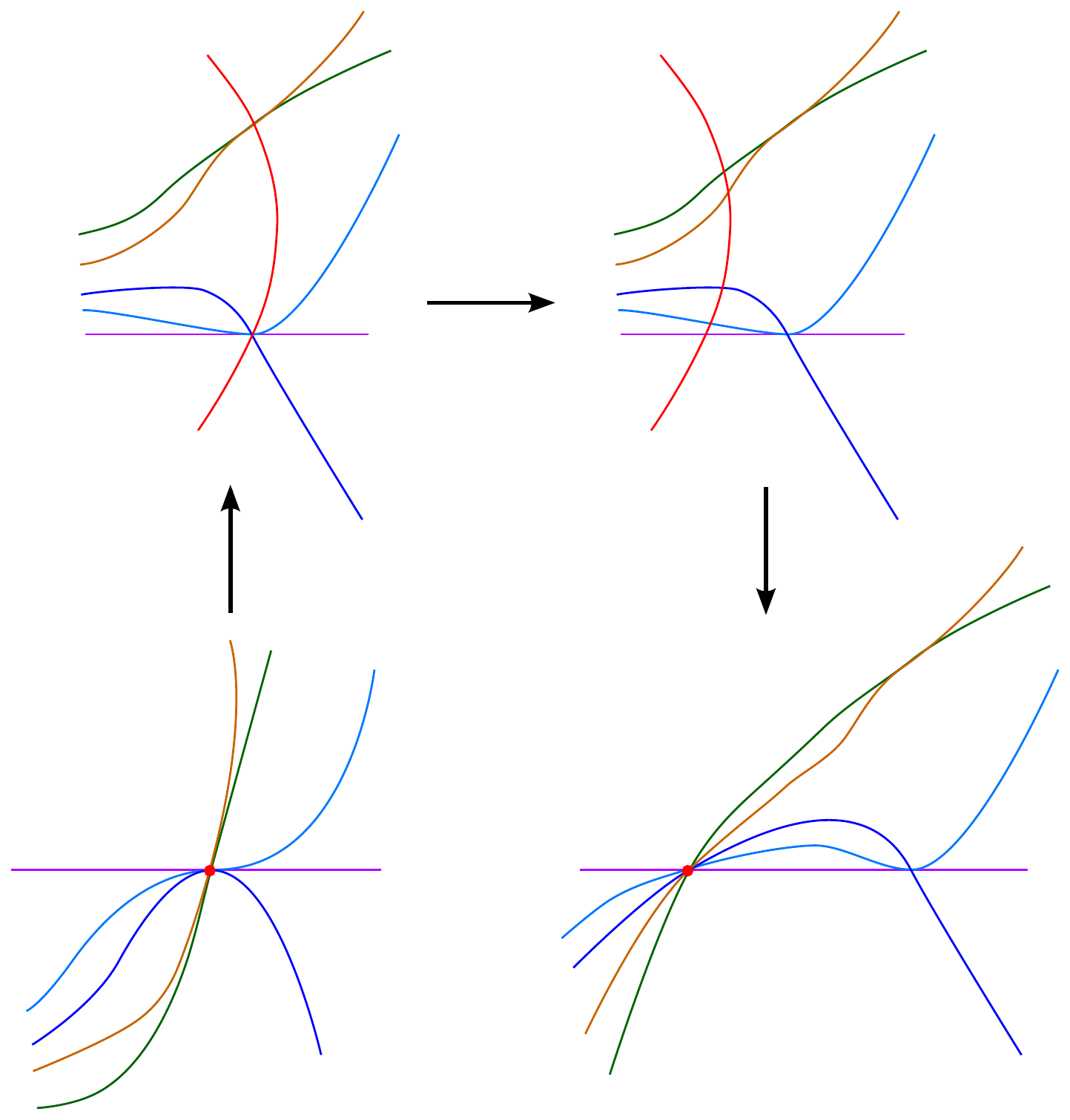}
		\caption{One iteration of the Scott deformation in an example.}
		\label{fig:ScottLoc}
	\end{figure}
	
	Since the multiplicity of the orders of tangency between components decreases each time we take the proper transform, 
	applying this procedure iteratively to the singularities of type \ref{i:others} eventually yields a deformation to a plane curve
	with only transverse intersections. See Figure \ref{fig:artin} for the iterations of the Scott deformation in our example, until all of the singularities are transverse intersection points.
	When working with a decorated germ $(\mathcal{C},w)$, with the marked points of $w$ initially concentrated at 0,  the same blow-up
	procedure will separate the marked points. Indeed, if there are additional marked points which increase the weight, they can be separated 
	by additional iterations of the blow-ups and translations, so that at the end all marked points are disjoint. (In this sense the scheme $w^s$ is reduced.)
	Note that the total weight $w(C_i)$ of each component is equal to the total number of marked points on that component (including the intersection points).

	\begin{figure}
		\centering
		\includegraphics[scale=.55]{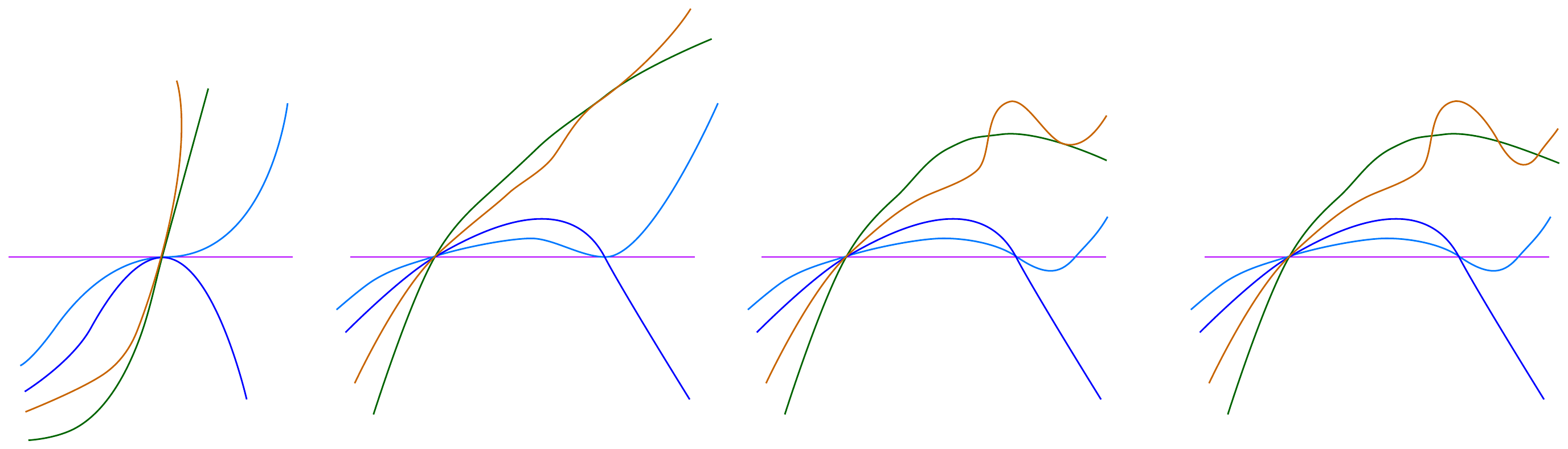}
		\caption{A Scott deformation applied iteratively until all intersections are transverse.}
		\label{fig:artin}
	\end{figure}

	When the components of $\mathcal{C}$ are smooth,  the result of this deformation is as follows. If some components of $\mathcal{C}$  were tangent 
	to order $r_1$ before the deformation, they  will all pass through the same $r_1$ transverse multi-points $p_{i_1},\dots, p_{i_{r_1}}$.
	If another component of $\mathcal{C}$ intersect these components with multiplicity $r_2<r_1$ before the deformation, 
	this component will pass through $r_2$ of these points afterwards.  The total number of intersection points appearing
	on the Scott deformation of a component $C_i$ is precisely $t(C_i)$, the highest possible order of tangency between $C_i$ 
	and another branch in the original germ $\mathcal{C}$. In this sense, the intersection points are used as efficiently as possible. The number of additional marked points on $C_i$ is $w(C_i)-t(C_i)$.


	
	Now, consider the Lefschetz fibration constructed from the Scott deformation via Lemma \ref{construct-fibration}. 
	We claim that up to a curve isotopy, the vanishing cycles of this fibration are 
	disjoint curves on the planar page. The reason for this is built into the iterative nature of 
	the Scott deformation, which results in a nesting of the vanishing cycles as follows.
	
	Consider the equivalence relations on the components $C_1,\dots, C_m$ 
	of the germ $\mathcal{C}$ defined by $C_i\sim_{l} C_j$ if $C_i$ and $C_j$ 
	intersect at $0$ with multiplicity at least $l$. The transitivity of
	this relation comes from the fact that if $C_1$ intersects $C_2$ with 
	multiplicity $r$ at $0$ and $C_2$ intersects $C_3$ with multiplicity $s$ at $0$, 
	then $C_1$ must intersect $C_3$ with multiplicity at least $\min\{r,s\}$. 
	These equivalence relations induce partitions of the components of $\mathcal{C}$, and $\sim_l$ refines $\sim_{l'}$ for $l>l'$.
	
	If we apply the Scott deformation procedure iteratively, on the first iteration, we obtain one transverse intersection of all 
	of the branches (the singularity of type \ref{i:mtuple}) which groups the components of $\mathcal{C}$ according 
	to the (unique) block of the partition induced from $\sim_1$. Applying the Scott deformation 
	procedure to all the singularities of type \ref{i:others}, generates a transverse multi-point of type \ref{i:mtuple} 
	for every block in the partition induced by $\sim_2$. Iterating this procedure, we obtain a transverse intersection 
	for every block of each partition $P_l$ induced by $\sim_l$, for $l\geq 1$. 
	For sufficiently large $l$, each block will consist of a single smooth component, 
	and thus no new transverse intersections of type \ref{i:mtuple} will result from the procedure. 
	When a block contains a single element, there may or may not be additional marked points placed.
	Instead of using the partition and Scott deformation to place additional marked points, 
	we can simply use the formula that $C_i$ must have $w(C_i)-t(C_i)$ total additional marked points.
	
	Recall that there is one vanishing cycle in the Lefschetz fibration for each marked point of the Scott deformed curve, 
	and this vanishing cycle encircles the punctures/holes corresponding to the components of curves which pass through
	the given marked point. Because the equivalence relations~$\sim_l$ refine each other as $l$ increases,
	the subsets of $C_i$ which intersect at the $(l+1)^{st}$ iteration are nested within the subsets of $C_i$ 
	which intersect at the $l^{th}$ iteration. Moreover, because the isotopy in the blow-up procedure 
	can be made arbitrarily small, we can assume that there is no braiding of the components $C_i$ between the $l^{th}$
	and $(l+1)^{st}$ iterations (see Section \ref{find-curvettas-for-filling} for more details on how braiding of
	the curves can occur and be understood in general).
	More specifically, observe that in the Scott deformation procedure, 
	as in Figure~\ref{fig:ScottLoc}, the deformation from right to left in the blow-up (at the top of the figure) can be performed 
	by an arbitrarily small translation of the exceptional divisor. 
	By making the translation 
	sufficiently small, we can ensure that in each subset intersecting at the $(l+1)^{st}$ iteration, the curves stay close together and 
	do not interact with another such subset. (In the language of Section~\ref{find-curvettas-for-filling},   non-trivial braiding would correspond to a crossing of
	the wires, and a small translation ensures that the wires cannot cross in between  the singularities produced 
	iteratively by the Scott deformation.) 
	Then, the vanishing cycles corresponding
	to the intersections of type \ref{i:mtuple} which are introduced at the $(l+1)^{st}$ iteration will be nested inside
	(and thus disjoint from) the vanishing cycles corresponding to the intersections of type \ref{i:mtuple} 
	introduced at the $l^{th}$ iteration. We can also assume that any two vanishing cycles introduced in this
	way at the $l^{th}$ iteration are disjoint because the application of Lemma \ref{construct-fibration} to the 
	Scott deformation actually realizes these Lefschetz singularities simultaneously in the same fiber 
	(we can later perturb so they arise in different fibers if desired). Finally,
	the additional marked points at smooth points of the $C_i$ correspond to vanishing cycles which
	are boundary parallel to the $i^{th}$ hole, and thus can be realized disjointly from each other and all other vanishing cycles. 
	Thus we conclude that the Scott deformation yields a Lefschetz fibration with disjoint vanishing cycles.  This means that the compatible planar open book for the link $(Y, \xi)$ has monodromy which 
	is a product of positive Dehn twists about the \emph{disjoint} curves described above.
	Because at the first step we get a transverse intersection of 
	{\em all} deformed curvettas, the corresponding vanishing cycle encloses {\em all} holes, i.e. we have a Dehn twist about the outer boundary component of the page. 
	%
\end{proof}

\subsection{Symplectic resolution and Lefschetz fibrations.}
It is noted in \cite{dJvS} that the Scott deformation corresponds to the Artin smoothing, which in this situation is diffeomorphic to the resolution of the singularity.
In fact, we can see more directly, through symplectic topological means, that the Lefschetz fibration corresponding
to this Scott deformation gives a plumbing which necessarily corresponds to the resolution of the singularity.

We recall the procedure of \cite[Theorem 1.1]{GayMark}. Starting with the plumbing graph $G$,
this procedure produces a planar Lefschetz fibration compatible with the symplectic resolution of a rational  singularity with reduced fundamental 
cycle. (The sympectic structure on the plumbing can be deformed to the corresponding Stein structure.)  In fact,
\cite[Theorem 1.1]{GayMark} applies to a wider class of singularities (see Subsection~\ref{unexp-non-r} below), but we first describe it for this particular case.
To construct the fiber of the Lefschetz fibration, 
take a sphere $S_v$ for each vertex  $v \in G$ and cut out  $-a(v) - v \cdot v \geq 0$ disks out of this sphere. (As before, $a(v)$ is the valency
of 
the vertex $v$; the number of disks is non-negative by~\eqref{valency-weight}.) 
Next, make a connected sum of these spheres with holes  by adding a connected sum neck for each 
edge of $G$.  For a sphere $S_v$
corresponding to the vertex $v$, the number of necks equals the number of edges adjacent to $v$, i.e. its valency $a(v)$. 
The resulting surface $S$ has genus 0 because $G$ is a tree. See {the} top of Figure~\ref{fig:outer} for an example.

\begin{prop} \cite[Theorem 1.1]{GayMark} \label{gay-mark-fibration}
	The surface $S$ constructed above is the fiber of a Lefschetz fibration on a symplectic
	neighborhood of symplectic surfaces intersecting $\omega$-orthogonally according to the graph $G$. 
	The vanishing cycles are given by the curves parallel to the boundaries of the holes (one curve for each hole) and the {cores} of the necks of the connected sums.
\end{prop}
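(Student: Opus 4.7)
The plan is to build the Lefschetz fibration inductively on the plumbing graph, starting with the well-known model for a single disk bundle and extending across edges by ``cutting and pasting'' at the boundary tori. First, consider the base case of a single vertex $v$ with no edges, corresponding to the disk bundle $D_v$ of degree $v\cdot v=-n$ over $S^2$. The standard planar Lefschetz fibration here has fiber the sphere $S_v$ with $n$ holes and vanishing cycles $\delta_1,\dots,\delta_n$ parallel to each hole; this can be verified either from the well-known presentation of $D_v$ as the result of $n$ Lefschetz-handle attachments to $S_v \times D^2$ along boundary-parallel curves, or from the Euler characteristic and monodromy computation (the product of $n$ boundary twists on the $n$-holed sphere is isotopic in the fiber to the identity on the binding complement, consistent with $\partial D_v = L(n,n-1)$).

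Next, for the inductive step, assume the fibration has been built for a subtree $G'\subset G$ yielding a symplectic plumbing $P'$, and consider a new vertex $w$ adjacent to $v\in G'$. The plumbing $P = P'\cup_T D_w$ glues $D_w$ to $P'$ along a solid torus neighborhood of a fiber disk in $D_v$. The key topological observation is that this plumbing operation corresponds on the Lefschetz-fibration side to removing one boundary-parallel vanishing cycle $\delta_h$ around a chosen hole $h$ of $S_v$, deleting that hole from $S_v$ and a matching hole from $S_w$, gluing in a connect-sum neck between the two spheres, and inserting one new vanishing cycle along the core circle $\gamma_{vw}$ of the neck. Checking this identification reduces to a local computation in the neighborhood of the plumbing point: the Lefschetz singularity associated to $\gamma_{vw}$ is the standard node $xy=0$, whose two branches $\{x=0\}$ and $\{y=0\}$ are precisely the sections realizing the two symplectic spheres meeting $\omega$-orthogonally.

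Iterating this procedure across all edges of the tree $G$ produces exactly the fiber $S$ described in the proposition: each vertex $v$ contributes a sphere with $n_v = -v\cdot v$ holes, of which $a(v)$ are spent on necks and $n_v-a(v)=-v\cdot v - a(v)\geq 0$ remain as boundary components carrying boundary-parallel vanishing cycles, while each edge contributes a core vanishing cycle of its neck. A quick Euler-characteristic check confirms that $S$ is planar and that the total space has $\chi = |V|+1$, matching the plumbing. To upgrade from topological to symplectic, equip each local disk bundle with its standard rotationally symmetric Kähler form $\omega_v$ making the zero section symplectic, and observe that the plumbing gluing identifies these forms in Darboux charts where the two zero sections appear as the complex axes in $\mathbb{C}^2$, hence automatically $\omega$-orthogonal; the Lefschetz fibration structure is compatible because each singular fiber models the complex-analytic node.

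The main obstacle I expect is verifying the symplectic compatibility at the gluing: one must ensure that the plumbing symplectic form, the local symplectic Lefschetz-fibration form on each piece, and the global form on $P$ can be simultaneously arranged in a single isotopy class near the neck. This is delicate because the neck region interpolates between two Lefschetz fibrations over different base coordinates. The standard way around this is to work in a tubular neighborhood where the normal bundles admit a canonical product structure and to apply a Moser-type argument to align the symplectic forms, together with a careful choice of symplectic parallel transport so that the spheres $S_v$ and $S_w$ remain $\omega$-orthogonal after gluing. Once this local symplectic gluing is established, the inductive construction runs through without further difficulty.
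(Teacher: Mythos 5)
First, note that the paper offers no proof of Proposition~\ref{gay-mark-fibration}: it is quoted directly from Gay--Mark \cite[Theorem 1.1]{GayMark}, so there is no in-paper argument to compare against. Your inductive skeleton (disk bundle as base case, one neck and one neck vanishing cycle per edge, the Euler characteristic check $\chi=2|V|-|E|=|V|+1$, and the count that each vertex $v$ retains $-v\cdot v-a(v)\geq 0$ boundary-parallel twists, which implicitly uses hypothesis~\eqref{valency-weight}) is a reasonable organization of the topological content.

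There are, however, two genuine gaps. The first is conceptual: you describe the two branches $\{x=0\}$, $\{y=0\}$ of the node as ``sections realizing the two symplectic spheres.'' They are not sections. In the Gay--Mark fibration the plumbing configuration is the union of the \emph{closed components of a singular fiber}: realizing all critical points in one fiber and collapsing each vanishing cycle, $S_v$ caps off to a closed sphere $\hat S_v$ whose self-intersection is minus the number of vanishing cycles along $\partial S_v$, and the neck vanishing cycle collapses to the node where $\hat S_v$ meets $\hat S_w$ (this is exactly the reading that Proposition~\ref{Artin-factor} of this paper runs in reverse). A section of a Lefschetz fibration over the disk is a disk mapping bijectively to the base, whereas the spheres here are closed and lie inside a single fiber; if you carry out your ``local computation at the plumbing point'' with the spheres as sections, you land in the situation of Lemma~\ref{construct-fibration} (complement of proper transforms), not in the plumbing, and the identification of the total space fails. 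The second gap is that the symplectic assertion --- which is the actual substance of the theorem --- is deferred to ``a Moser-type argument.'' The inductive gluing is precisely where the work lies: one must exhibit a single symplectic form on $P'\cup_T D_w$ for which every $\hat S_v$ is symplectic, the intersections are $\omega$-orthogonal, the fibration has symplectic fibers with the standard local model at each critical point, and the boundary is convex. Gay--Mark achieve this by constructing the form and the fibration simultaneously from explicit local models, not by matching two pre-existing fibrations across the neck; a Moser argument would require cohomologous forms and controlled behaviour on the overlap, neither of which your setup provides. As written, the last paragraph of your proposal states the problem rather than solving it.
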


Let $\ti{X}$ be the Milnor fiber of the Artin smoothing component for a rational $(X, 0)$ with reduced fundamental cycle; $\ti{X}$ is a Stein filling for the contact link 
$(Y, \xi)$. We now have several different Lefschetz fibration structures on $\ti{X}$. First, because $\ti{X}$ is diffeomorphic to the minimal resolution of $(X, 0)$, 
a Lefschetz fibration is produced by the Gay--Mark construction of Proposition~\ref{gay-mark-fibration}. Second, for each choice of the decorated germ $(\mathcal{C}, w)$ with smooth branches,  the proof 
of Proposition~\ref{artin-ob} also gives a Lefschetz fibration on $\ti{X}$. All these Lefschetz fibrations have planar fibers. In our construction of the Lefschetz fibration from
the curvetta arrangement, the general fiber has  a distinguished ``outer'' boundary component coming from the fibration $\pi: B \to \C$ on the Milnor ball 
$B=D_x \times D_y \subset \C^2$. 
In the Gay-Mark construction, there is no distinguished boundary component of the fiber. On the other hand, the decorated germ is not uniquely defined:
recall from Proposition~\ref{p:choices} that there are {$mult\;\! X$} choices of decorated germs with smooth branches representing $(X, 0)$,
where some of these germs may coincide  due to symmetries in the extension of the resolution graph.
Of course, since the link of the singularity is independent of the choice of curvetta germs, the Stein filling arising from the Artin smoothing should not depend on these choices.
We now show that the choice of curvettas corresponds precisely to the choice of the outer boundary component, so this choice 
only affects the presentation of the Lefschetz fibration.  See Figure~\ref{fig:outer} for an example.
\begin{figure}
	\centering
	\includegraphics[scale=1.2]{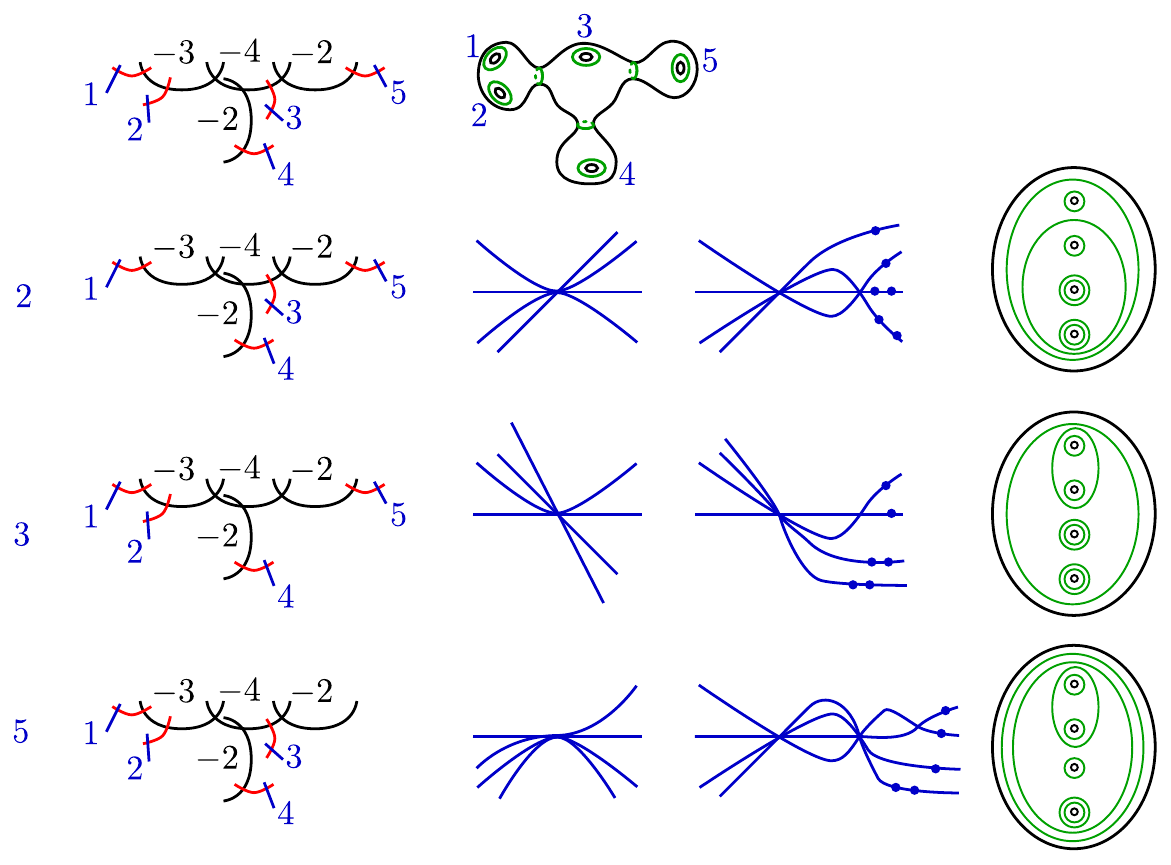}
	\caption{An example demonstrating different choices of curvettas correspond to different choices of outer boundary component
		for the fiber of the Lefschetz fibration. At the top we have the resolution configuration 
		and the corresponding Gay--Mark Lefschetz fiber with vanishing cycles. The resolution configuration is augmented with red $(-1)$ curves 
		and blue curvettas. For each choice of curvettas we delete exactly one of these red $(-1)$ curves and the corresponding curvetta. 
		We show the resulting curvettas, their Scott deformation, and the corresponding planar Lefschetz fibration obtained 
		from Lemma~\ref{construct-fibration} in the cases of excluding the $(-1)$ curves labeled $2$, $3$, and $5$. 
		Note that because of symmetries in the graph, the exclusion of $1$ or $2$ yield very similar looking cases, and similarly with the exclusion of $4$ or $5$.}
	\label{fig:outer}
\end{figure}

\begin{lemma} Let $\mathcal{L}$ be the planar Lefschetz fibration on $\ti{X}$ provided by  Proposition~\ref{gay-mark-fibration}. Then 
	the  {$mult\;\! X$} different choices of 
	smooth curvetta germs for $(X, 0)$ produce, via the Scott deformation, planar Lefschetz fibrations on $\ti{X}$ with a distinguished boundary component of the fiber. 
	The choices of smooth curvetta germs are in one-to-one  correspondence with the different choices of outer
	boundary component of the general fiber of $\mathcal{L}$.
\end{lemma}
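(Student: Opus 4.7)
The plan is to establish the bijection combinatorially, by introducing an intermediate object, the full extension $G''$, whose $mult\;\!X$ additional $(-1)$ leaves will be matched simultaneously with the $mult\;\!X$ boundary components of the Gay--Mark fiber and with the $mult\;\!X$ choices of decorated germ. First I would verify the numerical match: from the computation $\sum_{v\in G}(v\cdot v + a(v)) = -1-m$ carried out in the proof of Proposition~\ref{p:choices}, the Gay--Mark fiber has
$$\sum_{v\in G}(-a(v) - v\cdot v) \;=\; 1+m \;=\; mult\;\! X$$
boundary components, matching the $m+1$ boundary components of the Scott fiber (a disk with $m$ holes) and the $mult\;\!X$ bound on the number of decorated germs given by Lemma~\ref{lem:toptype}.

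Next I would build the bridge through $G''$. Recall $G''$ is obtained from $G$ by adding the maximal allowed number of $(-1)$ leaves, so that $v\cdot v + a(v) = 0$ for every non-root vertex and equals $-1$ at the root. Each of the $mult\;\!X$ added $(-1)$ leaves is attached to some vertex $v \in G$, and combinatorially this attachment consumes exactly one of the $-a(v) - v\cdot v$ holes in the sphere $S_v$ of the Gay--Mark construction. This gives a canonical bijection between the $(-1)$ leaves of $G''$ and the boundary components of the Gay--Mark fiber. A choice of decorated germ with smooth branches corresponds to choosing one $(-1)$ leaf $\ell$ to \emph{delete} from $G''$ to form $G'$, leaving the corresponding boundary component ``uncapped''; my claim to prove is that under the identification of Stein fillings, this uncapped component is precisely the outer boundary of the Scott Lefschetz fibration associated to $G'$.

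The core of the proof is to match the vanishing cycles of the two Lefschetz fibrations. I would root the tree $G'$ at the vertex adjacent to the deleted leaf $\ell$ (thought of as lying ``at infinity'' in the coordinate $x$), and then show that the iterative structure of the Scott deformation tracked in the proof of Proposition~\ref{artin-ob}, via the partitions $\sim_l$, matches this rooted tree. Concretely, the transverse multi-point produced at the $l$-th iteration corresponds to a vertex of $G$ at distance $l$ from the root, and the nested vanishing cycle it produces corresponds to a boundary-parallel curve of the associated hole together with the neck-cores of edges traversed on the way from the root; the additional $w(C_i) - t(C_i)$ free marked points account for the remaining boundary-parallel Dehn twists needed. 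Reading the outer boundary off the Scott construction as ``everything the projection $\pi_x$ sees from infinity'' identifies it with the boundary component labelled by $\ell$, and different choices of $\ell$ give different planar embeddings with different outer boundaries, yielding the claimed bijection.

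The main obstacle will be the precise matching of vanishing cycles: Gay--Mark produces them directly from the graph combinatorics, while the Scott construction produces them through iterated blow-ups and perturbations, and translating between these two descriptions requires careful bookkeeping of the rooted tree structure on $G'$, of the order in which multi-points arise (controlled by the refinement of the partitions $\sim_l$), and of the correct matching of boundary-parallel twists coming from free marked points with the remaining holes of the Gay--Mark fiber. Once this matching is established at the level of one decorated germ, the bijection follows automatically because the $mult\;\!X$ different choices of $\ell$ produce $mult\;\!X$ different outer boundaries via the canonical leaf-to-hole correspondence set up above.
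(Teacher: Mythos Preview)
Your approach is essentially the same as the paper's: both set up the bijection via the maximal extension $G''$, matching its $(-1)$ leaves to the $-v\cdot v - a(v)$ boundary components contributed by each vertex $v$ in the Gay--Mark fiber, and then observing that deleting one leaf to form $G'$ singles out exactly one boundary component as ``outer''.

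The paper's proof is considerably shorter than what you propose. It stops after your first two steps: once the remaining $(-1)$ leaves of $G'$ are matched with curvettas, and curvettas are matched with the inner holes of the Scott fiber (via Lemma~\ref{construct-fibration}), the deleted leaf is declared to correspond to the outer boundary essentially by elimination. The paper does \emph{not} carry out the vanishing-cycle matching you describe as the ``core of the proof''; it treats the correspondence as combinatorially evident from the leaf-to-hole bijection rather than verifying that the two Lefschetz fibrations agree cycle by cycle. Your proposed verification via the rooted-tree structure and the partitions $\sim_l$ would supply a more direct check that the Scott fibration really is the Gay--Mark fibration with the specified outer boundary, but for the lemma as stated the paper is content with the numerical bijection.
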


\begin{proof}
	As before, we associate to each vertex of the resolution graph $G$ for the singularity the quantities $v\cdot v$ for the self-intersection
	and $a(v)$ for the valency. In the Gay-Mark Lefschetz fibration $\mathcal{L}$,  each vertex $v \in G$  contributes  $-v\cdot v-a(v)$ boundary components to the fiber. 
	On the other hand, recall from the proof of Propositions~\ref{p:resolution} and~\ref{p:choices} that the germ $\mathcal{C}$ of smooth curvettas is obtained from an extension of the resolution graph $G$ to a graph $G'$. We attach $-v\cdot v-a(v)$ vertices with self-intersection $-1$ and valency $1$ to each vertex $v$ to obtain a graph $G''$ and then delete exactly one of these $(-1)$ vertices to get the graph $G'$. This shows 
	that the number of choices for the germ matches the number of boundary components of the fiber of $\mathcal{L}$, {and this number is exactly $mult\;\! X=-\sum(v\cdot v+a(v))$}. 
	The curvetta branches of the germ $\mathcal{C}$ are obtained by taking disks dual to the remaining $(-1)$ vertices and considering 
	their proper transform after blowing down all exceptional divisors; thus the curvettas are in one-to-one correspondence with the $(-1)$ vertices of $G'$.
	In turn, in the Lefschetz fibration constructed by Lemma~\ref{construct-fibration}, the ``inner'' boundary components of the fiber are in one-to-one 
	correspondence with the curvettas. 
	The deleted $(-1)$ vertex in $G''$ still corresponds 
	to a boundary component in the fiber of the Gay-Mark Lefschetz fibration $\mathcal{L}$, thus we can say that it corresponds to the outer boundary 
	component of the fiber of the planar Lefschetz fibration produced by Lemma~\ref{construct-fibration}. 
	Note also that if we enumerate the $(-1)$ vertices of the graph $G'$ by $1, 2, \dots, m=mult\;\! X-1$, we get an enumeration of the components of $\mathcal{C}$, which in turn gives an enumeration of the holes of the fiber.
\end{proof}

Recall from Remark~\ref{rem:toptype} that there may be different analytic types of singularities with the same link $(Y, \xi)$. These singularities are all topologically 
equivalent and have the same graph $G$, so that decorated germs for each these singularities are obtained from extensions of $G$. 
A particular choice of extension gives topologically equivalent decorated germs for all singularities with link $Y$. Topologically equivalent germs yield the same open book
decompositions of $(Y, \xi)$ as in Proposition~\ref{artin-ob}, since the weights and the orders of tangency between branches are encoded by the topological type. Together with the previous proposition, 
this gives

\begin{cor} Let $(Y, \xi)$ be a link of surface singularity with reduced fundamental cycle. Then for  any singularity $(X, 0)$ whose link is $Y$ and any choice of 
the decorated germ $\mathcal{C}$ for $(X, 0)$ with smooth branches, the open book decomposition of $(Y, \xi)$ defined by $\mathcal{C}$ is the same; namely, 
the open book induced by the Gay-Mark Lefschetz fibration. Different extensions $G'$
of the resolution graph $G$ used to construct $\mathcal{C}$ correspond to different choices of the outer boundary of the page of the open book. Enumeration of the branches of 
$\mathcal{C}$ (or equivalently, of the $(-1)$ vertices of $G'$) corresponds to enumeration of the holes in the page. 
\end{cor}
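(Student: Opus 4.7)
The plan is to assemble the corollary directly from three ingredients already on the table: Remark~\ref{rem:toptype}, Proposition~\ref{artin-ob}, and the preceding lemma identifying curvetta choices with outer boundary choices in the Gay--Mark fibration.

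First I would fix a singularity $(X,0)$ with link $(Y,\xi)$ and a decorated germ $\mathcal{C}$ with smooth branches, arising from some extension $G'$ of the minimal good resolution graph $G$. Applying Proposition~\ref{artin-ob} to the Scott deformation of $\mathcal{C}$ produces a planar open book on $(Y,\xi)$ together with a disjoint-twist factorization of its monodromy. The key observation is that the combinatorial data feeding this factorization, namely the number of branches $m$, the weights $w(C_i)$, and the pairwise orders of tangency $\mathrm{tang}(C_i,C_j)$, are all encoded by the topological type of $\mathcal{C}$ (equivalently, by the graph $G'$), not by its analytic type. Hence for a second singularity $(X',0)$ with link $Y$, Remark~\ref{rem:toptype} lets me choose a decorated germ $\mathcal{C}'$ associated to the \emph{same} extension $G'$ which is topologically equivalent to $\mathcal{C}$; the resulting Scott factorization has identical page (a disk with $m$ holes) and identical Dehn twist data up to the labeling of the holes. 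Since a planar open book is determined by its page and monodromy factorization up to positive Hurwitz moves, this produces the same open book on $(Y,\xi)$.

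Next I would identify this open book with the Gay--Mark one. The previous lemma constructs a bijection between the smooth curvetta germs for $(X,0)$ and the choices of outer boundary component of the Gay--Mark fiber, under which the Scott Lefschetz fibration of Lemma~\ref{construct-fibration} realizes the Gay--Mark plumbing fibration of Proposition~\ref{gay-mark-fibration} with the chosen boundary component pushed outward. Since this identification takes place inside a Stein filling of $(Y,\xi)$, the induced open book on the boundary agrees with the Gay--Mark one. Thus every choice of $\mathcal{C}$ yields the same open book on $(Y,\xi)$; different extensions $G'$ only record which boundary component of the Gay--Mark fiber is declared ``outer,'' and the enumeration of the $(-1)$ leaves of $G'$ matches the enumeration of the inner holes via the dual transverse curvettas.

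Finally I would remark that independence of the analytic representative $(X,0)$ is now automatic: contactomorphic links share the same graph $G$ by Neumann, the set of admissible extensions $G'$ depends only on $G$, and the Scott monodromy factorization depends only on the combinatorics of $G'$. The step that requires the most care is verifying that two topologically equivalent germs really do produce the same factorization on the nose (not merely up to some unrelated mapping class), but this follows because the iterative Scott procedure in the proof of Proposition~\ref{artin-ob} is driven entirely by the partitions $\sim_l$ on the branches, which are visibly invariants of the topological type of $\mathcal{C}$. I expect no genuine obstacle beyond bookkeeping of the labeling conventions for holes and for the chosen outer boundary.
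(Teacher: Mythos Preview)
Your proposal is correct and follows essentially the same approach as the paper: the corollary is stated there as an immediate consequence of Remark~\ref{rem:toptype} (topologically equivalent singularities yield topologically equivalent decorated germs for the same $G'$), Proposition~\ref{artin-ob} (the Scott factorization depends only on the weights and pairwise tangency orders, hence only on the topological type of $\mathcal{C}$), and the preceding lemma (curvetta choices correspond to outer-boundary choices in the Gay--Mark fibration). Your write-up is more explicit than the paper's two-line justification but adds nothing beyond unpacking these same three ingredients.
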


It is interesting to note that the Milnor fiber of the Artin 
smoothing is the {\em only} Stein filling with disjoint vanishing cycles in its Lefschetz fibration.  

\begin{prop}\label{Artin-factor} 
	Suppose a planar Lefschetz fibration has disjoint vanishing cycles, with at least one boundary parallel vanishing cycle for each boundary component.
	Then this is a Lefschetz fibration for the Artin smoothing of a rational singularity with reduced fundamental cycle.
	In particular, the induced open book decomposition on the boundary supports the contact link of a rational singularity with reduced fundamental cycle.
\end{prop}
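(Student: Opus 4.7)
The plan is to reverse-engineer the Gay--Mark construction of Proposition~\ref{gay-mark-fibration}. Given a planar Lefschetz fibration whose vanishing cycles are disjoint and include at least one boundary-parallel cycle around each boundary component, I will extract a plumbing tree $G$ satisfying the reduced-fundamental-cycle inequality, and then match the given Lefschetz fibration with a Scott-deformation model from Proposition~\ref{artin-ob} for a rational singularity whose minimal resolution graph is $G$.

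First I build the graph. Let $F$ be the planar fiber, a disk with $m$ holes (so $m+1$ boundary components in total), and let $c_1,\dots,c_N$ denote the disjoint vanishing cycles. Each $c_i$ is either boundary-parallel in $F$ or separates $F$ into two essential planar pieces. After isotoping the $c_i$ to be pairwise geometrically disjoint, I cut $F$ along the non-boundary-parallel cycles to obtain connected pieces $P_1,\dots,P_n$, and form a graph $G$ with a vertex $v_j$ for each $P_j$ and an edge between $v_j$ and $v_k$ for each non-boundary-parallel cycle separating them. Planarity of $F$ together with the separating property of each cut forces $G$ to be a tree. Assign self-intersection $v_j\cdot v_j=-|\partial P_j|$. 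Writing $a(v_j)$ for the valency of $v_j$ (the number of cut circles in $\partial P_j$) and $h(v_j)$ for the number of boundary components of $P_j$ inherited from $\partial F$, we have $|\partial P_j|=a(v_j)+h(v_j)$, so $-v_j\cdot v_j-a(v_j)=h(v_j)\geq 0$, which is precisely the reduced-fundamental-cycle inequality~\eqref{valency-weight}. Since an essential separating cycle in a planar surface cannot bound a disk, $|\partial P_j|\geq 2$ whenever any cut is present, so $G$ contains no $(-1)$-vertices in any nontrivial case and represents the minimal good resolution graph of some rational singularity $(X,0)$ with reduced fundamental cycle.

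Next I match the Lefschetz fibration to the Scott model for $(X,0)$. The Gay--Mark construction for $G$ produces a planar fiber $\widehat F$ by connect-summing spheres $S_{v_j}$ with $h(v_j)$ holes along necks at each edge, with vanishing cycles consisting of one boundary-parallel curve per hole together with the $n-1$ neck cores. Under the cut decomposition, $\widehat F$ is diffeomorphic to $F$, the neck cores are identified with our non-boundary-parallel $c_i$, and the $h(v_j)$ holes of each $S_{v_j}$ correspond to the boundary components of $F$ lying on $\partial P_j$; the hypothesis of at least one boundary-parallel cycle per boundary of $F$ ensures that each hole is decorated with at least the Gay--Mark boundary-parallel twist. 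By Proposition~\ref{artin-ob}, the Scott deformation of a decorated germ $(\mathcal{C},w)$ for $(X,0)$ yields a factorization with $w(C_i)-t(C_i)$ boundary-parallel twists around the $i$-th hole plus at least one about the outer boundary; choosing an extension $G'\supset G$ and weights so that the Scott factorization reproduces our factorization identifies the total space with the Artin Milnor fiber of $(X,0)$ and its induced boundary open book with the one supporting the canonical contact structure on the link of $(X,0)$.

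The \textbf{main obstacle} is ensuring that the boundary-parallel multiplicities appearing in our factorization can be realized by a Scott deformation: by Lemma~\ref{lem:toptype}, the admissible weights $w(C_i)=1+l(v_0,v_i)$ are determined by the combinatorics of the extension $G'\supset G$, of which there are $mult\;\! X$ choices (Proposition~\ref{p:choices}). Verifying that any valid boundary-parallel count can be matched requires a careful analysis exploiting the freedom in the choice of $G'$, together with an Euler characteristic comparison showing the total space has the expected topological invariants of the minimal resolution. The delicate point is to confirm that extra boundary-parallel Dehn twists, beyond the minimal one per boundary in Gay--Mark, correspond exactly to free marked points in a Scott deformation and do not introduce additional independent $(-1)$-spheres in the total space.
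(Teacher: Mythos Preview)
Your construction of the tree $G$ and the verification of~\eqref{valency-weight} are correct, but the detour through decorated germs and Scott deformations both creates the obstacle you describe and rests on a misconception. You treat the weights $w(C_i)$ as parameters you can adjust for a fixed singularity $(X,0)$ with resolution graph $G$, hoping the $mult\,X$ choices of extension $G'\supset G$ give enough freedom. But the weights are part of the data \emph{defining} the singularity: increasing $w(C_i)$ by one appends a $(-2)$-vertex to the resolution graph. Your $G$ is the resolution graph of the correct singularity only in the minimal case when every boundary component carries exactly one parallel twist.

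The paper avoids this entirely by cutting along \emph{all} vanishing cycles, including the boundary-parallel ones. If a hole has $k\geq 2$ parallel boundary-parallel twists, the $k-1$ annuli between consecutive copies each become closed $(-2)$-spheres in the singular fiber, contributing a chain of $(-2)$-vertices to the graph; only the innermost annulus, which touches $\partial F$, becomes an open disk and is discarded. The correct resolution graph is thus your $G$ with these $(-2)$-chains appended, and running Gay--Mark on \emph{that} graph reproduces the given Lefschetz fibration exactly, so no multiplicity-matching is needed. Your worry about stray $(-1)$-spheres is misplaced: the extra boundary-parallel twists produce $(-2)$-spheres, so the graph stays minimal.

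More fundamentally, the paper identifies the total space with the plumbing directly, never passing through decorated germs at all. Since the vanishing cycles are disjoint, one realizes all Lefschetz singularities in a single fiber, and the regular fibers exhibit the total space as a regular neighborhood of the resulting nodal configuration of spheres. Negative definiteness of the graph then follows from planarity via~\cite{Etn-planar}. You try instead to match monodromy factorizations to a known model, which is both harder and unnecessary for the stated conclusion.
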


\begin{proof} 
	As in \cite{GayMark}, if the vanishing cycles are disjoint, we can realize all Lefschetz singularities simultaneously in the same fiber. The unique singular fiber is thus a configuration of spheres intersecting transversally according to a graph. 
	Note that the boundary parallel twists are important to ensure that the only non-closed components of the singular
	fiber are disks which retract to a point. (These disks come from the small annuli around the holes.) The non-singular fibers provide a regular neighborhood for the configuration, so the entire $4$-manifold is a symplectic plumbing. 
	This $4$-manifold gives a symplectic filling for a contact structure supported by a planar open book, thus by \cite{Etn-planar} its intersection form is negative definite,
	i.e. the plumbing graph $G$ is negative definite.  Thus, the graph can be thought of as the resolution graph of a normal surface singularity $(X, 0)$.
	
	As in \cite{GayMark}, $-v \cdot v \geq a(v)$ for each vertex $v \in G$, so $(X, 0)$ is a rational singularity with reduced fundamental cycle. To see this, observe that each vertex $v \in G$ corresponds to 
	a closed component $\hat{S}_v$ of the singular fiber. Alternatively, $\hat{S}_v$ can be viewed as the union of a component $S_v$ of the complement of the vanishing cycles in a regular fiber capped off by thimbles for each of its boundary vanishing cycles. Then,  $v \cdot v=\hat{S}_v \cdot \hat{S}_v$ equals 
	the negative number of thimbles in $\hat{S}_v$, or equivalently the negative number of vanishing 
	cycles on the boundary of $S_v$ (see \cite[Proposition 2.1]{GGP}). The valency $a(v)$ is the number of other spheres in the singular fiber intersecting $\hat{S}_v$. 
	Put differently, $a(v)$ is the number of closed surfaces $\hat{S}_{v'}$, $v'\neq v$, such that $S_v$ and $S_{v'}$ share a vanishing cycle in their boundaries; thus $a(v)$ is the number 
	of the vanishing cycles in $\partial S_v$ that are not adjacent to a boundary component in the fiber. Then $-v \cdot v - a(v)$ is the number of vanishing cycles adjacent to a boundary component in 
	$\partial S_v$, so $-v \cdot v - a(v) \geq 0$, as required. Note also that $v \cdot v \leq -2$ as 
	each $S_v$ has at least 2 vanishing cycles on the boundary, so $G$ is the graph of the {\em minimal} resolution.
	
	The above discussion implies that if we run the construction of Proposition~\ref{gay-mark-fibration} for the graph $G$, we recover the given Lefschetz fibration. It follows that 
	our Lefschetz fibration is compatible with the symplectic structure on the minimal resolution.
	For a rational singularity, the resolution is diffeomorphic to the Milnor fiber  of the Artin smoothing (and the symplectic structure on the symplectic plumbing deforms to the corresponding 
	Stein structure).  This shows that the Lefschetz fibration produces the same filling as the Artin smoothing.
\end{proof}

{\subsection{A digression: some non-rational singularities and potential unexpected fillings.} \label{unexp-non-r}
Although we stated Proposition~\ref{gay-mark-fibration} for rational singularities, 
Theorem 1.1 of \cite{GayMark} is more general: the same construction works when the normal crossings resolution has exceptional curves of higher genus, as long as 
condition~(\ref{valency-weight}) is satisfied. The fiber of the corresponding Lefschetz fibration is formed by taking the connected sum of surfaces given by the exceptional 
curves and cutting $-v \cdot v - a(v) \geq 0$ holes in the surface corresponding to $v \in G$. As before, the vanishing cycles are given by the boundary parallel curves around the holes 
and the curves around the connected sum necks. We can use this construction together with monodromy factorizations of~\cite{BMVHM} {to construct infinite collections of Stein fillings for links of certain non-rational singularities.} 
\begin{figure}[htb]
	\centering
	\includegraphics[scale=.6]{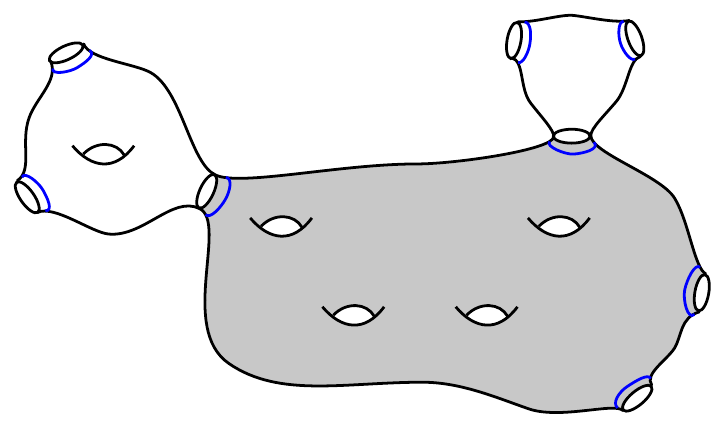}
	\caption{The Gay-Mark Lefschetz fibration for the resolution of a non-rational singularity which admits infinitely many unexpected fillings. The subsurface of genus~$4$ with $d=4$ used to produce infinitely many monodromy factorizations is shaded. {Vanishing cycles are drawn in blue.}}
	\label{fig:highgenus}
\end{figure}

Indeed, suppose that a normal surface singularity $(X,0)$ has a good resolution such that 
one of the exceptional curves has genus $g\geq 2$ and
self-intersection $-d$, with $d \leq 2g -4$. As before, we assume that the resolution graph has no bad vertices, i.e. satisfies~(\ref{valency-weight}).
Then the fiber of the Lefschetz fibration from \cite[Theorem 1.1]{GayMark} has a subsurface of 
genus $g$ with some necks and holes, and a vanishing cycle around each neck and each 
hole. (See Figure~\ref{fig:highgenus}.) The total number of these vanishing cycles is $d$.  
We can cut out this subsurface along the curves parallel to the vanishing cycles to get a surface of genus $g$ with $d$ holes, so that the product of the Dehn twists around the 
vanishing cycles is the boundary multi-twist. For $d\leq 2 g -4$, \cite[Theorem A]{BMVHM} establishes that the boundary multi-twist has infinitely many positive factorizations as products of Dehn twists about non-separating curves. These factorizations can consist of arbitrarily many Dehn twists.   
It follows that the monodromy of the corresponding open book on the link $(Y, \xi)$ has infinitely many positive factorizations, 
each of which produces a positive allowable Lefschetz fibration (see~\cite{AkbOz}) and thus a Stein filling; these Stein fillings can have arbitrarily high Euler characteristic. We ask

\begin{question}
 Does the above construction produce any unexpected Stein fillings?
\end{question}

To answer this question, one would need to contrast these Stein fillings and the Milnor fibers of {\em all} surface 
singularities with the given link.  Each fixed singularity can only have finitely many Milnor fibers. (Indeed, the Milnor fibers correspond to the components of the base of miniversal deformation; the base  is 
a germ of an analytic space,  and as such it can only have finitely many components, see e.g. \cite[Theorem 4.10 and discussion in section 7]{PPP}.) However, because of the presence of a higher-genus surface in the resolution, 
every singularity as above is not (pseudo)taut \cite{Lauf}, which means that there exist infinitely many analytic types of singularities with the same dual resolution graph, and thus the same contact link. We are interested in the Stein topology of the Milnor fibers, which is more coarse than the analytic type;  in principle, it is possible  that the infinite collection of analytic types of the singularity would only give rise to finitely many Stein homotopy types for the Milnor fibers. Thus, we have the following dichotomy: either (1) there are only finitely many Stein homotopy types (or diffeomorphism types) of the Milnor fibers, which would imply existence of unexpected fillings, or (2)~an infinite collection of possible analytic types gives rise to an infinite collection of pairwise distinct Stein fillings. Establishing either outcome would be extremely interesting, even for a single example.  

It should also be noted that in the non-rational case, one should in principle consider non-normal singularities as well, as these might generate additional Stein fillings, see \cite{PPP2} for a detailed discussion of this issue 
(which doesn't arise in the rational case).}  

\begin{remark} In a related direction, it is interesting to give a closer look at a family of examples given by cones over curves. Consider a normal surface singularity whose resolution
has a unique exceptional curve of genus $g\geq 2$ with self-intersection $-d$, for $d>0$.  The resolution is the total space of the complex line bundle of degree $d$ over the corresponding Riemann surface,
and the singularity can be thought of as cone point.  The link is a circle bundle over the genus $g$ surface, with Euler number $-d$. 
The canonical contact structure is the Boothby--Wang structure, which has an open book decomposition as described above: 
the page is a genus $g$ surface with $d$ boundary components, and the monodromy is the boundary multi-twist.

As explained above, for $d\leq 2 g-4$, we have an infinite collection of Stein fillings,
produced by factorizations of the multi-twist. Interestingly, this method no longer applies when $d>4g+4$: in that range, the 
boundary multi-twist admits no non-trivial positive factorizations, again by \cite[Theorem A]{BMVHM}.
On the other hand, for $d>4 g+4$, the singularity is realized by an affine cone over a projective curve and is known to be non-smoothable~\cite{Tendian}. In fact, it is also known that the resolution gives the unique Stein filling in this case \cite[Proposition 8.2]{OhtaOno1}.

Similarly, for cones over elliptic curves, i.e. $g=1$,  the singularity is non-smoothable for 
$d>9$ \cite{Pinkham}, and the only Stein filling is indeed given by the resolution, while for $d \leq 9$, all
Stein fillings are given by smoothings and resolutions, \cite{OhtaOno2}. 
  \end{remark}

\section{Every symplectic filling comes from a symplectic deformation of curvettas} \label{find-curvettas-for-filling}

\subsection{Braided wiring diagrams}
	A braided wiring diagram is a generalization of a braid in $ \R \times \C$ (where the braid condition means that 
	the curves should be transverse to each $\{t\}\times \C$). In a wiring diagram, instead of only looking at smooth 
	braids, we allow the strands to intersect. Let $\pi_\R: \R\times \C\to \R$ denote the projection to the first coordinate. {We will also use the natural projection from $\C$ to $\R$ sending a complex number its real part.}
	

	\begin{definition}\label{def:wiring}
		A \emph{braided wiring diagram} is a union of curves $\gamma_j: \R \to \R\times \C$, $j=1,\dots, n$, each of 
		which is a section of the projection $\pi_\R: \R\times \C\to \R$,  i.e. each ``wire'' is given by
		$\gamma_j(t)=(t, h_j(t)+iw_j(t))$. Different wires $\gamma_j$ may intersect; 
		in this article we will assume that they are not tangent at intersections. 
		
		We say a braided wiring diagram is in \emph{standard form} if there are disjoint 
		intervals $I_1,\dots, I_N\subset \R$ such that $I_\ell \times \C$ contains a unique intersection point
		of some subcollection of the curves $\gamma_j$, and in $I_\ell \times \C$, the wires are given as $\gamma_j(t)=(t, k_jt+a_j+ib_j)$.
		If $\gamma_j$ does not pass through the intersection point, we require $k_j=0$.
	\end{definition}
	
	Note that any braided wiring diagram can be isotoped through braided wiring diagrams to be in standard form.
	
	{We can encode a braided wiring diagram by projecting the union of the images of the $\gamma_j$ to $\R\times \R$ and denoting the crossings of the projection as in a knot diagram.}
	
	A braided wiring diagram can be encoded by sequence $(\beta_0, J_1, \beta_1, J_2, \dots, \beta_{m-1}, J_m, \beta_m)$, where each $\beta_i$ is a 
	braid and $J_i=\{k_i, k_i+1, \dots, k_i+\ell_i\}$ is a consecutive sequence of integers indicating the local indices 
	of the strands involved in the $i^{th}$ intersection point. For brevity, we will say that $J_i$ is a consecutive set.
	
	\textbf{Conventions:} Strands in a wiring diagram are numbered from bottom to top. The convention in \cite{CS} is to draw 
	this sequence of braids and intersections from right to left. If one thinks of composing words
	in the braid group using group notation (left to right) instead of functional notation (right to left), then one will need to read off the braid words from left to right--this is the convention used in \cite{CS}. However, in our case since we are always thinking of braids as diffeomorphisms of the punctured plane, we will use functional notation to compose braid words, and thus read everything--the intersections and the braid words--from right to left.
	
	\begin{example}
		The braided wiring diagram shown in Figure \ref{fig:wiringexample} corresponds to the sequence 
		$$(id, \{2,3\}, id, \{3,4\}, \sigma_1^{-1}\circ \sigma_2^{-1}, \{3,4\}).$$
		
		\begin{figure}[h]
			\centering
			\includegraphics[scale=1]{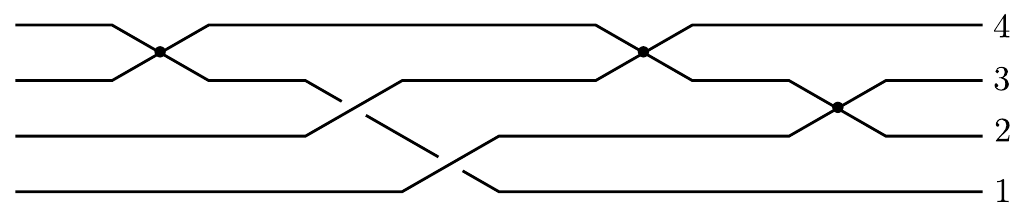}
			\caption{Braided wiring diagram.}
			\label{fig:wiringexample}
		\end{figure}
	\end{example}
	
	Braided wiring diagrams were introduced {in \cite{CS} (inspired by foundational work of \cite{MoTeI} and generalized from diagrams of \cite{Goodman})} to study configurations of complex curves, particularly line arrangements, 
	and the fundamental groups of their complements. The definition works just as well to study configurations of smooth graphical disks 
          in $\C^2$. As in Section~\ref{s:homotopy}, let $(x,y)$ be complex coordinates on $\C^2$, and let $\pi_x$ be the projection to the first coordinate. 
          Let $\Gamma_1,\dots, \Gamma_m$ be smooth disks in $\C^2$ which are graphical with respect to 
	the projection to $x$, $\Gamma_i=\{y=f_i(x)\}$. Assume that all the intersections
	between the $\Gamma_i$'s are transverse and positive (with respect to the natural orientation on the graphical disks projecting to $\C$).
                    
	\begin{definition} 
	A braided wiring diagram of a graphical configuration $\Gamma=\{\Gamma_1, \Gamma_2, \dots, \Gamma_n\}$ of smooth disks in $\C^2$ is obtained as follows.  
	Choose a (real) embedded curve $\eta:[0,1] \to \C$ which passes once through the projection of each singular point of the configuration, {such that the real part $\Re \eta$ is non-increasing.}
	The preimage of the curve $\eta$ under $\pi_x$ in $\C^2$ is diffeomorphic to $[0,1]\times \C$, and the intersection of this copy of $[0,1]\times \C$ with the
	configuration $\Gamma$ is the braided wiring diagram.
	\end{definition}
		
	The transversality of each smooth disk $\Gamma_j$ to the projection $\pi_x$ ensures that the wiring diagram curves are transverse to the projection 
	$\pi_\R: \R\times \C\to \R$. Note different choices of $\eta$ may result in different braided wiring diagrams which are related by certain generalized Markov moves. See for example \cite{CS} for more details. 
	We will show in Section~\ref{s:sympl-constr} that one can always construct a configuration $\Gamma$ with a given braided wiring diagram; moreover,
	the components $\Gamma_j$ of $\Gamma$ can be chosen to be symplectic.
	
\subsection{Braided wiring diagrams to vanishing cycles} \label{s:vanishing}
	Given a configuration $\Gamma=\{\Gamma_1, \Gamma_2, \dots, \Gamma_m\}$  in~$\C^2$ as above, Lemma~\ref{construct-fibration} produces
	an associated Lefschetz fibration. Recall that a Lefschetz 
	fibration is completely determined by its fiber and an ordered list of vanishing cycles. {(Critical points are assumed to have distinct critical values.)} The fiber in this situation is planar with
	$m$ boundary components, where $m$ is the number of curves in the configuration. If we are given a braided wiring diagram of $\Gamma$, 
	we can explicitly determine the vanishing cycles, as follows.
	
	To describe the vanishing cycles of a Lefschetz fibration $L: M\to \C$, we first need to fix certain data.
	Choose a regular fiber $F_0:=L^{-1}(p_0)$ as the reference fiber. Let $p_1,\dots, p_n$ denote the critical values of $L$. 
	Choose paths $\eta_j$ connecting $p_0$ to $p_j$ in the complement of the $p_j$'s, such that the paths
	$\eta_j$ are ordered counterclockwise from $1$ to $n$ locally around $p_0$.
	Then the $j^{th}$ vanishing cycle $V_j$ is the simple closed curve in $F_0$ which collapses to a point under parallel transport along the path $\eta_j$.
	
	When given a braided wiring diagram, we can construct the paths $\eta_j$ in a systematic manner
	and compute the vanishing cycles $V_j$ in terms of the braided wiring data.
	The wiring diagram lies over a curve $\eta:[0,1]\to \C$ such that the real part of $\eta$ is always decreasing. 
	The Lefschetz fibration from Lemma~\ref{construct-fibration} comes from the composition $L:=\pi\circ \alpha$ of the blow-down
	map $\alpha: \C^2\#_n \cptwobar \to \C^2$  with the projection map $\pi_x: \C^2 \to \C$. 
	One then takes the complement of the sections given by proper transforms of the curves $\Gamma_1, \Gamma_2, \dots, \Gamma_m$ in  $ \C^2\#_n \cptwobar $, so that 
	each $\Gamma_j$ corresponds to a hole in the planar fiber. Thus the $j^{th}$ hole corresponds to the wire $\gamma_j$ in the diagram, and in the standard form the holes 
	are arranged vertically in the fiber, {labeled $1,\dots, m$ consecutively}. Each consecutive set $J_i$ corresponds to a subcollection of holes contained in a convex subset of $\C$.  
	The Lefschetz critical points occur in $ \C^2\#_n \cptwobar $ above the intersection points of the braided wiring diagram. 
	Let $0<t_1<\dots< t_n<1$ denote the times such that the  $j^{th}$  intersection point of the wiring diagram lies over $\eta(t_j)$. 
	We will choose our reference fiber to lie over the right end-point $p_0=\eta(0)$ of the curve $\eta$ in $\C$. Strictly speaking, we need a compact version 
	of this construction, which is obtained by working in a closed Milnor ball and taking complements of tubular neighborhoods of $\overline{\Gamma}_i$'s, but for simplicity we omit the Milnor ball from the notation.

	We will choose paths $\eta_j:[0,t_j]\to \C$ given by $\eta_j(t)= \eta(t)-\varepsilon_j \rho_j(t) i$, 
	where $\rho_j:[0,t_j]\to [0,1]$ is a bump function which is $0$ near $t=0$ and $t=t_j$, and $1$ 
	outside a small neighborhood of $0$ and $t_j$, and $0<\varepsilon_1<\varepsilon_2<\dots < \varepsilon_n<\varepsilon$. See Figure~\ref{fig:paths}.
	\begin{figure}
		\centering
		\includegraphics[scale=.75]{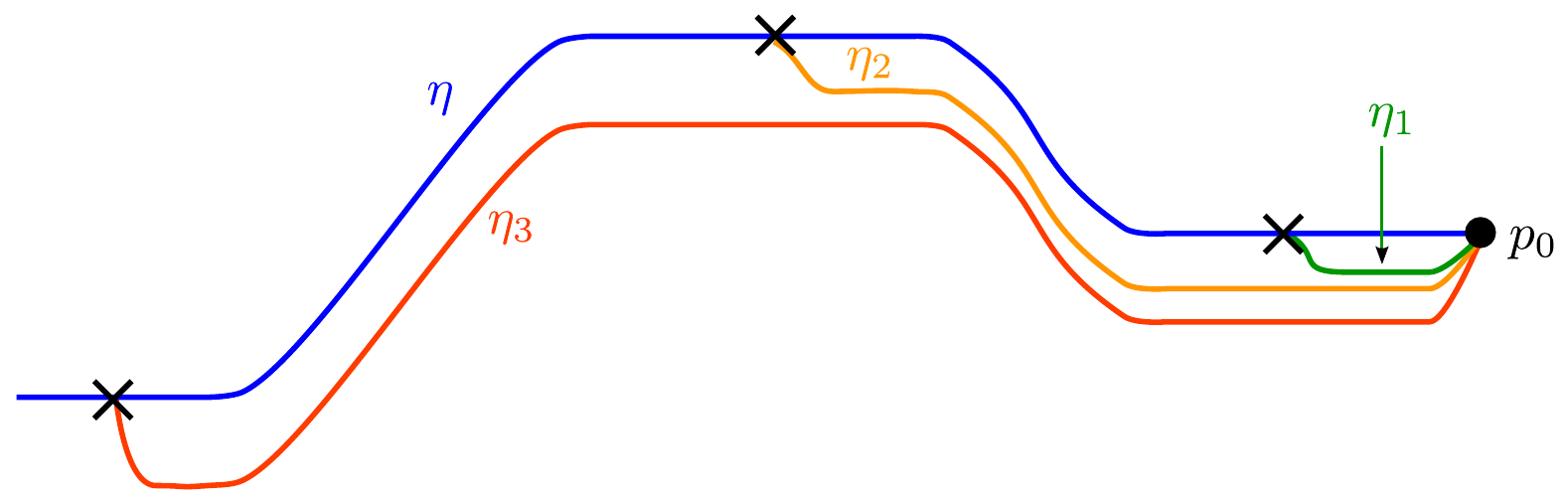}
		\caption{The vanishing paths $\eta_j$ chosen to identify the vanishing cycles in the fiber over $p_0$ relative to the wiring diagram path $\eta$.}
		\label{fig:paths}
	\end{figure}
	
	Our local model for the Lefschetz fibration in Lemma~\ref{construct-fibration} shows that the curve which collapses to a point in the fiber
	$L^{-1}(\eta_j(t_j-\delta))$ (for small $\delta>0$) is a convex curve 
	enclosing the holes in the set $J_j$. To determine the vanishing cycle in our reference fiber $F_0=L^{-1}(p_0)$, we need to 
	track the monodromy over the path $\eta_j$ for $t\in [0,t_j-\delta]$.
	This is the monodromy of the braid given by the intersection of the configuration with the slice of $\C^2$ which projects to $\eta_j$. 
	(Note this intersection is indeed a braid over the interior of $\eta_j$, 
	because each curve $\eta_j$ is disjoint from the critical points away from its endpoints.) 
	By assuming $\varepsilon$ to be sufficiently small, we see that this braid agrees with corresponding portion of the braided wiring diagram, 
	except when passing near an intersection point. 
	When $\eta_j$ passes an interval near $t_k$ for $k<j$, the braid resolves the intersection by separating the strands.
	The strands are ordered from bottom to top in decreasing order by slope in the projection $\R \times \C \to \R \times \R$ 
	(the most positive slope is the lowest strand in the crossing).
	This can be verified by checking the local model for the complexification of real lines because all of our intersections are positive and transverse (see~\cite{MoishezonTeicherI}). 
	After resolving an intersection of the strands in the set $J_k=\{i_k,i_k+1,\dots, i_k+l_k\}$, the element of the mapping class 
	group which corresponds to this portion of the braid from right to left is $\Delta^{-1}$, where $\Delta$ is 
	positive half-twist of the strands $i_k,i_k+1,\dots, i_k+l_k$. {(In terms of the standard generators of the braid group, $\Delta_{J_k}=(\sigma_{i_k}\cdots \sigma_{i_k+l_k-1})(\sigma_{i_k}\cdots \sigma_{i_k+l_k-2})(\sigma_{i_k}\sigma_{i_k+1})(\sigma_{i_k})$.)}
	Therefore, the braid lying above $\eta_j$ is given by
	$$\phi_j = \beta_{j-1}\circ \Delta_{j-1}^{-1}\circ \dots \circ \beta_1\circ \Delta_{1}^{-1}\circ \beta_0,$$
	where $\Delta_{k}$ denotes the positive half-twist of the strands in the set $J_k$.
	Namely, $\Delta_{k}$ is the diffeomorphism supported in a neighborhood of the disk convexly enclosing the holes in the set $J_k$, 
	which acts by rotating the disk by $\pi$ counterclockwise. 
	The $j^{th}$ vanishing cycle is the curve which is taken to the convex curve $A_j$ enclosing the holes in the set $J_j$ under the braid lying above $\eta_j$.
	Therefore, $V_j= \phi_j^{-1}(A_j)$.
	
	\begin{figure}
		\centering
		\includegraphics[scale=1.25]{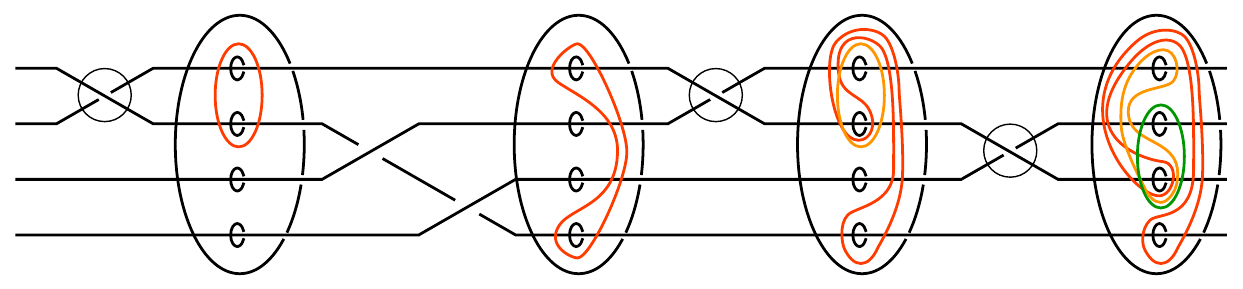}
		\caption{{Vanishing cycles corresponding to the braided wiring diagram of Figure~\ref{fig:wiringexample}. The circled crossings correspond to intersections in the wiring diagram. Uncircled crossings come from braiding between intersections.}}
		\label{fig:vanishingcycles}
	\end{figure}
	
	\begin{remark} \label{rmk:freepts} We can encode blow-ups at ``free'' points (as is allowed by Lemma \ref{construct-fibration}) by 
	adding marked points in our braided wiring diagram indicating ``intersection points'' {that involve}
	only a single strand (so the corresponding $J$ will have $|J|=1$).  \end{remark}
	
	Note that the total monodromy of the curve configuration around a circle enclosing all of the critical points can now be calculated in two different ways:
	\begin{enumerate} 
		\item using the total monodromy of the curve configuration encoded by the braided wiring diagram
		\item taking the product of positive Dehn twists about the induced vanishing cycles.
	\end{enumerate}
	
	To reassure the reader that our formulas and conventions are consistent, we verify that these two different ways of calculating the monodromy agree.
	
	The total monodromy encircling a braided wiring diagram $(\beta_0,J_1,\beta_1,\dots, \beta_{n-1},J_n,\beta_n)$ is given by 
	following the diffeomorphisms induced by a counterclockwise rotation around the wiring interval.
	Such a counterclockwise circle is obtained by connecting an upward push-off of the wire interval oriented right to left 
	with a downward push-off oriented left to right as in Figure \ref{fig:wiremonodromy}. The intersections between the strands of $J_j$ 
	are resolved as the positive half-twist $\Delta_j$ in the upward push-off (right to left). 
	In the downward push-off the intersection is resolved as the negative half-twist $\Delta_j^{-1}$ right to left,
	but since we pass throught the downward push-off from left to right, each such segment contributes $\Delta_j$ to the monodromy.
	The braids contribute $\beta_j$ when traversed right to left and $\beta_j^{-1}$ when traversed left to right. See Figure \ref{fig:wiremonodromy}.
	The total monodromy is therefore
	$$\beta_0^{-1}\circ \Delta_1 \circ \beta_1^{-1} \circ \Delta_2 \circ \beta_2^{-1} 
	\dots \beta_{n-2}^{-1}\circ \Delta_{n-1}\circ \beta_{n-1}^{-1}\circ \Delta_n^2 \circ \beta_{n-1}\circ \Delta_{n-1} 
	\circ \beta_{n-2}\circ \dots \circ \beta_2 \circ \Delta_2 \circ \beta_1 \circ \Delta_1 \circ \beta_0.$$
	
	\begin{figure}
		\centering
		\includegraphics[scale=.8]{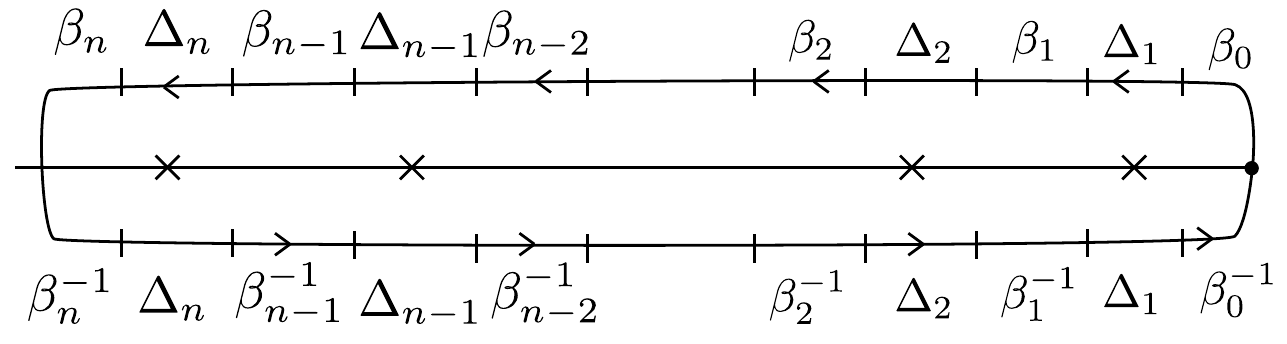}
		\caption{The total monodromy about a braided wiring diagram.}
		\label{fig:wiremonodromy}
	\end{figure}
	
	On the other hand, each vanishing cycle is given as 
	$$V_j = \phi_j^{-1}(A_j)=(\beta_{j-1}\circ \Delta_{j-1}^{-1}\circ \dots \circ \beta_1 \circ \Delta_1^{-1}\circ \beta_0)^{-1}(A_j).$$
	Therefore a Dehn twist $\tau_{V_j}$ about $V_j$  is equal to
	$$\tau_{V_j} = \phi_j^{-1}\circ \Delta_j^2 \circ \phi_j$$
	because $\tau_{A_j} = \Delta_j^2$ and in general $\tau_{\phi(C)} = \phi\circ \tau_C \circ \phi^{-1}$. Thus, the total monodromy of the Lefschetz fibration
	given by the product of positive Dehn twists about the vanishing cycles is 
	$$\phi_n^{-1} \circ \Delta_n^2 \circ \phi_n \circ \phi_{n-1}^{-1}
	\circ \Delta_{n-1}^2 \circ \phi_{n-1} \circ \dots \circ \phi_1^{-1}\circ \Delta_1^2 \circ \phi_1.$$
	We can simplify $\phi_j\circ \phi_{j-1}^{-1}$ as
	$$(\beta_{j-1}\circ \Delta_{j-1}^{-1}\circ \dots \circ \beta_1 \circ \Delta_1^{-1}\circ \beta_0) \circ (\beta_0^{-1}\circ \Delta_1 
	\circ \beta_1^{-1}\circ \dots \circ \Delta_{j-2}\circ \beta_{j-2}^{-1})=\beta_{j-1}\circ \Delta_{j-1}^{-1}.$$
	Therefore $\tau_{V_n}\circ \dots \circ \tau_{V_1}$ is equal to
	$$\phi_n^{-1} \circ \Delta_n^2 \circ (\beta_{n-1} \circ \Delta_{n-1}^{-1}) \circ \Delta_{n-1}^2 \circ \dots \circ
	(\beta_1 \circ \Delta_1^{-1}) \circ \Delta_1^2 \circ \beta_0,$$
	which equals
	$$\beta_0^{-1}\circ \Delta_1\circ \beta_1^{-1} \circ \dots \circ \Delta_{n-1}\circ \beta_{n-1}^{-1} \circ \Delta_n^2 \circ \beta_{n-1} \circ \Delta_{n-1}
	\circ \dots \circ \beta_1 \circ \Delta_1 \circ \beta_0.$$
	This coincides with the total monodromy of the braided wiring diagram given above, as required.
	
\subsection{Wiring diagrams to symplectic configurations} \label{s:sympl-constr}

Given any braided wiring diagram, we interpret it as a collection of intersecting curves in $\R\times \C$. 
We will extend each of these curves to a symplectic surface in $\C\times \C$.

\begin{prop}\label{symp-config} 
	Given a braided wiring diagram $\cup_j \gamma_j\subset \R\times \C$ in standard form, there exists a configuration of 
	symplectic surfaces $\cup_j \Gamma_j$ in $\C\times \C$ such that $\Gamma_j$ extends $\gamma_j$, that is,
	$$
	\left(\cup_j \Gamma_j\right) \cap (\R \times \{0\} \times \C) = \cup_j \gamma_j,
	$$ and all intersections $\Gamma_j\cap \Gamma_k$ 
	lie in the original wiring diagram in $(\R\times \{0\} \times \C)$ and are transverse and positive.
\end{prop}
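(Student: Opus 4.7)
The plan is to construct each $\Gamma_j$ as a symplectic graph $\{y=g_j(x):x\in U\}$ over a complex neighborhood $U\subset\C$ of the compact interval supporting the wiring diagram, defining $g_j$ piecewise. Near each intersection slab $I_\ell\times\C$, where the standard form gives $\gamma_j(t)=(t,k_jt+a_j+ib_j)$, we take the complex-affine extension $g_j(x)=k_j x+a_j+ib_j$, so that $\Gamma_j$ is literally a complex line in a neighborhood of the intersection. In the braid regions between slabs we take the translation extension $g_j(t+is)=h_j(t)+iw_j(t)$, independent of $s=\Im x$. Both local models are automatically symplectic: complex lines are holomorphic, and the translation extension has Jacobian identically zero, so the pullback of $\omega$ to $\Gamma_j$ equals $dx_1\wedge dx_2$. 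Distinct complex lines with $k_j\neq k_k$ meet transversally and positively at the unique point coming from the wire intersection, and two translation extensions $g_j$, $g_k$ are disjoint wherever the underlying wires are, since $g_j(x)=g_k(x)$ reduces to $\gamma_j(\Re x)=\gamma_k(\Re x)$ in $\C$.

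To join the two models, we pick a cutoff $\eta_\ell:\R\to[0,1]$ equal to $1$ on $I_\ell$ and supported in a slightly larger interval $\tilde I_\ell$, with the various $\tilde I_\ell$ pairwise disjoint, and set
\[
g_j(t+is)\;:=\;h_j(t)+iw_j(t)+i\,k_j\,s\,\eta_\ell(t)\qquad\text{on }\tilde I_\ell,
\]
where $k_j$ is the slope of $\gamma_j$ on $I_\ell$ (zero if $\gamma_j$ does not participate in the intersection there). This agrees with the complex-affine extension on $I_\ell$ and with the translation extension outside $\tilde I_\ell$. A direct computation gives the Jacobian of $g_j$ as a map $\C\to\C$ equal to $h_j'(t)\,k_j\,\eta_\ell(t)$, which evaluates to $k_j^2\geq 0$ on $I_\ell$, vanishes outside $\tilde I_\ell$, and is uniformly bounded in the transition zones. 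Rescaling the wiring diagram by a small factor in the $\C$-direction, a harmless operation preserving its combinatorial type, then makes $1+J>0$ throughout, so each $\Gamma_j$ is symplectic.

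The main point, and the hardest part of the argument, is to verify that the interpolation creates no spurious intersections. On $\tilde I_\ell$, the equation $g_j(x)=g_k(x)$ splits into
\[
h_j(t)=h_k(t),\qquad w_j(t)-w_k(t)=-(k_j-k_k)\,s\,\eta_\ell(t).
\]
Inside $I_\ell$ both equations force $s=0$ and give the original wire intersection as a transverse positive intersection of complex lines, as needed. In the transition zone $\tilde I_\ell\setminus I_\ell$, the first equation can be satisfied only at a \emph{projection crossing} of the wires in $\R\times\R$ (real parts of the second coordinates agree while imaginary parts do not); combined with $\eta_\ell(t)\neq 0$ and $k_j\neq k_k$, such a crossing would produce a spurious intersection at $s\neq 0$. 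To exclude this, we first perturb the wiring diagram slightly through wiring diagrams so that all $\R\times\R$ projection crossings are isolated, and then shrink each $\tilde I_\ell$ to exclude every projection crossing other than the one at $I_\ell$. This is possible because there are only finitely many such crossings and, by transversality of the wire intersection at $I_\ell$, the nearest projection crossing lies at positive distance from it. With these choices, $\Gamma_j\cap\Gamma_k$ consists precisely of the original wire intersections, each transverse and positive, completing the construction.
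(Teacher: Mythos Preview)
Your approach is essentially the same as the paper's: both interpolate, via a bump function in $t$ supported near the intersection times, between complex-line extensions at the intersections and $s$-translation-invariant extensions in the braid regions, and both rely on excluding projection crossings from the bump's support. The one notable technical difference is that the paper's correction term is $i\,\rho(t)\chi(s)h_j'(t)$ rather than your $i\,k_j\,s\,\eta_\ell(t)$: using the actual derivative $h_j'(t)$ (together with a bounded cutoff $\chi$ satisfying $\chi'\geq 0$) makes the pulled-back area form equal to $1+\rho(t)\chi'(s)(h_j'(t))^2\geq 1$ automatically, so no rescaling step is needed.
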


\begin{proof}
	Let $t_1=\pi_\R(p_1),\dots, t_n=\pi_\R(p_n)$ denote the $\R$ coordinates of the intersection points $p_1,\dots, p_n$
	in the wiring diagram. Braid crossings in the wiring diagram can be viewed as additional intersections that appear in the image of the diagram 
	under the projection $\R\times \C \to \R \times \R$. Choose $\delta>0$ sufficiently small so that there are no crossings in the braided {wiring} diagram 
	in $\pi_\R^{-1}([t_i-4\delta, t_i+4\delta])$ (except the intersection at $p_n$). 
	Let $\rho_i: \R\to [0,1]$ be a smooth bump function such that
	$$\rho_i(t)= \begin{cases} 1 & t\in [t_i-\delta, t_i+\delta]\\ 0 & t\notin (t_i-2\delta, t_i+2\delta)  \end{cases}$$
	Let $\rho = \sum_{i=1}^n \rho_i$. (Figure \ref{fig:rho}.)
	
	\begin{figure}
		\centering
		\includegraphics[scale=.4]{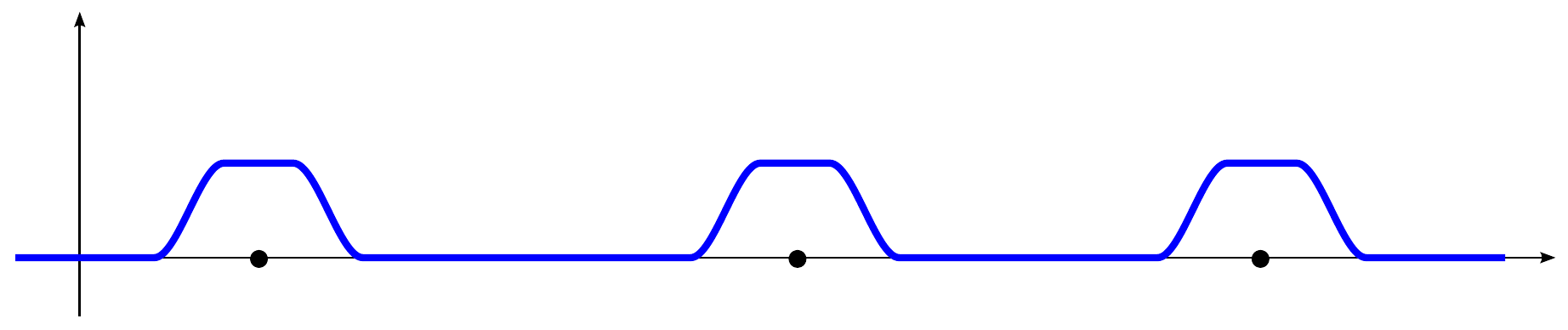}
		\caption{The graph of $\rho: \R\to [0,1]$. The marked points on $\R$ are the $t_i$.}
		\label{fig:rho}
	\end{figure}
	
	Let $\eta>0$. Let $\chi: \R\to [-\eta, \eta]$ (Figure \ref{fig:chi}) be a smooth function such that
	$$\begin{cases} 
	\chi(s) = -\eta & x\leq -2\eta \\
	\chi(s) = s & -\frac{\eta}{2}\leq s \leq \frac{\eta}{2}\\
	\chi(s) = \eta & x\geq 2\eta \\
	\chi'(s)\geq 0 & \forall s\in \R \end{cases}$$

	\begin{figure}
		\centering
		\includegraphics[scale=.3]{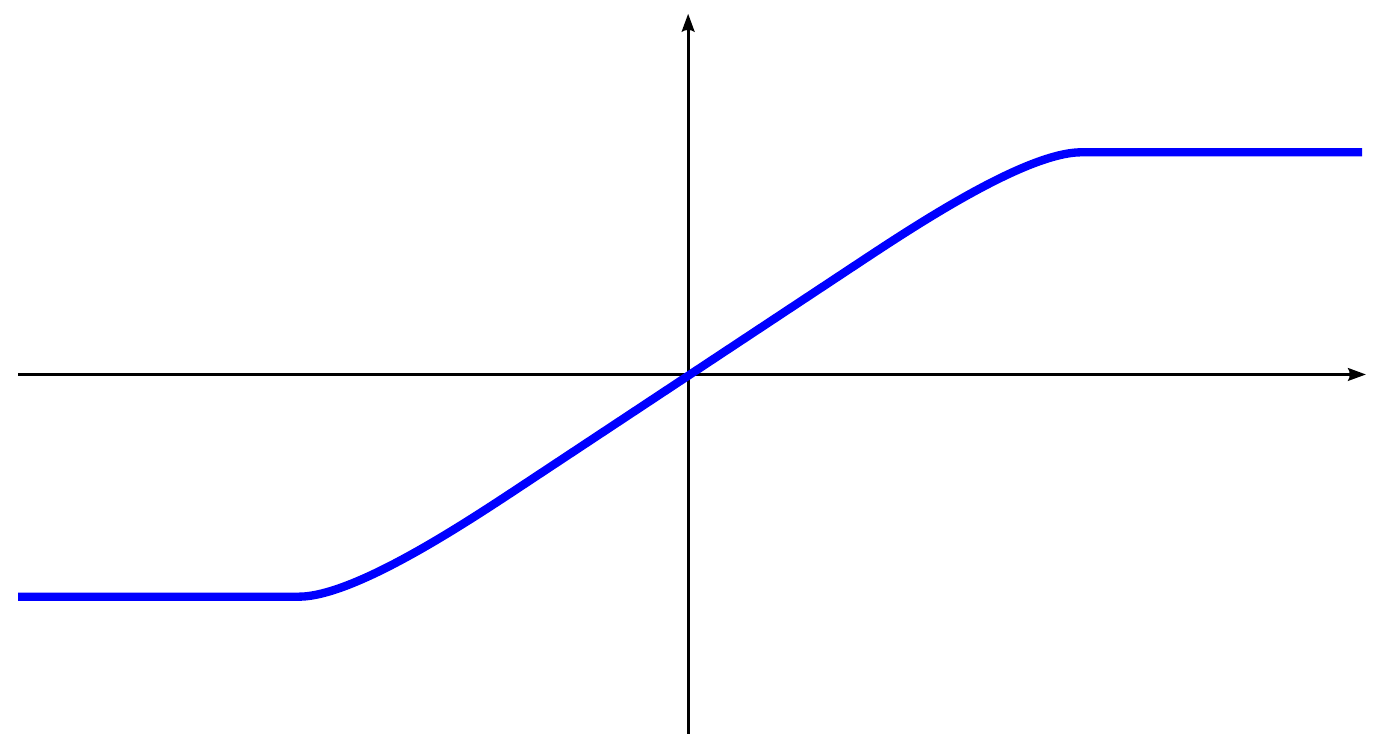}
		\caption{The graph of $\chi: \R\to [-\eta,\eta]$.}
		\label{fig:chi}
	\end{figure}
	
	For each wire, we will define its extension to a symplectic surface. Suppose the wire is parametrized as 
	$$\gamma_j(t) = (t, h_j(t) +iw_j(t))\in \R\times \C.$$
	Define $\Gamma_j(t,s): \R^2 \to \C^2$ by
	$$\Gamma_j(t,s) = (t+is, h_j(t)+i(w_j(t)+\rho(t)\chi(s)h_j'(t))).$$
	The tangent space of the image of $\Gamma_j$ is spanned by $\frac{\partial \Gamma_j}{\partial t}=d\Gamma_j(\frac{\partial}{\partial t})$ and 
	$\frac{\partial \Gamma_j}{\partial s}=d\Gamma_j(\frac{\partial}{\partial s})$. The previous formulas use complex coordinates $(x,y)$ on 
	$\C^2$; now we pass to real coordinates $(x_1,x_2, y_1,y_2)$, so that $x=x_1+i x_2$, $y=y_1+iy_2$.  In these coordinates, the standard symplectic form is 
	given by $\omega = dx_1\wedge dx_2+dy_1\wedge dy_2$.  We have
	
	$$\frac{\partial \Gamma}{\partial t} = \frac{\partial}{\partial x_1} +  h_j'(t)\frac{\partial}{\partial y_1}+(w'(t)+\rho'(t)\chi(s)h_j'(t)
	+\rho(t)\chi(s)h_j''(t))\frac{\partial}{\partial y_2} $$
	$$\frac{\partial \Gamma_j}{\partial s} = \frac{\partial}{\partial x_2} + \rho(t)\chi'(s)h_j'(t) \frac{\partial}{\partial y_2}$$
	Evaluating the symplectic form gives
	$$\omega\left( \frac{\partial \Gamma_j}{\partial t}, \frac{\partial \Gamma_j}{\partial s} \right) = 1+\rho(t)\chi'(s)(h_j'(t))^2>0,$$
	so the image of $\Gamma_j$ is a symplectic surface.
	
	To verify that these extensions do not intersect outside of the original intersections of the wiring diagram, 
	we observe that any intersection between $\Gamma_j$ and $\Gamma_k$ would occur at the same parameters $(t_0,s_0)$
	and must have $h_j(t_0)=h_k(t_0)$ and $w_j(t_0)+\rho(t_0)\chi(s_0)h_j'(t_0)=w_k(t_0)+\rho(t_0)\chi(s_0)h_k'(t_0)$. If $h_j(t_0)=h_k(t_0)$, 
	this means that the wires $\gamma_j$ and $\gamma_k$ project to the same point under the projection $\R\times \C \to \R\times \R$. 
	This means there is either a crossing or an intersection between wires $\gamma_j$ and $\gamma_k$ at $t_0$.
	
	If $t_0$ is an intersection point of the wires, $w_j(t_0)=w_k(t_0)$. Additionally, at $t_0$, the projections of the wires
	have different slopes, so $h_j'(t_0)\neq h_k'(t_0)$. We also have $\rho(t)\equiv 1$ near $t_0$. Using this, the intersection assumption that 
	$$w_j(t_0)+\rho(t_0)\chi(s_0)h_j'(t_0)=w_k(t_0)+\rho(t_0)\chi(s_0)h_k'(t_0)$$ 
	implies that
	$$\chi(s_0)(h_k'(t_0)-h_j'(t_0)) = w_j(t_0)-w_k(t_0)=0.$$
	Therefore, $\chi(s_0)=0$, so $s_0=0$ by definition of $\chi$.
	
	If $t_0$ is a crossing between wires, $w_j(t_0)\neq w_k(t_0)$. Because $\rho$ is supported only near the intersection times, and
	we assume the crossings occur outside of these intervals, $\rho\equiv 0$. Therefore,
	the assumption that $w_j(t_0)+\rho(t_0)\chi(s_0)h_j'(t_0)=w_k(t_0)+\rho(t_0)\chi(s_0)h_k'(t_0)$ gives a contradiction.
	
	Finally, we check that $\Gamma_j$ and $\Gamma_k$ intersect positively. 
	If we assume that the wiring diagram is in standard form near the intersection points $h_j(t)=k_j t+a_j$ 
	and constant coordinate $w_j(t)\equiv b_j$, then near $(t_i,0)$ where $\rho(t)\equiv 1$ and $\chi(s)\equiv s$, we have that 
	$$\Gamma_j(t,s) = (t+is, k_jt+a_j+i(b_j+k_js))$$
	so the image of $\Gamma_j$ agrees with the complex line $y=k_j x+a_j+b_ji$, 
	so the intersection of  $\Gamma_j$ and $\Gamma_k$ locally agrees with an intersection of complex lines.
\end{proof}

\subsection{Stein fillings correspond to symplectic configurations}

Given a contact structure supported by a planar open book, a theorem of Wendl \cite{We} says
that every Stein filling is symplectic deformation equivalent to a Lefschetz fibration with the same planar fiber;
Niederkr\"uger--Wendl \cite{NiWe} extend this result to minimal weak symplectic fillings.
Thus, Stein fillings are essentially in one-to-one correspondence with positive factorizations of the monodromy of the given
planar open book (and the same is true even for weak symplectic fillings, up to blow-up). The following statement is equivalent to Theorem~\ref{thm:intro-symp}.

\begin{prop}\label{filling-to-config} Let $(Y, \xi)$ be the link of a rational
 singularity $(X, 0)$ with reduced fundamental cycle. Fix a decorated germ $(\mathcal{C},w)$ for $(X, 0)$, 
with smooth branches $C_1, C_2, \dots, C_m$. 

Then every Stein filling of $(Y,\xi)$ is supported by a Lefschetz fibration built from a configuration of $m$ symplectic disks $\{\Gamma_1, \Gamma_2, \dots, \Gamma_m\}$
in $\C^2$ with marked points, via Lemma \ref{construct-fibration}. 
\end{prop}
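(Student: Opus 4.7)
The plan is to invert the constructions of Section~\ref{find-curvettas-for-filling}: starting from the Stein filling, use Wendl's theorem to produce a positive factorization of the open book monodromy, encode this factorization as a braided wiring diagram, and then extend that diagram to a symplectic disk configuration via Proposition~\ref{symp-config} and apply Lemma~\ref{construct-fibration}.

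By Proposition~\ref{artin-ob}, $(Y,\xi)$ is supported by a planar open book with page $P_m$ (a disk with $m$ holes, one for each branch of $\mathcal{C}$) and monodromy $\phi \in MCG(P_m)$; its image $\eta(\phi) \in MCG(\C_m)$ is the braid monodromy $\varphi$ of $\mathcal{C}$ (Lemma~\ref{lem:braid-monodromy}). By Wendl's theorem \cite{We}, the Stein filling $W$ is symplectic deformation equivalent to the total space of a Lefschetz fibration with planar fiber $P_m$ inducing this open book. Such a fibration corresponds to an ordered positive factorization $\phi = \tau_{V_n} \circ \cdots \circ \tau_{V_1}$ into right-handed Dehn twists about simple closed curves $V_j \subset P_m$.

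Next, I convert this factorization into a braided wiring diagram $(\beta_0, J_1, \beta_1, \dots, J_n, \beta_n)$ by inverting the vanishing cycle recipe of Section~\ref{s:vanishing}, working inductively on $j$. Suppose the data $\beta_0, J_1, \dots, \beta_{j-2}, J_{j-1}$ have been fixed, so that $\phi_{j-1}$ and $\Delta_{j-1}$ are determined. Let $J_j$ be a consecutive subset of $\{1,\dots,m\}$ whose size equals the number of holes of $P_m$ enclosed by $V_j$, with $|J_j|=1$ when $V_j$ is boundary-parallel so that the intersection becomes a free marked point as in Remark~\ref{rmk:freepts}. Using the transitive action of the braid group $MCG(\C_m)$ on isotopy classes of simple closed curves enclosing a fixed number of punctures, I choose $\beta_{j-1}$ so that $\phi_j(V_j) = A_j$, where $A_j$ is the convex curve around the holes of $J_j$; this yields $V_j = \phi_j^{-1}(A_j)$ as required by the vanishing cycle formula. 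The final braid $\beta_n$ is determined by the condition that the total boundary braid of the diagram equals $\partial \mathcal{C}$, which is forced by $\eta(\phi) = \varphi$ together with the total monodromy computation at the end of Section~\ref{s:vanishing}.

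Finally, feed this wiring diagram into Proposition~\ref{symp-config} to obtain a symplectic graphical disk configuration $\Gamma = \{\Gamma_1, \dots, \Gamma_m\}$ in $\C^2$, and equip it with marked points at all intersection points and at the free points corresponding to boundary-parallel vanishing cycles. By construction the Lefschetz fibration produced from $(\Gamma, \{p_j\})$ by Lemma~\ref{construct-fibration} has vanishing cycles $V_1, \dots, V_n$ in the prescribed order, so its total space is Stein deformation equivalent to $W$ by the uniqueness clause of Wendl's theorem. The main obstacle is the inductive construction of the wiring diagram: at each stage one must verify that there is enough flexibility in $\beta_{j-1}$ to bring the curve $\Delta_{j-1}^{-1}\phi_{j-1}(V_j)$ into the standard convex form $A_j$ with $J_j$ consecutive. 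This is exactly the transitivity of the braid group action on simple closed curves in $\C_m$ with a fixed number of enclosed punctures, and the boundary-parallel case is absorbed into the free-marked-point convention, so the only remaining care is bookkeeping the total weight on each $\Gamma_i$, which matches via the argument in the proof of Lemma~\ref{same-monodromy}.
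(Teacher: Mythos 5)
Your proposal is correct and follows essentially the same route as the paper's proof: apply Wendl's theorem to obtain an ordered positive factorization, recursively choose consecutive sets $J_j$ and braids $\beta_{j-1}$ (using the change-of-coordinates/transitivity of the mapping class group action on simple closed curves) so that the wiring-diagram recipe of Section~\ref{s:vanishing} reproduces the given vanishing cycles, then invoke Proposition~\ref{symp-config} and Lemma~\ref{construct-fibration}, with the weight count handled as in Lemma~\ref{same-monodromy}.
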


\begin{proof} 
	Because the contact manifold is planar, any Stein filling is supported by a planar Lefschetz fibration with the same fiber. We will 
	reverse-engineer the required configuration of symplectic disks.
	Let $F_0$ be a fixed identification of the planar fiber, where the holes are lined up vertically and labeled by numbers $1, 2, \dots, m$. 
	Let $V_1,\dots, V_n$ be the ordered list
	of vanishing cycles for the Lefschetz fibration.
	We begin by  producing a collection $(\psi_0,\dots, \psi_{n-1})$ of diffeomorphisms $\psi_i: F_0\to F_0$ and $(J_1,\dots, J_n)$ of
	consecutive subsets of $\{1,\dots, m\}$. Here, ``consecutive'' means that  $J_j=\{i, i+1, \dots, i+k\}$ for some $i,k$.
	
	Choose a diffeomorphism $\beta_0: F_0\to F_0$ such that $\beta_0(V_1)$ is isotopic to a curve convexly enclosing a consecutive collection 
	of holes; let $J_1$ be the corresponding consecutive subset. Let $\Delta_1$ be the counter-clockwise half-twist
	of the convex disk that contains precisely the holes indexed by $J_1$. Recursively, choose a diffeomorphism 
	$\beta_{j}: F_0\to F_0$ such that $\beta_{j}\circ \Delta_j^{-1}\circ \dots \circ \beta_1 \circ \Delta_1^{-1}\circ \beta_0(V_{j+1})$ is isotopic
	to a curve convexly enclosing a consecutive collection of holes that corresponds to the set $J_{j+1}$, and let $\Delta_{j+1}$ denote the corresponding half-twist.
	
	Consider the braided wiring diagram determined by $(\beta_0,J_1,\beta_1,J_2,\dots, \beta_{n-1}, J_n)$. 
	By Proposition~\ref{symp-config}, we can construct a configuration of symplectic surfaces $\Gamma_1, \dots, \Gamma_m$  in $\C^2$ extending this diagram. 
	Using Lemma~\ref{construct-fibration}, we obtain a planar Lefschetz fibration. We need to use the compact version of the construction to get a fibration 
	whose general fiber is a disk with $m$ holes; for this, we start with a Milnor ball of the form $B=D_x \times D_y$,
	such that $D_x$ is a neighborhood of $\eta$, and $D_y$ is a disk of sufficiently large radius to include the wires above $D_x$.
		
	As explained in Subsection~\ref{s:vanishing}, the vanishing cycles of this Lefschetz fibration will be given by
	$$V_j' = (\beta_{j-1}\circ \Delta_{j-1}^{-1}\circ \dots \circ \beta_1\circ \Delta_1^{-1}\circ \beta_0)^{-1}(A_j)$$
	for $j=1,\dots, n$, where $A_j$ is a convex curve enclosing the consecutive holes in the set $J_j$. 
	The choice of the $\beta_j$ ensures that these vanishing cycles are identical to our original ones: $V_j'=V_j$.

	Along with the symplectic disk configuration $\{\Gamma_1, \dots, \Gamma_m\}$, we also obtain a collection of marked points on these disks. The marked points 
include all the intersections as well as additional free marked points, as in Remark~\ref{rmk:freepts}.  Each free marked point can be chosen anywhere on the corresponding disk, as long as 
all marked points are distinct.
As in Lemma~\ref{same-monodromy},  counting multiplicities of pairwise Dehn twists in the monodromy shows that the number of marked points 
on each disk $\Gamma_j$ is the same as the weight $w(C_j)$ of the corresponding curvetta 
$C_j$ of  the defining decorated  germ $(C, w)$ of the singularity.
	\end{proof} 
	
	\begin{remark} Note that the diffeomorphisms $\beta_j$ are not unique. Any choice will suffice to produce an appropriate braided wiring diagram and corresponding symplectic configuration. \end{remark}

To show that every Stein  filling is generated by a symplectic analog of de Jong--van Straten's theorem, it remains to prove that different symplectic configurations 
with the same monodromy are related by deformations. The role of de Jong--van Straten's picture deformations is played by graphical homotopies.   

\begin{prop} \label{p:deform} Let $(X, 0)$ be a rational singularity with reduced fundamental cycle, 
	and $(C,w)$  its decorated plane curve germ with smooth branches $C_1, \dots, C_m$. Let  $(Y,\xi)$ be the contact link of $(X,0)$.
	Suppose that $\Gamma=\{\Gamma_1, \Gamma_2, \dots, \Gamma_m\}$ is a configuration of symplectic disks with marked points $p_1, \dots, p_n$, 
	constructed for a given Stein filling of  $(Y, \xi)$ as in Proposition \ref{filling-to-config}.  Then $(\Gamma, \{p_j\})$ can be connected to $(C, w)$ by a smooth graphical
	homotopy.
\end{prop}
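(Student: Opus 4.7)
The plan is to realize the homotopy as a straight-line interpolation of defining functions. Both $\mathcal{C}$ and $\Gamma$ are configurations of graphical disks over the standard projection $\pi_x: \C^2 \to \C$: this is tautological for $\mathcal{C}$, while for $\Gamma$ the explicit parametrization in Proposition~\ref{symp-config} writes each $\Gamma_i$ as
\[
(t,s) \longmapsto \bigl(t+is,\; h_i(t) + i(w_i(t)+\rho(t)\chi(s)h_i'(t))\bigr),
\]
a smooth graph over $x = t+is$. So we may write $C_i = \{y = f_i(x)\}$ and $\Gamma_i = \{y = g_i(x)\}$ with $f_i$ holomorphic and $g_i$ smooth, and interpolate.

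\textbf{Choice of a common Milnor ball.} Since $\mathcal{C}$ is a germ, we are free to rescale $\Gamma$ by a linear dilation $(x,y)\mapsto(\lambda x,\lambda y)$ for small $\lambda>0$; this preserves graphicality, symplecticity, the transverse intersection pattern, and the marked-point data. After shrinking $D_x$ and enlarging $D_y$ if needed, we obtain a common Milnor ball $B = D_x \times D_y$ such that all of $\{f_i\}$ and $\{g_i\}$ are defined on $D_x$ with values in the interior $\mathring{D_y}$, each branch of $\mathcal{C}$ and of $\Gamma$ meets $\partial D_x \times D_y$ transversely and avoids $D_x \times \partial D_y$, and all intersections of branches (of either configuration) lie in the interior of $B$.

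\textbf{Linear interpolation.} Set
\[
C_i^t := \{y = f_i^t(x)\}, \qquad f_i^t(x) := (1-t) f_i(x) + t\, g_i(x), \quad t \in [0,1],
\]
so $f_i^t$ is smooth in $(x,t)$ with $C_i^0 = C_i$ and $C_i^1 = \Gamma_i$. Convexity of $D_y$ gives $f_i^t(D_x) \subset \mathring{D_y}$ for all $t$, so $C_i^t$ is graphical over $D_x$, meets $\partial D_x \times D_y$ transversely in a copy of $\partial D_x$, and is disjoint from $D_x \times \partial D_y$; this is Definition~\ref{def:homotopy}(1). Any intersection point $C_i^t \cap C_j^t$ lies in $D_x \times \mathring{D_y}$, inside $B$, giving~(2). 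At $t=1$ the intersections of the $C_i^1 = \Gamma_i$ are transverse and positive by construction (Proposition~\ref{symp-config}), giving~(3); the marked points $\{p_j\}\cap \Gamma_i$ include all such intersections and total $w(C_i)$ in number by Proposition~\ref{filling-to-config}, giving~(4).

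\textbf{Main subtlety.} The only substantive compatibility check is that a single projection $\pi_x$ places both $\mathcal{C}$ and $\Gamma$ in graphical form; this is automatic because Proposition~\ref{symp-config} constructs $\Gamma$ inside the same complex coordinates $(x,y)$ used to realize $\mathcal{C}$ as graphs, and the wiring diagram lives in $\R \times \{0\} \times \C \subset \C \times \C$. Once this is observed, convexity of $D_y$ handles the rest: the interpolation is graphical and transverse to $\partial B$ uniformly in $t$, all intersections remain inside $B$, and the marked-point condition is inherited from the endpoint $\Gamma$. No deeper obstruction appears.
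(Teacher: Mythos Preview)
Your argument has a genuine gap in verifying condition~(2) of Definition~\ref{def:homotopy}. You claim that any intersection $C_i^t \cap C_j^t$ lies in $D_x \times \mathring{D_y}$ and conclude this is ``inside $B$''. But $D_x \times \mathring{D_y}$ contains the vertical boundary $\partial D_x \times \mathring{D_y} \subset \partial B$, so you have not excluded intersections on the boundary. Concretely: the boundary braids $\bigcup_i \partial C_i = \bigcup_i \{(x,f_i(x)) : x \in \partial D_x\}$ and $\bigcup_i \partial \Gamma_i = \bigcup_i \{(x,g_i(x)) : x \in \partial D_x\}$ are braid-isotopic but there is no reason for them to coincide as curves. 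If, say, $f_1(x_0)=a$, $f_2(x_0)=b$, $g_1(x_0)=b$, $g_2(x_0)=a$ at some $x_0 \in \partial D_x$, then at $t=\tfrac12$ the strands of $\partial C_1^t$ and $\partial C_2^t$ meet at $(x_0,\tfrac{a+b}{2})$, so $C_1^t$ and $C_2^t$ intersect on $\partial B$. Convexity of $D_y$ does nothing to prevent this.

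The paper's proof handles exactly this point by splitting the homotopy into two stages. First, Lemma~\ref{l:samebraid} uses the braid isotopy between $\partial \Gamma$ and $\partial \mathcal{C}$ to homotope $\Gamma$ (realizing the isotopy over an annular collar and reparametrizing) so that its boundary link becomes literally equal to that of $\mathcal{C}$; throughout this stage the boundary remains a genuine braid, so no intersections appear there. Only then does Lemma~\ref{l:graphicalconvex} apply the straight-line interpolation you wrote down, now with the crucial extra hypothesis $f_i|_{\partial D_x} = g_i|_{\partial D_x}$, which forces $f_i^t|_{\partial D_x}$ to be constant in $t$ and hence keeps the boundary link fixed and intersection-free. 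Your linear interpolation is the right engine, but it needs the boundary-matching preprocessing to make condition~(2) go through.
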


\begin{lemma}\label{l:graphicalconvex}
	Suppose $C_1^0,\dots, C_m^0$ and $C_1^1,\dots, C_m^1$ are two configurations of graphical disks in a Milnor ball $B=D_x \times D_y$,
	such that $\partial C_j^0=\partial C_j^1$  for $j=1,\dots, m$. Then there is a family of graphical disks
	$C_1^t,\dots, C_m^t$ (potentially with negative intersections) interpolating between these two configurations with fixed boundary link 
	$\partial C_1^t \cup \dots \cup \partial C_m^t \subset \d B$. Here, $\d C_j^t = C_j^t \cap \d B = C_j^t \cap (\d D_x \times D_y)$.  
\end{lemma}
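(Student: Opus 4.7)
The plan is to construct the interpolation by linear convex combination of the defining functions of the graphical disks. Since each $C_j^\tau$ is graphical with respect to the projection $\pi_x: D_x \times D_y \to D_x$, we can write $C_j^0 = \{y = f_j^0(x)\}$ and $C_j^1 = \{y = f_j^1(x)\}$ for smooth functions $f_j^0, f_j^1: D_x \to D_y$. The boundary equality $\partial C_j^0 = \partial C_j^1$ translates to $f_j^0|_{\partial D_x} = f_j^1|_{\partial D_x}$ as functions into $D_y$.

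The key observation is that $D_y \subset \C$ is a convex subset of $\C$. This allows us to define
\[
f_j^t(x) = (1-t) f_j^0(x) + t f_j^1(x),
\]
which is a smooth one-parameter family of functions $D_x \to D_y$ with $f_j^t(x) = f_j^0(x) = f_j^1(x)$ whenever $x \in \partial D_x$. Setting $C_j^t := \{y = f_j^t(x)\}$ gives a family of graphical disks interpolating between $C_j^0$ and $C_j^1$ with $\partial C_j^t = \partial C_j^0 = \partial C_j^1$ for all $t$, and each $C_j^t$ is contained in $B$ by the convexity of $D_y$.

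The only technical point requiring a little care is to ensure that each $C_j^t$ meets $\partial B$ only in the vertical part $\partial D_x \times D_y$, i.e.\ that $f_j^t(x)$ remains in the interior of $D_y$ for $x$ in the interior of $D_x$. This is automatic if we enlarge $D_y$ slightly so that both $f_j^0(D_x)$ and $f_j^1(D_x)$ lie in the interior of $D_y$ away from $\partial D_x$; the convex combination then stays in this interior. Once this is arranged, each $C_j^t$ is transverse to $\partial B$ in the required sense. The lemma makes no claim about transversality or positivity of intersections between different $C_i^t$ and $C_j^t$ during the homotopy, so no further work is needed — the disks are allowed to have arbitrary (in particular, negative or non-transverse) intersections at intermediate times $t \in (0,1)$. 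Thus the whole argument is essentially an observation about the convexity of the target disk, and there is no serious obstacle.
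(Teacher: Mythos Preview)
Your proof is correct and is essentially identical to the paper's: both write the graphical disks as $\{y=f_j^s(x)\}$ and take the linear interpolation $f_j^t=(1-t)f_j^0+tf_j^1$, noting that the boundary condition forces $f_j^0=f_j^1$ on $\partial D_x$. Your additional remark about convexity of $D_y$ ensuring the interpolated graph stays in $B$ is a reasonable technical point that the paper simply omits.
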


\begin{proof}
	Because we are not limiting the behavior of the intersections of the components, it suffices to check that 
	there is a family $C_j^t$ interpolating between $C_j^0$ and $C_j^1$ for one component. 
	For simplicity of notation we will drop the $j$. For this, because both $C^0$ and $C^1$ are graphical, we
	can write them as $C^s = \{ (x, f^s(x)) \}$ for $s=0,1$. Then since $\partial C^0 = \partial C^1$, we have that $f^0(x)=f^1(x)$ 
	for $x\in \d D_x$. Let $C^t = \{(x,tf^1(z)+(1-t)f^0(x) \}$. Then $C^t$ interpolates smoothly between $C^0$ and $C^1$, and its boundary is fixed.
\end{proof}

\begin{lemma}\label{l:samebraid}
	Suppose $C_1\cup \dots \cup C_m$ is a configuration of graphical disks, so its boundary $\partial C_1 \cup \cdots \cup \partial C_m$ is a braid. 
	Let $L_1, \dots, L_m$ be the components of a braid $L_1\cup \cdots \cup L_m$ which is braid isotopic (with corresponding indices) to
	$\partial C_1 \cup \cdots \cup \partial C_m$. Then there is a homotopy of graphical disks $C_1^t, \dots, C_m^t$ such that $C_j^0 = C_j$ and $\partial C_j^1 = L_j$.
\end{lemma}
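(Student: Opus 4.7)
The plan is to extend a given braid isotopy radially into the disk, using a bump function to produce a homotopy of graphical disks with the correct boundary. The braid isotopy from $\partial C_1\cup\cdots\cup\partial C_m$ to $L_1\cup\cdots\cup L_m$ supplies a smooth $1$-parameter family of braids $L_1^t\cup\cdots\cup L_m^t$ in $\partial D_x\times D_y$ with $L_j^0=\partial C_j$ and $L_j^1=L_j$. Since each $L_j^t$ is a section of $\pi_x:\partial D_x\times D_y\to\partial D_x$, I can write $L_j^t=\{(x,g_j^t(x)):x\in\partial D_x\}$ for a smooth family of functions $g_j^t$, with $g_j^0=f_j|_{\partial D_x}$, where $C_j=\{(x,f_j(x)):x\in D_x\}$.

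Let $r$ be the radius of $D_x$ and pick a smooth bump function $\beta:[0,r]\to[0,1]$ that vanishes identically near $0$ and is identically $1$ near $r$. The plan is to define, for $x\in D_x\setminus\{0\}$,
\[
f_j^t(x)\;=\;f_j(x)\;+\;\beta(|x|)\Bigl[g_j^t\!\bigl(r x/|x|\bigr)\;-\;g_j^0\!\bigl(r x/|x|\bigr)\Bigr],
\]
and $f_j^t(0)=f_j(0)$. Then set $C_j^t=\{(x,f_j^t(x)):x\in D_x\}$. Because $\beta$ vanishes near $0$, the correction term extends smoothly (by $0$) across $x=0$, so $f_j^t$ is smooth on $D_x$ and jointly smooth in $(x,t)$. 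At $t=0$ the correction term vanishes identically, giving $C_j^0=C_j$. On $\partial D_x$ the factor $\beta$ equals $1$ and $r x/|x|=x$, so $f_j^t(x)=f_j(x)+g_j^t(x)-f_j(x)=g_j^t(x)$; in particular $\partial C_j^1=L_j$, which is what we want.

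There is essentially no obstacle to this construction because the lemma imposes no constraints on how the disks intersect each other during the homotopy---each $C_j^t$ only needs to be an individual graphical disk, which follows tautologically from being the graph of a smooth function. The disks $C_j^t$ may well develop negative or non-transverse intersections with each other in the interior of $D_x$ for intermediate values of $t$, but this is explicitly permitted by the statement (and is exactly why this lemma is combined later with Lemma~\ref{l:graphicalconvex} rather than used on its own to get a homotopy through configurations with only positive transverse intersections). The only mild technical point to verify is smoothness at $x=0$, which is handled by the choice of $\beta$ vanishing in a neighborhood of the origin, so that $f_j^t$ coincides with $f_j$ on that neighborhood for every $t$.
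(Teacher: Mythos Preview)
Your proof is correct and uses essentially the same idea as the paper: realize the trace of the braid isotopy in a collar of $\partial D_x$ to adjust the boundary while leaving the interior unchanged. The paper's implementation differs only cosmetically---it extends each $C_j$ over a larger disk $D'_x$ by placing the trace of the isotopy in the annulus $D'_x\setminus D_x$ and then shrinks $D'_x$ back to $D_x$ through a family of embeddings---but this is just an alternate packaging of the same collar construction you carry out with the radial bump function.
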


\begin{proof}
	If $C_1,\dots, C_m$ are graphical over a disk $D_x$, choose a larger disk $D'_x$ containing $D_x$. Then we can extend $C_1,\dots, C_m$ 
	to graphical disks $C_1',\dots, C_m'$ over $D'_x$ such that $\partial C_1',\dots, \partial C_m'$ is the braid $L_j$, 
	by realizing the trace of the braid isotopy over the annulus $D'_x\setminus D_x$. 
	Next, we can shrink $D'_x$ to $D_x$ continuously via a family of embeddings $\phi_t: D_x'\to D'_x$ where $\phi_0=id$,
	$\phi_1(D'_x) =D_x$, and $\phi_1$ identifies points in $\partial D'_x$ with points in $\partial D_x$ according to the same identification used to realize the trace.
	Then if $C_j' = \{(x,f_j(x))\}$ for $x\in D'_x$, we can let 
	$$C_j^t = \{(\phi_t(x), {f_j(x)} \mid x\in D'_x \} \cap (D_x\times \C).$$
	Then $C_j^0=C_j$ and $\partial C_j^1 = L_j$ as required.
\end{proof}

\begin{proof}[Proof of Proposition~\ref{p:deform}]
        
	When we fix   the germ $(C,w)$ and apply the method of Proposition~\ref{filling-to-config} to a given  Stein filling for $(Y,\xi)$, we first consider
	the open book on $(Y, \xi)$ induced by the decorated germ as in Proposition~\ref{artin-ob}. The Stein filling then carries a Lefschetz fibration that 
	induces the same open book on the boundary, and the arrangement  $(\Gamma, \{p_j\})$ is constructed from the monodromy of this Lefschetz fibration.  
	The smooth disks $\Gamma_1, \dots, \Gamma_m$ are contained in the Milnor ball $B$ for $\mathcal{C}$ and are transverse to its boundary $S^3$, so that 
	$S^3\cap (\Gamma_1\cup\cdots \cup \Gamma_m)$ is a braid. 
	By Lemma~\ref{lem:braid-monodromy}, the monodromy of this braid is the image of the monodromy 
	of the open book under the projection  $MCG(P_m)\to MCG(\C_m)$ of the mapping class group of the compact disk with holes to the mapping class group of the punctured plane, so the two braids are braid-isotopic.
	Therefore, we can apply Lemma~\ref{l:samebraid} to perform a graphical homotopy to $\Gamma_1,\dots, \Gamma_m$
	so that its boundary agrees with that of $C_1,\dots, C_m$. Next, apply Lemma~\ref{l:graphicalconvex} 
	to continue the graphical homotopy from $C_1,\dots, C_m$ to $\Gamma_1,\dots, \Gamma_m$.
\end{proof}

\begin{remark}
	For our construction of a Lefschetz fibration, it is not important that the $C_i^t$ are symplectic disks, we only care that they are graphical.
	However, by performing a rescaling in the $y$ direction, we can ensure that all of 
	the graphical disks are symplectic if the partial derivatives of the function $f$ are sufficiently small. More specifically, if $C = \{ (x,f(x)) \}$
	where $x=x_1+ix_2$ and 
	$$\left| \frac{\partial f}{\partial x_1} \right|, \left| \frac{\partial f}{\partial x_2} \right| < \frac{\sqrt{2}}{2}$$
	then $C$ will be symplectic. This bound is sufficient although not necessary; it can be achieved by rescaling $f$ which itself is a graphical homotopy.
	Moreover, if $f^0$ and $f^1$ both satisfy these bounds, then 
	their convex combination $tf^0+(1-t)f^1$  also satisfies the bound {for all $t \in[0,1]$}, 
	so the interpolation between the two {disks} will also be symplectic.
\end{remark}

\section{Incidence matrix and topology of fillings} \label{topology}

\subsection{Basic topological invariants} 
It is shown in \cite{dJvS} that the basic topological invariants of the Milnor fibers obtained from the picture deformations 
can be easily computed from the deformed curvetta arrangement.
Moreover, the incidence matrix of the arrangement can be reconstructed from the Milnor fiber, \cite{NPP}.
We now review these facts briefly and adapt and generalize them in our context: the goal is to show that exactly the same results hold for more general Stein fillings, constructed from smooth disk arrangements as in Section~\ref{find-curvettas-for-filling}.

As we have shown in Section~\ref{find-curvettas-for-filling}, every Stein filling $W$ can be described by an arrangement $\Gamma=\{\Gamma_i\}$ 
of  symplectic  curvettas with marked points $\{p_j\}_{j=1}^n$, related to the plane curve germ $\mathcal{C}=\{C_1,\dots, C_m\}$ by a smooth graphical homotopy.
We always assume that curvettas intersect positively. 
The set of marked points $\{p_j\}_{j=1}^n$ contains all intersection points between the $\Gamma_i$'s and possibly a number of free points. The {\em incidence matrix}
$\mathcal{I}(\Gamma, \{ p_j\})$ has $m$ rows and $n$ columns, 
defined so that its entry $a_{ij}$ at the intersection of $i$-th row and $j$-th column equals 1 if $p_j \in \Gamma_i$, and 0 otherwise. Note that there is no 
canonical labeling of the points $p_j$, so the incidence matrix is defined only up to permutation of columns. We will say that two arrangements 
$(\Gamma, \{ p_j\})$ and $(\Gamma', \{ p'_j\})$ are {\em combinatorially equivalent} if their incidence matrices coincide (up to permutation of columns, i.e. up to relabeling of 
the marked points). 

Let $\mathcal{L}$ be the Lefschetz fibration constructed for the arrangement $(\Gamma, \{ p_j\})$ as in  Lemma~\ref{construct-fibration}. Its general fiber is a disk 
with $m$ holes that correspond to the curvettas $\Gamma_1, \dots, \Gamma_m$ of $\Gamma$; in particular, the number of holes equals the number of rows in the matrix 
$\mathcal{I}(\Gamma, \{ p_j\})$. The vanishing cycles of $\mathcal{L}$ correspond to the marked points $\{p_j\}_{j=1}^n$ and enclose sets of holes that correspond to curvettas 
passing through that point: if $\Gamma_{i_1}, \dots, \Gamma_{i_k}$ are all curvettas that intersect at $p_j$,  
the vanishing cycle $V_j$ encloses the holes $h_{i_1}, \dots, h_{i_k}$.  It follows that {\em homology classes} of the vanishing cycles of $\mathcal{L}$ can be determined from 
the incidence matrix $\mathcal{I}(\Gamma, \{ p_j\})$, and we have

\begin{prop} \label{column-holes} 
Let $\mathcal{L}$ be the Lefschetz fibration for the arrangement $(\Gamma, \{ p_j\})$ with incidence matrix $\mathcal{I}(\Gamma, \{ p_j\})$. 
If the $j^{th}$ column of $\mathcal{I}(\Gamma, \{ p_j\})$ has $1$'s in rows $i_1, i_2, \dots, i_k$, the corresponding vanishing cycle $V_j$ of $\mathcal{L}$ encloses the holes
$h_{i_1}, \dots, h_{i_k}$ in the fiber.
\end{prop}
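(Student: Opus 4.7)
The plan is to derive this statement directly from the local model developed in the proof of Lemma~\ref{construct-fibration}, so the proposition is essentially a repackaging of that construction in combinatorial language. First I would recall the setup: the Lefschetz fibration $\mathcal{L}$ is obtained by blowing up $\C^2$ (or the Milnor ball $B$) at each marked point $p_j$ and removing tubular neighborhoods of the proper transforms $\widetilde{\Gamma}_i$. The general fiber is a disk with $m$ holes $h_1,\dots,h_m$, where $h_i$ is the boundary of the tubular neighborhood of $\widetilde{\Gamma}_i$ inside the fiber, and the Lefschetz critical point associated to $p_j$ lies above $\pi_x(p_j)\in \C$.

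The main step is then to identify the vanishing cycle $V_j$ in terms of the combinatorics at $p_j$. Fix the column $j$ and let $\Gamma_{i_1},\dots,\Gamma_{i_k}$ be the curvettas through $p_j$, so by definition of $\mathcal{I}(\Gamma,\{p_j\})$ these are exactly the rows with a $1$ in column $j$. In the local coordinates $(y,u)$ on the blow-up constructed in Lemma~\ref{construct-fibration}, with $\pi_x\circ\alpha(y,u)=yu$, the proper transforms $\widetilde{\Gamma}_{i_\ell}$ meet the exceptional sphere $E$ at $k$ distinct points away from the node $(0,0)$, while proper transforms of the remaining $\Gamma_i$ are disjoint from $E$. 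Here one uses transversality and positivity of the intersections of the $\Gamma_{i_\ell}$ at $p_j$ to ensure distinct slopes $\lim_{a\to 0} f_{i_\ell}(a)/a$, so the intersection points on $E$ really are distinct.

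Now I would identify the vanishing thimble and its boundary circle. Taking a small vanishing arc from a nearby regular value to the Lefschetz critical value, the standard model shows that $V_j$ bounds a small embedded disk in the singular fiber which is a neighborhood of the node in $E \cup \widetilde{F}$. Intersecting with a nearby regular fiber $F_t = \pi_x^{-1}(\pi_x(p_j)+t)$, one sees that $V_j$ is a small circle that separates off a disk meeting $\widetilde{\Gamma}_{i_\ell}$ transversely once for each $\ell=1,\dots,k$ and disjoint from every other $\widetilde{\Gamma}_i$. Translating into the fiber with holes: $V_j$ bounds a subsurface containing precisely the holes $h_{i_1},\dots,h_{i_k}$, i.e.\ encloses exactly these holes, which is the claim.

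I do not expect a real obstacle; the only step that requires care is the transversality check that the $\widetilde{\Gamma}_{i_\ell}$ meet $E$ at genuinely distinct points, so that the collection is unambiguously encoded by the incidence data rather than a further perturbation. Once this is noted, the proposition is immediate from the local model in Lemma~\ref{construct-fibration}.
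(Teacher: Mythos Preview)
Your proposal is correct and matches the paper's approach: the proposition is stated there as an immediate consequence of Lemma~\ref{construct-fibration} together with the definition of the incidence matrix, with no further proof given. Your write-up simply unpacks the local model from that lemma in more detail, which is exactly the intended argument.
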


\begin{cor} \label{homologous}
Let $(\Gamma, \{ p_j\})$ and $(\Gamma', \{ p'_j\})$ be two combinatorially equivalent arrangements, and $\mathcal{L}$ and 
$\mathcal{L}'$ the corresponding Lefschetz fibrations. Then the vanishing cycles of $\mathcal{L}$ and~$\mathcal{L}'$ are in one-to-one correspondence, so that the two vanishing 
cycles that correspond to one another are given by homologous curves in the fiber.
\end{cor}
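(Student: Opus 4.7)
The plan is to use Proposition~\ref{column-holes} together with the elementary observation that in a planar surface (a disk with holes), the homology class of a simple closed curve is determined by the subset of holes it encloses. Combinatorial equivalence of $(\Gamma, \{p_j\})$ and $(\Gamma', \{p'_j\})$ gives, by definition, a bijection between the columns of their incidence matrices that preserves the column data; this in turn furnishes a bijection between the marked points $\{p_j\}$ and $\{p'_j\}$, hence between the vanishing cycles $V_j$ of $\mathcal{L}$ and $V'_j$ of $\mathcal{L}'$, after identifying the $m$ holes of the fibers via the rows of the matrices.

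Fix such an identification of the fibers $F$ of $\mathcal{L}$ and $F'$ of $\mathcal{L}'$ as planar surfaces with $m$ labeled holes $h_1, \ldots, h_m$. Recall that $H_1$ of a disk with $m$ holes is freely generated by the boundary classes $[\partial h_1], \ldots, [\partial h_m]$, subject to the single relation that their sum equals the class of the outer boundary (or equivalently, one may take them as a basis of $H_1$ of the punctured plane). For any embedded simple closed curve $\gamma$ in this surface, the homology class $[\gamma]$ is given by $\sum_{i \in S} [\partial h_i]$, where $S$ is the index set of the holes that $\gamma$ encloses.

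Now apply Proposition~\ref{column-holes} to both Lefschetz fibrations. If the $j$-th column of $\mathcal{I}(\Gamma, \{p_j\}) = \mathcal{I}(\Gamma', \{p'_j\})$ has $1$'s exactly in rows $i_1, \ldots, i_k$, then $V_j$ encloses precisely the holes $h_{i_1}, \ldots, h_{i_k}$ in $F$, and similarly $V'_j$ encloses precisely the same labeled holes $h_{i_1}, \ldots, h_{i_k}$ in $F'$. By the homological observation above,
\[
[V_j] \;=\; \sum_{\ell=1}^{k} [\partial h_{i_\ell}] \;=\; [V'_j]
\]
in the identified first homology of the fiber. This establishes the one-to-one correspondence and the homological equality of the corresponding vanishing cycles. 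No step here poses a real obstacle; the content is essentially bookkeeping once Proposition~\ref{column-holes} is in hand.
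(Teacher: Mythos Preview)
Your proof is correct and is exactly the argument the paper has in mind: the corollary is stated immediately after Proposition~\ref{column-holes} with no separate proof, since it follows at once from that proposition together with the fact that the homology class of a simple closed curve in a disk with holes is determined by the set of holes it encloses. You have simply made this implicit reasoning explicit.
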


Because smooth graphical homotopies do not allow intersections to escape through the boundary,  the number of pairwise intersections of $\Gamma_i$ and $\Gamma_j$ is  
 given by 
 $tang(C_i, C_j)=\rho( v_i, v_j; v_0)$, see Remark~\ref{rem:toptype}. The weight of $\Gamma_i$ (the total number of intersection points and the free marked 
 points on $\Gamma_i$) is given by $w(C_i)=1+l(v_0, v_i)$. The intersections between $\Gamma_i$ and $\Gamma_j$ correspond to the points among  $p_1, p_2, \dots, p_n$ contained in both lines, and each such point
gives a ``1'' in the same column for the $i$-th row and the $j$-th row of the incidence matrix. Therefore we have

\begin{lemma} \label{where1} Let $(\mathcal{C}, w)$ be a decorated germ corresponding to $(X, 0)$, with branches $C_1, C_2, \dots, C_m$.
Consider any arrangement $\{ \Gamma_i\}_{i=1}^m$ of smooth curvettas
encoding a Stein filling of the link of $(X, 0)$. 
The incidence matrix $\mathcal{I}(\Gamma, \{ p_j\})$ has the following properties:

\noindent (i)  the number of $1$'s in the $i$-th row of $\mathcal{I}(\Gamma, \{ p_j\})$  is $w(C_i)=1+l(v_0, v_i)$; 

\noindent (ii) the number of $1$'s which appear in the same columns for the $i$-th row and the $j$-th row is $tang(C_i, C_j)=\rho( v_i, v_j; v_0)$.

Here,  $l(v_0, v_i)$ and $\rho( v_i, v_j; v_0)$ are the length and overlap functions on the resolution graph $G$, defined in Remark~\ref{rem:toptype}, and $v_0$ is 
the choice of root.
\end{lemma}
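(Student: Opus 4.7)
The plan is to deduce both statements essentially by unpacking the definition of smooth graphical homotopy and invoking the earlier resolution-graph calculations of $w(C_i)$ and $tang(C_i, C_j)$. First I would observe that, by the very definition of the incidence matrix, the number of $1$'s in row $i$ counts the marked points of the arrangement lying on $\Gamma_i$, while the number of columns in which rows $i$ and $j$ simultaneously contain a $1$ counts the marked points lying on both $\Gamma_i$ and $\Gamma_j$. By condition (4) of Definition~\ref{def:homotopy} all intersection points of any two branches $\Gamma_i \cap \Gamma_j$ are marked at time $t=1$, and free marked points lie on a single branch by definition, so a column contributing to the overlap in (ii) corresponds to an honest geometric intersection of $\Gamma_i$ with $\Gamma_j$, and conversely.

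For (i), condition (4) directly says that the total number of marked points on $\Gamma_i = C_i^1$ is $w_i = w(C_i)$, so row $i$ has $w(C_i)$ ones. The identification $w(C_i) = 1 + l(v_0, v_i)$ is not new: it was established in the proof of Lemma~\ref{lem:toptype} by tracking how many blow-down steps affect the curvetta obtained as the transverse slice on the $(-1)$ leaf attached to $v_i$ — one for the leaf itself, and one for each vertex of the path in $G$ from $v_i$ down to the chosen root $v_0$.

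For (ii), I would argue via the preservation of the algebraic intersection number along the homotopy. Conditions (1)--(2) of Definition~\ref{def:homotopy} guarantee that each $C_i^t$ meets $\partial B$ transversely for all $t$ and that intersections between branches never escape to $\partial B$ during the homotopy. Consequently $\{\partial C_i^t\}_{t\in[0,1]}$ is an isotopy of braids (respecting labels) in $\partial B = S^3$, and for every pair $(i,j)$ the algebraic intersection number of the disks $C_i^t$ and $C_j^t$ (equivalently, the linking number of their boundary components) is independent of $t$. At $t=1$ the intersections are transverse and positive by condition (3), so the geometric count of $\Gamma_i \cap \Gamma_j$ equals this algebraic count. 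At $t=0$ the disks are the complex branches $C_i, C_j$ of the germ $\mathcal{C}$, and their local intersection multiplicity at the origin is, by definition, $tang(C_i, C_j)$; the identification $tang(C_i, C_j) = \rho(v_i, v_j; v_0)$ is again provided by Lemma~\ref{lem:toptype}.

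The lemma is therefore essentially a bookkeeping statement once the right definitions are in place. The only substantive point is the insistence in Definition~\ref{def:homotopy} that intersections remain in the interior of $B$ and that boundaries stay transverse — if either condition failed, the boundary braid could change during the homotopy and the count at $t=1$ would no longer match $tang(C_i, C_j)$. Everything else reduces to earlier calculations on the resolution graph.
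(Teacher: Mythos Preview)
Your proposal is correct and matches the paper's own argument essentially verbatim: the paper deduces (i) directly from condition (4) of Definition~\ref{def:homotopy} and the formula $w(C_i)=1+l(v_0,v_i)$ from Lemma~\ref{lem:toptype}, and deduces (ii) from the fact that intersections between branches cannot escape $B$ during the homotopy (your linking-number phrasing is just a slightly more explicit version of this), combined with the formula $tang(C_i,C_j)=\rho(v_i,v_j;v_0)$ from the same lemma. There is nothing to add.
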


We now describe how the incidence matrix $\mathcal{I}(\Gamma, \{ p_j\})$ determines  basic algebraic topology of the filling $W$, 
namely $H_1(W)$, $H_2(W)$, the intersection form of $W$, and the first Chern class $c_1(J)$ of the Stein structure. (Homology is taken with $\Z$ coefficients thoughout.) The statements about the homology and the intersection form of 
$W$ are proved in \cite[Section 5]{dJvS} for the algebraic case, but the proofs are entirely topological and apply in the more general settings as well. Alternatively, the same invariants 
can be computed from the vanishing cycles of the Lefschetz fibration \cite[Lemma 16]{Aur}. For Lefschetz fibrations with planar fiber, detailed proofs for the intersection form and $c_1(J)$
calculations are given in \cite{GGP}. We write $\Z\langle \{p_j\}\rangle$ for the free abelian group generated by $\{p_j\}_{j=1}^n$;
$\Z\langle \{\Gamma_i\}\rangle$ is defined similarly. The incidence matrix    $\mathcal{I}(\Gamma, \{ p_j\})$ defines a map between the corresponding lattices.
\begin{prop}\label{homology} There is a short exact sequence 
$$
0 \longrightarrow H_2(W) \longrightarrow \Z\langle \{p_j\}\rangle \stackrel{\mathcal{I}}\longrightarrow \Z\langle \{\Gamma_i\}\rangle  \longrightarrow H_1(W) \longrightarrow 0.
$$
\end{prop}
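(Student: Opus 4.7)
The plan is to extract the sequence from a handle (equivalently, CW) decomposition of $W$ coming from the Lefschetz fibration $\mathcal{L}$ constructed in Lemma~\ref{construct-fibration}. Recall that $\mathcal{L}$ has general fiber the planar surface $P_m$, a disk with $m$ holes labeled by $\Gamma_1,\dots,\Gamma_m$, and one Lefschetz critical point for each marked point $p_j$, with vanishing cycle $V_j\subset P_m$. Since $W$ is a Stein filling of a 3-manifold, it admits a handle decomposition with handles of index at most $2$; concretely, $P_m\times D^2$ contributes one $0$-handle and $m$ $1$-handles (it deformation retracts onto a wedge of the $m$ loops $\gamma_i$ around the holes), and each critical point contributes a $2$-handle attached along the corresponding vanishing cycle in a reference fiber.

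From this I would read off the associated cellular chain complex
\[
0\longrightarrow C_2=\Z\langle\{p_j\}\rangle\stackrel{\partial_2}{\longrightarrow} C_1=\Z\langle\{\Gamma_i\}\rangle\stackrel{\partial_1}{\longrightarrow}C_0=\Z\longrightarrow 0,
\]
where $C_1$ is identified with $\Z\langle\{\Gamma_i\}\rangle$ by sending the $1$-handle associated to the hole $h_i$ to the generator $\Gamma_i$, and $C_2$ is identified with $\Z\langle\{p_j\}\rangle$ by sending the $2$-handle attached along $V_j$ to $p_j$. The map $\partial_1$ is zero since each $\gamma_i$ is a loop based at the unique $0$-cell. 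The key step is then to verify that, under these identifications, $\partial_2=\mathcal{I}$. Indeed, $\partial_2$ sends a $2$-handle to the homology class of its attaching circle in $H_1(P_m)=\Z\langle\{\Gamma_i\}\rangle$; in a planar surface, the class of any embedded simple closed curve is the (signed) sum of the classes of the meridional loops around the holes it encloses. By Proposition~\ref{column-holes}, the holes enclosed by $V_j$ are precisely those indexed by the rows where the $j$-th column of $\mathcal{I}$ has a $1$, and the signs are all positive because all curvetta intersections are positive (the disks are symplectic and graphical, so each intersection point contributes a Dehn twist enclosing its holes with the standard orientation). Hence $\partial_2$ agrees column-by-column with $\mathcal{I}$.

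Once $\partial_2=\mathcal{I}$ and $\partial_1=0$ are in place, the conclusion is immediate: $H_2(W)=\ker\partial_2=\ker\mathcal{I}$ and $H_1(W)=C_1/\operatorname{im}\partial_2=\operatorname{coker}\mathcal{I}$, which is exactly the asserted four-term exact sequence. The only real point that requires care is the orientation/sign bookkeeping in the identification of $[V_j]\in H_1(P_m)$ with the $j$-th column of $\mathcal{I}$; I expect this to be the main (but routine) obstacle, handled by fixing once and for all the convention in which $\Gamma_i$ corresponds to a positively oriented meridian of $h_i$ and the vanishing cycles of $\mathcal{L}$ are oriented as boundaries of Lefschetz thimbles. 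Alternatively, one could bypass the handle decomposition and derive the same chain complex from the long exact sequence of the pair $(\widetilde{B},W)$, using that $\widetilde{B}\simeq B\#_j\overline{\cptwo}$ and that $H_*(\widetilde{B},W)$ computes (via excision and Lefschetz duality on the tubular neighborhoods of the proper transforms $\widetilde{\Gamma}_i$) the group dual to $\Z\langle\{\Gamma_i\}\rangle$, with connecting map given by intersection with the exceptional divisors; the resulting matrix is again $\mathcal{I}$ by definition of the incidence of marked points with curvettas.
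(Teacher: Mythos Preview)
Your argument is correct and essentially the same as the paper's. The paper phrases it via the long exact sequence of the pair $(W, P\times D')\simeq (W,P)$, identifying $H_2(W,P)\cong\Z\langle\{p_j\}\rangle$ with the relative classes of the $2$-handle cores and $H_1(P)\cong\Z\langle\{\Gamma_i\}\rangle$ with the loops around the holes, and then checks that the connecting map $\partial_*$ sends $p_j$ to the $j$-th column of $\mathcal{I}$; this is exactly your cellular chain complex with $\partial_2=\mathcal{I}$ and $\partial_1=0$, just packaged differently.
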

\begin{proof}
Let $W$ be the total space of a Lefschetz fibration over a disk $D$, with planar fiber $P$. (We always assume that $W$, $P$, and $D$ are compatibly oriented.)
If $D' \subset D$ is a small disk that contains no critical points, then $W$ is obtained from $P \times D'$ by attaching 2-handles to copies of the vanishing cycles 
contained in the vertical boundary $P\times \d D'$, so that distinct handles are attached along knots contained in distinct fibers. We use 
the exact sequence of the pair $(W, P \times D')$; since $P \times D'$ retracts onto $P$, we can replace the former with the latter. Notice also that  $H_1(W,P) = 0$, so we get
\[
0 \longrightarrow H_2(W) \stackrel{j_*}\longrightarrow H_2(W, P) \stackrel{\d_*}\longrightarrow  H_1(P) \longrightarrow H_1(W) \longrightarrow 0.
\]
The group $H_2(W, P)$ is freely generated by the cores of the attached
2-handles;
we can identify these generators with the vanishing cycles. By construction of the Lefschetz fibration, 
each vanishing cycle corresponds to a blow-up at some marked point, so we can identify the vanishing cycles with the set $\{p_j\}$.  The free abelian group $H_2(W, P)$ is then 
identified with the lattice $\Z \langle\{p_j\}\rangle$. The generators for the free abelian group $H_1(P)$ can be given by loops around the holes in the planar fiber. The holes correspond to 
the branches of $\mathcal{C}$, thus $H_1(P)$ can be identified with the lattice $\Z\langle\{\Gamma_i\}\rangle$. The map $\d_*$ is evaluated as follows: to compute $\d_*(p_j)$, we take 
the boundary of the core of the corresponding 2-handle, given by the vanishing cycle associated with $p_j$, and express this vanishing cycle in terms of the generators of 
$H_1(P) = \Z\langle \{\Gamma_i\}\rangle$. Since the vanishing cycle is a simple closed curve on the planar page, its first homology class equals the sum of the boundaries of the holes it encloses, 
which in turn correspond to the branches $\Gamma_i$ passing through $p_j$. Therefore, $\d_*(p_j)$ is given precisely by the $j$-th column of the incidence matrix  
$\mathcal{I}(\Gamma, \{ p_j\})$, as required. 
\end{proof}

\begin{remark} \label{b1=0} Since the link $Y$ of a rational singularity $(X,0)$ is always a rational holomology 3-sphere, a standard argument shows that $b_1(W)=0$ for any Stein filling $W$ 
of $Y$. Indeed, $W$ has no 3-handles, so  $H^3(W; \Q)=0$; then for the pair $(W, Y)=(W, \partial W)$ we have 
$$
0=H_1(\partial W; \Q) \rightarrow H_1(W; \Q)  \rightarrow H_1(W,\d W; \Q)\cong H^3(W; \Q)=0.
$$
It follows that the matrix $\mathcal{I}(\Gamma, \{ p_j\})$ always has full rank.
\end{remark}

Note that $H_2(W)$ is isomorphic to $\Im j_*$, which in turn equals $\Ker \d_*$. So $H_2(W)$ can be identified with null-homologous 
linear combinations of vanishing cycles (thought of as 1-chains in $P$).  One can explicitly describe an oriented embedded surface in $W$ representing 
a given second homology class, as follows \cite[Section 2]{GGP}. First, one constructs an oriented embedded surface in $P \times D'$  whose boundary is the given 
null-homologous linear combination of the vanishing cycles, and then the vanishing cycles are capped off in $W$. A similar construction is given in \cite{dJvS} 
without Lefschetz 
fibrations, for Milnor fibers obtained by blowing up the 4-ball at the marked points and taking the complement of the proper transforms of curvettas; exactly the same argument 
works for a smooth curvetta arrangement $(\Gamma, \{ p_j\})$. After blowing up the 4-ball $B$ at the points $p_1, p_2, \dots, p_n$, we have the 4-manifold 
$\ti{B}$, the blow-up of $B$, with generators of $H_2(\ti{B})$  given by the fundamental classes $E_{p_i}$ of the exceptional divisors. We identify 
$H_2(\ti{B})= \Z\langle \{p_j\} \rangle$.  The intersection form of $\ti{B}$ is  standard negative definite in the given basis, as $E_p \cdot E_p=-1$. The manifold $W$ is obtained from 
$\ti{B}$ by removing the tubular neighborhoods $T_i$ of the proper transforms $\ti{\Gamma}_i$ of the curvettas $\Gamma_i$. The inclusion induces a map $H_2(W) \to H_2(\ti{B})$, which is in fact 
the same map as $j_*$ above, under obvious identifications. Every homology class in  $H_2(W)$ is represented by an embedded oriented surface which can be constructed
by taking the collection of the corresponding exceptional spheres $E_{p_i}$, punctured at their intersections with $\ti{\Gamma}_j$, and connected by tubes running inside the 
cylinders $T_i$. The intersection of two such surfaces can be computed by taking the intersections of the corresponding collections of exceptional spheres, as the tubes can be 
arranged to be disjoint. For the Stein structure $J$ on $W$ associated to the given Lefschetz fibration, we can compute $c_1(J)$ using the same inclusion
$H_2(W) \to H_2(\ti{B})$. Indeed,  $J$ is homotopic to the restriction of complex structure $j$ on $\ti{B}$ and $c_1(j)[E_{p_i}]= 1$ for every $E_{p_i}$.
Therefore we have  
\begin{prop}\label{intform} The intersection form on $H_2(W) \subset \Z\langle \{p_j\} \rangle$ is the restriction of the standard negative definite
form given by $p_i \cdot p_j = -\delta_{ij}$, $i, j=1, \dots, n$. The first Chern class $c_1(J)$ of the Stein structure is the restriction of the linear form on 
$\Z\langle \{p_j\}\rangle$ given 
by $c_1[p_i]=1$, $i=1, \dots, n$.
\end{prop}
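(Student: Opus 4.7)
The plan is to leverage the inclusion $W \hookrightarrow \widetilde{B}$ of $W$ into the blow-up of the Milnor ball, which induces exactly the map $j_*: H_2(W) \to H_2(\widetilde{B})$ appearing in the short exact sequence of Proposition~\ref{homology}. Since $\widetilde{B}$ is diffeomorphic to a boundary connect sum of $n$ copies of $\cptwobar$, its second homology is freely generated by the exceptional divisor classes $E_{p_1}, \dots, E_{p_n}$ with intersection form $E_{p_i}\cdot E_{p_j} = -\delta_{ij}$. Under the identification $H_2(\widetilde{B}) \cong \Z\langle\{p_j\}\rangle$ sending $E_{p_i}$ to $p_i$, this recovers the claimed standard negative definite form on the ambient lattice, and Proposition~\ref{homology} tells us $j_*$ is injective, so it suffices to show that $j_*$ is an \emph{isometry} onto its image and that $c_1(J)$ pulls back from the corresponding evaluation on $H_2(\widetilde{B})$.

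First I would realize each class $\alpha \in H_2(W)$ by an explicit embedded oriented surface of the form sketched right before the proposition. Given $\alpha = \sum a_i p_i \in \ker \partial_* \subset H_2(W,P)$, take the formal combination $\sum a_i E_{p_i}$ of exceptional spheres in $\widetilde{B}$, puncture each $E_{p_i}$ at its (transverse) intersections with the proper transforms $\widetilde{\Gamma}_j$, and observe that because $\alpha$ lies in the kernel of the incidence map $\mathcal{I}$, on each boundary torus $\partial T_j$ the resulting meridional loops cancel homologically. This allows the punctures to be closed up by disjoint annular tubes running along $\partial T_j$ inside $W$, yielding an embedded surface $\Sigma_\alpha \subset W$ with $[\Sigma_\alpha]=\alpha$ and $j_*[\Sigma_\alpha] = \sum a_i E_{p_i}$ in $H_2(\widetilde{B})$.

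Next I would compute $\Sigma_\alpha \cdot \Sigma_\beta$ by taking two such representatives and pushing their tubes to disjoint parallel copies on each $\partial T_j$; since the tubes then contribute nothing, the intersection number equals $(\sum a_i E_{p_i})\cdot(\sum b_j E_{p_j}) = -\sum_i a_i b_i$ in $\widetilde{B}$, proving the first assertion. For the Chern class, the Stein structure $J$ on $W$ induced by the Lefschetz fibration $\pi_x\circ\alpha$ is homotopic through almost complex structures to the restriction of the standard complex structure $j$ on $\widetilde{B}$, since the fibration is constructed by holomorphic blow-ups and $J$ can be taken compatible with these blow-ups near the exceptional divisors. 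Naturality gives $c_1(J) = (j_*)^* c_1(j)$, and the standard computation $c_1(j)[E_{p_i}]=1$ for a complex blow-up completes the proof.

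The main technical obstacle is justifying that the tube-and-puncture surgery really produces an embedded surface representing $\alpha$, and that different classes admit simultaneously embedded representatives whose intersection reduces cleanly to that of the exceptional pieces. This is essentially the argument of \cite[Section 5]{dJvS} and \cite[Section 2]{GGP}, and once one has chosen disjoint parallel collars inside each $T_j$ the verification is routine bookkeeping; the only point requiring care is to check that the homotopy between $J$ and $j|_W$ can be arranged through \emph{almost} complex (not necessarily integrable) structures, which is automatic since both restrict to the same tangent distribution along the exceptional divisors.
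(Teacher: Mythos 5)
Your proposal is correct and follows essentially the same route as the paper: identify $H_2(W)$ with its image under the inclusion-induced map into $H_2(\ti{B})\cong\Z\langle\{p_j\}\rangle$, represent classes by tubed-together punctured exceptional spheres whose tubes can be made disjoint, and compute $c_1(J)$ by homotoping $J$ to the restriction of the blow-up complex structure with $c_1(j)[E_{p_i}]=1$. The paper likewise defers the detailed bookkeeping to \cite[Section 5]{dJvS} and \cite[Propositions 2.1 and 2.4]{GGP}, so no further justification is needed.
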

See also \cite[Proposition 2.1, Proposition 2.4]{GGP} for a detailed calculation (in terms of the vanishing cycles) of the intersection form and $c_1(J)$ for an arbitrary 
Lefschetz fibration $(W, J)$  with planar fiber. 

\subsection{Uniqueness of the Artin filling and proof of Theorem~\ref{kollar-fill}}
In general, the topology of the filling might not be fully determined by the incidence  matrix of the corresponding curvettas arrangement; Proposition~\ref{column-holes}
gives the
homology classes of the vanishing cycles but not their isotopy classes.     However,  it turns out that the incidence 
matrix completely determines the smoothing for  picture deformations that are combinatorially equivalent to the Scott deformation, so that one gets the Artin smoothing 
component, \cite[Cases 4.13]{dJvS}. We prove that an analogous result holds for Stein fillings as well.
Note that the argument in~\cite{dJvS} uses simultaneous resolutions and only works in the algebraic setting, while we work with mapping class groups instead. Our 
argument works because the Artin filling has a Lefschetz fibration  with {\em disjoint} vanishing cycles in the fiber.

\begin{prop} \label{artin-fill}
Let $(X,0)$ be a rational surface singularity with reduced fundamental cycle, with contact link $(Y, \xi)$ and decorated germ $(\mathcal{C}, w)$.
Let $\Gamma$ 
be an arrangement of smooth graphical curves with positive intersections and marked points $\{p_j\}$, related to 
the germ $(\mathcal{C}, w)$ by a smooth graphical homotopy, so that  $(\Gamma, \{p_j\})$ gives rise to a Stein filling $W$ of $(Y, \xi)$.

Suppose that $(\Gamma, \{p_j\})$ is  
combinatorially equivalent to the Scott deformation $(\mathcal{C}^s, w^s)$ of $(\mathcal{C}, w)$. 
Then the Stein filling given by $(\Gamma, \{p_j\})$ is Stein deformation equivalent to the Artin filling of $(Y, \xi)$.
\end{prop}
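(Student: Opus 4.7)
The plan is to show that the Lefschetz fibration produced by $(\Gamma, \{p_j\})$ via Lemma~\ref{construct-fibration} has, after isotopy, disjoint vanishing cycles matching those coming from the Scott deformation, and then invoke Proposition~\ref{Artin-factor} to identify the filling with the Artin filling. The hypothesis of combinatorial equivalence will be exploited through Corollary~\ref{homologous}, which already gives the correspondence between vanishing cycles of the two Lefschetz fibrations as homology classes in the page; the work is to upgrade this to a correspondence as isotopy classes and then to a simultaneous disjointness statement.

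First I would recall that on a planar surface $P_m$ (a disk with $m$ holes), an (essential) simple closed curve is determined up to isotopy by the subset of holes it separates off from the outer boundary, or equivalently by its homology class in $H_1(P_m)$. Applying Proposition~\ref{column-holes} column by column to the shared incidence matrix, I would conclude that the vanishing cycles $V_1,\dots,V_n$ of the Lefschetz fibration $\mathcal{L}$ associated to $(\Gamma,\{p_j\})$ are pairwise isotopic to the vanishing cycles $V_1^{\mathrm{Sc}},\dots,V_n^{\mathrm{Sc}}$ of the Lefschetz fibration $\mathcal{L}^{\mathrm{Sc}}$ produced by the Scott deformation, because each pair $V_j, V_j^{\mathrm{Sc}}$ encloses the same subset of holes. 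In particular, the $w(C_i)-t(C_i)>0$ boundary-parallel twists around each hole $h_i$ guaranteed by Proposition~\ref{artin-ob}(2) for the Scott factorization are inherited by $\mathcal{L}$, so the boundary-parallel hypothesis of Proposition~\ref{Artin-factor} is satisfied on the nose.

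Next I would promote this pointwise identification to a simultaneous disjointness. The proof of Proposition~\ref{artin-ob} shows that the subsets of $\{h_1,\dots,h_m\}$ enclosed by the $V_j^{\mathrm{Sc}}$ form a laminar (i.e.\ nested) family: they arise from the refining sequence of partitions $P_l$ induced by the tangency equivalence relations $\sim_l$ at successive iterations of the Scott procedure, together with singleton enclosures coming from the free marked points. By the previous paragraph, the collection $\{V_1,\dots,V_n\}$ encloses precisely the same laminar family of subsets of holes. A standard fact about planar surfaces then yields that any finite collection of essential simple closed curves whose enclosed-hole subsets form a laminar family can be simultaneously isotoped to be pairwise disjoint; the isotopy is carried out block by block in the nesting tree. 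Applying this to $\{V_1,\dots,V_n\}$ produces a new Lefschetz factorization of the same open book monodromy in which all the Dehn twists are about pairwise disjoint curves.

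Finally, I would feed this factorization into Proposition~\ref{Artin-factor}: the resulting Lefschetz fibration has disjoint vanishing cycles with at least one boundary-parallel cycle around each boundary component, so it is the Gay--Mark Lefschetz fibration on the minimal resolution, which in this setting is the Artin filling. Because the two fibrations $\mathcal{L}$ and $\mathcal{L}^{\mathrm{Sc}}$ have the same ordered list of vanishing cycles up to isotopy and up to Hurwitz moves that permute the order of \emph{commuting} Dehn twists, they define the same allowable Lefschetz fibration, hence (via Wendl's theorem, or directly since Hurwitz-equivalent factorizations yield deformation equivalent Lefschetz fibrations) the Stein structures on the two total spaces are deformation equivalent. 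The main obstacle in the argument is the simultaneous disjointness step: pointwise isotopy of the $V_j$ to the $V_j^{\mathrm{Sc}}$ is essentially free on a planar page, but realising all the isotopies at once requires exploiting the laminar structure coming specifically from the Scott hierarchy, which is why this combinatorial structure is what distinguishes the Artin component among all possible factorizations.
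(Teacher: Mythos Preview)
Your proof has a genuine gap at the very first step. You claim that on the planar page $P_m$, an essential simple closed curve is determined up to isotopy by the set of holes it encloses, i.e.\ by its homology class. This is false in general. Consider a disk with three holes $h_1,h_2,h_3$ arranged in a row, and two simple closed curves each enclosing $\{h_1,h_3\}$ but passing on opposite sides of $h_2$: they are homologous but not isotopic (e.g.\ they have different geometric intersection with a fixed arc from $\partial h_2$ to the outer boundary). The statement is only true for curves enclosing a single hole (boundary-parallel curves), and the paper uses exactly this special case as the base of an induction. Your step 3 does not repair this: finding \emph{some} disjoint collection realizing the same laminar family of hole-subsets gives a collection homologous to the $V_j$, but the Dehn twists about those new curves are not the Dehn twists $\tau_{V_j}$, so you have changed the Lefschetz fibration rather than identified it.

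The paper's proof confronts precisely this difficulty. After disposing of the boundary-parallel vanishing cycles (where homology does determine isotopy), it proceeds by induction on the number of vertices of the resolution graph $G$. At each step one picks a leaf $v$, chooses arcs $\alpha_1,\dots,\alpha_{r-1}$ in the subsurface $S_v$ that are disjoint from all neck vanishing cycles of $\mathcal{L}_A$, and uses the right-veering property of products of positive Dehn twists (as in \cite{HKM,BMVHM,Foss}) to show that the $V_j$ of $\mathcal{L}$ cannot essentially intersect these arcs either, since the two products agree. Cutting along the $\alpha_i$ makes the neck curve $V^A$ boundary-parallel in the cut surface, forcing $V \simeq V^A$ there, and hence in $S$; removing this pair of twists reduces to the graph $G\setminus\{v\}$. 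This mapping-class-group argument is the substantive content missing from your proposal.
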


\begin{proof}  Let $\mathcal{L}$ the Lefschetz fibration  for $(\Gamma, \{p_j\})$, constructed as in Lemma~\ref{construct-fibration}, and let 
$\mathcal{L}_A$ be the Lefschetz fibration for the Artin smoothing, given in Proposition~\ref{gay-mark-fibration}.
We know that $\mathcal{L}_A$ is given by the monodromy factorization as in Proposition~\ref{artin-ob}; let $\phi$ denote the monodromy 
of the open book as in the lemma.  

Both fibrations $\mathcal{L}$ and  $\mathcal{L}_A$ 
have the same fiber $S$, and the fibration $\mathcal{L}$ 
corresponds to some factorization of the same monodromy $\phi$.    By Corollary~\ref{homologous}, the vanishing cycles $\{V_j\}$ and $\{V_j^A\}$ of the two fibrations 
are in one-to-one correspondence, so that the curves $V_j$ and $V_j^A$ are homologous in the fiber. We need to show that $V_j$ and $V_j^A$ are isotopic.

There are two types of the vanishing cycles in the fibration $\mathcal{L}_A$: 1) boundary-parallel curves that enclose a single hole each and 2) the curves that go around the necks 
connecting the spheres, as shown at the top of Figure~\ref{fig:outer}. The isotopy class of a boundary-parallel curve in the fiber is uniquely determined by its homology class, so if $V_j^A$ is boundary-parallel, 
then $V_j=V_j^A$. Now, because the total monodromy of $\mathcal{L}$ and $\mathcal{L}_A$ is the same, and 
the Dehn twists around the boundary-parallel curves are in the center of the mapping class group of the fiber, we see that the products 
of the Dehn twists around the vanishing cycles homologous to necks are the same for both  $\mathcal{L}$ and $\mathcal{L}_A$. 
In other words, if $N$ denotes the set the vanishing 
cycles homologous to necks,  we have 
\begin{equation} \label{eq:non-bdry}
\prod_{V_j \in N} \tau_{V_j}  = \prod_{V^A_j \in N} \tau_{V^A_j}.
\end{equation}
Let $\psi$ denote the diffeomorphism of the fiber given by the product~(\ref{eq:non-bdry}).

To prove that each vanishing cycle $V_j$ is indeed isotopic to the vanishing class $V_j^A$ homologous to $V_j$, we proceed by induction on the number of necks in the fiber $S$
(this is the same as the number of edges in the dual resolution graph $G$). Equivalently, we can induct on the number of vertices, since $G$ is a tree. 
When $G$ has only one vertex, there are no necks, so all the vanishing cycles are boundary-parallel, 
and $V_j=V_j^A$ for all pairs of vanishing cycles. Assume that the claim is established for all graphs  with $k$ vertices or fewer. Consider a graph
$G$ with $k+1$ vertices and pick  a leaf vertex $v$ of $G$. We will be able to remove $v$ to reduce the question to a graph $G'$ with $k$ vertices.  

In the Lefschetz fibration of Lemma~\ref{gay-mark-fibration}, the leaf $v$ corresponds to the sphere $S_v$  with holes, connected to the rest of the fiber $S$ by a single neck. 
The fibration $\mathcal{L}_A$ has a vanishing cycle $V^A$  that goes around this neck, and $\mathcal{L}$ has a vanishing cycle $V$ in the same homology class. 
Since $v$ is a leaf, $S_v$ is separated from its complement $S \setminus S_v$ by the curve $V^A$. Observe that all the other non-boundary parallel vanishing cycles of $\mathcal{L}_A$ 
lie outside $S_v$. A priori, non-boundary parallel vanishing cycles of $\mathcal{L}$ may belong to different isotopy classes and intersect $S_v$;
we want to show that they can be isotoped to lie outside $S_v$.

If the self-intersection $v \cdot v=-2$, then in fact $V^A$  encloses only one hole, so it is boundary-parallel, and we can immediately conclude that $V$ and $V^A$ are isotopic, 
and $S_v$ is a boundary-parallel annulus disjoint from all the other vanishing cycles. 

Suppose now that $v \cdot v\leq -3$, so that $V^A$ encloses $r=-1-v \cdot v>1$ holes. Connect these holes by $r-1$ disjoint arcs $\alpha_1, \dots, \alpha_{r-1}$ 
in the sphere $S_v$, so that if the fiber $S$  is cut along these arcs, the $r$ holes will become a single hole, see Figure~\ref{fig:artincut}.

	\begin{figure}[htb]
		\centering
		\includegraphics[scale=.9]{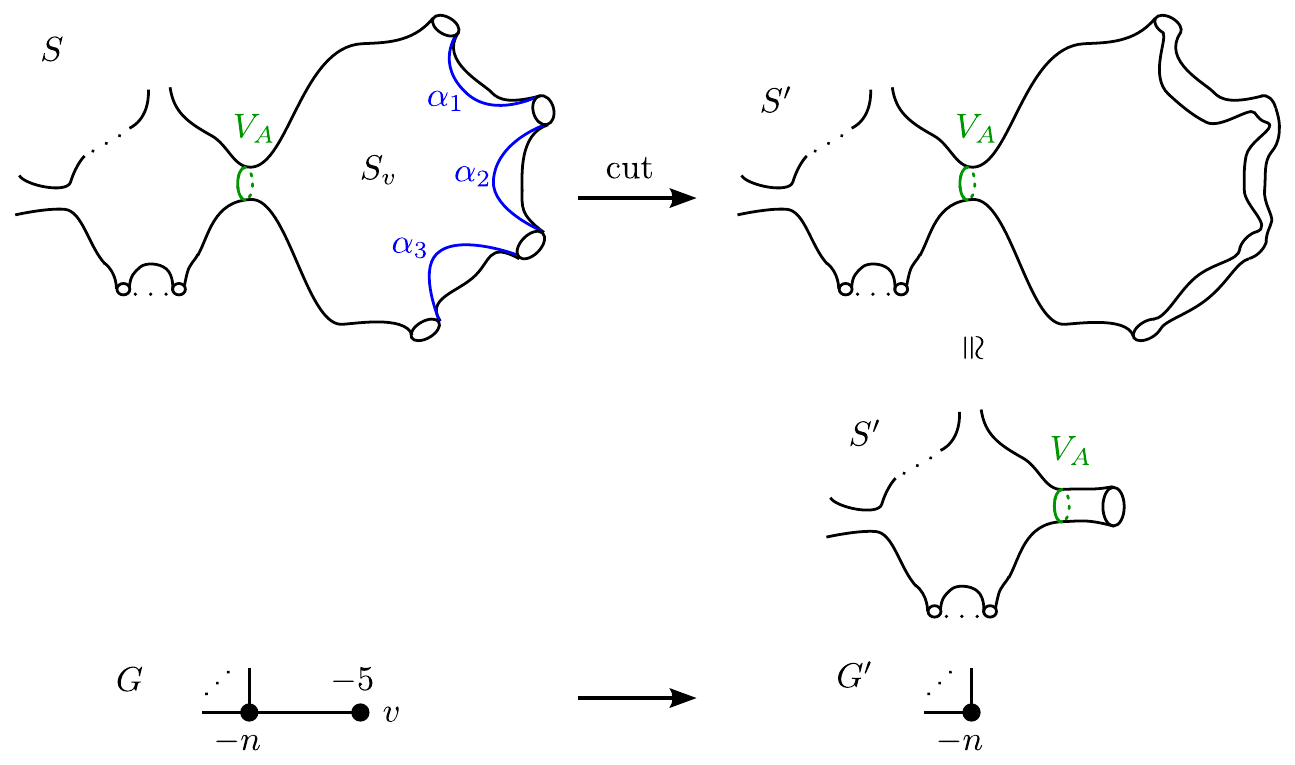}
		\caption{After cutting the fiber $S$, the vanishing cycle $V^A$ becomes boundary-parallel in $S'$.}
		\label{fig:artincut}
	\end{figure}

By construction, the arcs $\alpha_1, \dots, \alpha_{r-1}$ are disjoint from all non-boundary parallel vanishing cycles $V_j^A$ of $\mathcal{L}_A$. It follows that 
each $\alpha_i$ is fixed by the diffeomorphism $\psi$. As in \cite[Proposition~3]{BMVHM} and  \cite[Section 2]{Foss}, we now make the following key observation:  after an isotopy removing non-essential intersections, 
all arcs $\alpha_1, \dots, \alpha_{r-1}$ must be also disjoint from all non-boundary-parallel vanishing cycles $V_j$ of $\mathcal{L}$. To see this, we recall that each 
right-handed Dehn twist is a right-veering diffeomorphism of the oriented surface $S$, \cite{HKM}. If $\alpha$ and $\beta$ are two arcs with the same endpoint $x \in \partial S$, we say that $\beta$
lies to the right of $\alpha$ if the pair of tangent vectors $(\dot{\beta},\dot{\alpha})$ at $x$ gives the orientation of $S$.
The right-veering property of a boundary-fixing map $\tau: S\to S$ means that for every simple 
arc $\alpha$ with endpoints on $\partial S$, the image $\tau(\alpha)$ is either isotopic to $\alpha$ or lies to the right of $\alpha$ at both endpoints,
once all non-essential intersections between $\alpha$ and  $\tau(\alpha)$
are removed.  Now, suppose that $\mathcal{L}$ has  a vanishing cycle $V_j \in N$  that 
essentially intersects one of the arcs, say $\alpha_1$. Then the curve  $\tau_{V_j} (\alpha_1)$ is not isotopic to $\alpha$ (see e.g. \cite[Proposition 3.2]{FM}), 
so $\tau_{V_j} (\alpha_1)$ lies  to the right of $\alpha_1$. Since the composition of right-veering maps is right-veering, 
we can only get curves that lie further to the right of $\alpha$  after composing with the 
other non-boundary parallel vanishing cycles of $\mathcal{L}$. 
However, the composition $\psi=\prod_{V_j \in N} \tau_{V_j}$ fixes $\alpha_1$, a contradiction.

Once we know that no vanishing cycles of $\mathcal{L}$ or $\mathcal{L}_A$ intersect any of the arcs $\alpha_1, \dots, \alpha_{r-1}$, we can cut the fiber $S$ along these arcs, 
and consider the image of the relation~\eqref{eq:non-bdry} in the resulting cut-up surface $S'$. In $S'$, 
$V^A$ becomes a boundary-parallel curve, and since $V$ lies in the same 
homology class, we see that $V$ and $V^A$ are isotopic in $S'$ (and therefore in $S$).   We then have 
$$
\prod_{V_j \in N, V_j \neq V} \tau_{V_j}  = \prod_{V^A_j \in N, V^A_j \neq V^A} \tau_{V^A_j}.
$$
Now observe that cutting up $S$ along the arcs as above has the same effect as removing the sphere $S_v$ with its neck from the set of subsurfaces forming the fiber $S$ in
Lemma~\ref{gay-mark-fibration}.  Then the cut-up fiber $S'$ with its non-boundary parallel vanishing cycles $\{V_j\}$ and $\{V_j^A\}$ corresponds to the fibrations 
for the graph $G'$ obtained by deleting the leaf $v$ and its outgoing edge from the graph $G$. 
By the induction hypothesis, we can conclude that all pairs of homologous vanishing cycles $V_j, V_j^A$  are isotopic in $S'$, and thus in $S$. It follows that the Lefschetz fibrations 
$\mathcal{L}$ and $\mathcal{L}_A$ are equivalent, and therefore the Stein filling given by $\mathcal{L}$  is Stein deformation equivalent to the Artin filling.  
\end{proof}

The above results have the following interesting application, related to conjectures of Koll\'ar on deformations of rational surface singularities.
Suppose that a rational singularity $(X, 0)$ has a   
 dual resolution graph $G$ such that  $v\cdot v \leq -5$ for every vertex $v \in G$. In this case, Koll\'ar's  conjecture asserts that
the base space of a semi-universal deformation of $X$ has just one component, the
Artin component; in particular, there is a unique smoothing, up to diffeomorphism. In the special case of reduced fundamental cycle,
this conjecture was proved by de Jong and van Straten
via their picture deformations method. We establish the symplectic version of this result, proving Theorem~\ref{kollar-fill}.

\begin{proof}[Proof of Theorem~\ref{kollar-fill}]
We can focus on Stein fillings: by \cite{We} and~\cite{NiWe}, 
every weak symplectic filling of a planar contact manifold is a blow-up of a Stein filling, up to symplectic 
deformation. By Section~\ref{find-curvettas-for-filling}, Stein fillings are given by arrangements of symplectic curvettas.  
The argument in \cite[Theorem 6.23]{dJvS} shows that under the given hypotheses on the resolution of $(X, 0)$,
there is a unique {\em combinatorial} solution to the smoothing problem, namely, any arrangement of curvettas must have the same incidence matrix 
as the Artin incidence matrix given by the Scott deformation. De Jong--van Straten's argument  is somewhat involved, so we will not summarize it here, but
we emphasize that the proof of this fact is completely combinatorial and does not use the algebraic nature of arrangements. The same claim holds for 
an arbitrary smooth arrangement subject to the same hypotheses. 
The only input used in \cite{dJvS} is the properties of the incidence matrix determined by the 
resolution graph as in Lemma~\ref{where1}, together with the the following observation:  if all vertices of the 
resolution graph $G$ have self-intersection $-5$ or lower, each end vertex of $G$ (except the root) gets at least three  $(-1)$ vertices  attached in the 
augmented graph $G'$, so that there are at least three corresponding curvettas. An important step in the inductive proof is that the matrix must
have a column where all entries are~1, i.e.
all $\Gamma_i$'s must have a common point. 

Once we know that all arrangements corresponding to possible Stein fillings are combinatorially equivalent to the arrangement given by the Scott deformation, 
Theorem~\ref{kollar-fill} follows from Proposition~\ref{artin-fill}. 
\end{proof}

In the case where additionally the graph $G$ is star-shaped with three legs, uniqueness of minimal symplectic filling (up to symplectomorphism and symplectic deformation) 
was proved by Bhupal--Stipsicz \cite{BhSt}. (They give a detailed proof under the hypothesis that the self-intersection of the central vertex is at most $-10$ 
but mention that one can go up to $-5$ with similar techniques.) Their method relies on McDuff's theorem \cite{McD} and was previously used by Lisca \cite{Li}: one finds a concave symplectic cap which is a plumbing of spheres that 
completes an arbitrary filling to a rational surface, which must be a blow-up of $\cptwo$, analyzes possible configurations of $(-1)$ curves, 
and then verifies that the configurations in the image of the cap plumbing under the blow-down is a pencil of symplectic lines which has a unique symplectic isotopy class.
 {To our knowledge, this strategy 
has not been applied to non-star-shaped graphs in existing literature.} {The difficulty in the non-star-shaped case is that there is not an obvious concave symplectic plumbing which can serve as a cap.}
Our proof works for completely arbitrary trees.

\subsection{Distinguishing Stein fillings}
We now turn to constructions that will be needed in the next section, and explain how to use  incidence matrices to distinguish Stein fillings, at least relative to certain boundary data. 
Indeed, as shown by N\'emethi and 
Popescu-Pampu \cite{NPP}, the 
 incidence matrix is ``remembered'' by the Milnor fiber of the corresponding smoothing, which 
allows us to show that certain Milnor fibers are not diffeomorphic (in the strong sense, i.e. relative to a boundary marking). 
The argument in \cite{NPP} is purely topological, so we can generalize it to arbitrary Stein fillings.
While \cite{NPP} applies more generally to sandwiched singularities, we only consider the case of reduced fundamental cycle.

Instead of the boundary marking used in \cite{NPP}, we will keep track of the boundary data via a choice of a compatible open book for $(Y, \xi)$. 
As in Section~\ref{s:picdef}, we fix a choice of extension $G'$ of the dual resolution graph $G$ of a singularity with link $(Y, \xi)$, to fix the topological type of the associated decorated germ $(\mathcal{C}, w)$ with labeled branches $C_1, \dots, C_m$. Each branch $C_j$ corresponds to a hole $h_j$ of the open book as explained in 
Section~\ref{s:artin}; {fixing the open book, up to isotopy, is equivalent to fixing the topological type of the decorated germ.} In fact, this open book decomposition provides the data of the ``markings'' of \cite{NPP}, where each of the solid tori components of the binding correspond to ``pieces'' of the marking data which allow one to fix the gluing of the smooth cap of \cite{NPP} to the filling using the open book instead of the markings.

By Wendl's theorem \cite{We}, all Stein fillings of a planar contact 3-manifold are given, up to symplectic deformation, by Lefschetz fibrations with same fiber, so that these 
fibrations are encoded by  monodromy factorizations of the fixed open book as above. 
{Suppose that Stein fillings $W$ and $W'$ arise from symplectic curvetta arrangements $(\Gamma, \{ p_j\})$ and $(\Gamma',\{p_j'\})$ as in 
Propositions~\ref{filling-to-config} and~\ref{p:deform}.
On the boundaries $\d W$ and $\d W'$, these arrangements induce open books which are isomorphic, because both are isomorphic to the open book induced by the germ $(\mathcal{C}, w)$. Fix these two open books, {$\mathcal{OB}$ on $W$ and $\mathcal{OB}'$ on $W'$, defined up to isotopy; as part of the open book data, we also label the binding components (with the exception of the outer boundary of the disk, the boundary components of the page correspond to the branches of the decorated germ)}.

 We will say that $W$ and $W'$ are {\em strongly diffeomorphic} if there is an orientation-preserving diffeomorphism 
 $W \to W'$ whose  restriction to $\d W$ maps the open book  on $\d W$ to an open book on $\d W'$ which is isotopic to the given one. If the open book on $\d W'$ is isotopic to the image of the open book on $\d W$,  we can compose the diffeomorphism $W \to W'$ with a 
 self-diffeomorphism of $W'$ which extends the isotopy of $\d W'$ to obtain a diffeomorphism matching the open books. Therefore, we can equivalently say that $W$ and $W'$ are {\em strongly diffeomorphic} if there is an orientation-preserving diffeomorphism $W \to W'$ that identifies the open books $\mathcal{OB}$ on $\d W$ and 
 $\mathcal{OB}'$ on $\d W'$. This identification is required to preserve the labeling of the binding components.
}

Rephrasing the theorem of \cite{NPP} in our context, we have:

\begin{prop}(\cite[Theorem 4.3.3]{NPP})\label{incidence-matrix} Let $(Y, \xi)$ be the contact link of a rational singularity
with reduced fundamental cycle, and fix {the isotopy class of an embedded open book} as above.  Let two {strongly diffeomorphic} Stein fillings $W$ and $W'$ arise from
arrangements $(\Gamma, \{ p_j\})$ and $(\Gamma',\{p_j'\})$ of symplectic curvettas with marked points, as in Section~\ref{find-curvettas-for-filling}.  
Then the incidence matrices $\mathcal{I}(\Gamma, \{ p_j\})$ and $\mathcal{I}(\Gamma', \{ p_j'\})$ are equal, up to permutation of columns.
\end{prop}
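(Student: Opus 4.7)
The strategy is to topologically reconstruct the incidence matrix from the pair $(W, \mathcal{OB})$, following the argument of~\cite[Theorem 4.3.3]{NPP}. Starting from the arrangement $(\Gamma, \{p_j\})$ produced by Proposition~\ref{filling-to-config}, the Lefschetz fibration of Lemma~\ref{construct-fibration} presents $W$ as the complement in $\ti B \cong \#_n \cptwobar$ (where $n$ is the number of marked points) of tubular neighborhoods of the $m$ proper transforms $\ti\Gamma_i$, each a disk of self-intersection $-w_i$. The boundary of $W$ carries the open book $\mathcal{OB}$ whose inner binding components $B_1,\dots,B_m$ correspond to the $\d \ti\Gamma_i$. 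My plan is to cap off $W$ in a manner determined intrinsically by $(W,\mathcal{OB})$ to recover $\#_n \cptwobar$, then to read the rows of the incidence matrix from the resulting homology classes.

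First I will verify that the capping depends only on $(W, \mathcal{OB})$. For each inner binding $B_i$, one glues a cap along the framing determined by the page of $\mathcal{OB}$, so that the union with a suitably chosen surface inside $W$ produces a smoothly embedded $(-w_i)$-sphere $S_i$ in the resulting closed manifold $\hat W$; here the weight $w_i$ equals the number of Dehn twists in any positive factorization of the monodromy of $\mathcal{OB}$ that encircle the hole $h_i$ (cf.~the proof of Lemma~\ref{same-monodromy}), an invariant of $\mathcal{OB}$ alone. The outer binding, together with the remaining part of $\d W$, is capped by a standard $4$-ball. By construction, $\hat W$ coincides with $\ti B \cup D^4 \cong \#_n \cptwobar$, and the intersection form is the standard negative diagonal in the basis $\{e_{p_j}\}$ of exceptional divisor classes. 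Because the construction is canonical, a strong diffeomorphism $\Phi\colon W \to W'$ extends to $\hat \Phi\colon \hat W \to \hat W'$ sending $S_i$ to $S_i'$ for $i = 1,\dots,m$ (labels are preserved since $\Phi$ matches the labelled binding components).

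Next I identify $[S_i] \in H_2(\hat W)$ with the $i$-th row of the incidence matrix. The sphere $S_i$ meets each exceptional divisor $E_{p_j}$ transversally once if $p_j \in \Gamma_i$ and not at all otherwise, so in $H_2(\hat W) = \Z\langle\{e_{p_j}\}\rangle$ we have
\[
[S_i] = \sum_{p_j \in \Gamma_i} e_{p_j},
\]
which is the $i$-th row of $\mathcal I(\Gamma, \{p_j\})$ expressed in the basis $\{e_{p_j}\}$. A consistency check via Propositions~\ref{column-holes} and~\ref{homology} confirms the identification.

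Finally, the induced map $\hat \Phi_*\colon H_2(\hat W) \to H_2(\hat W')$ is an orientation-preserving isometry of the negative definite diagonal lattice $\Z^n$, hence a signed permutation $\sigma$ of the basis $\{e_{p_j}\}$. Sign flips are ruled out as follows: since every marked point lies on at least one branch, every $e_{p_j}$ appears with a positive coefficient in some $[S_i]$; because $\hat \Phi_*$ sends each non-negative combination $[S_i]$ to the non-negative combination $[S_i']$, the permutation $\sigma$ cannot negate any basis vector. Thus $\sigma$ is an honest permutation of $\{p_j\}$, matching $\mathcal I(\Gamma, \{p_j\})$ with $\mathcal I(\Gamma', \{p_j'\})$ up to a permutation of columns. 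The main technical obstacle is verifying that the capping construction is intrinsic to $(W, \mathcal{OB})$ --- in particular, that the framing along each $B_i$ can be read off from $\mathcal{OB}$ without reference to the specific Lefschetz fibration on $W$, which requires a careful identification of the page framing of $B_i \subset \d W$ with the normal framing of $\ti \Gamma_i$ used in Lemma~\ref{construct-fibration}.
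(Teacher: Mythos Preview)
Your proposal follows the same strategy as the paper's proof (itself an adaptation of \cite[Theorem~4.3.3]{NPP}): build a cap determined by the boundary open-book data, glue to obtain $\#_n\cptwobar$, and recover the incidence matrix from intersection numbers of distinguished spheres with the essentially unique orthonormal basis of exceptional classes; your signed-permutation argument is correct and matches the paper.

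One point deserves tightening. You describe each $S_i$ as the cap disk united with ``a suitably chosen surface inside $W$'', but in general there is no Seifert surface for the binding component $B_i$ in $W$ --- the class $[B_i]\in H_1(W)$ need not vanish (cf.\ Proposition~\ref{homology}). In the paper the cap is realized concretely as $U=B'\cup_i T_i$: a reversed-orientation Milnor ball $B'$ containing copies $C_i'$ of the germ branches, with the tubes $T_i$ reattached as $2$-handles along $\partial C_i'\subset\partial B'$ with framing $-w_i$. The spheres $\Sigma_i=\ti\Gamma_i\cup C_i'$ then live \emph{entirely in the cap}, so their homology classes are manifestly determined by $U$ (hence by the open-book data) rather than by anything in $W$. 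With this correction your argument goes through; the framing issue you flag as the ``main technical obstacle'' is exactly what the paper checks by observing that the attaching link $\cup_i\partial C_i'$ and the framings $-w_i$ are read off from the decorated germ.
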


\begin{proof}  We outline the proof briefly, referring the reader to \cite{NPP} for details, as we use exactly the same topological argument in a slightly different 
(in fact, simpler) context.

Let $(\mathcal{C}, w)$ be the decorated germ with labeled smooth branches $C_1, \dots, C_m$, determined up to topological equivalence by the open book data for 
$(Y, \xi)$. Unlike \cite{NPP}, we  only work with the case of smooth components of $\mathcal{C}$; therefore, 
all $\delta$-invariants of the branches $C_i$ are 0, and the formulas of \cite{NPP} become 
simpler.

As in \cite{NPP}, we construct a cap $U$, {which is a smooth manifold with boundary}  that can be attached to any Stein filling $W$ of $(Y, \xi)$, so   
that $W \cup U$ is a blow-up of a 4-sphere. To construct $U$, let $B \subset \C^2$ be a closed Milnor ball as in Section~\ref{s:homotopy}, 
so that $B$ contains both the branches of the
germ $\mathcal{C}$ and the arrangement $\Gamma$ together with all intersection points between curvettas $\Gamma_i$. 
Let $(B', \mathcal{C'})$ be another copy of this ball with 
the  germ $\mathcal{C}$ inside, {with reversed orientation.}
After an isotopy of the boundaries of the curvettas $\Gamma_i$ to match $\partial C_i$, 
we can glue 
$(B, \Gamma)$ and $({B}', {\mathcal{C}}')$ so that the boundary of $\Gamma_i$ is glued to the boundary of the corresponding germ branch ${C}'_i$. Note that each disk $\Gamma_i$ is 
 {oriented as a graph over $\C$}, so the result of gluing 
is a smooth 4-sphere $B \cup {B}'$ containing the embedded smooth 2-spheres $\Sigma_i=\Gamma_i \cup {C}_i'$.   Blowing up at the points $p_1, \dots, p_n$, we get
$\#_{i=1}^n \cptwobar$, represented as the blow-up $\tilde{B}$ of the ball $B$
glued to ${B}'$. Let $T_i$ be a thin tubular neighborhood of the proper transform of $\Gamma_i$ in $\tilde{B}$.  Recall that 
by Lemmas~\ref{construct-fibration} and~\ref{same-monodromy}, we have $W= \tilde B \setminus \cup_{i=1}^m T_i$. Set $U = {B}' \cup_{i=1}^m T_i$, so that we have 
$U \cup W = \#_{i=1}^n \cptwobar$. As in \cite[Lemma 4.2.4]{NPP}, the cap $U$ is independent on $W$ and is determined 
by the boundary data. Indeed, to form $U$, we attach 2-handles to the 
4-ball ${B}'$. The attaching circles are given by the boundaries of $\Gamma_i$'s, and the link $\cup_i \partial \Gamma_i$ is isotopic to the link given by the boundaries of 
the branches of the original decorated germ. The framing for $\partial \Gamma_i$ is $-w_i$, the negative weight on the branch $C_i$ of the decorated germ. The proof of
Lemma~\ref{same-monodromy} shows that the weight $w_i$ is given by the number of Dehn twists enclosing the $i$-th hole in (any decomposition of) the monodromy of the open book. 
Thus, the cap $U$ and the way it is glued to $W$ is determined by the decorated germ defining the singularity, together with the fixed open book data of 
$(Y, \xi)$. Finally, as in \cite{NPP}, we see that there is a unique basis $\{e_j\}_{j=1}^n$ for $H_2(\#_{i=1}^n \cptwobar)$ {of classes of square $-1$} such that the intersection numbers 
$\Sigma_i \cdot e_j$ are all positive. It follows that these numbers depend only on $W$ and the open book data. On the other hand, 
the numbers $\Sigma_i \cdot e_j$  form the incidence matrix  $\mathcal{I}(\Gamma, \{ p_j\})$, as $\Sigma_i \cdot e_j=1$ if $p_j \in \Gamma_i$, and 0 otherwise. {It follows that the incidence matrices $\mathcal{I}(\Gamma, \{ p_j\})$ and $\mathcal{I}(\Gamma', \{ p'_j\})$ are the same, up to relabeling the marked points, which amounts to permutation of columns.}
\end{proof}

\begin{remark} Our definition of a strong diffeomorphism and the above proof assumes that the binding components of the open book are labeled, and that the diffeomorphism preserves this labeling. In other words, we think of the page of the open book(s) as a disk with holes, where each hole $h_i$ corresponds to the $i$-th branch of the fixed decorated germ; the diffeomorphism matches the $i$-th hole of the page for $\partial W$ to the $i$-th hole for $\partial W'$.  It is in fact possible to  consider a less restrictive definition of strong diffeomorphisms, by allowing permutations of binding components, and to prove a sightly stronger version of Proposition~\ref{incidence-matrix} and Theorem~\ref{thm:nonMilnor}.  More precisely, the proposition still holds if there is a diffeomorphism $f: W \to W'$ that sends the chosen open book $\mathcal{OB}$ on $\d W$ to an open book on 
$\partial W'$ which is isotopic to $\mathcal{OB}'$, in the sense of isotoping the binding and the pages, but the isotopy matches the binding components in a wrong order. Moreover, it is plausible that the proposition still holds if we only have a diffeomorphism $W \to W'$ whose restriction to $\d W$ takes the binding of the open book $\mathcal{OB}$ to an oriented link which is isotopic to the binding of $\mathcal{OB}'$ on $\d W'$ 
(because $\d W=Y$ is a link of rational singularity, and thus a rational homology sphere, it seems possible to use \cite{Thurston} to construct an isotopy of pages of the open books if their bindings are isotopic, perhaps under some mild additional hypotheses). We leave most of the details to the motivated reader, only indicating below why the proposition should hold if the identification of the open books permutes the binding components. 
It should be emphasized that these arguments would yield only a mild generalization of Proposition~\ref{incidence-matrix}: fixing appropriate boundary data is crucial for our proof. Note that by Wendl's theorem, all Stein fillings of a planar contact manifold fill {\em the same} open book; so in this sense, it is reasonable to think of the boundary open book as fixed.

To consider the case where the diffeomorphism between the fillings permutes the binding components of the 
open book, assume that there is  an orientation-preserving self-diffeomorphism $\sigma$ of the page of the open book that commutes with the monodromy. We do not assume that $\sigma$ fixes the boundary of the page; in particular, we are interested in the case where
$\sigma$ permutes the boundary components.  It can be shown that if $\sigma$ acts non-trivially on the set of boundary components, then the decorated germ and/or the resolution graph of the singularity has the corresponding symmetry. For example, if $\sigma$ exchanges holes $h_1$ and $h_2$, these holes must be enclosed by the same number of Dehn twists (in any positive factorizations of the open book); this implies, in partucular, that equality of weights for the corresponding curvetta branches, $w_1=w(C_1)=w(C_2)=w_2$. Additionally, for any other hole $h_i$, the number of Dehn twists enclosing the pair $h_1, h_i$ must be the same as the number of Dehn twists enclosing the pair $h_2, h_i$. Because the Artin factorization is determined by combinatorial data (see Proposition~\ref{artin-fill}), it follows that the Artin factorization admits a symmetry interchanging holes $h_1$ and $h_2$. Then, we can argue as in Proposition~\ref{Artin-factor} to reconstruct the resolution graph of the singularity, and to see that the graph must have a symmetry, and the corresponding curvetta arrangement must admit a symmetry interchanging curvettas $C_1$ and $C_2$ (up to a topological equivalence). A similar reasoning would work for a more general self-diffeomorphism $\sigma$; we do not give the complete argument to avoid setting up complicated notation.  If $\sigma$ exchanges the boundary of a hole with the outer boundary of the page (thought of a disk with holes), there must be a symmetry of the resolution graphs and the corresponding extended graphs (see Section~\ref{s:picdef}). 

Since the self-diffeomorphism $\sigma$ of the page commutes with the monodromy, it induces  a self-diffeomophism of the supporting 3-manifold $Y$, which is not necessarily isotopic to the identity. We will use the same notation for this self-diffeomorhism of $Y$, $\sigma:Y\to Y$.  

Now, suppose that fillings $W$ and $W'$ are as in Proposition~\ref{incidence-matrix}, and that 
there is an orientation-preserving diffeomorphism $f:W \to W'$ that maps the open book $\mathcal{OB}$ on $W$
to the open book $f(\mathcal{OB})$ on $W'$ that is isotopic to 
$\sigma(\mathcal{OB}')$ rather than to $\mathcal{OB}'$. As explained above, the decorated germ admits a symmetry induced by $\sigma$; in turn, it follows that the cap $U$ admits a self-diffeomorphism that restricts to the map   
$\sigma: Y\to Y$ on the boundary, after an orientation reversal. Using this self-diffeomorphism to glue the cap to $W'$, and comparing $W \cup_{id} U$ and $W' \cup_{\sigma} U$, we can argue as in   Proposition~\ref{incidence-matrix} to conclude that the incidence matrices $\mathcal{I}(\Gamma, \{ p_j\})$ and $\mathcal{I}(\Gamma', \{ p'_j\})$ are the same.

\end{remark}

\section{Milnor fibers and unexpected Stein fillings: examples} \label{s:examples}

We now construct examples where the link of a rational singularity with reduced fundamental cycle has Stein fillings that are not realized by  Milnor fibers of any smoothing. 

Our examples build on results of the previous sections: by \cite{dJvS},  Milnor fibers of smoothings correspond to (algebraic) picture deformations
of the decorated germ, while  Stein fillings of the link can be constructed from arbitrary smooth graphical homotopies of the curvettas. 
During the picture deformation, the decorated germ $\mathcal{C}$ is {\em immediately} deformed into an arrangement 
of curvettas yielding a Milnor fiber, so that the arrangement appears as the deformation $\mathcal{C}^s$
for small $s$ (and for a given deformation, all values of $s$ close to 0 produce diffeomorphic  Milnor 
fibers and equivalent Lefschetz fibrations). Indeed, for an algebro-geometric  1-parameter deformation of the germ $\mathcal{C}$, 
the general fibers of the deformation all ``look the same'' (up to diffeomorphism). By contrast, during the course of a
smooth graphical homotopy, we are allowed to change the topology of the arrangement of curvettas, and thus will produce Stein fillings whose topology varies during the homotopy. 
We emphasize that {\em immediate deformation} vs {\em long-term homotopy} of the branches of $\mathcal{C}$ makes the key 
difference between Milnor fillings and Stein fillings of links of rational singularities with reduced fundamental cycle.
In Section~\ref{s:further},  we  explain why this is the key aspect and  compare  picture deformations and smooth graphical homotopies in more detail.
In this section, we exploit the difference between immediate deformations and long-term homotopies to produce examples of Stein fillings
that are not diffeomorphic (rel boundary) to any Milnor fibers.

\subsection{Arrangements of symplectic lines and pseudolines}
To construct links of singularities that admit unexpected Stein fillings, we first consider decorated germs given by pencils of lines (with weights) and focus on 
their associated singularities. 
{In this section, we will use the following terminology: several points are {\em collinear} if they all lie on the same line, and several lines are {\em concurrent} if they all pass through the same point. Concurrent lines form a {\em pencil}; we will refer to an arrangement of concurrent lines as a {\em pencil of lines}.  We will also talk about concurrent pseudolines or concurrent smooth disks, with the same meaning.}

Note that any two pencils of complex lines in $\C^2$ are isotopic through pencils, therefore the corresponding 
singularities are topologically equivalent and have contactomorphic links. Let $\mathcal{C}=\{C_1, C_2, \dots, C_m\}$ be a pencil of $m$ complex lines, with each 
line $C_k$  decorated by a weight $w_k=w(C_k)$.  Consider the  surface singularity  that corresponds to the decorated germ $(\mathcal{C}, w)$, 
and let $Y(m, w)=Y(m; w_1, \dots, w_m)$ denote its link with the canonical contact structure $\xi$. Note that $Y(m, w)$ is a Seifert fibered space over $S^2$ with 
at most $m$ singular fibers.  
Indeed, consider the dual resolution graph of the singularity; the graph gives a surgery diagram for the link. This graph has $m$ legs emanating from the central vertex. 
Legs correspond to the lines of the pencil, so that $k$-th leg has $w_k-1$ 
vertices (including the central vertex).

Note that legs of length $1$ consist only of the central vertex and thus will appear invisible. However, in the examples we focus on, every leg will have length greater than $1$.
The central vertex has self-intersection $-m-1$, all the other vertices have self-intersection $-2$. See Figure~\ref{pencil} for an example.
The decorated pencil $\mathcal{C}$ can be recovered from the graph as in 
Section~\ref{s:picdef}: we add $(-1)$ vertices at the end of each leg, take the corresponding collection of curvettas, and blow down the augmented graph.

To construct Stein fillings of $Y(m,w)$, we will use curvetta homotopies taking the pencil of complex lines to a symplectic line arrangement in $\C^2$. 
We define these arrangements as follows.
\begin{definition}
	A \emph{symplectic line arrangement} in $\C^2$ is a collection of $m$ symplectic graphical disks $\Gamma_1,\dots, \Gamma_m$ in $\C^2$ with respect to a projection $\pi:\C^2\to \C$ such that
	\begin{enumerate}
		\item \label{i:int} for every pair $i,j\in\{1,\dots, m\}$ with $i\neq j$, $\Gamma_i$ intersects $\Gamma_j$ positively transversally exactly once, and
		\item \label{i:monodromy} for $R$ sufficiently large, $(\Gamma_1\cup\cdots \cup \Gamma_m)\cap \pi^{-1}(S_R)$ is isotopic to the braid given by one full twist on $m$ strands in the solid torus $\pi^{-1}(S_R)$ (where $S_R\subset \C$ is the circle of radius $R$).
	\end{enumerate}
\end{definition}
Equivalently, we can view the symplectic line arrangement in a Milnor ball $B=D_x \times D_y \subset \C^2$ containing all intersections. The intersection 
of the arrangement with $\partial B$ is then the braid of one full twist in $\partial D_x \times D_y$. 
A symplectic line arrangement in the closed ball $B$ can 
always extended to an arrangement in $\C^2$, so we will give all statements about symplectic line arrangements in $\C^2$.

\begin{example}
	A pencil of complex lines intersecting at the origin in $\C^2$ is a symplectic line arrangement. 
	Clearly every pair of lines intersects at a single point (the origin) transversally (and positively because they are complex). 
	That the monodromy in $\pi^{-1}(S_R)$ is one full twist on $m$ strands can be computed directly from a model as in \cite{MoTeI}.
	
	More generally, any complex line arrangement of $m$ lines in $\C^2$ such that no intersections between lines
	occur at infinity (i.e. every complex line has a different complex slope) gives a symplectic line arrangement. 
	This can be seen by compactifying the line arrangement in $\cptwo$ and looking at the intersection of the lines 
	with the boundary of a regular neighborhood of the $\cpone$ at infinity. These intersections form an $m$ 
	component link with one component for each line, such that the link components are isotopic to disjoint fibers of 
	the $\varepsilon$-neighborhood (which can be identified with a subset of the normal bundle) of the $\cpone$ at infinity. 
	After changing coordinates from the perspective of the $\cpone$ at infinity to the perspective of the complementary ball, 
	the components of the link obtain one full twist. From the Kirby calculus perspective, the boundary of the $\varepsilon$-neighborhood 
	of $\cpone$ is presented as $(+1)$ surgery on the unknot, and the link is $m$ parallel meridians of this surgery curve. 
	After reversing orientation to get the boundary of the complementary ball, the surgery coefficient on the unknot becomes a $(-1)$ surgery, 
	and blowing down this surgery curve induces one full twist in the $m$ unknotted meridians.
\end{example}


Since any symplectic line arrangement has the same monodromy as the pencil of complex lines, Lemmas~\ref{l:graphicalconvex} and~\ref{l:samebraid} imply they are related to the pencil by a smooth graphical homotopy.

Our primary source of examples of non-complex symplectic line arrangements is given by pseudoline arrangements as described below. 
However, symplectic line arrangements are more general and can include braiding in the associated wiring diagram.

\begin{example} \label{discuss-plines}
	A {\em pseudoline arrangement} is a collection $\ell_1,\dots, \ell_m$ of smooth graphical curves
	in $\R^2$ where for every pair $i,j$, $\ell_i$ and $\ell_j$ intersect transversally
	at exactly one point. Such a pseudoline arrangement can be considered a braided wiring 
	diagram as in Definition~\ref{def:wiring}, but in the particular case where there is no braiding. 
	In particular, we can apply Proposition~\ref{symp-config} to extend the pseudoline
	arrangement to an arrangement of symplectic graphical disks $\Gamma_1,\dots, \Gamma_m$; the extension produces a symplectic line arrangement. Indeed, 
	condition (\ref{i:int}) in the definition of a symplectic line arrangement is satisfied because any two pseudolines intersect transversally at one point, and their 
	extensions intersect positively by construction. Condition (\ref{i:monodromy}) follows
	from the calculation of the total monodromy as in Section~\ref{s:vanishing} and a classical theorem of Matsumoto and Tits~\cite{Matsumoto}
	about uniqueness of reduced factorizations in the braid group. 
	
	Alternatively, we can refer to the results of \cite[Section 6]{RuSt}, where  pseudoline arrangements in $\rptwo$ are  extended to symplectic
	line arrangements in $\cptwo$ (extensions in $\cptwo$ are strictly harder to construct than extensions in $\C^2$.) Additionally, using the same theorem of 
	 Matsumoto and Tits, \cite[Proposition 6.4]{RuSt} provides a homotopy of pseudoline arrangements connecting the given 
	arrangement to the pencil. After applying Proposition~\ref{symp-config}, we get  a homotopy of the corresponding symplectic line arrangements. 
	{Note that by construction,  this}  homotopy of symplectic line arrangements keeps 
	all intersections positive at all times, whereas the smooth graphical  homotopy given by 
	Lemmas~\ref{l:graphicalconvex} and~\ref{l:samebraid} may introduce negative intersections.
\end{example}	

We use symplectic line and pseudoline arrangements to construct Stein fillings of Seifert fibered spaces $(Y(m;w), \xi)$ via
Lemma~\ref{construct-fibration} and Lemma~\ref{same-monodromy}.

\begin{prop} \label{p:symp-lines-fill} Let $(\mathcal{C}, w)$ be a decorated pencil of $m$ lines. Suppose that 
$\Gamma =\{\Gamma_1,\dots, \Gamma_m \}$ is a  symplectic line arrangement such that each disk $\Gamma_i$ has at most $w_i$ 
distinct intersection points with the other disks of the arrangement. Then, $(\Gamma, \{ p_j\})$ yields a Stein filling of 
$(Y(m;w_1, w_2, \dots, w_m), \xi)$.  

In particular, a pseudoline arrangement $\Lambda =\{\ell_1,\dots, \ell_m \}$ gives a Stein filling of  $(Y(m;w_1, w_2, \dots, w_m), \xi)$ via an extension to a symplectic line arrangement, provided that $\ell_i$ has at most $w_i$ distinct intersection points with the other pseudolines.
\end{prop}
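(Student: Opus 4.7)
The plan is to establish the proposition by showing that any symplectic line arrangement $\Gamma$ satisfying the hypothesis can be connected to the pencil $\mathcal{C}$ by a smooth graphical homotopy in the sense of Definition~\ref{def:homotopy}, and then to apply Corollary~\ref{cor:stein-fill} (that is, Theorem~\ref{thm:Lefschetz}). Since the pencil $\mathcal{C}$ itself is a symplectic line arrangement, both $\d \mathcal{C}$ and $\d \Gamma$ realize the same braid (one full twist on $m$ strands) in the boundary of a suitable Milnor ball $B = D_x\times D_y$, by the defining monodromy condition. So the needed homotopy is built in two steps: first use Lemma~\ref{l:samebraid} to graphically homotope $\Gamma$ until its boundary braid matches that of $\mathcal{C}$, then invoke Lemma~\ref{l:graphicalconvex} to interpolate through graphical disks with fixed boundary. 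The resulting family $\{C_i^t\}$ is graphical throughout by construction, and by shrinking the interpolation if necessary (or choosing the Milnor ball appropriately) one arranges that any intersection of two branches occurring during the homotopy remains in the interior of $B$; negative intersections during $0<t<1$ are permitted by Definition~\ref{def:homotopy}.

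Next, I would equip the final arrangement with marked points. Since each $\Gamma_i$ meets the other disks in at most $w_i$ points by hypothesis, I first mark every intersection of $\Gamma_i$ with the other $\Gamma_j$, and then add $w_i-\#(\Gamma_i\cap \bigcup_{j\neq i}\Gamma_j)\ge 0$ free marked points on $\Gamma_i$, chosen in the interior of $B$ and distinct from all intersection points and all other free points. By condition~(\ref{i:int}) in the definition of a symplectic line arrangement, at $t=1$ all intersections are transverse and positive, so conditions (3) and (4) of Definition~\ref{def:homotopy} are met, and we have a legitimate smooth graphical homotopy from the decorated germ $(\mathcal{C},w)$ to $(\Gamma,\{p_j\})$. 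Corollary~\ref{cor:stein-fill} then produces a Stein filling of the contact link of the singularity associated to $(\mathcal{C},w)$, which is precisely $(Y(m;w_1,\dots,w_m),\xi)$ as identified above.

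Finally, for a pseudoline arrangement $\Lambda=\{\ell_1,\dots,\ell_m\}\subset \R^2$, I would view $\Lambda$ as a braided wiring diagram with trivial braiding between intersections (as in Example~\ref{discuss-plines}) and apply Proposition~\ref{symp-config} to extend it to a symplectic graphical disk arrangement $\Gamma\subset\C^2$ with $\Gamma\cap(\R\times\{0\}\times\C)=\Lambda$ and all intersections transverse and positive. The total boundary monodromy computed from the wiring diagram equals the product of the positive half-twists corresponding to the pairwise intersections; by the Matsumoto--Tits theorem this simplifies to the full twist on $m$ strands, so $\Gamma$ is a symplectic line arrangement. The hypothesis that each $\ell_i$ has at most $w_i$ intersection points with the other pseudolines transfers directly to $\Gamma$, and the first part of the proposition applies. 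The only point requiring care in this outline is ensuring that the homotopy produced by Lemmas~\ref{l:samebraid} and~\ref{l:graphicalconvex} respects the requirement that intersections not escape through $\d B$, but this is arranged by performing the boundary-matching isotopy in a collar of $\d B$ and keeping the convex interpolation inside a slightly enlarged Milnor ball.
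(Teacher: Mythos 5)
Your proposal is correct and follows essentially the same route as the paper, which likewise deduces the smooth graphical homotopy from the full-twist boundary monodromy via Lemmas~\ref{l:samebraid} and~\ref{l:graphicalconvex}, distributes marked points using the weight hypothesis, and invokes the construction of Theorem~\ref{thm:Lefschetz} (together with Proposition~\ref{symp-config} and the Matsumoto--Tits theorem for the pseudoline case).
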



\subsection{Unexpected line arrangements yield unexpected fillings}
Now we will show that some of the Stein fillings as above do not arise as Milnor fibers. In the next lemma, 
we consider analytic deformations of reducible plane curve germs, associated to a singularity by the De Jong--Van Straten theory, and establish a property that will play a key role in our construction of unexpected arrangements.

{The term {\em $\delta$-constant} deformation in the next lemma refers to an algebro-geometric property: the deformation is required to preserve the $\delta$-invariant of a singular plane curve. We  keep this terminology since it is used in \cite{dJvS, NPP}; however, under the hypothesis that the germ has smooth branches, 
the $\delta$-constant condition simply means that the deformation changes the germ component-wise, without merging different components. Intuitively, the $\delta$-invariant counts the number of double points ``concentrated'' in each singular point \cite[Section 10]{Milnor}; for example, an ordinary $d$-tuple point (where $d$ smooth components meet transversely) contributes 
$\delta=\frac12 d(d-1)$, since it can be perturbed to $\frac12 d(d-1)$ double points. Thus, we can deform a triple point to three double points by a $\delta$-constant deformation, but we are not allowed to deform two transversely intersecting lines into a smooth conic (such a deformation would kill a double point).}      


\begin{lemma} \label{get-lines} Consider the germ of a reducible plane curve $\mathcal{C}$ 
	in $\C^2$ with $m$ smooth graphical branches $C_1, C_2, \dots, C_m$ passing through $0$, and let  
	$\mathcal{C}^s = \cup_{k=1}^m C^s_k $ be a $\delta$-constant deformation of $\mathcal{C}$. 
	{(Here, {\em $\delta$-constant} means that each branch of the germ is deformed individually, i.e. the deformation is not allowed to merge different branches.)}
	Suppose that all the branches $C_1, \dots, C_m$ have distinct tangent lines at $0$, and that not all deformed branches $C_1^s, \dots, C_m^s$ are concurrent for $s \neq 0$.

	Then there exists a complex line arrangement $\mathcal{A}=\{L_1, \dots, L_m\}$ in $\C^2$ such that not all lines in $\mathcal{A}$ are concurrent, no two lines are equal, and $\mathcal{A}$ satisfies all the incidence relations of $\mathcal{C}^s$. Namely, for any  collection of the deformed branches $C_{i_1}^s$, $C_{i_2}^s$, \dots, $C_{i_k}^s$ that intersect at one point,  
	the corresponding lines $L_{i_1}, L_{i_2}, \dots, L_{i_k}$ also intersect:
	\begin{equation} \label{eq:incidence}
	C_{i_1}^s \cap C_{i_2}^s\cap \dots \cap C_{i_k}^s \neq \emptyset \Longrightarrow L_{i_1} \cap L_{i_2}\cap \dots \cap L_{i_k} \neq \emptyset.
	\end{equation}
\end{lemma}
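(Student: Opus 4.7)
The idea is to extract a complex line arrangement from $\mathcal{C}^s$ as a leading-order limit under a rescaling centered at the origin. Choose complex coordinates $(x,y)$ on $\C^2$ so that each branch is graphical, $C_k = \{y = f_k(x)\}$ with $f_k(0)=0$ and $f_k'(0)=a_k$ pairwise distinct by the distinct-tangent hypothesis. An unfolding then expresses the deformation as $C_k^s = \{y = f_k(x) + s\, h_k(x) + O(s^2)\}$ for analytic functions $h_k$.

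First I would perform the rescaling $(x,y) = (s\tilde x, s\tilde y)$. In the rescaled coordinates, $C_k^s$ is cut out by $\tilde y = a_k \tilde x + h_k(0) + O(s)$, converging as $s \to 0$ to the complex line $\tilde L_k = \{\tilde y = a_k \tilde x + h_k(0)\}$. Two observations make this limit useful. (i) The $\tilde L_k$ are pairwise distinct, since their slopes are distinct. (ii) Because $a_i \neq a_j$ and the deformation is $\delta$-constant, each pair $C_i^s \cap C_j^s$ consists of a single point lying at $x = O(s)$; in rescaled coordinates, this point has a well-defined limit equal to $\tilde L_i \cap \tilde L_j$. Consequently, any common intersection $C_{i_1}^s \cap \cdots \cap C_{i_k}^s \neq \emptyset$ forces the pairwise intersections of $\tilde L_{i_1},\dots,\tilde L_{i_k}$ all to coincide, making these lines concurrent and establishing the required incidence implication.

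For the non-concurrence condition, I would use that each pair of branches meets in exactly one point and each pair of the $\tilde L_k$ meets in exactly one point; a standard pigeonhole argument then shows that if every triple were concurrent, then all $m$ curves (resp.\ lines) would be concurrent. The hypothesis that not all $C_k^s$ are concurrent therefore yields a non-concurrent triple of branches, whose three distinct pairwise intersections descend to three distinct intersection points of the corresponding $\tilde L_k$, witnessing a non-concurrent triple in $\mathcal{A} = \{\tilde L_1, \dots, \tilde L_m\}$.

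The main obstacle is the degenerate case where the first-order rescaling accidentally produces a concurrent line arrangement. This happens precisely when $h_k(0) = y_0 - a_k x_0$ for some $(x_0, y_0)$ independent of $k$, i.e.\ when the first-order deformation is a common affine translation of the tangent pencil, with the actual separation of branches encoded in higher-order unfolding terms. I would handle this by translating coordinates via $(x,y) \mapsto (x,y) - (s x_0, s y_0)$, which makes all effective values $h_k(0)$ vanish, and then rescaling at the next order in $s$ to detect the higher-order separation. Because by hypothesis the full deformation $\mathcal{C}^s$ is not all concurrent, iterating this translate-and-rescale procedure must terminate after finitely many steps at a rescaling whose limit lines are not all concurrent. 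Carefully tracking the incidence implications through each iteration, and verifying termination, is the technical heart of the argument.
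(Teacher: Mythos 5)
Your overall strategy---rescale at the origin, pass to the limit line arrangement, and transfer incidences by continuity---is the same as the paper's. The genuine gap is exactly the point you flag as ``the technical heart'': ensuring the limit arrangement is not a pencil. Note that while coincidences of intersection points survive the limit, \emph{distinctness does not}, so your claim that a non-concurrent triple of branches ``descends to three distinct intersection points of the corresponding $\tilde L_k$'' fails precisely in the degenerate case you describe; and your proposed fix (iterate translate-and-rescale order by order) is asserted to terminate but not proven to, nor is it shown that the incidence implications survive each iteration. As written, the argument establishes only the incidence implication \eqref{eq:incidence}, not the non-concurrence of $\mathcal{A}$.

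The paper closes this in one step rather than by iteration. After an $s$-dependent translation one may assume $C_1^s\cap C_2^s=\{0\}$ for all $s$, so the constant terms satisfy $b_1\equiv b_2\equiv 0$, where $C_i^s=\{y=a_i(s)x+b_i(s)+c_i(x,s)\}$ with $\ord_x c_i>1$. Setting $r=\min_i \ord_s b_i$ (finite precisely because the branches are not all concurrent, since concurrence would force every branch through the pinned point $0$) and rescaling by $x=s^rx'$, $y=s^ry'$, the quadratic-and-higher terms die because $c_i=O(x^2)=O(s^{2r})$, and the limit lines are $y'=a_i(0)x'+\bar b_i(0)$ with $b_i=s^r\bar b_i$. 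Non-concurrence is then immediate: $\bar b_1(0)=\bar b_2(0)=0$ while $\bar b_{i_0}(0)\neq 0$ for the index realizing the minimum, so $L_1, L_2$ pass through the origin and $L_{i_0}$ does not. In effect, the paper replaces your order-by-order tracking of a moving concurrence point by pinning that point to the origin once and for all via the analytic path $s\mapsto C_1^s\cap C_2^s$; if you want to salvage your iteration, observing that this intersection point depends analytically on $s$ (the tangents $a_1\neq a_2$ give intersection multiplicity one) lets you perform all your translations simultaneously and reduces your argument to the paper's.
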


{Note that the incidence pattern for branches of $\mathcal{C}^s$ is the same for all $s\neq 0$, because the definition of a 1-parameter deformation implies that all nearby fibers ``look the same''. 
It's important to keep in mind}  that the complex line arrangement $\mathcal{A}$ may satisfy additional incidences, so that certain intersection points coincide in $\mathcal{A}$
but are distinct for the arrangement $\{ C_1^s, C_2^s, \dots, C_m^s \}$. In particular, a pencil of lines would satisfy incidence relations 
of any other arrangement, but we postulate that $\mathcal{A}$ cannot be a pencil (the lines in $\mathcal{A}$ are not all concurrent).

\begin{proof}[Proof of Lemma~\ref{get-lines}]
Since any two curvettas intersect positively
in the original germ $\mathcal{C}$, any two deformed branches $C_i^s$ , $C_j^s$ intersect for $s\neq 0$.
We can make an $s$-dependent translation to ensure that the first two branches always intersect
at the origin, $C_1^s \cap C_2^s = \{0\}$; strictly speaking, this means passing to a slightly different deformation of
the germ $\mathcal{C}$.

All components of the reducible curve $\mathcal{C}$ pass through $0$ and are graphical analytic disks with respect to the projection to the $x$-coordinate. 
Thus we can define the germ of 
$\mathcal{C}$ near $0$ by an equation of the form  
$$
\prod_{i=1}^m (a_i  x + c_i(x)-y)=0,
$$
where $c_i(x)=\sum_{k\geq 2} c_{i,k} x^k$  are analytic functions in $x$ with $ord_{x} c_i >1$ at $0$. 
We can also assume that $a_i \neq 0$ for all $i=1, \dots, m$.

The 1-parameter deformation $\mathcal{C}^s$ is then given by, for $s$ close to $0$, by an equation of the form 

$$
\prod_{i=1}^m (a_i(s) x + b_i(s) +c_i(x,s)-y)=0. 
$$
Here  $a_i$, $b_i$ are analytic functions in $s$, and {at the origin $(0,0)$} $ord_s a_i=0$, $ord_s b_i>0$; additionally,    $c_i(x, s)$ is analytic in $x,s$, and $ord_{x} c_i>1$. 
The $i$-th component $C_i^s$ of the deformed curve at time $s$ is given by the equation $a_i(s) x + b_i(s) +c_i(x,s)-y=0.$ Because the branches $C_1^s$ 
and $C_2^s$ pass through 0 for all $s$, we have $b_1\equiv b_2\equiv 0$. At $s=0$ all components pass through the origin so $b_i(0)=0$ for all $i$.

Let $r = \min_i (ord_s b_i)$ {(where the order is always taken at the origin)}. Because $b_i(0)=0$ for all $i$, we have $r>0$, and $r=ord_s b_{i_0}$ for some $3 \leq i_0 \leq m$.  Notice also that $r<+\infty$, since otherwise all the components $C_i^s$ would pass through $0$ for all $s\neq 0$.  
We  write $b_i(s) = s^r \bar{b}_i(s)$; then $\bar{b}_{i_0}(0)\neq 0$. 

Now make a change of variables {for $s\neq 0$}
$$
x= s^r x', \qquad y = s^r y'. 
$$
Since $ord_{x} c_i(x,s) \geq 2$, we have $c_i(x,s)= s^{2r} \bar{c}_i(x', s)$ for some analytic function $\bar{c}_i$. 
Thus, the equation for the deformation becomes
$$
\prod_{i=0}^m  (a_i(s) s^r x' +  s^r \bar{b}_i(s)  + s^{2r} \bar{c}_i(x',s)  - s^r y' )=0.
$$
Equivalently, for  $s\neq 0$ and $i=1, \dots, m$, the deformed components $C_i^s$ are given by the equations 
$$
a_i(s)  x' +   \bar{b}_i(s)  + s^{r} \bar{c}_i(x',s)  - y' =0.
$$
When we pass to the limit as $s \to 0$, 
the equations become 
$$
a_i(0)  x' +   \bar{b}_i(0)  - y' =0,
$$
so in the limit we obtain an arrangement of of straight lines in $\C^2$.  Not all of these lines are concurrent, since  
$\bar{b}_{i_0}(0)\neq 0$ while $\bar{b}_1(0)=\bar{b}_2(0) = 0$. 

The curves $C_i^s$ satisfy the same incidence relations for all $s \neq 0$. Since intersection points between curves 
vary continuously with $s$,  the incidence relations must be preserved in the limit, so~(\ref{eq:incidence}) holds. \end{proof}

Our examples of unexpected Stein fillings are given by pseudoline arrangements with the following special property.
\begin{definition}  Let  $\Lambda=\{\Gamma_1, \dots \Gamma_m\} \subset \R^2$ be a symplectic line arrangement where not all lines are concurrent. We say that $\Lambda$ is {\em unexpected} if the only 
	complex line arrangements that satisfy all the incidence relations of $\Lambda$ are pencils of lines. Namely, whenever
	a complex line arrangement  $\mathcal{A}=\{L_1, L_2, \dots, L_m\} \subset \C^2$ has the property
	$$
	\Gamma_{i_1} \cap \Gamma_{i_2}\cap \dots \cap \Gamma_{i_k} \neq \emptyset \Longrightarrow L_{i_1} \cap L_{i_2}\cap \dots \cap L_{i_k} \neq \emptyset,
	$$
	all the lines $L_1, L_2, \dots, L_m$ of $\mathcal{A}$ must be concurrent.
	
	If an unexpected symplectic line arrangement comes from a pseudoline arrangement, we will say that the pseudoline arrangement is unexpected.
\end{definition}

\begin{remark} It is important to note that unexpected symplectic line arrangements are not the same as symplectic line arrangements not realizable by complex lines.
        Being an unexpected arrangement 
	is a stronger condition: we want to rule out not only complex line arrangements with the same incidence relations as those of $\Lambda$, 
	but also complex line arrangements that satisfy all the incidence relations of $\Lambda$ and possibly additional incidence relations (without being a pencil). For instance, 
	the pseudo-Pappus arrangement (Example~\ref{pseudopapp} in the next section)
	is not realizable by complex lines but it is not unexpected, 
	because the classical Pappus arrangement has all of the same incidences and a additional 
	one. 	
\end{remark}

\begin{theorem} \label{thm:nonMilnor}  Suppose that $\Gamma=\{\Gamma_1, \dots, \Gamma_m \}$ is an arrangement of smooth graphical disks
with marked points $\{p_j\}$, related by a smooth graphical homotopy to a decorated germ $(\mathcal{C}, w)$. Let $(Y, \xi)$ be the link of the surface singularity that corresponds to $(\mathcal{C}, w)$.
Suppose that a subcollection of disks $\{\Gamma_1, \Gamma_2, \dots, \Gamma_r\}$ of $\Gamma$ forms an unexpected symplectic line arrangement. 

Then the Stein filling $W$ given by $(\Gamma, \{p_j \})$ is not strongly diffeomorphic to any 
Milnor filling of $(Y, \xi)$. If the weights on $\mathcal{C}$ are large enough,  $W$ is simply-connected. 
\end{theorem}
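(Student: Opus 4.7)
The plan is to argue by contradiction, combining the rigidity of Milnor fibers established by de Jong--van Straten with the incidence-matrix invariant of N\'emethi--Popescu-Pampu (Proposition~\ref{incidence-matrix}), and then producing a forbidden complex line arrangement via Lemma~\ref{get-lines}. So suppose $W$ is strongly diffeomorphic to a Milnor filling of $(Y,\xi)$. That Milnor filling arises from a picture deformation $\mathcal{C}'^s$ of some decorated germ $(\mathcal{C}',w')$ of some singularity $(X',0)$ whose link is $(Y,\xi)$. Because strong diffeomorphism preserves the induced open book (and with it the boundary data that fixes the topological type of the decorated germ), Remark~\ref{rem:toptype} lets us take $(\mathcal{C}',w')$ topologically equivalent to $(\mathcal{C},w)$: same number of smooth branches, same pairwise orders of tangency, and same weights. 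Proposition~\ref{incidence-matrix} then yields a labeling of the deformed branches $C_1'^s,\dots,C_m'^s$ of $\mathcal{C}'^s$ and of the marked points so that the incidence matrices of $(\mathcal{C}'^s,w'^s)$ and $(\Gamma,\{p_j\})$ agree.

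Next I would restrict attention to the subcollection $C_1'^s,\dots,C_r'^s$ paired with $\Gamma_1,\dots,\Gamma_r$, and verify the hypotheses of Lemma~\ref{get-lines} for the subgerm $C_1'\cup\dots\cup C_r'$ together with its (restricted) picture deformation. First, because $\Gamma_1,\dots,\Gamma_r$ is a symplectic line arrangement, any two of its members intersect in exactly one point; matching incidence matrices forces the same to hold for each pair $C_i'^s, C_j'^s$, so $tang(C_i', C_j') = 1$ for $i\neq j$. This means the branches $C_1',\dots,C_r'$ meet transversally at $0$ and hence have pairwise distinct tangent lines. Second, since $\Gamma_1,\dots,\Gamma_r$ is unexpected, not all of its members pass through a common point, so no column of the (shared) incidence matrix has ones in all of rows $1,\dots,r$; consequently the deformed branches $C_1'^s,\dots,C_r'^s$ are not all concurrent.

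With both hypotheses in place, Lemma~\ref{get-lines} produces a complex line arrangement $\mathcal{A}=\{L_1,\dots,L_r\}\subset\C^2$, not a pencil, satisfying all incidence relations of the $C_i'^s$. Since these incidences coincide with those of $\Gamma_1,\dots,\Gamma_r$, the arrangement $\mathcal{A}$ also realizes every incidence of $\Gamma_1,\dots,\Gamma_r$. But $\{\Gamma_1,\dots,\Gamma_r\}$ is unexpected, so any such $\mathcal{A}$ must be a pencil, contradicting the conclusion of Lemma~\ref{get-lines}. This contradiction shows $W$ is not strongly diffeomorphic to any Milnor filling of $(Y,\xi)$. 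The main delicate point, in my view, is bookkeeping the two appeals to the matching of incidence matrices: once to propagate the ``one intersection per pair'' condition back to the germ so that Lemma~\ref{get-lines} applies, and once to transfer the incidence relations between the picture deformation and $\{\Gamma_1,\dots,\Gamma_r\}$ so that the unexpectedness hypothesis can be invoked.

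For the simple-connectivity statement, I would work directly with the Lefschetz fibration of Lemma~\ref{construct-fibration}. The fiber is a planar surface $P_m$ and $\pi_1(W)$ is generated by the boundary loops of the holes $h_1,\dots,h_m$, modulo relations coming from the vanishing cycles. Each free marked point on $\Gamma_i$ contributes a boundary-parallel vanishing cycle around $h_i$, killing the corresponding generator of $\pi_1$. When the weights $w_i$ are large, every branch $\Gamma_i$ carries free marked points after accounting for intersections (by condition~(\ref{weight-ineq}), $w_i > t(C_i)$, and any further increase in $w_i$ can only produce additional free points), so every generator is killed and $\pi_1(W)=1$. Combined with the previous paragraphs this completes the proof.
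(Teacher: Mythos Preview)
Your proposal is correct and follows essentially the same approach as the paper: contradiction via Proposition~\ref{incidence-matrix} to match incidence matrices, verification of the hypotheses of Lemma~\ref{get-lines} (distinct tangent lines from the pairwise-single-intersection condition, non-concurrence from unexpectedness), and then the contradiction with the definition of an unexpected arrangement; the simple-connectivity argument via boundary-parallel vanishing cycles is also the same. One small imprecision: your appeal to inequality~(\ref{weight-ineq}) in the last paragraph is not quite on point, since $t(C_i)$ is the maximal tangency order in the germ rather than the number of intersection points on $\Gamma_i$ in the chosen arrangement---the correct statement (as in the paper) is simply that once each $w_i$ strictly exceeds the number of intersection points on $\Gamma_i$, every disk carries a free marked point.
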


By Proposition~\ref{p:symp-lines-fill}, unexpected line arrangements yield unexpected fillings of Seifert fibered spaces of the form $Y(m,w)$.

\begin{cor} \label{cor:nonMilnor} Let $\Gamma= \{\Gamma_1, \dots, \Gamma_m\}$ be an unexpected symplectic line arrangement, and for $k=1, \dots, m$ 
let $w(\Gamma_k)$ denote the number of intersection points of $\Gamma_k$ with the disks $\Gamma_i$, $i\neq k$.
Then  for every weight $w=(w_1, w_2, \dots, w_m)$ with $w_k \geq w(\Gamma_k)$, $k=1, \dots, m$,  the Seifert fibered space $(Y(m,w), \xi)$ has a Stein filling not
strongly  diffeomorphic to any Milnor filling.  
This Stein filling is given by a Lefschetz fibration constructed from 
the arrangement $\Gamma$ with the appropriate choice of marked points. When strict inequalities
$w_k> w(\Gamma_k)$ hold for all $k$, we get a simply-connected unexpected Stein filling.
\end{cor}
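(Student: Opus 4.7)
The plan is to present Corollary~\ref{cor:nonMilnor} as a direct specialization of Theorem~\ref{thm:nonMilnor}, applied with the unexpected symplectic line arrangement $\Gamma$ playing the role of both the full curvetta arrangement and the unexpected subcollection (so $r=m$), and with the decorated germ $(\mathcal{C},w)$ taken to be a pencil of $m$ concurrent complex lines equipped with the prescribed weights $w_1,\dots,w_m$, so that the associated singularity has link $(Y(m,w),\xi)$.

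First I would construct the marked-point data and the filling. For each $k$, I declare the $w(\Gamma_k)$ pairwise intersection points on $\Gamma_k$ to be marked, and then add $w_k - w(\Gamma_k) \geq 0$ free marked points in the interior of $\Gamma_k$, chosen distinct from one another and from the intersection points. With this choice every $\Gamma_k$ carries exactly $w_k$ marked points, so Proposition~\ref{p:symp-lines-fill} applies, and together with Lemma~\ref{construct-fibration} produces a planar Lefschetz fibration whose boundary open book supports $(Y(m,w),\xi)$; call the resulting Stein filling $W$.

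Next I would verify the hypothesis of Theorem~\ref{thm:nonMilnor} that $\Gamma$ is related to $\mathcal{C}$ by a smooth graphical homotopy. Both $\mathcal{C}$ and $\Gamma$ are symplectic line arrangements, so by condition~(2) of the definition their boundary braids on $\partial B$ are each isotopic to one full positive twist on $m$ strands. Lemma~\ref{l:samebraid} then allows one to graphically isotope $\mathcal{C}$ so that its boundary braid matches that of $\Gamma$ componentwise, and Lemma~\ref{l:graphicalconvex} completes the graphical interpolation in the interior. The weights transport naturally along this homotopy, placing the marked-point subscheme into the position required at $t=1$ by Definition~\ref{def:homotopy}. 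Theorem~\ref{thm:nonMilnor} now applies verbatim and yields that $W$ is not strongly diffeomorphic to any Milnor filling of $(Y(m,w),\xi)$. The hard part is really the content of Theorem~\ref{thm:nonMilnor}, which proceeds via Proposition~\ref{incidence-matrix} (to reduce the question to incidence matrices) and Lemma~\ref{get-lines} (to convert a hypothetical Milnor realization into a complex line arrangement satisfying all incidences of $\Gamma$, contradicting unexpectedness).

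Finally, under the strict inequalities $w_k > w(\Gamma_k)$ for every $k$, I would verify $\pi_1(W)=1$ directly from the Lefschetz presentation. The generic page is a disk with $m$ inner holes, so its fundamental group is free on the meridians $\mu_1,\dots,\mu_m$, and $\pi_1(W)$ is the quotient by the normal subgroup generated by the vanishing cycles. Each $\Gamma_k$ now carries at least one free marked point, and by the local model in Lemma~\ref{construct-fibration} the associated vanishing cycle is isotopic in the page to the boundary-parallel curve around the $k$-th hole, yielding the relation $\mu_k=1$. Iterating over $k$ kills every generator, so $\pi_1(W)=1$. As a consistency check, under the same hypothesis the incidence matrix $\mathcal{I}(\Gamma,\{p_j\})$ contains, for each $k$, a column with a single~$1$ in row~$k$, so the map in Proposition~\ref{homology} surjects and $H_1(W)=0$, confirming the $\pi_1$ computation at the level of abelianizations.
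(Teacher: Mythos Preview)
Your proposal is correct and follows essentially the same approach as the paper: the corollary is deduced directly from Theorem~\ref{thm:nonMilnor} together with Proposition~\ref{p:symp-lines-fill} (which packages the homotopy construction via Lemmas~\ref{l:samebraid} and~\ref{l:graphicalconvex} that you spell out), and your simply-connectedness argument via boundary-parallel vanishing cycles killing the generators of $\pi_1(\text{fiber})$ is exactly the one the paper gives at the start of the proof of Theorem~\ref{thm:nonMilnor}.
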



\begin{proof}[Proof of Theorem~\ref{thm:nonMilnor}] Observe that when the number of intersection points on each $\Gamma_i$ 
is smaller than the weight of the corresponding  branch of the decorated germ,
each $\Gamma_i$ has a free marked point. Then the  Lefschetz fibration constructed from $(\Gamma, \{p_j \})$
	has a boundary-parallel vanishing cycle around every hole in the disk fiber, so that the corresponding thimbles kill all generators of $\pi_1(\text{fiber})$,
	and therefore, in this case $\pi_1(W)=0$.

	Let $W_M$  be a Milnor filling  that arises from a smoothing of some surface singularity with the link~$Y$. 
	By Theorem~\ref{thm:intro-djvs}, $W_M$ corresponds to a picture deformation $\mathcal{C}'^s$ of  a decorated germ
	$\mathcal{C}' = \cup_{i=1}^m C'_i$ with weight $w$, topologically equivalent to $(\mathcal{C}, w)$.

	Although the germs $\mathcal{C}$ and $\mathcal{C}'$ may differ analytically, they are topologically equivalent
	and thus have isotopic boundary braids. Therefore by Lemma~\ref{same-monodromy} the open book decomposition naturally induced by the Lefschetz fibration in Lemma~\ref{construct-fibration} for $W$ agrees with that for $W_M$, so comparing them via strong diffeomorphism makes sense.
	   
	By Proposition~\ref{incidence-matrix}, if $W$ is strongly diffeomorphic to $W_M$, 
	the incidence matrix of the deformed curvetta arrangement $\{C'^{s}_1, \dots, C'^{s}_{m}\}$, $s\neq 0$, with its marked points
	must be the same as the incidence matrix for the  arrangement $(\Gamma, \{p_j \})$, up to permutation of columns. In particular, 
	we see that the subarrangement 
	$\{\Gamma_1, \dots, \Gamma_r\}$ of symplectic lines satisfies the same incidence relations as the subarrangement $\{C'^{s}_1, \dots, C'^{s}_{r}\}$ of the
	deformed curvettas of $\mathcal{C}'$. By assumption, in each of these arrangements not all curvettas are concurrent.
	Because pairs of curves $\Gamma_1,\dots, \Gamma_r$ intersect algebraically positively once, $C'_1, \dots, C'_{r}$ have distinct tangent lines.
	Now by Lemma~\ref{get-lines}, there exists a complex line 
	arrangement $\mathcal{A}$ that satisfies all the incidence relations of $\{C'^{s}_1, \dots, C'^{s}_{r}\}$, and thus all the incidence relations of $\Gamma$. 
	This is a contradiction because $\Gamma$ is an unexpected arrangement.	
	\end{proof}

\subsection{Constructing unexpected pseudoline arrangements}
We now give examples of unexpected pseudoline arrangements; these will yield concrete examples of unexpected Stein fillings.
We start with classical projective geometry constructions. 

\begin{example} \label{ex:pappus} Recall that the classical Pappus arrangement in $\R^2$ is constructed as follows. Take two lines, $\ell_1$ and $\ell_2$, and mark three distinct points $a, b, c$ on $L_1$ 
	and three distinct points 
	$A, B, C$ on $\ell_2$, avoiding the intersection $\ell_1 \cap \ell_2$. Consider the following lines through pairs of marked points: 
	$$
	\ell_3 = aB, \quad \ell_4= aC, \quad \ell_5=bA, \quad \ell_6=bC, \quad \ell_7=cA, \quad \ell_8=cB. 
	$$
	The Pappus theorem asserts that the three intersection points $\ell_3 \cap \ell_5$, $\ell_4 \cap \ell_7$, and $\ell_6\cap \ell_8$ are collinear; the classical Pappus arrangement consists of 
	the lines $\ell_1, \dots, \ell_8$, together with the line through these three points. We modify this last line to make an unexpected pseudoline arrangement, as follows. Let $\ell_9$ be a line through
	$C$, distinct from $\ell_4$ and $\ell_6$. Consider the intersection point $\ell_8 \cap \ell_9$ and let $\ell_{10}$ be a pseudoline passing through points  $\ell_3 \cap \ell_5$, $\ell_4 \cap \ell_7$, and 
	$\ell_8 \cap \ell_9$ as shown in Figure~\ref{fig:pappus}. Let $\mathcal{P}=\{\ell_1, \ell_2, \dots, \ell_{10} \}$. 
	
	Notice that in this case, it is clear that the pseudoline $\ell_{10}$ can be homotoped to the classical Pappus line 
	through the points $\ell_3 \cap \ell_5$, $\ell_4 \cap \ell_7$, and $\ell_6\cap \ell_8$. The resulting arrangement of straight lines in $\R^2$  
	can be homotoped to a pencil by linear homotopy. (We already know from discussion in Example~\ref{discuss-plines} that $\mathcal{P}$ is homotopic 
	to the pencil, but here we have a very simple explicit homotopy.) 
\end{example}

\begin{figure}[htb]
	\centering
	\bigskip
	\labellist
	\small\hair 2pt
	\pinlabel $\ell_1$ at 4 107
	\pinlabel $\ell_2$ at 4 35
	\pinlabel $\ell_4$ at 23 127
	\pinlabel $\ell_3$ at 37 127
	\pinlabel $\ell_6$ at 88 131
	\pinlabel $\ell_5$ at 107 131
	\pinlabel $\ell_8$ at 157 138
	\pinlabel $\ell_7$ at 171 139
	\pinlabel $\ell_9$ at 129 134
	\pinlabel $\ell_{10}$ at 45 51
	\pinlabel $a$ at 38 111
	\pinlabel $b$ at 91 118
	\pinlabel $c$ at 161 127
	\pinlabel $A$ at 56 26
	\pinlabel $B$ at 95 21
	\pinlabel $C$ at 162 11
	
	\endlabellist
	\includegraphics[scale=1.3]{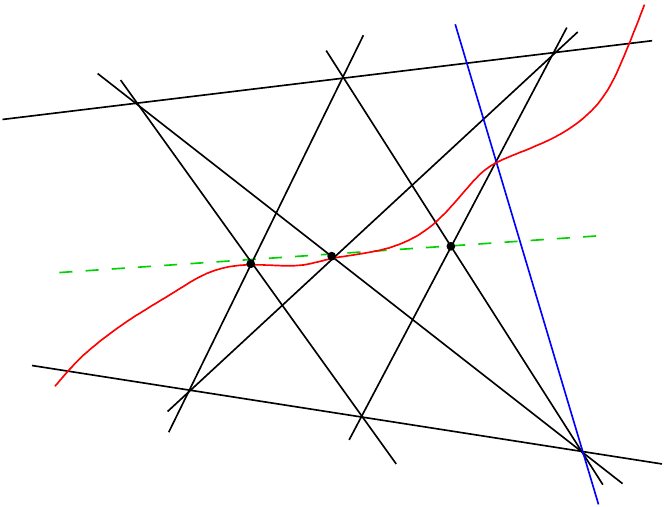}
	\caption{The pseudoline arrangement $\mathcal{P}=\{\ell_1, \ell_2, \dots, \ell_{10}\}$ is given by the black lines, the blue line, and the red line in the figure.
		The dotted line in the middle is not included. The dotted line and the eight black lines give the classical Pappus arrangement. Note that the intersection points 
		$\ell_1 \cap \ell_2$, $\ell_3\cap \ell_6$, $\ell_3\cap \ell_9$, $\ell_5\cap \ell_8$, and $\ell_5\cap \ell_9$ are not shown in the figure.}
	\label{fig:pappus}
\end{figure}

\begin{prop}	\label{pappus-exotic}
	The arrangement $\mathcal{P}$ is unexpected.
\end{prop}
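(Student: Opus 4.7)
The plan is to show that any complex line arrangement $\mathcal{A}=\{L_1,\ldots,L_{10}\}\subset\C^2$ (with pairwise distinct lines) satisfying the incidence relations of $\mathcal{P}$ must have all ten lines concurrent. First I would observe that the incidences among $L_1,\ldots,L_8$ encode exactly the hypothesis of the classical Pappus theorem: $L_1$ contains $a,b,c$; $L_2$ contains $A,B,C$; and $L_3=aB$, $L_4=aC$, $L_5=bA$, $L_6=bC$, $L_7=cA$, $L_8=cB$ are the six connecting lines. In the non-degenerate case ($L_1\neq L_2$, the distinguished points all distinct, and no incidental extra coincidences among the six connecting lines), complex Pappus gives that $L_3\cap L_5$, $L_4\cap L_7$, and $L_6\cap L_8$ lie on a common Pappus line $p$.

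Next I would exploit the two extra incidences involving $L_9$ and $L_{10}$. Since $L_{10}$ contains $L_3\cap L_5$ and $L_4\cap L_7$, and these are distinct in the non-degenerate case, we conclude $L_{10}=p$, hence $L_6\cap L_8\in L_{10}$. The incidence $L_8\cap L_9\in L_{10}$, combined with $L_{10}\neq L_8$, forces the equality $L_6\cap L_8=L_8\cap L_9$; equivalently, $L_6,L_8,L_9$ share a common point $Q$. Now $L_9$ and $L_6$ both contain $C$ (since $L_6=bC$ and $L_9$ is constrained to pass through $C$), and they both contain $Q$; unless $Q=C$, two distinct points would force $L_6=L_9$, contradicting distinctness. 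Hence $Q=C$, meaning $L_8$ passes through $C\in L_2$. But $L_8$ already passes through $B\in L_2$, so $L_8$ shares two distinct points with $L_2$, yielding $L_8=L_2$ --- again a contradiction to distinctness. Thus the non-degenerate case cannot occur at all for a distinct non-pencil realization.

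The remaining work is a case analysis covering degenerate configurations, which I would organize into three flavors: (i) $L_1=L_2$, so that $a,b,c,A,B,C$ all lie on one line; (ii) coincidences among the distinguished points $a,b,c,A,B,C$, including the possibility that one of them equals $L_1\cap L_2$; and (iii) $L_3\cap L_5=L_4\cap L_7$, so that $L_{10}$ is not pinned down by these two incidences alone and must be recovered from $L_8\cap L_9\in L_{10}$. In each sub-case, substituting the identified coincidences into the remaining incidences and repeatedly using that two distinct points determine a unique complex line collapses the configuration: one either produces a forbidden line coincidence $L_i=L_j$, or drives all ten lines through the single point $L_1\cap L_2$, giving the desired pencil. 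The main obstacle will be the bookkeeping for this degenerate analysis: the flexibility introduced by allowing some of the $a,b,c,A,B,C$ to coincide opens several branches, and one must verify that the tight constraints from $L_9$ (tied to $C$) and $L_{10}$ (tied to three a priori unrelated points) propagate through every branch. Once this verification is complete, together with Lemma~\ref{get-lines} the proposition follows: no $\delta$-constant algebraic deformation can reproduce the incidence pattern of $\mathcal{P}$ except via a pencil, so $\mathcal{P}$ is unexpected in the sense of the definition.
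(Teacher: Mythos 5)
Your generic case is correct and coincides with the paper's opening move: Pappus pins $L_{10}$ to the Pappus line, the two incidences on $L_8$ force $L_6\cap L_8=L_8\cap L_9$, and the constraint that $L_6$ and $L_9$ both contain $C$ collapses the configuration. The collapsing mechanism you name for the degenerate branches --- two distinct points determine a unique line --- is also exactly the right tool; it is the paper's Observation~\ref{3points} (two coinciding intersection points force all lines through either of them to be concurrent). So the strategy is the same as the paper's.

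The gap is that the degenerate analysis, which is the entire content of ``unexpected'' as opposed to mere non-realizability by complex lines, is announced but not carried out, and your case decomposition is not yet in a form that could be executed cleanly. Flavor (i), $L_1=L_2$, is vacuous under your own standing hypothesis that the ten lines are pairwise distinct. More seriously, flavors (ii) and (iii) do not a priori exhaust the degeneracies: a realization of the incidences of $\mathcal{P}$ can have collisions among the \emph{secondary} intersection points (e.g.\ $L_3\cap L_5=L_6\cap L_8$, or $L_3\cap L_5=L_8\cap L_9$ on $L_{10}$) without any of $a,b,c,A,B,C$ coinciding and without $L_3\cap L_5=L_4\cap L_7$; each such collision does eventually reduce to your flavors, but only by first applying the collapsing principle, so as stated the branch structure is neither exhaustive nor finite-depth without further argument. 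The paper sidesteps the enumeration with a single dichotomy --- whether $L_3\cap L_5$ and $L_4\cap L_7$ coincide --- and runs one short coincidence chase in each case: if they are distinct, Pappus plus the $L_{10}$ incidences give a common point $O=L_6\cap L_8=L_8\cap L_9$, whence $O=C$, then $O=B$, $O=a$, $O=b$, $O=c$, and every line contains $O$; if they coincide at $O$, then $O=a=A$, hence $O=c=C$, and again every line contains $O$. Note in particular that the chase terminates in a \emph{pencil}, not a contradiction: your non-degenerate contradiction ($L_8=L_2$) is just the special case of the chase in which one insists that $B\neq C$. Reorganizing your degenerate analysis around this propagation argument is what is needed to turn the sketch into a proof; combined with Lemma~\ref{get-lines} it then yields the proposition as you indicate.
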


\begin{proof} As already stated, the classical Pappus theorem asserts that for the given arrangement, the intersection points 
	$\ell_3 \cap \ell_5$, $\ell_4 \cap \ell_7$, and $\ell_6\cap \ell_8$ are collinear. Collinearity holds both in the real and in the complex projective geometry settings, so that if 
	$L_1, L_2, \dots, L_7, L_8 \subset \C^2$ are complex lines with given incidences, then  $L_3 \cap L_5$, $L_4 \cap L_7$, and $L_6\cap L_8$ are collinear. 
	From this, we can immediately 
	see that the arrangement $\mathcal{P}$ is not realizable by complex lines $\{L_1, L_2, \dots, L_{10}\}$: since  $L_6\cap L_8$ and $L_8 \cap L_9$ are distinct points on $L_8$,  the points 
	$L_3 \cap L_5$, $L_4 \cap L_7$ and $L_8 \cap L_9$ cannot be collinear. 
	
	To show that  $\mathcal{P}$ is unexpected, we need to prove that no complex line arrangement satisfies all the incidence relations of $\mathcal{P}$ even if some (but not all) 
	of the intersection points coincide. Indeed, we show that if a complex line arrangement $\mathcal{A}=\{L_1, L_2, \dots, L_{10}\}$ satisfies the incidence relations of $\mathcal{P}$ and two of the intersection points 
	coincide, then $\mathcal{A}$ must be a pencil. Remember that we always assume that all the lines in the arrangement are distinct.
	
	The following trivial fact, applied systematically, greatly simplifies the analysis of cases:
	\begin{observation} \label{3points} Let $L_1, L_2, L_3, L_4$ be four lines in $\C^2$, which are not necessarily distinct. Suppose that two of the pairwise intersection points coincide: 
		$L_1\cap L_2=  L_3\cap L_4$. Then $L_1$, $L_2$, $L_3$, and $L_4$ are concurrent, so that 
		they all intersect at the point $L_1\cap L_2=L_1\cap L_3=L_1\cap L_4=L_2\cap L_3 = L_2\cap L_4 = L_3\cap L_4$.
		
		In the case of three lines, if $L_1\cap L_2 = L_3\cap L_1$, then $L_2\cap L_3 = L_1\cap L_2 = L_3\cap L_1$. Visually, if two vertices
		of a triangle coincide, the third vertex of the triangle  coincides with the first two.
		
	\end{observation}
	
	Assuming that some of the intersection  points in Figure~\ref{fig:pappus} coincide, we mark these points by ``O'', and 
	then use Observation~\ref{3points} to chase vertices that coincide: starting with two marked vertices, we  look for additional vertices that coincide with the first two,
	further mark these by ``O'', and continue. When every line contains a marked intersection point, we know that all lines in the arrangement are concurrent: 
	they form a pencil though O.
	
	We begin this process. First, assume that the intersection points $L_3 \cap L_5\cap L_{10}$ and $L_4 \cap L_7\cap L_{10}$ are distinct. By the Pappus theorem, 
	the complex line arrangement $\mathcal{A}=\{L_1, L_2, \dots, L_{10}\}$ can satisfy all the incidence relations of $\mathcal{P}$ only if $L_6\cap L_8=L_8 \cap L_9\cap L_{10}$. Setting
	$\text{O}=L_6\cap L_8=L_8 \cap L_9\cap L_{10}$, by Observation~\ref{3points} we have $\text{O}=C=L_4 \cap L_6\cap L_9 \cap L_2$, then  
	$\text{O}= B= L_8 \cap L_2 \cap L_3$, then $\text{O}= a= L_3 \cap L_4 \cap L_1$, then $\text{O}= b= L_5\cap L_6\cap L_1$ and $\text{O}=c = L_7 \cap L_8 \cap L_1$. Now, O appears on every line at least once, so the arrangement degenerates to a pencil. 
	
	(This can be seen quickly if in the above diagram, you highlight the lines passing through intersection points marked by O, in order. 
	You can mark a new intersection by O if it contains at least two highlighted lines, and then highlight all the lines through that point O. 
	When all the lines are highlighted, you have a pencil.)
	
	For the second case, assume that the intersection points $L_3 \cap L_5\cap L_{10}$ and $L_4 \cap L_7\cap L_{10}$ coincide. Set $\text{O}=L_3 \cap L_5\cap L_{10}= L_4 \cap L_7\cap L_{10}$. Then 
	$\text{O}=a=L_3\cap L_4 \cap L_1$ and
	$\text{O}=A=L_5\cap L_7 \cap L_2$. Then $\text{O}=c=L_7\cap L_8\cap L_1$ and $\text{O}=C=L_4\cap L_6\cap L_2\cap L_9$. Again, every line contains a point marked O, so the arrangement degenerates to a pencil.
\end{proof}

\begin{cor}    \label{Ypappus}
	Let $Y= Y(10; w)$ be a Seifert fibered space given by a star-shaped plumbing graph with 10 legs as in Figure~\ref{pencil},
	such that eight of the legs of the graph have at least 5 vertices each, including the central vertex, and  two remaining legs have at least 4 vertices each. 
	(Equivalently, two components of $w$ are 5 or greater, and the rest are 6 or greater.)  
	Observe that $Y$ is the link of a rational singularity, and let $\xi$ be the Milnor fillable contact structure on $Y$. 
	Then $(Y, \xi)$ admits a Stein filling which is not strongly diffeomorphic to any Milnor filling.
\end{cor}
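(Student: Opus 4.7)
The plan is to deduce this corollary from Corollary~\ref{cor:nonMilnor} applied to the unexpected pseudoline arrangement $\mathcal{P}$ of Example~\ref{ex:pappus}. First, I would observe that $Y(10;w)$ is the link of the rational singularity with reduced fundamental cycle whose decorated germ is a pencil of $10$ complex lines, weighted by $w_1,\dots,w_{10}$, exactly in the sense described in the paragraph preceding Figure~\ref{pencil}: the star-shaped resolution graph has $10$ legs, with $k$-th leg of length $w_k-1$ (measured in vertices including the central vertex), central vertex of self-intersection $-11$, and all other vertices of self-intersection $-2$. The hypothesis in the statement—eight legs with at least $5$ vertices and two legs with at least $4$ vertices—translates into $w_k\geq 6$ for eight indices and $w_k\geq 5$ for the remaining two.

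Next, I would invoke Example~\ref{discuss-plines} to extend $\mathcal{P}$ to a symplectic line arrangement $\Gamma=\{\Gamma_1,\dots,\Gamma_{10}\}$ in $\C^2$, preserving all intersection data. By Proposition~\ref{pappus-exotic}, this arrangement is unexpected. To apply Corollary~\ref{cor:nonMilnor}, the remaining step is to verify the weight constraint $w_k\geq w(\Gamma_k)$ for each $k$, where $w(\Gamma_k)$ is the number of distinct intersection points on $\Gamma_k$.

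The key computation is therefore the intersection-point count. I would go through each of the ten pseudolines of $\mathcal{P}$ in Figure~\ref{fig:pappus}, identifying which pairwise intersections coincide. The triple points $a,b,c$ lie on $\ell_1$, the triple points $A,B$ and the quadruple point $C$ (through which $\ell_9$ also passes) lie on $\ell_2$, and the three "Pappus-type" points $P=\ell_3\cap\ell_5\cap\ell_{10}$, $Q=\ell_4\cap\ell_7\cap\ell_{10}$, $R=\ell_8\cap\ell_9\cap\ell_{10}$ account for the coincidences among $\ell_3,\dots,\ell_{10}$. A careful enumeration shows that exactly two lines—namely $\ell_2$ and $\ell_4$, both of which pass through the quadruple point $C$—carry only $5$ distinct intersection points, while each of the other eight lines carries $6$ distinct intersection points. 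This matches the weight hypothesis precisely: assigning the two "small" weights $w_k\geq 5$ to $\ell_2$ and $\ell_4$ and the remaining weights $w_k\geq 6$ to the other eight pseudolines, the inequality $w_k\geq w(\Gamma_k)$ holds for every $k$.

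Having checked all hypotheses of Corollary~\ref{cor:nonMilnor}, I would conclude that the arrangement $\Gamma$, together with a choice of marked points that includes all intersection points and enough free marked points to reach the prescribed weights, produces via Lemma~\ref{construct-fibration} a Lefschetz fibration whose total space is a Stein filling of $(Y(10;w),\xi)$ not strongly diffeomorphic to any Milnor filling. The only "hard" part is really the bookkeeping of intersection multiplicities on each pseudoline of $\mathcal{P}$; once that counting matches the stated weight bounds, everything else is a direct invocation of Corollary~\ref{cor:nonMilnor} and Proposition~\ref{pappus-exotic}.
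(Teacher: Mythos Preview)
Your proposal is correct and follows essentially the same approach as the paper: count the distinct intersection points on each pseudoline of $\mathcal{P}$, verify that $w(\ell_2)=w(\ell_4)=5$ and $w(\ell_k)=6$ for $k\neq 2,4$, match these against the hypothesized weight bounds, and then invoke Corollary~\ref{cor:nonMilnor} together with Proposition~\ref{pappus-exotic}. The paper's proof is slightly terser but does exactly the same bookkeeping with the same outcome.
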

\begin{figure}[htb]
	\centering
	\bigskip
	\labellist
	\small\hair 2pt
	\pinlabel $w_1\geq 6$ at 237 175
	\pinlabel $w_2\geq 5$ at 237 160
	\pinlabel $w_3\geq 6$ at 237 138
	\pinlabel $w_4\geq 5$ at 237 122
	\pinlabel $w_5\geq 6$ at 237 107
	\pinlabel $w_6\geq 6$ at 237 91
	\pinlabel $w_7\geq 6$ at 237 74
	\pinlabel $w_8\geq 6$ at 237 58
	\pinlabel $w_9\geq 6$ at 237 40
	\pinlabel $w_{10}\geq 6$ at 237 7
	\pinlabel $-11$ at 325 98
	\endlabellist
	\includegraphics[scale=1]{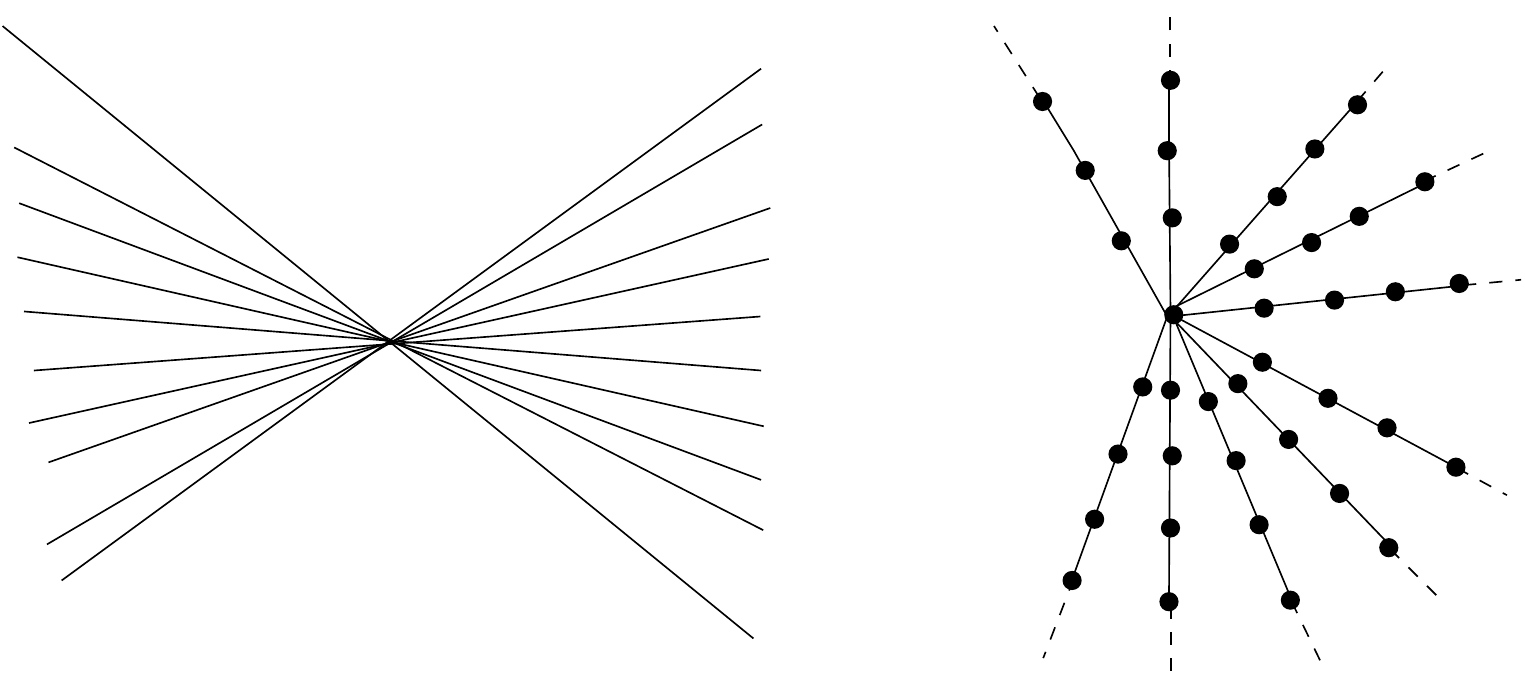}
	\caption{Left: a pencil of 10 lines decorated with weights. Right: the plumbing graph for $Y$, the central vertex has self-intersection $-11$, all the rest 
		self-intersection~$-2$. Eight of the legs have at least 5 vertices 
		each (including the central vertex), and  two remaining legs have at least 4 vertices each.}
	\label{pencil}
\end{figure}
\begin{proof} We count the intersection points on each line in the arrangement $\mathcal{P}$: $w(\ell_2)=w(\ell_4)=5$, $w(\ell_k)=6$ for $k\neq 2, 4$. Then for any collection of 
	integers $w_1, w_2, \dots, w_{10}$ such that $w_2 \geq 5$, $w_4\geq 5$, and $w_k \geq 6$ for $k\neq 2, 4$, we can mark the lines of the arrangement $\mathcal{P}$ as required 
	in Corollary~\ref{cor:nonMilnor}. The corresponding singularity has the dual resolution graph as shown in Figure~\ref{pencil}, with one leg of length $w_k-1$ for each line $L_k$ in the arrangement, 
	so the link is the Seifert fibered space $Y(10,w)$. The result now follows from Corollary~\ref{cor:nonMilnor} and Proposition~\ref{pappus-exotic}.
\end{proof}


A different example comes from a version of the Desargues theorem; we use complete quadrangles and harmonic conjugates. The example in Figure~\ref{fig:orev-lines} was pointed out to us by Stepan Orevkov. He suggested an approach to prove that this arrangement cannot appear as an algebraic deformation of a pencil. We are grateful for his input which inspired us to define unexpected line arrangements and prove Lemma~\ref{get-lines}.

\begin{figure}[htb]
	\centering
	\bigskip
	\labellist
	\small\hair 2pt
	\pinlabel $V$ at 57 157
	\pinlabel $H$ at 150 60
	\pinlabel $P$ at 103 127
	\pinlabel $\ell_1$ at 21 110
	\pinlabel $\ell_2$ at 38 110
	\pinlabel $\ell_3$ at 56 110
	\pinlabel $\ell_4$ at 73 110
	\pinlabel $\ell_0$ at 21 155
	\pinlabel $\ell_6$ at 5 60
	\pinlabel $\ell_5$ at 5 82
	\pinlabel $\ell_7$ at 5 39
	\pinlabel $\ell_8$ at 5 20
	\pinlabel $\ell_9$ at 10 4
	\pinlabel $\ell_{10}$ at 108 10
	\pinlabel $a$ at 40 78
	\pinlabel $b$ at 57 67
	\pinlabel $c$ at 73 32
	\pinlabel $a'$ at 22 79
	\endlabellist
	\includegraphics[scale=1.3]{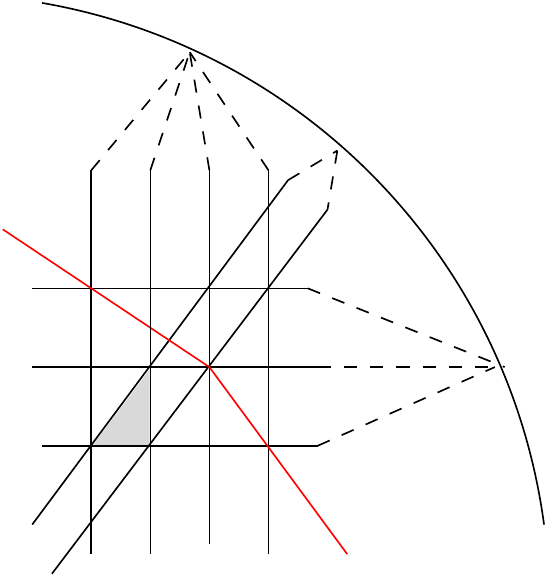}
	\caption{An arrangement of real pseudolines. The intersection of $\ell_0$ and $\ell_{10}$ is not shown.}
	\label{fig:orev-lines}
\end{figure}

\begin{example} \label{example-orevkov}In the standard $\R^2 \subset \rptwo$, we take  four vertical lines  $\ell_1, \ell_2, \ell_3, \ell_4$,  three horizontal 
	lines $\ell_5, \ell_6, \ell_7$, the two parallel diagonal lines $\ell_8, \ell_9$, and a ``bent'' pseudoline 
	$\ell_{10}$ as shown in Figure~\ref{fig:orev-lines}.  Let $\ell_0$ be the line at infinity. Note that 
	because $\ell_1, \ell_2, \ell_3, \ell_4$ are all parallel in $\R^2$, they intersect at a point $V$ on $\ell_0$. Similarly, the lines $\ell_5, \ell_6, \ell_7$ have a common intersection with $\ell_0$ at a point $H$, and the lines $\ell_8$ and $\ell_9$ intersect on $\ell_0$ at a point $P$. Removing from $\rptwo$ a line which is different from all $\ell_i$'s and intersects them generically,
	we can consider $\mathcal{Q}=\{\ell_i\}_{i=0}^{10}$ as a pseudoline arrangement in $\R^2$. (See Figure~\ref{fig:orev-proj} for a version where $\ell_0$ is no longer the line at infinity.)
\end{example}

\begin{prop} \label{orevkov-exotic} The pseudoline arrangement $\mathcal{Q}$ is unexpected.
\end{prop}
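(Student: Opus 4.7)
The plan is to follow the same two-case strategy as the proof of Proposition~\ref{pappus-exotic}, but replacing Pappus' theorem with a Desargues/harmonic conjugate relation that is encoded in $\mathcal{Q}$. The arrangement contains three distinguished pencils — the four ``vertical'' lines through $V\in \ell_0$, the three ``horizontal'' lines through $H\in \ell_0$, and the two ``diagonal'' lines through $P\in \ell_0$ — together with the bent pseudoline $\ell_{10}$, which is required to pass through four specific intersection points of these three pencils. In any complex line arrangement $\mathcal{A}=\{L_0,\dots,L_{10}\}$ satisfying all incidences of $\mathcal{Q}$, the requirement that $L_{10}$ contains all four of these intersection points becomes a single projective equation among the other lines, governed (via the three concurrence points $V$, $H$, $P$ on $L_0$) by the classical Desargues theorem and the uniqueness of the harmonic conjugate. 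The ``bend'' of $\ell_{10}$ in Figure~\ref{fig:orev-lines} precisely records that this projective equation fails for the pseudoline configuration, so over $\C$ the equation can be satisfied only if some of the intersection points in question coincide, which is exactly what we exploit.

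First, I would make the classical theorem precise by identifying the complete quadrangle hidden inside $\mathcal{Q}$: three of the four points that $L_{10}$ must contain are diagonal points of a quadrangle formed by $L_1,L_2,L_3,L_4$ together with $L_5,L_6,L_7$, while the fourth point is dictated by the pencil at $P$. Over any field of characteristic $\neq 2$, the three diagonal points of a complete quadrangle are collinear, but the fourth point lies on the diagonal line if and only if a specific harmonic cross-ratio equals $-1$ (equivalently, two configuration-dependent intersections on $L_0$ coincide). This is the complex analog of the fact that the straightening of $\ell_{10}$ would force the ``bent'' configuration to become harmonic, which is ruled out by the specific position of $\ell_9$ relative to $\ell_8$.

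Next, as in the Pappus argument, I would split into cases. In the first case, the four required incidence points on $L_{10}$ are all distinct; then the harmonic equation above gives a nontrivial algebraic relation on the configuration that forces one of the following degeneracies: either $L_8=L_9$, or two of $L_5,L_6,L_7$ coincide, or two of $L_1,\dots,L_4$ coincide. Each of these contradicts the standing assumption that the eleven lines are distinct. In the second case, at least two of the four points on $L_{10}$ coincide; then I set the common point to be $O$ and use Observation~\ref{3points} to chase: because $O$ lies on two lines of one of the pencils at $V$, $H$, or $P$, every line of that pencil passes through $O$, which in turn forces neighboring lines of the other two pencils through $O$, and finally $L_0$ and $L_{10}$ themselves. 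Once every line of $\mathcal{A}$ passes through $O$, the arrangement is a pencil.

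The main obstacle will be the bookkeeping in the first case: writing down the harmonic/Desargues equation explicitly in terms of the cross-ratios determined by $V$, $H$, $P$ on $L_0$ and showing that the ``bend'' in $\ell_{10}$ — i.e.\ the particular choice of which of the two intersections $L_8\cap(\text{vertical})$ and $L_9\cap(\text{vertical})$ it is made to pass through — produces an inequality incompatible with the harmonic relation unless the configuration degenerates. The coincidence-chasing in the second case is routine given the three pencils at $V$, $H$, $P$, since every pair of lines from any one pencil automatically forces the whole pencil through any common point, so once $O$ is on two lines of a single pencil the propagation to all eleven lines is short.
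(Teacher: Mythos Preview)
Your overall two-part strategy (a projective obstruction in the nondegenerate case, Observation~\ref{3points} coincidence-chasing in the degenerate case) matches the paper, but your Case~1 mechanism is wrong in two places, and as written the argument would not go through.

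First, the projective-geometry claim is backwards: over any field of characteristic $\neq 2$ the three diagonal points of a complete quadrangle are \emph{never} collinear (this is precisely the failure of the Fano axiom). So you cannot obtain three collinear incidence points on $L_{10}$ as diagonal points of a single quadrangle. Second, the contradiction in the nondegenerate case is not that the harmonic relation forces two of the $L_i$ to coincide. What actually happens is this. Set $a=L_2\cap L_5$, $b=L_3\cap L_6$, $c=L_4\cap L_7$, $a'=L_1\cap L_5$; the incidences of $\mathcal{Q}$ force $L_{10}$ to contain $a',b,c$. The lines $L_2,L_3,L_5,L_6$ form a complete quadrangle with one diagonal $L_8$ and the other the line $L$ through $a,b$; its three diagonal points on $L_0$ are $V,H,P$, so $Q=L\cap L_0$ is the harmonic conjugate of $P$ with respect to $V,H$. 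The \emph{second} quadrangle $L_2,L_4,L_5,L_7$ has one diagonal $L_9$ and the other the line $L'$ through $a,c$, and gives the \emph{same} harmonic conjugate $Q'=Q$ on $L_0$ (same $V,H,P$). Hence $L=L'$ (both pass through $a$ and $Q$), so $a,b,c$ are collinear. Now $L_{10}$, being a complex line through $b$ and $c$, would also contain $a$; since $a\neq a'$ and both lie on $L_5$, this forces $L_{10}=L_5$, contradicting distinctness of the lines. That is the actual obstruction --- a forced collinearity incompatible with the ``bend'' of $\ell_{10}$ --- not a degeneracy among $L_8,L_9$ or the vertical/horizontal families.

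For the degenerate case, the paper's chase is also organized differently from yours: rather than starting from two coincident points on $L_{10}$, it shows directly that if any two of $V$, $H$, or the grid points $L_i\cap L_j$ ($1\le i\le 4$, $5\le j\le 7$) coincide then everything collapses to a pencil, and that if they are all distinct then the remaining intersections are automatically generic double points. Your pencil-propagation idea works once you start the chase at the right place, but you should anchor it on the grid and the three concurrence points $V,H,P$, not on $L_{10}$.
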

\begin{proof} Suppose that a complex line arrangement $\mathcal{A}=L_0, L_1, \dots, L_{10}$ satisfies all the incidence relations of $\mathcal{Q}$. This means that 
	for all intersections between the pseudolines in Figure~\ref{fig:orev-lines}, the corresponding lines of $\mathcal{A}$ intersect. We claim that unless 
	$\mathcal{A}$ is a pencil, all of these intersection points must be distinct (That is, no two distinct intersection points in Figure~\ref{fig:orev-lines} can coincide for the 
	arrangement $\mathcal{A}$.) To see this, we use Observation~\ref{3points} repeatedly, as in Proposition~\ref{pappus-exotic}.  Recall that $V=L_1\cap L_2\cap L_3\cap L_4 \cap L_0$ and $H=L_5\cap L_6\cap L_7\cap L_0$.

	If $H=V=\text{O}$, then we have $L_i \cap L_j=\text{O}$ for all $1 \leq i \leq 4$, {$5 \leq j \leq 7$}, and so $\mathcal{A}$ is a pencil. 
	
	If one of the intersection 
	points $L_i \cap L_j$, $1 \leq i \leq 4$, {$5 \leq j \leq 7$}, coincides with $V$ or $H$, then we have two intersection points marked with $\text{O}$ on a vertical or horizontal 
	line  in Figure~\ref{fig:orev-lines}; then $\text{O}=V=H$, and all lines are concurrent.
	
	If any two intersection points $L_i \cap L_j$,
	$1 \leq i \leq 4$, {$5 \leq j \leq 7$}, coincide, Observation~\ref{3points} implies that they will both coincide with at least one of $V$ or $H$, so we revert to the previous case. 
	
	Finally, if all the points $V$, $H$,  $L_i \cap L_j$,
	$1 \leq i \leq 4$, {$5 \leq j \leq 7$} are distinct, all remaining intersection points which do not coincide with one of these are necessarily generic double points
	(otherwise we would have a pair of lines intersecting more than once).
	
	Once we know that all the distinct intersections for $\mathcal{Q}$  are distinct for $\mathcal{A}$, it remains to show that $\mathcal{Q}$ cannot be realized as a complex line 
	arrangement $\mathcal{A}=\{L_i\}_{i=0}^{10}$. Suppose that it is, for the sake of contradiction.

	\begin{figure}[htb]
		\centering
		\includegraphics[scale=.75]{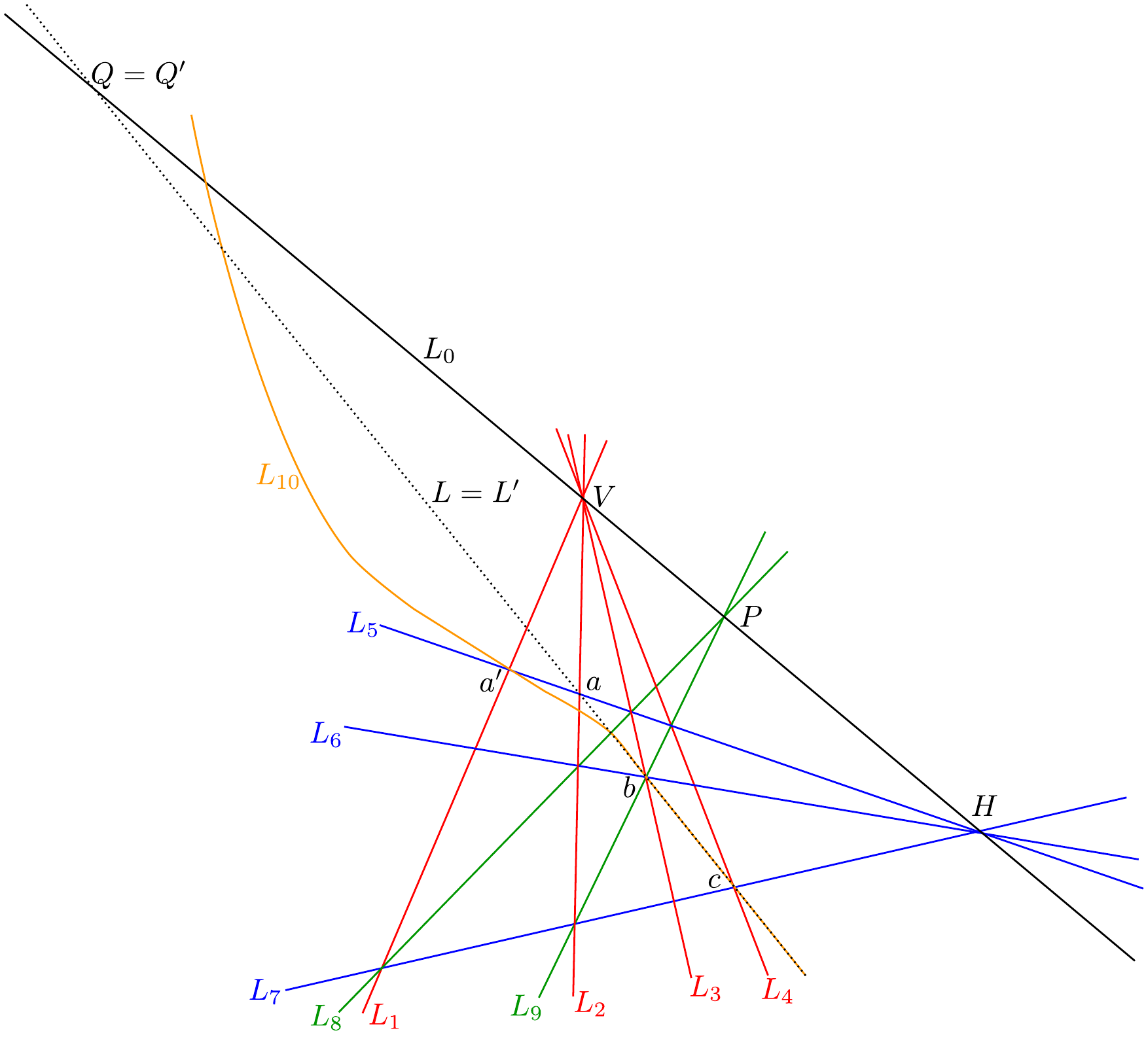}
		\caption{An arrangement of lines $L_0,L_1,\dots, L_9$ and pseudoline $L_{10}$ with incidences as in Figure~\ref{fig:orev-lines}. We show that the line $L$ through $a$ and $b$ and the line $L'$ through $a$ and $c$ coincide (with the dotted line shown), so the points $a$, $b$, and $c$ are collinear. Therefore $a'$, $b$, and $c$ cannot be collinear.}
		\label{fig:orev-proj}
	\end{figure}
	
	We will show that the intersection points $a=L_2 \cap L_5$, $b=L_3\cap L_6$, and $c=L_4 \cap L_7$ are collinear. (See Figure~\ref{fig:orev-proj}) Then we can conclude that the points
	$a'=L_1 \cap L_5$, $b$, and $c$ cannot be collinear. Indeed, $a \neq a'$, since 
	all intersection points in the diagram are distinct. If  all four points $a$, $a'$, $b$, $c$ were collinear, then the line $L_5$ through $a$ and $a'$ would 
	coincide with the line $L_{10}$ through $a'$, $b$, and $c$, but we assume that $L_5$ and $L_{10}$ are distinct. 
	
	
	
	To see that the points $a$, $b$, and $c$ are collinear, we will use some notions of classical projective geometry, namely complete quadrangles and harmonic conjugates. (In Remark~\ref{eucl} below, we also indicate an alternative proof, in the more familiar Euclidean terms.)   
	Observe that the lines $L_5$, $L_6$, $L_2$, $L_3$, $L_8$, and the line $L$ through $a,b$ form the four sides and the two
	diagonals of a complete quadrangle. Then the point $Q=L\cap L_0$ is the harmonic conjugate of the point $P=L_8 \cap L_0$
	with respect to the points $V=L_2 \cap L_3$ and $H=L_5 \cap L_6$.  Now, consider the lines 
	$L_2$, $L_4$, $L_5$, $L_7$, $L_9$, and the line $L'$ through $a$ and $c$. Again these form a complete quadrangle, so that 
	the point  $Q'=L' \cap L_0$  is the harmonic conjugate of the point $P=L_9 \cap L_0$  with respect to  $V=L_2 \cap L_4$ and $H=L_5 \cap L_7$.
	Since the harmonic conjugate of $P$ with respect to $V, H$ is unique, it follows that $Q=Q'$. Since the lines $L$ and $L'$
	both pass through $Q=Q'$ and $a$, we must have $L=L'$, and so all three points $a$, $b$, $c$ lie on this line.  
\end{proof}
\begin{remark}\label{eucl} The above statement also has an easy Euclidean geometry proof, after some projective transformations. Indeed, we can find an automorphism of $\cptwo$ such that 
	$$
	L_1 \cap L_5 \mapsto (0:0:1), \quad L_1 \cap L_6  \mapsto (1:0:1), \quad L_2 \cap L_5 \mapsto (0:1:1), \quad  L_2 \cap L_6 \mapsto (1:1:1).  
	$$
	Then  $H \mapsto (1:0:0)$,   $V\mapsto (0:1:0)$,
	and it is not hard to see that all the lines in the figure must be complexifications of real lines. The line $L_{0}$ is the line at infinity; the remaining lines 
	are (complexifications of) the corresponding real lines in $\R^2$. We use the same notation for the real lines.  Now we see that   
	$L_1, L_2, L_3, L_4$ are parallel vertical lines, $L_5, L_6, L_7$ are parallel horizontal lines, etc. So that the arrangement 
	looks like Figure~\ref{fig:orev-lines}. The lines in the  figure form  a number of triangles
	that are similar to the shaded triangle; it then follows that the points $a, b, c$ are collinear, so $a', b, c$ are not. 
	
	Note, however, that the above proof is somewhat incomplete: Figure~\ref{fig:orev-lines} assumes a particular position of the lines $L_3, L_4, L_7$ relative to $L_1, L_2, L_5, L_6$. For a complete proof, an additional analysis of cases is required, with slightly different figures for other possible relative positions of the lines. Our projective argument with harmonic conjugates allows to avoid this analysis, and also to emphasize the projective nature of the statement and the proof. 	
\end{remark}

\begin{cor}   \label{Yorevkov}
	Let $Y=Y(11;w)$ be the Seifert fibered space given by a star-shaped plumbing graph with 11 legs, 
	such that two legs have at least 5 vertices each, two legs have at least 3 vertices, and the remaining 
	7 legs have at least 4 vertices each (including the central vertex). In other words, two components of the multi-weight $w$
	are 4 or greater, two are 6 or greater, and the remaining seven are 5 or greater. 
	Let $\xi$ be the Milnor fillable contact structure on $Y$.	
	Then $(Y, \xi)$ admits a Stein filling which is not strongly diffeomorphic to any Milnor filling.
\end{cor}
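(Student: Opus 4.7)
The plan is to parallel the proof of Corollary~\ref{Ypappus} line for line, substituting the Orevkov pseudoline arrangement $\mathcal{Q}=\{\ell_0,\ell_1,\dots,\ell_{10}\}$ of Example~\ref{example-orevkov} for the Pappus arrangement $\mathcal{P}$. Combining Proposition~\ref{orevkov-exotic} (that $\mathcal{Q}$ is unexpected) with Corollary~\ref{cor:nonMilnor} (that unexpected arrangements yield unexpected Stein fillings whenever the target weights dominate the intersection counts on each line) reduces the problem to an elementary bookkeeping step.

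Concretely, the first step is to enumerate the intersection points on each pseudoline $\ell_k$ of $\mathcal{Q}$. The only non-generic incidences are the quintuple point $V=\ell_0\cap\ell_1\cap\ell_2\cap\ell_3\cap\ell_4$, the quadruple point $H=\ell_0\cap\ell_5\cap\ell_6\cap\ell_7$, the triple point $P=\ell_0\cap\ell_8\cap\ell_9$, together with the three triple points $a'=\ell_1\cap\ell_5\cap\ell_{10}$, $b=\ell_3\cap\ell_6\cap\ell_{10}$, and $c=\ell_4\cap\ell_7\cap\ell_{10}$; every other pairwise intersection, including $\ell_0\cap\ell_{10}$, is a distinct transverse double point. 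Summing the contributions of these multi-points and the remaining transverse pairs gives an explicit list of counts $\{w(\ell_k)\}_{k=0}^{10}$, and a direct matching of these counts to the 11 legs of the star-shaped plumbing graph shows that the numerical bounds stated in the corollary are exactly what is needed to guarantee $w_k\geq w(\ell_k)$ for every $k$ under an appropriate labeling of the legs.

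Once this numerical compatibility is in place, Corollary~\ref{cor:nonMilnor} (via Proposition~\ref{p:symp-lines-fill} applied to the symplectic extension of $\mathcal{Q}$ provided by Example~\ref{discuss-plines}) produces a Stein filling $W$ of $(Y(11;w),\xi)$ from the arrangement, with additional free marked points placed on each pseudoline $\ell_k$ as needed to bring its total weight from $w(\ell_k)$ up to $w_k$. Since $\mathcal{Q}$ is unexpected by Proposition~\ref{orevkov-exotic}, Theorem~\ref{thm:nonMilnor} then rules out any strong diffeomorphism between $W$ and a Milnor filling of $(Y,\xi)$, which is the desired conclusion.

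The only real work in executing this plan is the bookkeeping: one must verify that the intersection counts $w(\ell_k)$, after being matched to the 11 legs, are bounded by the corresponding $w_k$'s under the hypothesized lower bounds on the weights. Since the list of multi-points above is fully explicit and short, no genuinely hard step remains beyond what has already been proved in Proposition~\ref{orevkov-exotic} and Theorem~\ref{thm:nonMilnor}; the whole argument is essentially the same short paragraph that proves Corollary~\ref{Ypappus}, with the Pappus incidence data replaced by the Orevkov incidence data.
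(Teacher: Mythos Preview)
Your approach is exactly the paper's: combine Proposition~\ref{orevkov-exotic} with Corollary~\ref{cor:nonMilnor}, and reduce to counting the intersection points on each pseudoline of $\mathcal{Q}$. The paper's proof is a one-paragraph reference back to Corollary~\ref{Ypappus}, just as you describe.

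However, your sketch of the bookkeeping contains a genuine error that would derail the count if executed as written. Your list of non-generic incidences is incomplete: the two diagonal lines $\ell_8$ and $\ell_9$ pass through grid points, creating several additional triple points (specifically $\ell_1\cap\ell_7\cap\ell_8$, $\ell_2\cap\ell_6\cap\ell_8$, $\ell_3\cap\ell_5\cap\ell_8$, $\ell_2\cap\ell_7\cap\ell_9$, $\ell_4\cap\ell_5\cap\ell_9$), and moreover $\ell_9$ passes through $b$, so $b=\ell_3\cap\ell_6\cap\ell_9\cap\ell_{10}$ is a quadruple point, not a triple point. These incidences are essential to the complete-quadrangle argument in Proposition~\ref{orevkov-exotic} and are visible in Figure~\ref{fig:orev-lines} and Example~\ref{long-homotopy}. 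With your truncated list, $\ell_3$ would carry six intersection points rather than four, contradicting the ``two legs with at least 3 vertices'' clause. With the corrected list one finds $w(\ell_0)=w(\ell_3)=4$, $w(\ell_8)=w(\ell_{10})=6$, and $w(\ell_k)=5$ for the remaining seven indices, which is precisely what the corollary hypothesizes (cf.\ the weights in Example~\ref{example:weights}). Once this is fixed, your argument goes through verbatim.
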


\begin{proof} 
Exactly as in Corollary~\ref{Ypappus}, this follows from Corollary~\ref{cor:nonMilnor} and Proposition~\ref{orevkov-exotic}. The picture is similar to Figure~\ref{pencil},
with the obvious minor changes. Indeed, the pseudoline arrangement
of Proposition~\ref{orevkov-exotic} has two lines $\ell_0, \ell_3$ with weight 4, two lines $\ell_9, \ell_{10}$ with weight 6, and seven remaining lines with weight 5.
Note that a permutation of the components of $w$  does not change the contact manifold, so we avoided labeling the components of $w$ in the statement of the corollary. 
\end{proof}


It is easy to generalize the above examples to star-shaped graphs with higher negative self-intersection values of the central vertex. Indeed, 
by Theorem~\ref{thm:nonMilnor}, we can construct unexpected Stein fillings from an arbitrary  arrangement of smooth graphical disks that contains an unexpected 
symplectic line arrangement. We turn to the general case later in this section; for now, we create more unexpected pseudoline arrangements simply by adding extra lines.

\begin{lemma} \label{add-line} Suppose that $\Lambda$ is an unexpected symplectic line arrangement. Let $\ell$ be  a symplectic line that 
passes through at least one  intersection point of two or more lines in $\Lambda$. 
Then the pseudoline arrangement $\Lambda \cup \{\ell \}$ is also unexpected.
\end{lemma}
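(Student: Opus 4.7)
The plan is to reduce the unexpectedness of $\Lambda \cup \{\ell\}$ directly to that of $\Lambda$. First I would fix notation: write $\Lambda = \{\Gamma_1,\dots,\Gamma_m\}$ and $\ell = \Gamma_{m+1}$, and let $\mathcal{A}=\{L_1,\dots,L_{m+1}\} \subset \C^2$ be any complex line arrangement satisfying all the incidence relations of $\Lambda \cup \{\ell\}$. Since every incidence among $\Gamma_1,\dots,\Gamma_m$ is in particular an incidence of $\Lambda \cup \{\ell\}$, the subarrangement $\{L_1,\dots,L_m\}$ satisfies all the incidence relations of $\Lambda$. By the unexpectedness hypothesis on $\Lambda$, this forces $L_1,\dots,L_m$ to be concurrent at a single point~$P$.

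Next I use the hypothesis on $\ell$. By assumption there exist indices $i_1\neq i_2$ in $\{1,\dots,m\}$ with $\ell \cap \Gamma_{i_1}\cap \Gamma_{i_2}\neq \emptyset$. The incidence relation lifted to $\mathcal{A}$ says $L_{m+1}\cap L_{i_1}\cap L_{i_2}\neq \emptyset$. But $L_{i_1}$ and $L_{i_2}$ are distinct concurrent lines of a pencil through $P$, so their unique common point is $P$; hence $L_{m+1}$ passes through $P$, and the whole arrangement $\mathcal{A}$ is a pencil through $P$.

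To complete the verification that $\Lambda \cup \{\ell\}$ is unexpected, I would note that $\Lambda \cup \{\ell\}$ is itself a symplectic line arrangement (since $\ell$ is a symplectic line and thus intersects each $\Gamma_i$ once transversely and positively), and that not all lines of $\Lambda \cup \{\ell\}$ are concurrent, which is automatic because $\Lambda$ already contains non-concurrent lines.

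There is essentially no obstacle here: the lemma is a one-step logical consequence of the definition of ``unexpected'' applied to the subarrangement $\{L_1,\dots,L_m\}$, together with the fact that once a pencil is forced, the only available intersection point through which the extra line can pass is the pencil point itself. The condition that $\ell$ pass through \emph{some} existing intersection point of $\Lambda$ is used precisely at the step that pins $L_{m+1}$ to~$P$; without it, $L_{m+1}$ could be any line and $\mathcal{A}$ would not need to be a pencil.
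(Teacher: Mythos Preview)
Your proposal is correct and follows essentially the same approach as the paper's proof: restrict the complex line arrangement to the sub-arrangement corresponding to $\Lambda$, use unexpectedness of $\Lambda$ to force a pencil, and then use the incidence hypothesis on $\ell$ to pin the extra line to the pencil point. You also explicitly verify that $\Lambda \cup \{\ell\}$ is a non-concurrent symplectic line arrangement, which the paper's proof leaves implicit.
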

\begin{proof} If there exists a complex line arrangement $\mathcal{A} \cup \{L\}$ that satisfies all the incidence relations of $\Lambda \cup \{\ell \}$, and $L$ corresponds to 
	$\ell$, then $\mathcal{A}$ satisfies all incidences of $\Lambda$, and so $\mathcal{A}$ is a pencil. The line $L$ must pass through the intersection of two or more 
	lines of $\mathcal{A}$, so $\mathcal{A}\cup \{L\}$ is also a pencil. 
\end{proof}

\begin{theorem}  For any $m \geq 10$, consider the Seifert fibered space $Y_m=Y(m,w)$ with $m\geq 10$, with weights $w=(w_1, \dots, w_m)$ such that $w_i \geq m-1$ for all $i=1, \dots, m$.
The space $Y_m$ is given by a star-shaped graph with $m\geq 10$ legs, such that the length of each leg is at least $m-1$. The central vertex has self-intersection $-m-1$, 
and all other vertices have self-intersection $-2$.
Let $\xi$ be the Milnor fillable contact structure on $Y$. Then $(Y, \xi)$ admits a simply-connected Stein filling not strongly diffeomorphic to any Milnor fiber.
\end{theorem}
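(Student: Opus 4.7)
The plan is to construct, for every $m \geq 10$, an unexpected symplectic pseudoline arrangement $\Lambda_m$ of exactly $m$ lines such that each line of $\Lambda_m$ carries at most $m-2$ intersection points with the other lines. Granted such $\Lambda_m$, the statement follows immediately from Corollary~\ref{cor:nonMilnor}: since the resolution graph of $Y_m$ forces the weights to satisfy $w_i \geq m-1 > w(\Gamma_i)$, all the inequalities are strict, so we obtain a \emph{simply-connected} unexpected Stein filling, which by Theorem~\ref{thm:nonMilnor} is not strongly diffeomorphic to any Milnor filling.

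For the base case $m=10$, I take $\Lambda_{10}$ to be the Pappus-based arrangement $\mathcal{P}$ of Example~\ref{ex:pappus}. It is unexpected by Proposition~\ref{pappus-exotic}, and the enumeration in the proof of Corollary~\ref{Ypappus} shows that each line of $\mathcal{P}$ meets the others in at most $6$ points, comfortably under the bound $m-2=8$.

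For the inductive step, assuming $\Lambda_m$ has been constructed with the stated properties, I form $\Lambda_{m+1}$ by adjoining a single pseudoline $\ell$ chosen to pass through at least one of the existing intersection points of $\Lambda_m$ where two or more previous lines meet. Such a pseudoline can be drawn in standard form so as to remain graphical and to intersect every other line of $\Lambda_m$ transversely and positively, and Proposition~\ref{symp-config} then extends the configuration to a symplectic line arrangement. By Lemma~\ref{add-line}, $\Lambda_{m+1}$ is again unexpected. The intersection counts are straightforward: each original line gains at most one new intersection (with $\ell$), raising its count from $\leq m-2$ to $\leq m-1=(m+1)-2$; and $\ell$ itself meets the $m$ previous lines in at most $m-1=(m+1)-2$ distinct points, since the two preexisting lines through the chosen common point contribute only a single intersection point to $\ell$.

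The main point requiring care is the geometric realization at each inductive step: one must verify that the new pseudoline $\ell$ can actually be inserted while preserving the standard form of a wiring diagram (graphical over a common projection, all intersections transverse and positively oriented). This is routine in the pseudoline setting---the topological flexibility is exactly what distinguishes our construction from the rigid algebraic case---but it is the reason the theorem holds for all $m\geq 10$ while no analogous algebraic statement is available. Once this realization is in hand, Proposition~\ref{symp-config} yields the symplectic line arrangement, and Corollary~\ref{cor:nonMilnor} together with Theorem~\ref{thm:nonMilnor} assemble the desired simply-connected unexpected Stein filling of $(Y_m, \xi)$.
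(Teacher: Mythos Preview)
Your approach is essentially the paper's: both enlarge the Pappus-based arrangement $\mathcal{P}$ to $m$ pseudolines by repeatedly invoking Lemma~\ref{add-line}, then apply Corollary~\ref{cor:nonMilnor}. Your inductive bookkeeping is in fact a bit sharper. The paper records only the trivial bound $w(\Gamma_i)\le m-1$ (each pseudoline meets the other $m-1$ at most once each) and then says the filling is simply-connected ``if all inequalities are strict''; taken literally, this leaves the boundary case $w_i=m-1$ of the stated hypothesis $w_i\ge m-1$ not quite covered. By maintaining the stronger bound $w(\Gamma_i)\le m-2$ through your induction---the new line meets two old lines at a common point so has at most $(m+1)-2$ intersections, while each old line gains at most one---you obtain the strict inequality $w_i>w(\Gamma_i)$ even when $w_i=m-1$, so the simple-connectivity conclusion from Corollary~\ref{cor:nonMilnor} is fully justified.
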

\begin{proof} We can add lines to  the arrangement $\mathcal{P}$ to form unexpected arrangements of $m \geq 10$ pseudolines. Since any pseudolines intersect at most once, 
each pseudoline has at most $m-1$ intersections with other lines. By Corollary~\ref{cor:nonMilnor}, $Y=Y(n,w)$ is an unexpected Stein filling if $w_i \geq m-1$ for 
all $i=1, \dots, m$, which is simply-connected if all inequalities are strict.
\end{proof}

Varying the positions of the additional lines and/or applying a similar procedure to different arrangements such as $\mathcal{P}$ and $\mathcal{Q}$, it is possible 
to construct a variety of pairwise non-homeomorphic Stein fillings of the same link, so that none of the Stein fillings is strongly diffeomorphic to a Milnor filling.  
We give one such construction below to prove the first part of Theorem~\ref{thm:intro-examples}. The second part of Theorem~\ref{thm:intro-examples} follows from the discussion
at the end of this section, where we extend star-shaped graphs that correspond to unexpected arrangements to a much wider collection of graphs of rational singularities with reduced 
fundamental cycle.

\begin{theorem} For every $N>0$ there exists a rational singularity with reduced fundamental cycle whose link $(Y, \xi)$ admits 
at least $N$ pairwise non-homeomorphic simply-connected Stein fillings, none of which is strongly diffeomorphic to any Milnor fiber.
The link $Y$ is given by a Seifert fibered space $Y=Y(2N+5, w)$ with sufficiently large 
weights $w$. 
\end{theorem}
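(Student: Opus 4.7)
\smallskip

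The plan is to produce $N$ unexpected pseudoline arrangements of $m := 2N+5$ pseudolines each, all matching the same decorated germ and hence filling the same link $(Y, \xi) = (Y(2N+5, w), \xi)$, and then distinguish the resulting Stein fillings by their second Betti numbers.

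First, I would start with a core unexpected pseudoline arrangement of $10$ pseudolines -- the Pappus-type arrangement $\mathcal{P}$ from Example~\ref{ex:pappus} (the Orevkov arrangement $\mathcal{Q}$ would work equally well). For each $k = 1, \dots, N$ I would adjoin $2N-5$ extra pseudolines to $\mathcal{P}$ to produce an arrangement $\Lambda_k$ with $m = 2N+5$ pseudolines. Each added pseudoline will be required to pass through at least one pre-existing intersection point, so that $\Lambda_k$ remains unexpected by repeated application of Lemma~\ref{add-line}. The different arrangements $\Lambda_k$ are chosen to have different incidence structures; concretely, one can pick a fixed double point $p = \ell_1 \cap \ell_2 \in \mathcal{P}$ and demand that in $\Lambda_k$ exactly $k$ of the added pseudolines pass through $p$ (so that $p$ has multiplicity $k+2$ in $\Lambda_k$), while all other intersections introduced by the added pseudolines are in generic position relative to each other.

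Second, I would fix a weight vector $w = (w_1, \dots, w_m)$ with each $w_i$ strictly larger than the maximum number of intersections borne by any pseudoline in any $\Lambda_k$ (this is bounded above by $m-1 = 2N+4$, so e.g.\ $w_i = 2N+5$ suffices, possibly with a uniform shift). By Corollary~\ref{cor:nonMilnor}, each $\Lambda_k$, after completing with free marked points on each $\Gamma_i$ to bring its weight up to $w_i$, yields a simply connected Stein filling $W_k$ of the Seifert fibered link $(Y(m,w), \xi)$, and by Theorem~\ref{thm:nonMilnor} none of the $W_k$ is strongly diffeomorphic to a Milnor filling.

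Third, to distinguish the $W_k$ as topological manifolds, I would compute $b_2(W_k)$ from the incidence matrix via Proposition~\ref{homology}. Since $Y$ is a rational homology sphere we have $b_1(W_k) = 0$ (Remark~\ref{b1=0}), so the incidence matrix $\mathcal{I}_k$ has full rank $m$, giving
\[
b_2(W_k) = n_k - m,
\]
where $n_k$ is the total number of marked points in $\Lambda_k$. Writing $n_k = (\text{\# intersection points of } \Lambda_k) + (\text{\# free points}) = (\text{\# intersection points}) + \sum_i w_i - \sum_i (\text{intersections on } \Gamma_i)$, and noting that $\sum_i w_i$ is fixed while $\sum_i(\text{intersections on } \Gamma_i)$ equals the sum of multiplicities of the intersection points, an elementary count shows that concentrating $k$ added pseudolines at the fixed point $p$ (versus spreading them out) changes $n_k$ in a strictly monotonic fashion in $k$. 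Hence the $b_2(W_k)$ take pairwise distinct values, so the $W_k$ are pairwise non-homeomorphic.

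The main obstacle is the combinatorial bookkeeping in the third step: one must verify that the prescribed differences in incidence structure genuinely produce distinct values of $n_k$ after compensating by free marked points to land at the same fixed weight vector $w$. This reduces to a small calculation relating the number of distinct intersection points in $\Lambda_k$ to the multiplicity-$(k+2)$ point at $p$, and choosing the remaining added pseudolines generically enough that no other incidence coincidences occur across the family. All other ingredients (unexpectedness, simply-connectedness, non-realizability as Milnor fibers) follow directly from the results already developed in the paper.
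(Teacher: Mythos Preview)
Your overall strategy coincides with the paper's: enlarge a known unexpected pseudoline arrangement via Lemma~\ref{add-line} to obtain a family of unexpected arrangements on $2N+5$ pseudolines, take weights strictly larger than the number of intersections on each line so that Corollary~\ref{cor:nonMilnor} gives simply-connected unexpected fillings, and then distinguish the fillings by $b_2$ (equivalently, by Euler characteristic, i.e.\ by the number $n_k$ of marked points). The difference lies only in the concrete construction of the family. The paper starts from the arrangement~$\mathcal{Q}$, adds extra vertical and horizontal lines to build an $N\times N$ grid inside an arrangement $\mathcal{Q}'$ of $2N+4$ lines, and then varies a \emph{single} additional diagonal $\lambda_k$: in $\mathcal{Q}'_k=\mathcal{Q}'\cup\{\lambda_k\}$ the line $\lambda_k$ hits $N-k$ of the grid's triple points and misses $k$ of them. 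Replacing one triple point by three double points is exactly a lantern substitution (a rational blow-down of a $(-4)$ sphere), so $\chi(W_{k+1})=\chi(W_k)-1$; the monotonicity of $\chi$ is immediate and no further bookkeeping is needed.

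Your construction---varying how many of the $2N-5$ added lines concentrate at a fixed double point $p$---can be made to work, but the bookkeeping you flag really does need care. You must say where the $2N-5-k$ lines \emph{not} through $p$ go (each must hit some intersection point to invoke Lemma~\ref{add-line}), and you must do this consistently across the family so that $n_k$ can be compared. If, say, $a_j$ for $j>k$ passes through a fixed auxiliary double point $q_j$, a direct count gives $n_{k+1}-n_k=k$; in particular $n_0=n_1$, so you must start at $k=1$ (as you do) to get $N$ distinct values, and you need $N\le 2N-5$, i.e.\ $N\ge 5$. You also need enough distinct auxiliary points $q_j$, and $\mathcal{P}$ has only finitely many double points, so for large $N$ one must recycle or use intersections created by earlier added lines. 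None of this is a real obstruction, but the paper's ``vary one line through a grid'' construction sidesteps all of it by changing $\chi$ by a uniform amount at each step.
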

\begin{figure}[htb]
	\centering
	\bigskip
	\labellist
	\small\hair 2pt
	\pinlabel $V$ at 82 200
	\pinlabel $H$ at 177 97
	\pinlabel $P$ at 135 162
	\pinlabel $V$ at 308 200
	\pinlabel $H$ at 403 97
	\pinlabel $P$ at 360 162
	\pinlabel $\lambda_0$ at 8 4
	\pinlabel $\lambda_3$ at 232 4
	\endlabellist
	\includegraphics[scale=1.0]{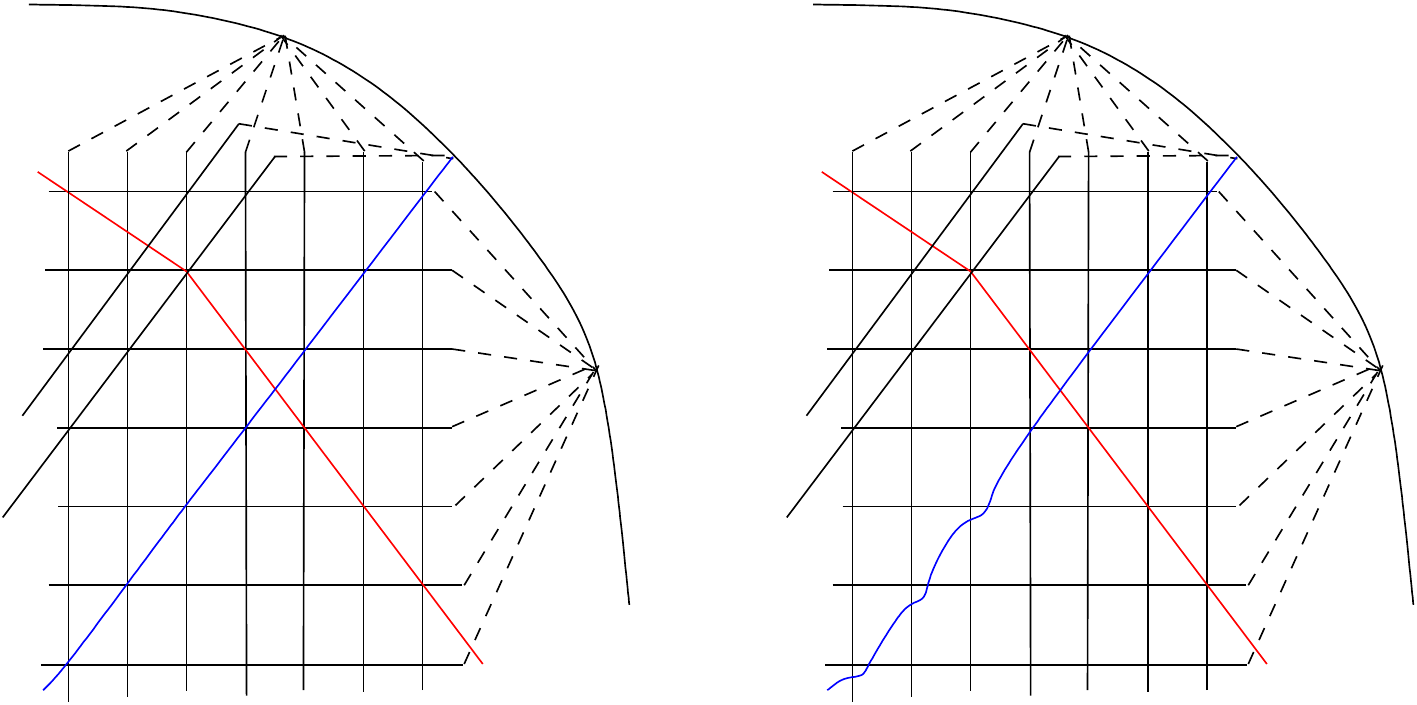}
	\caption{Pseudoline arrangements and fillings with different topology.}
	\label{fig:manylines}
\end{figure}
\begin{proof} We will start with the arrangement $\mathcal{Q}$ of Figure~\ref{fig:orev-lines} and augment it to other unexpected arrangements, using Lemma~\ref{add-line}. First, 
	we add more ``vertical'' and ``horizontal'' lines to the arrangement, so that it has $N$ vertical and $N$ horizontal lines, creating a grid as shown in Figure~\ref{fig:manylines}. (We assume $N\geq 4$ as the $N=4$ case fulfills the statement for lower values of $N$.)
	All ``vertical'' lines intersect at the point $V$, all horizontal lines intersect at the point $H$. The two diagonal lines, $\ell_8$ and $\ell_9$ intersecting at $P$,  
	the bent pseudoline $\ell_{10}$, and the line at infinity $\ell_0$ are present as in the arrangement $\mathcal{Q}$. Let $\mathcal{Q'}$ denote this arrangement. We will 
	now produce $N+1$ unexpected arrangements $\mathcal{Q}'_k=\mathcal{Q'}\cup{\lambda_k}$, $k=0, 1, \dots, N$, 
	by adding  to $\mathcal{Q'}$ different additional 
	``diagonal'' pseudolines $\lambda_0, \lambda_1, \dots, \lambda_N$ passing through $P$, see Figure~\ref{fig:manylines}. Each arrangement $\mathcal{Q}'_k$ consists of 
	$2N+5$ pseudolines. The pseudoline $\lambda_0$ is taken to be the main diagonal of the grid 
	formed by the vertical and horizontal lines; it  is a straight line in $\rptwo$ passing through  the point $P$. The pseudoline $\lambda_1$ differs from $\lambda_0$ in a small
	neighborhood of a single grid intersection: while $\lambda_0$ passes through the chosen intersection point of a vertical and a horizontal line, $\lambda_1$ intersects these 
	two lines at distinct points. Similarly, $\lambda_k$ differs from $\lambda_0$ in neighborhoods of $k$ grid intersections and meets the corresponding vertical and horizontal 
	lines at distinct points. Figure~\ref{fig:manylines} shows the arrangements  $\mathcal{Q}'_0=\mathcal{Q'}\cup{\lambda_0}$ and  $\mathcal{Q}'_3=\mathcal{Q'}\cup{\lambda_3}$. 
	
	Now, consider the decorated germ given by a pencil of $2N+5$ lines, each with a weight greater than $2N+4$. We choose the weights
	to be greater than the number of intersection points on each line in 
   any of the arrangements $\mathcal{Q}'_k$; obviously, taking weights greater than $2N+4$ suffices because each line intersects the other $2N+4$ lines once (in fact $w\geq 2N+2$ suffices for this arrangement). Let $(Y_N, \xi)$ be the contact link of the corresponding singularity. Similarly to the previous 
	examples, $Y_N$ is the Seifert fibered space given by a star-shaped plumbing graph with $2N+5$ sufficiently long legs,
	with the central vertex having the self-intersection
	$-2N-6$ and all the other vertices self-intersection $-2$. By Corollary~\ref{cor:nonMilnor}, each arrangement $\mathcal{Q}'_k$ 
	yields a Stein filling $W_k$ of $(Y_N, \xi)$ which is not strongly 
	diffeomorphic to any Milnor filling. 
	
	Finally, we argue that all fillings $W_0, W_1, \dots, W_N$ have different Euler characteristic. Each $W_k$ has the structure of 
	a Lefschetz fibration  with the same planar fiber (a disk with $2N+5$ holes), but these Lefschetz fibrations have different number of vanishing cycles. Every time we replace 
	a triple intersection of pseudolines in the arrangement by three double points (and arrange the marked points on the lines accordingly),
	the number of vanishing cycles 
	decreases by 1. Indeed, three double points correspond 
	to three vanishing cycles in the Lefschetz fibration (each enclosing two holes), while a triple intersection together with an additional
	free marked point on each of three lines corresponds to four vanishing cycles (one vanishing cycle enclosing three holes, 
	the remaining three enclosing a single hole each). Thus, replacing a triple point by three double points
	corresponds to a lantern relation monodromy substitution, which in turn corresponds to a rational blow-down of a $(-4)$ sphere.
	Therefore, $\chi(W_0)>\chi(W_1)>\dots >\chi(W_N)$, as required. 
\end{proof}

\subsection{Generalizations}
All our previous examples were given by singularities with star-shaped graphs where most vertices have self-intersection $-2$. It is not hard 
	to obtain examples with much more general graphs, using the full power of Theorem~\ref{thm:nonMilnor}:
	we add more smooth disks to an unexpected symplectic line arrangement.		

\begin{figure}[htb]
	\centering
	\bigskip
	\labellist
	\small\hair 2pt	
	\pinlabel $-12$ at 200 100
	\pinlabel $-4$ at 170 140
	\pinlabel $-12$ at 337 73
	\pinlabel $-4$ at 308 113
	\endlabellist
	\includegraphics[scale=1.0]{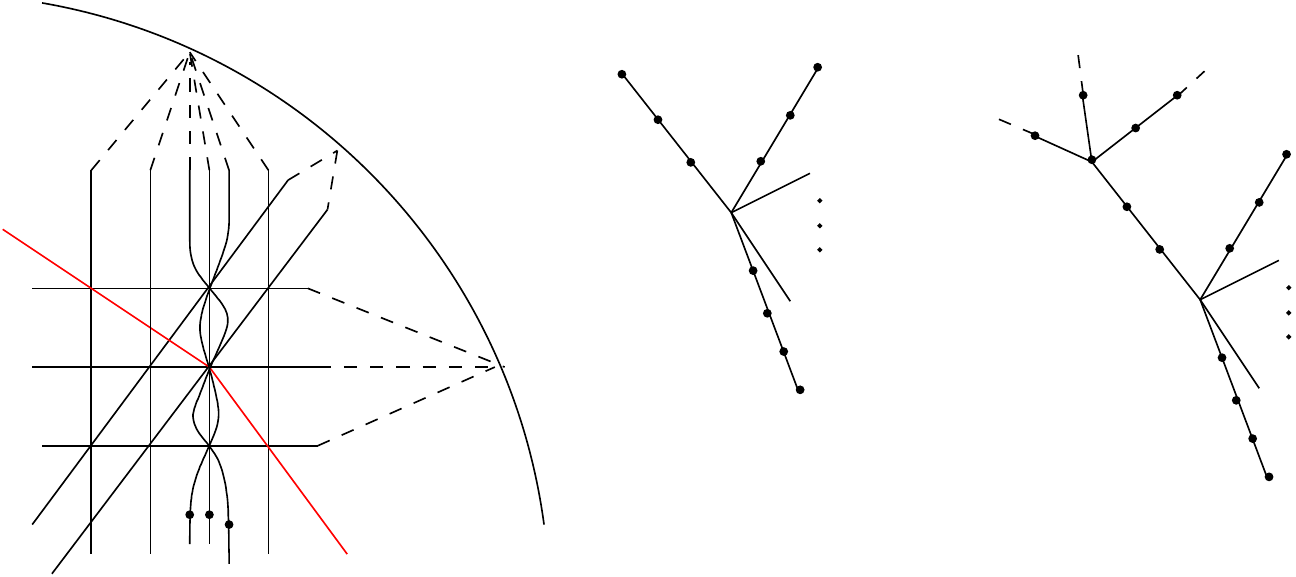}
	\caption{The pseudoline arrangement $\mathcal{Q}$ of  Figure \ref{fig:orev-lines} is modified: the pseudoline $\ell_3$ is replaced by three smooth curves
	with 4 intersections, as shown. There are 3 free marked points, one on each of the new curves; the rest of the marked points are the intersections in the diagram. 
	The germ of the corresponding singularity has three curvettas tangent to order $4$, each of weight 5, replacing one of the lines.  The resolution graph of the corresponding singularity is shown in the middle of the 
	figure. If the weights of the three tangent curvettas are taken to be higher, the  graph will have additional branching as shown on the right. All unlabeled vertices have self-intersection $-2$.}
	\label{fig:orev-lines-more}
\end{figure}

\begin{example} \label{ex:orev-more} In the arrangement $\mathcal{Q}$ of Figure~\ref{fig:orev-lines}, replace the line $\ell_3$ by several pseudolines that 
all pass through the same four intersection  points. Note that because of multiple intersections,  the result is no longer a pseudoline arrangement, but we still have a braided wiring diagram and can apply Proposition~\ref{symp-config} to extend it to an arrangement of  symplectic disks. In Figure~\ref{fig:orev-lines-more}, we take three curves replacing $\ell_3$. In the decorated germ, the complex line corresponding to $\ell_3$ will be replaced by  3 curvettas that  are tangent to order 4 (and  transverse to the other 10 branches of the germ). By~(\ref{weight-ineq}), the weight of each new curvetta must be 5 or greater. We take the weights to be exactly 5 for the three new curvettas. 
Consider the  symplectic curve arrangement given by the extension of the diagram in Figure~\ref{fig:orev-lines-more},
with  marked points at all intersections and one additional free marked point on each of the three new curves (to 
 account for higher weights).  The resolution graph for $\mathcal{Q}$ is star-shaped with 11 legs. The self-intersection of the central vertex is $-12$ and all other self-intersections are $-2$. The legs of the resolution graph for $\mathcal{Q}$ with minimal weights had two legs of length $3$, two of length $5$, and seven of length $4$. 
 For this revised arrangement, the corresponding singularity has an augmented graph. Specifically, one of the legs of length $3$ 
 (which corresponded to $\ell_3$) gains an additional vertex of self-intersection $-4$.
 If the three tangent  
 curvettas have higher weights, so they have additional free marked points in the deformed arrangement, the $-4$ vertex becomes a branching point with 3 additional legs 
 (each vertex on these legs has self-intersection $-2$).  See Figure~\ref{fig:orev-lines-more}. By Theorem~\ref{thm:nonMilnor}, the links of the corresponding singularities have 
 unexpected Stein fillings. 
 
 In general, if we replace $\ell_3$ with $k$ curves commonly intersecting at the four points where $\ell_3$ intersected other pseudolines as above, the additional vertex will have self-intersection 
 $-k-1$ and increased weights will yield $k$ additional legs with $(-2)$ vertices.
 \end{example}

 Further, we can replace each of the $k$ pseudolines  by a bundle of curves that go through the same intersections.
 
\begin{figure}[htb]
	\centering
	\bigskip
	\labellist
	\small\hair 2pt	
	\pinlabel $-12$ at 233 81
	\pinlabel $-4$ at 205 120
	\pinlabel $-5$ at 185 135
	\pinlabel $-3$ at 175 162
	\pinlabel $-3$ at 227 128
	\pinlabel $-3$ at 239 137
	\pinlabel $\ell_5$ at 97 141
	\pinlabel $\ell_6$ at 97 113
	\pinlabel $\ell_7$ at 97 83
	\endlabellist
	\includegraphics[scale=1.0]{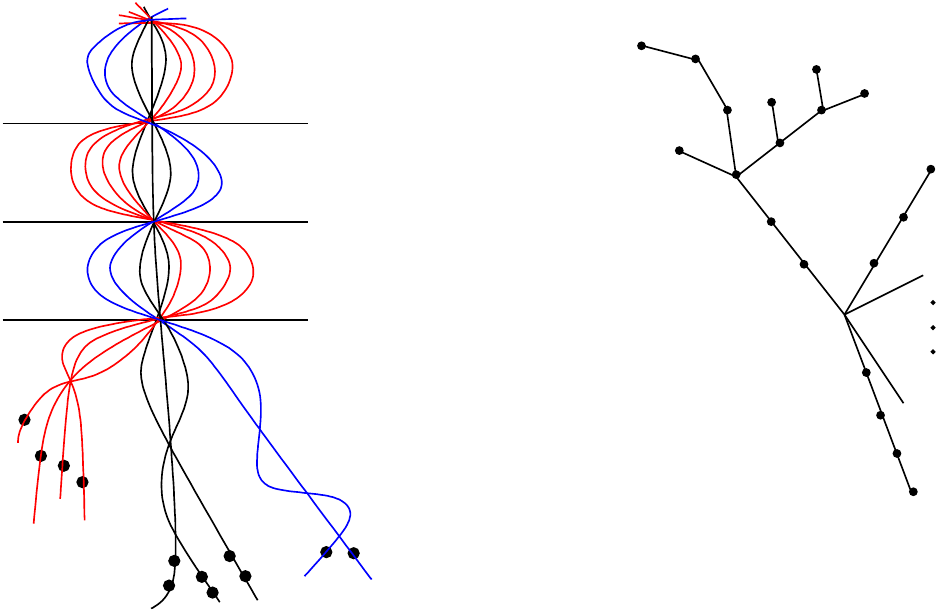}
	\caption{In the pseudoline arrangement $\mathcal{Q}$ of  Figure \ref{fig:orev-lines}, we replace $\ell_3$ with a bundle of curves passing through the exisiting intersections 
	of $\ell_3$ with $\ell_5$, $\ell_6$, $\ell_7$, and $\ell_0$. (Only part of the arrangement is shown.) Note that the additional curves create no extra intersections with 
	the pseudolines of $\mathcal{Q}$. All the intersection points are marked, and there are additional free marked points that correspond to higher weights.
	In the resolution graph of the  singularity, the leg corresponding to $\ell_3$ is replaced by a tree with additional branching, as shown. 
	 All 
	unlabeled vertices have self-intersection $-2$.}
	\label{fig:orev-manymore}
\end{figure}

\begin{example} \label{ex:orev-manymore}
Figure~\ref{fig:orev-manymore} shows  a possible {bundle replacing $\ell_3$, instead of the bundle of three curves in the previous  arrangement} of Figure~\ref{fig:orev-lines-more}. All the new curves run $C^1$ close to 
and are isotopic to the original pseudoline, and they pass through the same intersection points with the other pseudolines. Within each bundle, the curves may have additional 
intersections, which lead to higher order tangencies between the corresponding curvettas in the decorated germ. In particular, for the arrangement in
Figure~\ref{fig:orev-manymore}, the bundle of curves replacing $\ell_3$ will have three subbundles of curves intersecting each other 4 times, (and intersecting each of the other pseudolines once). 
One of these subbundles has four curves which are intersect each other a total of $5$ times, another has two curves which intersect a total of $7$ times, and the third has two curves intersecting each other a total of $6$ times, with an additional curve intersecting these two $5$ times.

The corresponding decorated germ (with the weights given by the number of intersection points in the disk arrangement) encodes the singularity whose graph has more branching 
and some vertices with higher negative self-intersections, as shown in Figure~\ref{fig:orev-manymore}. Note that if we vary the incidence pattern of the additional curves
(subject to the weight restrictions), we can obtain a number of unexpected Stein fillings with different topology.
\end{example}

Example~\ref{ex:orev-manymore} demonstrates how, once we have an unexpected symplectic line arrangement $\Gamma=\{\Gamma_i\}$, the star-shaped graph 
$G$ of the corresponding singularity can be extended to arbitrarily complicated graphs of rational singularities with reduced fundamental cycle. 
The following proposition explains how to form these bundles in general from a given extension of the graph, completing the proof of Theorem~\ref{thm:intro-examples}. 
It is not hard to see that under the hypotheses of the proposition, the extended graph $H$ corresponds to a singularity with reduced fundamental cycle. 

\begin{prop}\label{p:orev-manymore} 
	Let $G$ be the star-shaped resolution graph corresponding to the surface singularity associated to an unexpected symplectic line arrangement
	with minimal possible weights. Let $I$ be the set of leaves of $G$, and let $\{G_i\}_{i \in I}$ be a collection of (possibly empty)
	negative definite rooted trees; 
	assume that $G$ and $G_i$ have no $(-1)$ vertices. 
	
	Consider a graph $H$ constructed by attaching to $G$ the rooted trees $G_i$, $i \in I$,
	so that the root of $G_i$ is connected to the leaf $u_i$ by a single edge.
	{Assume that the resulting graph $H$ satisfies condition~(\ref{valency-weight}).}
	Let $(Y,\xi)$ be the link of a rational surface singularity with reduced fundamental cycle
	whose dual resolution graph is $H$. 
	
	Then $(Y,\xi)$ admits a Stein filling which is not strongly diffeomorphic to any Milnor filling. 
\end{prop}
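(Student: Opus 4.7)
\medskip

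\noindent\textbf{Proof proposal.} The plan is to mimic Examples~\ref{ex:orev-more} and~\ref{ex:orev-manymore}: replace each line $\Gamma_i$ of the unexpected symplectic line arrangement by a ``bundle'' $\tilde{\mathcal{B}}_i$ of smooth symplectic disks encoding the tree $G_i$ attached at the leaf $u_i$, and then apply Theorem~\ref{thm:nonMilnor} to the resulting symplectic disk configuration. First I would identify the decorated germ $(\mathcal{C}_H,w_H)$ for the singularity with resolution graph $H$. Writing $H'$ for the extension of $H$ by $(-1)$ leaves as in Section~\ref{s:picdef} (which exists by the assumption that $H$ has no bad vertices and no $(-1)$ vertices), the curvettas of $\mathcal{C}_H$ correspond to $(-1)$ leaves of $H'$, and they naturally partition into bundles $\mathcal{B}_i$ according to which subtree $G_i$ (or $u_i$ itself, when $G_i$ is empty) they come from. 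Branches in different bundles $\mathcal{B}_i$ and $\mathcal{B}_j$ have the same pairwise tangency order as the original curvettas $C_i,C_j$ of $\mathcal{C}_G$, while tangency orders between branches inside a single bundle $\mathcal{B}_i$ are dictated by the overlap function on the subtree $G_i\cup\{u_i\}$, as recorded in Remark~\ref{rem:toptype}.

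Next I would build the symplectic arrangement $\tilde\Gamma$ that will yield the unexpected Stein filling. Starting from the unexpected symplectic line arrangement $\Gamma=\{\Gamma_1,\dots,\Gamma_m\}$, for each leaf $u_i$ of $G$ I would replace $\Gamma_i$ by a bundle $\tilde{\mathcal{B}}_i$ of smooth graphical disks, all $C^1$-close to $\Gamma_i$, such that (i)~every disk in $\tilde{\mathcal{B}}_i$ passes through exactly the same intersection points on $\Gamma_i$ with the other original lines (so an intersection of $\Gamma_i$ with $\Gamma_j$ becomes a common intersection of every disk in $\tilde{\mathcal{B}}_i$ with every disk in $\tilde{\mathcal{B}}_j$), and (ii)~within the bundle the disks have additional common multi-points modelled on a Scott-type realization (Section~\ref{s:artin}) of the subtree $G_i$. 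By working in a tubular neighborhood of $\Gamma_i$, these local intersection patterns can be arranged so that the pairwise intersection count between any two disks in $\tilde{\mathcal{B}}_i$ equals the tangency order predicted by $G_i$, and so that the disks remain graphical and symplectic; Proposition~\ref{symp-config} applied to the resulting braided wiring diagram guarantees a symplectic extension with positive transverse intersections. Marking every intersection, together with free marked points on each disk chosen to match the weights dictated by $H'$, produces the data $(\tilde\Gamma,\{p_j\})$. Lemma~\ref{l:samebraid} and Lemma~\ref{l:graphicalconvex} then supply a smooth graphical homotopy from $(\tilde\Gamma,\{p_j\})$ to the decorated germ $(\mathcal{C}_H,w_H)$, because by construction the boundary braid of $\tilde\Gamma$ agrees with that of $\mathcal{C}_H$ and the weights match.

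Finally, I would verify the hypotheses of Theorem~\ref{thm:nonMilnor}. Pick one disk $\tilde\Gamma_i$ from each bundle $\tilde{\mathcal{B}}_i$; the subcollection $\{\tilde\Gamma_1,\dots,\tilde\Gamma_m\}$ consists of symplectic graphical disks, any two of which meet transversely once (at the original intersection point of $\Gamma_i$ with $\Gamma_j$), and whose boundary braid is isotopic to the boundary braid of $\Gamma$, namely a full twist on $m$ strands. Hence $\{\tilde\Gamma_1,\dots,\tilde\Gamma_m\}$ is a symplectic line arrangement in the sense of Section~\ref{s:examples}, with exactly the same incidence data as $\Gamma$; since $\Gamma$ is unexpected, so is this subarrangement. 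The hypotheses of Theorem~\ref{thm:nonMilnor} apply to $(\tilde\Gamma,\{p_j\})$ with this distinguished unexpected symplectic line subarrangement, so the associated Stein filling of $(Y,\xi)$ is not strongly diffeomorphic to any Milnor filling. The main obstacle in the proof is the explicit bundle construction in the second step: one has to show that the intersection and tangency combinatorics of a tree $G_i$ with no bad vertices can always be realized by a symplectic bundle of graphical disks concentrated near a line, while keeping the pairwise intersections with other bundles pinned at the prescribed points. This can be handled by recursively applying the Scott deformation procedure of Proposition~\ref{artin-ob} in reverse inside a small neighborhood of $\Gamma_i$ (contracting an Artin-type arrangement down onto $\Gamma_i$), which produces the desired local model with the correct multi-point structure and thus completes the construction.
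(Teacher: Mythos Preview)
Your proposal is correct and follows essentially the same approach as the paper: replace each symplectic line $\Gamma_i$ by a bundle of $C^1$-close disks whose internal incidence structure encodes the attached tree $G_i$, extend the resulting braided wiring diagram via Proposition~\ref{symp-config}, verify that the outcome is related to the decorated germ for $H$ by a smooth graphical homotopy, and then invoke Theorem~\ref{thm:nonMilnor} on a subarrangement consisting of one representative from each bundle. The paper's proof differs only in making explicit what you gesture at in your final paragraph: it gives a concrete recursive algorithm for building the bundle (walk up from the root of $G_i$, at each $(-s)$ vertex with $s\geq 3$ split into $s-1$ subbundles, with intersection multiplicities dictated by the lengths of intervening $(-2)$ chains), and then carries out a direct computation checking that the resulting pairwise intersection numbers and marked-point counts agree with the tangency orders $\rho(\tilde v_x,\tilde v_y;u_0)$ and weights $1+l(\tilde v_x,u_0)$ predicted by Remark~\ref{rem:toptype} for the graph $H$.
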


\begin{remark}  Proposition~\ref{p:orev-manymore} provides a fairly general class of rational surface singularities with reduced fundamental cycle 
which admit unexpected fillings. The construction can be further generalized to include variations in the bundling structure and to apply to more general 
graphs $G$ as the input. Despite all variations, getting rid of the $(-2)$ vertices in the resolution graph seems difficult. Indeed, we could add a curve intersecting 
$\ell_3$ only twice in Example~\ref{ex:orev-manymore}, which would lower the self-intersection to $(-3)$ for one of the vertices on the leg of 
the star-shaped graph $G$. However, 
such a curve would 
intersect the other pseudolines in the arrangement $\mathcal{Q}$ at new points.
This would increase the weights on the curvettas corresponding to these other pseudolines, producing 
free marked points and yielding additional $(-2)$ vertices elsewhere  in the graph. In fact, we already know from Theorem~\ref{kollar-fill} that our strategy
must have limitations, as there are no unexpected fillings when  
 each vertex of the resolution graph has self-intersection $-5$ or lower. 
\end{remark}

\begin{proof}[Proof of Proposition~\ref{p:orev-manymore}]
The initial unexpected symplectic line arrangement $\{L_i\}$ consists of symplectic lines associated to the legs of the star-shaped graph $G$. As above, let $u_i$ denote 
the valency 1 vertex of the leg that corresponds to $L_i$. Choose a braided wiring diagram for the symplectic line arrangement such that a symplectic 
line~$L_i$ corresponds to the wire $\gamma_i$. The braided wiring diagram should be chosen such that $\gamma_i$ contains all the marked points of $L_i$ {(including free points)}. 
We will replace each wire $\gamma_i$,
	with a bundle of curves (with intersections but no braiding between the components of the bundle) constructed according to the tree $G_i$, as follows.
	
	All curves in the $i^{th}$ bundle must intersect at all marked points on $\gamma_i$. We will specify the
	additional intersections and explain how to determine the number of curves and free marked points in the bundle. The bundle will be described recursively, via its subbundles 
	and iterative (sub$)^k$-bundles, which we determine by moving through the graph $G_i$. We start at the root and move upward in the graph $G_i$ 
	with respect to the partial order induced by the root, stopping when  we either  reach either a vertex $v_0$  
	of self-intersection number $-s_0$ for $s_0\geq 3$ or exhaust the graph $G_i$.

       By Condition~(\ref{valency-weight}), $(-2)$ vertices can only occur
	in a linear chain. Thus, if we never reach a vertex with self-intersection $-s_0$ for $s_0\geq 3$, 
	 then all vertices of $G_i$ have self-intersection $-2$ {(and $G_i$ is a linear chain)}. Suppose there are $r_0\geq 0$ such $(-2)$ vertices. In that case,  
	the bundle for $G_i$ should consist of only a single curve, but with $r_0\geq 0$ additional free points. 
	(The weights of the decorated germ increase accordingly.)

	 If there exists a vertex $v_0$ of self-intersection $-s_0$ for $s_0\geq 3$ after passing through a linear chain of $r_0$ vertices of self-intersection 
	 $-2$, then the bundle will consist of exactly $s_0-1$ non-empty subbundles. The subbundles will be described as we travel further along $G_i$.
	 We require that all curves in the bundle intersect exactly $r_0$ additional times {(where each of these $r_0$ intersection points gets marked)} and
	 increase the weight of each curve by $r_0+1$, yielding one additional free marked point on each curve. 
	 Two curves in different subbundles will not intersect at any additional points beyond those specified {so far}.
	
	Note that $v_0$ can have at most $s_0-1$ vertices directly above it in $G_i$, since its valency is at most~$s_0$. 
	In particular, $G_i$ itself is built by attaching $s_0-1$ (potentially empty) trees onto the subgraph $\{v\leq v_0\}\subset G_i$.
	We associate  the $s_0-1$ subbundles to these $s_0-1$ {rooted} trees $G_1^1,\dots, G_{s_0-1}^1$, which may be empty or non-empty. {(Note that the partial order on $G$ induced by its root induces a partial order and root on each $G_j^1$.)}
	
	Now we will create subbundles and their  subsubbundles by iteratively repeating a slight modification of the process above.
	For each tree $G_j^1$, we construct a subbundle as follows. Starting at the root of $G_j^1$, we again have a linear chain of $r_1\geq 0$ vertices with self-intersection 
	$-2$, which either exhausts the graph $G_j^1$ or ends in a vertex $v_1$ of self-intersection number $-s_1$ for $s_1\geq 3$. 
	(Note that $r_1$ and $s_1$ depend on $j$, but we drop this index to avoid further notational clutter.) 
	If we are in the first case, where there is no such vertex $v_1$, the subbundle associated to $G_j^1$ will consist of a single curve 
	with $r_1$ additional free marked points. If we are in the second case, where the chain of length $r_1$ of $(-2)$-vertices ends at a vertex $v_1$ 
	with self-intersection $-s_1$ for $s_1\geq 3$, the subbundle itself will be a union of $s_1-1$ non-empty subsubbundles,
	intersecting at $r_1+1$ additional points. (Accordingly, the weights increase by $r_1+1$, but no new free marked points are added.) 
	Two curves in different subsubbundles will not intersect at any additional points beyond those previously specified.

	The $s_1-1$ subsubbundles correspond to the $s_1-1$ potentially empty
	trees $G_1^2,\dots, G_{s_1-1}^2$ attached above $v_1$. We determine these subsubbundles by iteratively repeating this process, 
	where $G_l^{2}$ takes the role of $G_j^1$ and the subsubbundle takes the role of the subbundle.
	Note the (sub$)^k$-bundles will generally have (sub$)^{k+1}$-bundles, leading to additional iterations of the procedure.
	The situation where a (sub$)^k$-bundle does not have a (sub$)^{k+1}$-bundle is when the (sub$)^k$-bundle consists of a single component
	(as in the first case of the procedure). Since the graph is finite, there will be a finite number of iterations,
	so this process will eventually describe the bundle completely.
	
	Having constructed such bundles individually for each $G_i$, we now superimpose them onto the wires $\gamma_i$ as satellites to get a new braided wiring diagram {by inserting them into a small neighborhood of $\gamma_i$ so that each wire of the bundle is $C^1$ close to the original wire $\gamma_i$}.
	Recall that all intersection points between wires are marked in the original diagram, and all curves from the $i^{th}$-bundle are required to intersect at all
	marked points. It follows that curves from the different bundles {are allowed to} intersect {\em only} at the marked points of the original diagram.
	
	We can apply Proposition~\ref{symp-config} to extend the new braided wiring diagram to an arrangement $\Gamma$ of symplectic disks. 
	We claim that via Lemma~\ref{construct-fibration}, the resulting arrangement $\Gamma$ provides a Stein filling for the link of the singularity with the resolution graph 
	$H$. To check the claim, we need to show that the open book decomposition on the boundary of the Lefschetz fibration constructed from $\Gamma$ supports the canonical contact structure for the link associated to $H$. 	Recall that $H$ is associated to a decorated germ $\mathcal{C}^H$ with smooth branches, by attaching $(-1)$~vertices and curvettas and blowing down. We will show that $\Gamma$ is related by a smooth graphical homotopy to another decorated germ $\mathcal{C}$ which is topologically equivalent to $\mathcal{C}^H$. The topological type of $\mathcal{C}$ will be determined by the intersections and marked points in $\Gamma$: the order of tangency between two components in $\mathcal{C}$ is equal to the number of intersections between the corresponding components of $\Gamma$. 
	The weight on each curve is the total number of marked points on the corresponding disk of $\Gamma$, including intersections and free marked points. After showing that $\Gamma$ and $\mathcal{C}$ are related by a smooth graphical homotopy, we will verify that $\mathcal{C}$ and $\mathcal{C}^H$ are topologically equivalent (with corresponding weights) to conclude that the open book decompositions are equivalent.
	
	To relate $\Gamma$ and $\mathcal{C}$, we first construct a smooth graphical homotopy from $\Gamma$ to a ``pencil of the bundles''. In the pencil of the bundles, 
	 all curves will intersect at one point, and curves from different bundles do not intersect anywhere else, but curves from the same bundle may intersect at other points 
	 along the corresponding line. We can use  a smooth graphical homotopy of the original symplectic line arrangement $\{L_i\}$ to a pencil as a guide to build the required 
	 homotopy of $\Gamma$, because 
	each bundle is $C^1$-close to the corresponding symplectic line inside the chosen Milnor ball. Essentially, at this step we treat each bundle as a whole, bringing  different bundles together without perturbing
	curves inside each bundle. More precisely, we satellite the bundle onto the family of wiring diagrams corresponding to the smooth graphical homotopy of the symplectic lines to the pencil. The intersection points within a bundle will 
	remain distinct in this smooth graphical homotopy.  Note that at intermediate times during the homotopy, we allow many additional intersection points in the arrangement,
	as curves from different bundles will intersect outside the common marked intersections.
	
	Next, we show that each bundle can be homotoped so that all the intersections come together to high order tangencies.	
	Let $\Gamma^i$ denote the $i^{th}$ bundle constructed above, and let $\mathcal{C}^i$ denote the curves in the germ $\mathcal{C}$ corresponding to those in $\Gamma^i$. 
 To show that $\Gamma^i$ and $\mathcal{C}^i$ are related by a smooth graphical homotopy, it suffices to check that 
 they have the same boundary braid.
 To verify this, we observe that the subbundling structure looks like the nested structure produced by the Scott deformation 
 of $\mathcal{C}^i$ as in the proof of Proposition~\ref{artin-ob}. The bundle, as drawn in $\R^2$,
 provides a wiring diagram which is planar isotopic to the wiring diagram of the Scott deformation, and thus their braid monodromy is the same.
 As a consequence, each bundle  $\Gamma^i$ is related by a smooth graphical homotopy to $\mathcal{C}^i$. Applying these homotopies to all bundles,
 we see that $\Gamma$ is related to $\mathcal{C}$ by a smooth graphical homotopy, and their induced open books agree.

	Now, we need to check that $\mathcal{C}$ and $\mathcal{C}^H$ are topologically equivalent.
	To this end, we will compare the weights and the parwise orders of tangency between curvettas in the two germs. For $\mathcal{C}$, these quantities are 
	computed from the 
	intersections and marked points in $\Gamma$, while  Remark~\ref{rem:toptype} shows how to compute them from the graph $H$.

	First, we make a few  observations to  relate the curvettas on the graph $H$ to the bundling construction above.  Before the star-shaped graph $G$ is extended, 
	the lines $L_i$ correspond to the legs of the graph. For each $i$, the $i^{th}$ leg is a chain of $(-2)$-vertices, with an end vertex $u_i$. We attach a single 
	$(-1)$ vertex to $u_i$ and put a curvetta on this vertex; this curvetta gives rise to the line $L_i$. By Remark~\ref{rem:toptype}, the  weight of $L_i$ is 
	$1+l(u_0,u_i)$, where $u_0$ is the root of $G$. {In this case, the root has been chosen to be the center of the star-shaped graph.}
	
	When $G_i$ is non-empty,  the symplectic line $L_i$ is replaced  by a collection of $m_i$ curves {(we compute $m_i$ below)} in the germ associated to $H$. These new curves come from 
	curvettas on the additional $(-1)$-vertices attached to $G_i$.  For each $v\in G_i$, $(v\cdot v+a(v))$ additional $(-1)$ vertices are attached to $v$, 
	and each $(-1)$ vertex has a curvetta attached, thus $$m_i = -\sum_{v\in G_i}(v\cdot v+a(v))$$
	as in Proposition~\ref{p:choices}. Note that $m_i$ agrees with the number of curves in the bundle $\Gamma_i$  constructed above for the graph $G_i$.
	This is because the sub-bundling process terminates when you reach a (sub$)^k$-bundle which is a single component.
	This occurs when the (sub$)^k$-bundle corresponds to a (sub$)^k$-tree consisting of only $r\geq 0$ vertices of self-intersection $-2$.
	When $r>0$, this means that there is a $(-2)$~vertex leaf which contributes one to $m_i$, and when $r=0$, 
	this means there is a $(-s)$ vertex $v$ with fewer than $(s-1)$ branches above it, and
	there are correspondingly $-(v\cdot v+a(v))=s-a(v)$ such 
	(sub$)^k$-bundles, each consisting of a single curve.

	Now, let  $C_x$ be one of the curvettas for  the graph $H$, and let $\tilde v_x$ be a vertex of $G$ such that $C_x$ intersects a $(-1)$ vertex attached to $\tilde v_x$.
	According to Remark~\ref{rem:toptype}, the weight of $C_x$ according to the graph $H$ is $1+l(\tilde v_x,u_0)$, where $l(\tilde v_x,u_0)$ 
	counts the number of vertices in the path from the root $u_0$ of $G$ to the vertex $\tilde v_x$. This path consists of several parts.
	 From the original graph $G$, the path contains the $l(u_i,u_0)$ vertices connecting the root $u_0$ to the vertex $u_i$ where $G_i$ is attached.
	 Next, there are vertices from $G_i$, which can be organized into $(K+1)$ chains as shown in Figure~\ref{fig:weights}.  For~$0\leq k\leq K-1$, the $k^{th}$ chain
	 consists of $r_k\geq 0$  vertices of self-intersection $(-2)$, followed by a vertex of self-intersection~$-s_k<-2$.
	 Finally, there may be a last chain of $(-2)$-vertices, of length~$r_{K}\geq 0$, such that $\tilde v_x$ is its last vertex. (If $\tilde v_x \cdot \tilde v_x<-2$, then $r_K=0$.) 
	  Therefore,
	$$1+l(\tilde v_x, u_0) = 1+l(u_i,u_0)+r_K+\sum_{k=0}^{K-1} (r_k+1).$$
	\begin{figure}
		\centering
		\includegraphics[scale=.85]{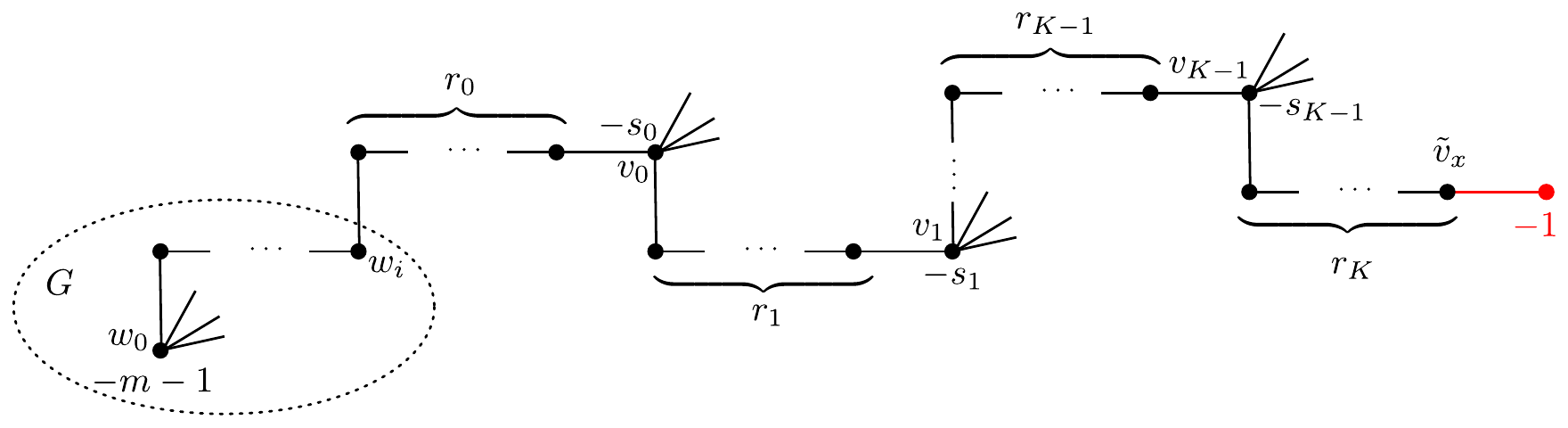}
		\caption{{How to compute the weights from the graph $G$ following the proof of Proposition~\ref{p:orev-manymore}.}}
		\label{fig:weights}
	\end{figure}
	On the other hand, in the construction of the bundle, the initial weight on each curve begins at $1+l(u_i,u_0)$. For each iterative (sub$)^k$-bundle 
	it is included in, the weight is increased by $r_k+1$, until we reach a stage $K$ where the (sub$)^{K}$graph consists of $r_K\geq 0$ vertices, all of self-intersection $-2$. For this $K^{th}$ stage,
	the weight is increased by $r_K$ (the increase is associated to free marked points). Therefore, the total weight on $C_x$ will be
	$$w(C_x)=1+l(u_i,u_0)+r_{K}+\sum_{k=0}^{K-1}(r_k+1),$$
	which agrees with $1+l(\tilde v_x, u_0)$, as required.

	Next, we compare the orders of tangency between the curves. According to Remark~\ref{rem:toptype}, the order of tangency between 
	two components $C_x$ and $C_y$ is $\rho(\tilde v_x,\tilde v_y;u_0)$, the number of common vertices in the path from $\tilde v_x$ 
	to $u_0$ with the path from $\tilde v_y$ to $u_0$. Note that by Condition~\ref{valency-weight}, the vertex $v_L$ where these two paths
	diverge has self-intersection $-s_L$ for $s\geq 3$. See Figure~\ref{fig:tangencies}.
	The path from $u_0$ to $v_L$ includes the path from $u_0$ to $u_i$ in $G$. This contributes $l(u_i,u_0)$ vertices. 
	The path continues into $G_i$, with sequential chains of $r_k$ vertices of self intersection $(-2)$, each ending in a vertex $v_k$ of 
	self-intersection~$-s_k<-2$,  for $0\leq k\leq L$. Therefore
	$$\rho(\tilde v_x,\tilde v_y; v_0) = l(u_i,u_0)+\sum_{k=0}^L(r_k+1).$$
		\begin{figure}
			\centering
			\includegraphics[scale=.85]{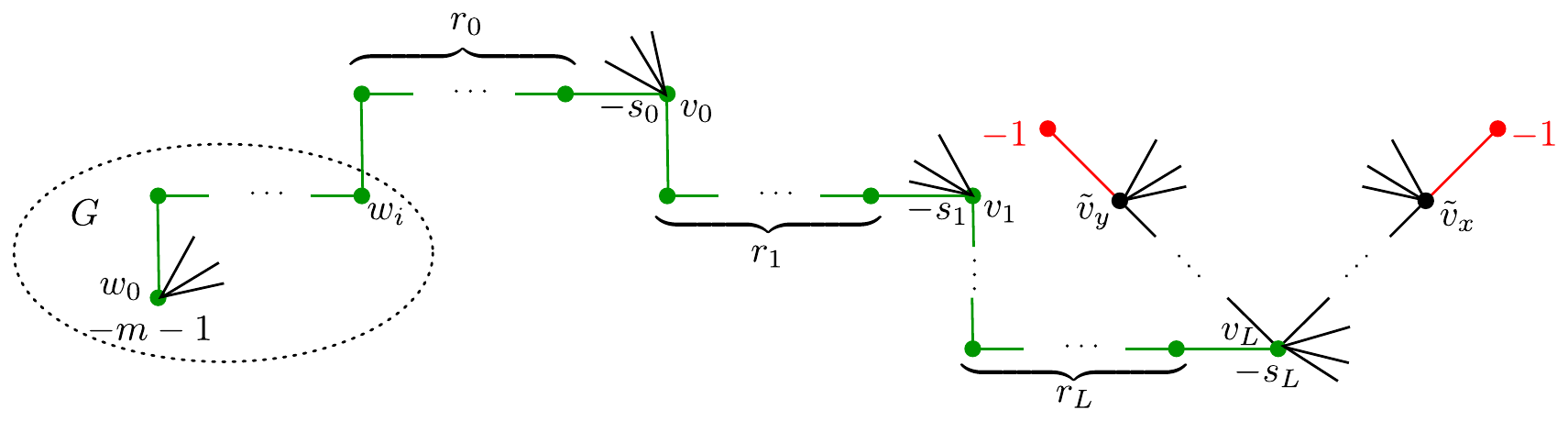}
			\caption{{How to compute the tangencies from the graph $G$ following the proof of Proposition~\ref{p:orev-manymore}.}}
			\label{fig:tangencies}
		\end{figure}
	 On the other hand, in the bundle construction, the curves $C_x$ and $C_y$ lie in two distinct (sub$)^{L+1}$-bundles created for
	 two of the distinct trees lying above vertex $v_L$.
	 No intersections between $C_x$ and $C_y$ will be created after the $L^{th}$ stage. At the beginning of the bundle construction,
	 all curves are required to intersect $1+l(u_i,u_0)$ times. All other intersections between $C_x$ and $C_y$ are created 
	 in the procedure above at some iteration $k$, for $0\leq k\leq L$. At the $k=0$ stage, we add $r_0$ intersections between
	 $C_x$ and $C_y$. At stage $k$ for $1\leq k\leq L$, we add additional $r_k+1$ intersections between $C_x$ and $C_y$. 
	 Therefore the total number of intersections between $C_x$ and $C_y$ is
	 $$1+l(u_0,u_i)+r_0+\sum_{k=1}^L (r_k+1),$$
	 which agrees with $\rho(\tilde v_x,\tilde v_y;v_r)$.

	 To complete the proof, observe that the arrangement $\Gamma$ contains the original unexpected symplectic line arrangement as a subarrangement 
	 (choose a single component of each bundle). By Theorem~\ref{thm:nonMilnor},
	 we obtain unexpected Stein fillings of the link of the sigularity corresponding to the graph $H$.	 
\end{proof}

\section{Further comments and questions on curvetta homotopies}   \label{s:further}

In the previous section we showed that 
Stein fillings of the link of a singularity do not always arise from the Milnor fibers,
even for the simple class of rational singularities with reduced fundamental cycle. Our examples of unexpected Stein fillings 
come from curvetta arrangements that do not arise as picture deformations of the decorated germ representing the singularity, although
 these arrangements  are still related to the decorated germ through a smooth graphical homotopy. In this section, we make 
a detailed comparison of de Jong--van Straten's picture deformations (Definition~\ref{pic-def}) to
smooth graphical homotopies (Definition~\ref{def:homotopy}). 
Observe that the two notions differ in several essential ways. 
Indeed, the curvetta branches are required to be algebraic 
resp. just smooth; positivity of all intersections and the weight restrictions must hold  at all times during a picture deformation but only at the end of a graphical homotopy; the topology of 
the arrangement may change at non-zero times during graphical homotopy but not during a picture deformation. This is summarized in Table~\ref{table}.  
We will explore each of these aspects and their role in differentiating Stein fillings from Milnor fibers. The most important aspect  seems to be  
the topology of the curvetta arrangement, and whether it is allowed to vary during the homotopy.

\begin{table}[tbh] \caption{\label{table}}
	\begin{tabular}{l||l|l|}
		\cline{2-3}      & smooth graphical homotopy     & picture deformation     \\ \hline \hline
		\multicolumn{1}{|l||}{type of curvetta branch $C^t_j$}                                                     & smooth graphical disk           
		& \begin{tabular}[c]{@{}l@{}}disk given by\\ (germ of) algebraic curve\end{tabular} \\ \hline
		\multicolumn{1}{|l||}{\begin{tabular}[c]{@{}l@{}}topology of curvetta \\ arrangement\end{tabular}} & may change with time                                                                                            & remains the same                                                               \\ \hline
		\multicolumn{1}{|l||}{\begin{tabular}[c]{@{}l@{}} weight restrictions: \\ $C^t_j$ has at most $w_j$ intersections \end{tabular}}                                                         & \begin{tabular}[c]{@{}l@{}}only hold for final arrangement,\\ may be violated during homotopy\end{tabular}      & hold at all times                                                              \\ \hline
		\multicolumn{1}{|l||}{ \begin{tabular}[c]{@{}l@{}} positivity of intersection points: \\ $C^t_i \cdot C^t_j>0$ \end{tabular}}                   
		& \begin{tabular}[c]{@{}l@{}}only hold for the final arrangement, \\ may be violated during homotopy\end{tabular} & hold at all times                                                              \\ \hline
	\end{tabular}
\end{table}


\subsection{Algebraic versus smooth}
The first difference between picture deformations and homotopies is that a smooth graphical homotopy includes curvettas which need not be complex algebraic curves, 
either during the course of the homotopy or at the end of the homotopy. It turns out that
this is not the key aspect contributing to the difference between Milnor fillings and Stein fillings in our examples. Indeed, adding higher-order terms, one can produce some surprising curvetta arrangements. 
Because the curvettas are open algebraic disks, possibly given by high degree algebraic equations, 
curvetta arrangements can be more general than arrangements of complex lines or global algebraic curves. 
To illustrate, we recall the example of pseudo-Pappus arrangement from \cite{dJvS}, see Figure~\ref{pseudopappus}:

\begin{figure}[htb]
	\centering
	\bigskip
	\labellist
	\small\hair 2pt
	\pinlabel $p$ at 56 45
	\pinlabel $q$ at 86 52
	\pinlabel $r$ at 109 53
	\endlabellist
	\includegraphics[scale=1.2]{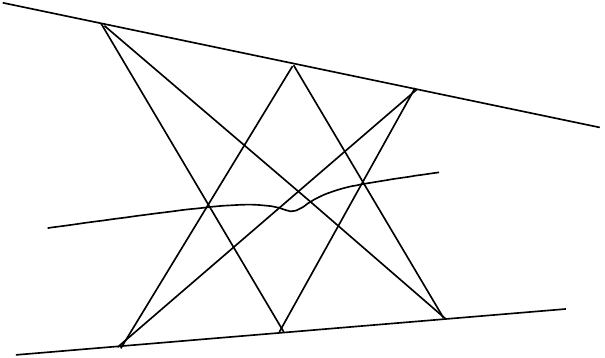}
	\caption{The pseudo-Pappus arrangement.}
	\label{pseudopappus}
\end{figure}

\begin{example} \label{pseudopapp} \cite{Gru, dJvS} Recall that the classical Pappus arrangement consists of 9 lines (we have already discussed this arrangement in Example~\ref{ex:pappus}.
	By the Pappus theorem, the points $p$, $q$, $r$ in the middle of Figure~\ref{pseudopappus} are collinear. In the pseudo-Pappus arrangement, the line through these three points is replaced 
	by a bent pseudoline that passes through two points but not through the third. The pseudo-Pappus arrangement cannot be realized by complex lines. However, the bent pseudoline can be given by a graph of a high-degree polynomial whose additional intersections with the other lines occur sufficiently far outside the ball we restrict to. Thus, the pseudo-Pappus arrangement can be realized by higher-degree open algebraic curves. 
	In fact, as mentioned in \cite{dJvS}, the pseudo-Pappus arrangement arises as a picture deformation of the pencil 
	of 9 lines, with the weights of each line given by the number of intersection points on the corresponding line in the arrangement. The picture deformation can be  obtained by adding small higher-order terms to the linear deformation of the pencil to the classical Pappus arrangement.
	Thus, the pseudo-Pappus arrangement gives rise to Milnor fibers of smoothings of the singularities given by the 
	corresponding decorated pencil of 9 lines.
\end{example}

In fact, all of the fillings produced via arrangements of real pseudolines can 
be obtained from an algebraic curvetta arrangement which can be deformed by a polynomial homotopy (through algebraic curves) to a pencil of lines. 
(However, this family does not constitute a picture deformation because the topology may vary at different $t\neq 0$,
and the weight constraints may fail at intermediate times.) Note that we only consider a portion of the algebraic curves in a chosen ball surrounding the origin. In particular, the algebraic curves may intersect additional times outside of this ball, but we do not need to count such intersections in the incidence data of our arrangement.

\begin{prop} \label{algebraicapprox} Let $\Lambda=\{\ell_1, \dots, \ell_m\}$ be an arrangement of real pseudolines in $\R^2$. Then 
there exists a family of complex algebraic curves $\{\Gamma_1^t,\dots, \Gamma_m^t\}$, given by polynomial equations
$$
\Gamma_i^t=\{y = p(x,t)\},
$$
and a smoothly embedded closed $4$-ball $B\subset \C^2$, such that $\{\Gamma_1^t,\dots, \Gamma_m^t\}$ is a symplectic line arrangement in $B$ (with intersections in the interior of $B$) for every $t\in [0,1]$, where
\begin{itemize}
 \item $B\cap (\Gamma_1^0\cup\cdots \cup \Gamma_m^0)$ has the incidences of a pencil of lines, 
 
 \item  $B\cap (\Gamma_1^1\cup \cdots \cup \Gamma_m^1)$ is isotopic in $B$ to the symplectic extension of 
 the pseudoline arrangement $\ell_1\cup\cdots \cup \ell_m$ given by Proposition~\ref{symp-config}.
 \end{itemize}
\end{prop}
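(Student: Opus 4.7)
The plan is to build the required algebraic family by approximating a smooth homotopy of the pseudoline arrangement to a pencil. The main tools are the pseudoline homotopy of \cite[Proposition 6.4]{RuSt} discussed in Example~\ref{discuss-plines}, joint Weierstrass approximation on a compact real domain, and a $C^1$-stability argument to ensure that the algebraic approximations retain the combinatorics of a symplectic line arrangement at every time $t$.

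First I would realize the given pseudoline arrangement $\Lambda$ as the endpoint of a smooth homotopy through pseudoline arrangements. Write $\ell_i=\{y=f_i(x)\}$ for smooth real functions $f_i$, and choose a disk $D_x\subset \C$ large enough to contain the projections of all pairwise intersection points. By \cite[Proposition 6.4]{RuSt} there is a smooth family of pseudoline arrangements $\{\ell_i^s=\{y=f_i(x,s)\}\}_{s\in[0,1]}$ with $\ell_i^1=\ell_i$ and $\ell_i^0$ the pencil of real lines $y=a_ix$ with distinct slopes, where all pairwise intersections lie in the interior of $D_x$ and are transverse for every $s$. The function $f_i:D_x\times[0,1]\to\R$ is smooth, and can be arranged so that $\partial f_i/\partial x$ is uniformly bounded; this is the key analytic data.

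Next I would polynomially approximate this family. Apply Stone--Weierstrass on the compact set $D_x\times[0,1]$ to obtain polynomials $p_i(x,s)\in\R[x,s]$ approximating $f_i(x,s)$ in $C^1$-norm to within an arbitrarily small $\epsilon>0$; by multiplying by a cutoff in $s$ (or by an obvious direct construction) we can arrange $p_i(x,0)=a_ix$ exactly, so the family begins at the true pencil of complex lines. Set $\Gamma_i^t=\{y=p_i(x,t)\}$. Choose $D_y\subset\C$ large enough that $\Gamma_i^t\cap(D_x\times\C)\subset D_x\times D_y$ for all $i,t$, and let $B=D_x\times D_y$ with corners smoothed. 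For $\epsilon$ sufficiently small, the following hold uniformly in $t\in[0,1]$: (a) the partial derivatives of $p_i$ are small enough to make $\Gamma_i^t$ symplectic via the rescaling discussion at the end of Section~\ref{find-curvettas-for-filling}; (b) each difference $p_i(x,t)-p_j(x,t)$ has the same number of roots in $D_x$ as $f_i(x,t)-f_j(x,t)$, all of them transverse, namely a single root near the original pseudoline intersection; (c) the boundary link $(\cup_i\Gamma_i^t)\cap\partial B$ is braid-isotopic to the corresponding link for the pseudoline family, since $C^1$-small perturbations do not alter the braid isotopy class. Properties (a)--(c) together say precisely that $\{\Gamma_1^t,\dots,\Gamma_m^t\}$ is a symplectic line arrangement in $B$, that at $t=0$ it is an actual pencil of complex lines, and that at $t=1$ it is ambient-isotopic in $B$ to the symplectic extension of $\Lambda$ provided by Proposition~\ref{symp-config} (both have the same graphical structure, incidence pattern, and boundary braid, so Lemmas~\ref{l:graphicalconvex} and \ref{l:samebraid} supply the isotopy).

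The main obstacle will be item (b): although a single transverse zero of $f_i-f_j$ is stable under small $C^1$-perturbation, the polynomial $p_i-p_j$ could in principle acquire spurious additional zeros in $D_x$. The fix is a uniform compactness argument. For each pair $(i,j)$, the function $(x,t)\mapsto f_i(x,t)-f_j(x,t)$ has a unique transverse zero at some $x_{ij}(t)\in D_x$; by transversality and compactness of $[0,1]$, there is a positive uniform lower bound on $|f_i-f_j|$ away from a small neighborhood of $\{(x_{ij}(t),t)\}$, and a positive uniform lower bound on $|\partial_x(f_i-f_j)|$ on that neighborhood. Choosing $\epsilon$ smaller than both bounds forces $p_i-p_j$ to have no zeros outside the neighborhood and exactly one transverse zero inside it, for every $t$. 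Making $\epsilon$ work simultaneously for all $\binom{m}{2}$ pairs, together with the symplecticity bound on derivatives, yields the required $\epsilon$ and completes the construction.
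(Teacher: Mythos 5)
Your overall strategy---approximate the Ruberman--Starkston pseudoline homotopy by polynomials, using uniform transversality on the compact parameter space to preserve the single transverse intersection of each pair---is the same as the paper's, and your compactness argument for the uniform lower bounds is a legitimate substitute for the paper's quantitative Lemma~\ref{l:pseudofam}. However, there is a genuine gap at the endpoint $t=1$. Your item (b) only controls \emph{pairwise} intersections: it guarantees that $p_i-p_j$ has a single transverse zero near the zero of $f_i-f_j$, but it says nothing about whether the zeros for the pairs $(i,j)$, $(i,k)$, $(j,k)$ coincide. If $\Lambda$ has a point where three or more pseudolines meet (and every interesting arrangement in this paper---the pseudo-Pappus, $\mathcal{P}$, $\mathcal{Q}$, the grids---does), a generic $C^1$-small approximation splits that multiple point into distinct double points. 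The resulting configuration at $t=1$ then has a different incidence pattern from the symplectic extension of $\Lambda$, and the two unions of disks are not even locally homeomorphic at that point, so no ambient isotopy in $B$ can exist; invoking Lemmas~\ref{l:graphicalconvex} and~\ref{l:samebraid} does not help, since those produce graphical homotopies (which may change topology), not isotopies of configurations. The paper repairs exactly this: after integrating the Weierstrass approximation of $\partial f_i/\partial x$, it adds Lagrange-interpolation correction terms of the form $a_k^i\,t\prod_{l\neq k}(x-x_l)$ (and $a_0^i(t-1)$) so that $p_i(x_k,1)=f_i(x_k,1)$ \emph{exactly} at every intersection abscissa $x_k$ and $p_i(0,0)=0$ exactly, while the correction coefficients are small enough to keep the $C^1$-error under control. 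Without this device (or an equivalent one) the second bullet of the proposition is not established.

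A secondary point: your Stone--Weierstrass step is set up on $D_x\times[0,1]$ with $D_x\subset\C$ a disk, but a polynomial in $\R[x,s]$ cannot approximate a general smooth function on a real two-dimensional $x$-domain, and the $f_i$ are not holomorphic. The approximation has to be carried out over a real interval $[-M,M]$, and the resulting real polynomials then complexified; one must separately check (as the paper does by taking the imaginary width $\alpha$ of the base small) that the complexified curves acquire no spurious intersections at non-real values of $x$ inside $B$. Relatedly, once $x$ is complex the graphs $\{y=p_i(x,t)\}$ are complex algebraic curves and hence automatically symplectic, so the rescaling argument you invoke for symplecticity is not the relevant issue there.
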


Before proving the proposition, we discuss its consequences.

\begin{remark} \label{change-topology} Consider an arbitrary pseudoline arrangement $\ell_1, \dots, \ell_m$ and the corresponding 
symplectic line arrangement $\{\Gamma_1,\dots, \Gamma_m\}$. By Proposition~\ref{p:symp-lines-fill}, this arrangement gives Stein fillings of the spaces $(Y(m;w_1,\dots, w_k),\xi)$ whenever the 
weights satisfy inequalities $w_k\geq w(\Gamma_k)$, $k=1, \dots, m$.  Let $\Gamma^t=\{\Gamma_1^t,\dots, \Gamma_m^t \}$
be a polynomial 
homotopy  between  a pencil of lines  and the arrangement $\{\Gamma_1,\dots, \Gamma_m\}$; such a homotopy always exists by Proposition~\ref{algebraicapprox}.   
A priori, the homotopy may violate the weight constraints: at some moment $t$, the number of intersections may increase, so that $w(\Gamma_k^t)>w_k$. 
(In fact, the homotopy constructed in Proposition~\ref{algebraicapprox} converts all multiple intersections into double points and thus creates a lot of additional intersections.)
However, since $\Gamma_k^t$ intersects each of the other $m-1$ components exactly once, $w(\Gamma_k^t)$ will never exceed $m-1$. Thus, if  $w_k \geq m-1$ for all $k$, 
any homotopy as above will satisfy the weight constraints. By construction, intersections between any two components $\Gamma_i^t$ and $\Gamma_j^t$ remain positive for all $t$. 
Thus, the homotopy $\Gamma^t$ satisfies the requirements of the first, third, and fourth lines in Table~\ref{table}, sharing these properties with picture deformations, 
but it changes topology of the arrangement. Accordingly, the arrangement $\{\Gamma_1^t,\dots, \Gamma_m^t \}$ gives a Stein filling $W_t$ of $(Y(m;w_1,\dots, w_k),\xi)$ for every 
$t$, and $W_t$ carries a Lefschetz fibration as in Lemma~\ref{construct-fibration}, but topology of the fillings $W_t$ changes with $t$. Note also that for small $t>0$,
the defining polynomials for $\Gamma_k^t$ give an unfolding, and thus a 1-parameter deformation of $\mathcal{C}$. Equipped with marked points, this gives a picture deformation. 
Therefore, for small $t>0$ the Stein filling $W_t$ is given by a Milnor fiber. As $t$ increases and the topology of the arrangement changes, we obtain new fillings $W_t$ which may 
not be realizable by Milnor fibers. We will consider a specific example of such a topology change in Subsection~\ref{subs:change-top}.

The conclusion we wish to draw here is that the difference between algebraic curves and smooth curves is not the essential to our counterexamples, as 
we can realize the corresponding symplectic line arrangements by complex algebraic curves and construct polynomial homotopies. The positivity of intersections and
the weight constraints can often be trivially satisfied, although we further discuss the role of  weights in Subsection~\ref{subs:weight}.
In fact, the important difference comes from the second aspect in Table~\ref{table}, namely smooth graphical homotopies 
can vary their topology and singularities in various different ways during the homotopy, whereas picture deformations
must maintain the same topology for all non-zero parameters $t$. 

\end{remark}

We now turn to the proof of Proposition~\ref{algebraicapprox}.	Given any pseudoline arrangement, it can be isotoped in $\R^2$ to be in a standard wiring diagram form,
with the following properties. Each pseudoline is graphical $\ell_i = \{ y = f_i(x) \}$. Away from intersection points,
each pseudoline is horizontal with $f_i(x) = 2\delta n$ for some integer $1\leq n \leq m$ and a fixed constant $\delta>0$.
There are disjoint intervals $(a_1,b_1),\dots, (a_r,b_r)$ at which $f_i(x)$ is non-constant, such that there is a unique point in each interval $(a_k,b_k)$ at which $\ell_i$ intersects other pseudolines. Furthermore, we ask that $f_i$ and $f_j$ are linear whenever $|f_i(x)-f_j(x)|<\delta$, and each $f_i(x)$ is monotonic in each interval $(a_k,b_k)$. We will assume after a planar isotopy of $\Lambda$ our pseudoline arrangement is initially given in this form. To construct our algebraic family, we first require a smooth family of pseudolines connecting this given pseudoline arrangement in standard wiring diagram form to a pencil, and satisfying a quantitative transversality property as follows.
	
	\begin{lemma}\label{l:pseudofam}
		Let $\Lambda=\{\ell_1, \dots, \ell_m\}$ be an arrangement of real pseudolines in $\R^2$ in standard wiring diagram form with constant $\delta$, such that all intersections occur in $[-M,M]\times \R$. Then there exist smooth functions $f_i: [-M,M]\times [0,1]\to \R$ with the following properties:
		\begin{enumerate}
			\item $\ell_i = \{y = f_i(x,1) \}$ (at time $1$ the graphs of the functions give the chosen pseudoline arrangement).
			\item $f_i(x,0) = c_i x$ (at time $0$ the graphs of the functions give a linear pencil).
			\item For any $t_0\in [0,1]$, and any $i\neq j$, there is a unique point $\bar{x}\in [-M,M]$ such that $f_i(\bar{x},t_0) = f_j(\bar{x},t_0)$ and an interval $\bar{x}\in (a,b)\subset [-M,M]$ such that $|f_i(x,t_0)-f_j(x,t_0)|< \delta$ if and only if $x\in (a,b)$ (the pseudolines remain at least distance $\delta$ apart except in a neighborhood of their unique intersection).
			\item \label{i:qtrans} For any $t_0\in[0,1]$, and any $x_0\in [-M,M]$ such that $|f_i(x_0,t_0)-f_j(x_0,t_0)|<\delta$, we have that
			$$\left| \frac{\partial f_i}{\partial x}(x_0,t_0) - \frac{\partial f_j}{\partial x}(x_0,t_0) \right| > \eta:=\delta/2M$$
			(whenever the pseudolines become close enough to intersect, their slopes are quantitatively far from each other to ensure isolated transverse intersections).
		\end{enumerate}
	\end{lemma}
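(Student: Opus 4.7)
The plan is to construct the family $f_i(x,t)$ in two stages: first compressing the intersection points of the given wiring diagram toward $x=0$ via a horizontal reparameterization of $[-M,M]$, then linearly interpolating from the compressed wiring to a suitable pencil. Let $\sigma$ be the permutation describing the vertical order of the pseudolines at $x=-M$. After an overall vertical translation (which does not affect the pseudoline arrangement), we may assume the heights at $x=\pm M$ are centered so that $f_i(-M,1) = -f_i(M,1)$ for each $i$. The target pencil slopes are then chosen as $c_i = -f_i(-M,1)/M$, which are distinct multiples of $2\delta/M = 4\eta$ and whose differences are bounded below by this amount, so condition~(4) holds at $t=0$ by construction. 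The ordering of the $c_i$'s is consistent with $\sigma$ at $x=-M$ and with its reverse $\tau$ at $x=+M$, so the pencil and the wiring diagram agree on which pseudoline is above which for each pair on each side of their unique intersection.

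For Stage~1 ($t \in [1/2,1]$), I choose a smooth family of diffeomorphisms $\psi_t: [-M,M] \to [-M,M]$ with $\psi_1 = \mathrm{id}$, $\psi_t$ fixing a neighborhood of $\pm M$ pointwise, and $\psi_{1/2}$ compressing all intersection $x$-coordinates of the given arrangement into a small neighborhood of $x=0$. Define $f_i(x,t) = f_i(\psi_t^{-1}(x), 1)$, a horizontal reparameterization of the original pseudoline. Since all pseudolines are reparameterized simultaneously, their pairwise intersections move with $\psi_t$ and condition~(3) is preserved. Condition~(4) is also preserved because each transition in the standard wiring diagram has height change $2\delta$ over a horizontal span of at most $2M$, giving a derivative of magnitude at least $\eta = \delta/(2M)$ across the transition; the compression by $\psi_t^{-1}$ only steepens the transitions. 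For Stage~2 ($t \in [0,1/2]$), interpolate linearly: $f_i(x,t) = 2t \cdot f_i(x, 1/2) + (1-2t) \cdot c_i x$, smoothing the join at $t=1/2$ with a cutoff in $t$.

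The main obstacle will be verifying (3) and (4) uniformly throughout Stage~2, where the compressed wiring $f_i(x,1/2)$ and the pencil $c_i x$ have qualitatively different derivative patterns. The key observation is that both $f_i(x,1/2) - f_j(x,1/2)$ and $(c_i - c_j)x$ change sign exactly once on $[-M,M]$ and do so with the same orientation (determined by the common initial and final orderings of the pseudolines); hence their convex combination also has a unique sign change on $[-M,M]$, giving the unique intersection required by~(3). For~(4), compute
\[
\partial_x\bigl(f_i(x,t) - f_j(x,t)\bigr) = 2t\,\partial_x\bigl(f_i(x,1/2) - f_j(x,1/2)\bigr) + (1-2t)(c_i - c_j).
\]
Whenever $|f_i(x,t) - f_j(x,t)| < \delta$, we are close to the unique intersection of $\ell_i$ and $\ell_j$, and both summands have the same sign (determined by the common crossing direction), so they add constructively; each contributes a slope magnitude of at least $\eta$ weighted by $2t$ or $(1-2t)$, respectively, giving a total slope difference of magnitude at least $\eta$. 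A standard convolution with a bump in $t$ smooths the join across $t=1/2$ without affecting these bounds.
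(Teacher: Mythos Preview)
Your approach has a genuine gap in Stage~2. The claim that ``both summands have the same sign'' in the derivative computation is false in general, because the condition $|f_i(x,t)-f_j(x,t)|<\delta$ does \emph{not} imply you are near the crossing of $\ell_i$ with $\ell_j$ in the compressed wiring. Concretely, take three strands at initial heights $-2\delta,0,2\delta$ with crossings in the order $(\ell_2,\ell_3)$, then $(\ell_1,\ell_3)$, then $(\ell_1,\ell_2)$. Set $g(x)=f_1(x,\tfrac12)-f_2(x,\tfrac12)$: this function runs through the values $-2\delta,-4\delta,-2\delta,2\delta$ across the compressed interval, so $\partial_x g<0$ during the first crossing (which involves $\ell_2$ and $\ell_3$, not $\ell_1$), while $c_1-c_2>0$. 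For $t$ with $\tfrac18<t<\tfrac14$, the interpolated difference $F=2tg+(1-2t)(c_1-c_2)x$ satisfies $|F|<\delta$ on an interval that reaches into this first crossing (since $|F|\approx 4t\delta<\delta$ just to its left), and near the smooth corner at the start of that crossing $\partial_x F$ passes continuously from a positive value to a large negative one, hence vanishes---violating~(4). The same example shows $\{|F|<\delta\}$ has two components, violating~(3): it dips above $\delta$ in the middle of crossings~1--2 (where $|2tg|$ reaches $8t\delta>\delta$) and falls below $\delta$ again on either side.

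The underlying issue is that a global convex interpolation between the compressed wiring and the pencil cannot control what happens at crossings involving a \emph{third} strand $\ell_k$: such crossings move $f_i$ or $f_j$ without moving the other, so $\partial_x g$ can have arbitrary sign there, and your compression only makes these wrong-sign slopes steeper. The paper's proof avoids this by invoking the explicit family from \cite{RuSt}, which proceeds through a sequence of \emph{local} moves (splitting a $k$-fold point into double points, and two reordering moves). Each move alters only a small window, and within that window the relevant strands stay piecewise linear with slope differences bounded below by $\eta$; the paper checks that properties~(3) and~(4) survive each move individually. That locality is exactly what your global interpolation lacks.
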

	
	\begin{proof}
		Note that when the original pseudoline arrangement $\{\ell_i\}$ is in standard wiring diagram form, 
		it does satisfy property (\ref{i:qtrans}) of the lemma when $t_0=1$. This is because
		whenever $|f_i(x,1)-f_j(x,1)|<\delta$, the function $f_i-f_j$ is linear, and it interpolates a height difference greater 
		than $\delta$ over an interval smaller than $2M$, so its slope is greater than $\eta$.
		
		It was proven that any arrangement of pseudolines in standard wiring diagram form can be related through 
		a family of pseudolines to a pencil in~\cite[Proposition 6.4]{RuSt}. In that paper, what is needed
		is that the pseudolines maintain transverse intersections throughout the family, whereas we need a quantitative measure of this transversality.
		We demonstrate here that this stronger condition is in fact  satisfied by the family in ~\cite{RuSt}.
		
		We briefly recall the key aspects in the construction of the family and refer the reader to~\cite[Proposition 6.4]{RuSt} for further details. 
		This family is graphical and thus can be written as $\ell_i^t=\{y=f_i(x,t)\}$, for $i=1,\dots, m$ where $\ell_i^1=\ell_i$.
		The key move to modify the pseudoline arrangement into a pencil through a family is shown in Figure~\ref{fig:pseudohomotopy}
		(this figure is a slight modification of that appearing in \cite[Figure 8]{RuSt}). This move is used iteratively to break 
		up $k$-tuple points into a sequence of double points in a particular order. This procedure can be reversed to form an $m$-tuple point from a collection of appropriately ordered double points at the end to obtain a pencil. The order of the double points can be modified through the moves shown in Figures~\ref{fig:braid1} and~\ref{fig:braid2}, by a classical theorem of Matsumoto and Tits \cite{Matsumoto}.
		
		\begin{figure}
			\centering
			\includegraphics[scale=.5]{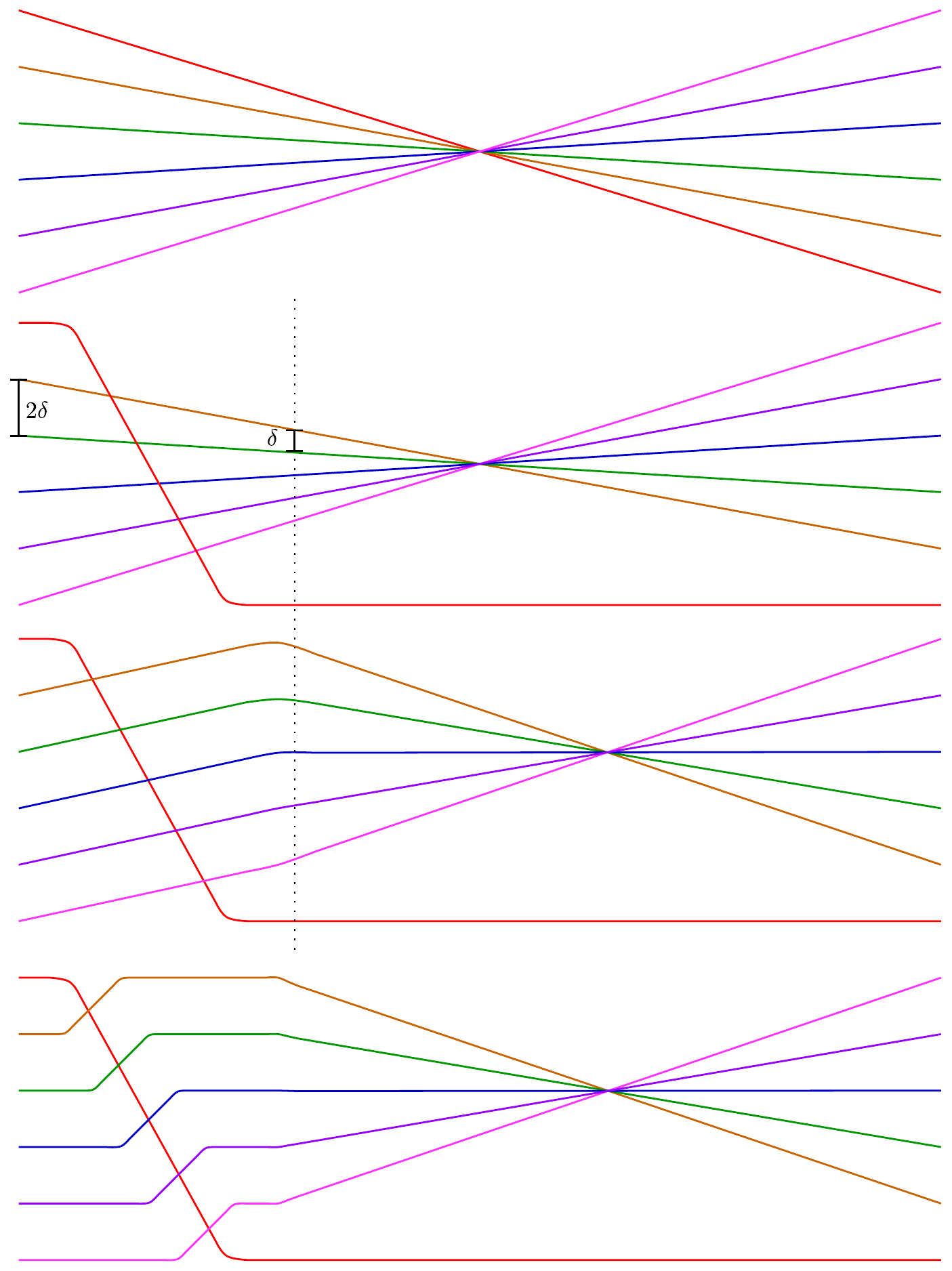}
			\caption{Key move used to construct a family of pseudolines, slightly modified from~\cite{RuSt}.}
			\label{fig:pseudohomotopy}
		\end{figure}
		
		\begin{figure}
			\centering
			\includegraphics[scale=.5]{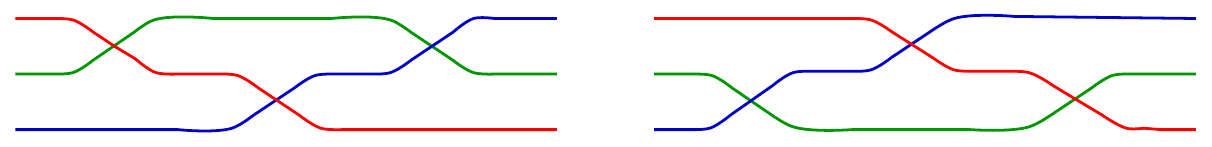}
			\caption{First reordering move.}
			\label{fig:braid1}
		\end{figure}
		
		\begin{figure}
			\centering
			\includegraphics[scale=.5]{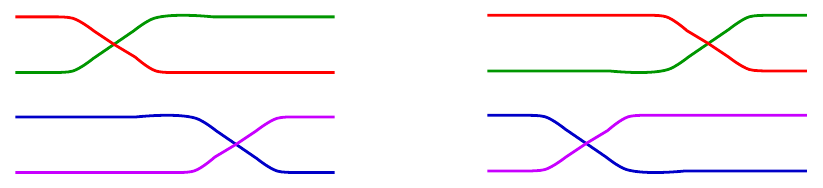}
			\caption{Second reordering move.}
			\label{fig:braid2}
		\end{figure}
		
		If a pseudoline arrangement satisfies the transversality property (\ref{i:qtrans}) before
		the move in Figure~\ref{fig:braid2}, then it will continue to satisfy the same property throughout the move,
		because the relative slopes remain the same, only the interval where they occur is translated.
		
		For the move from Figure~\ref{fig:braid1}, this can be realized using Figure~\ref{fig:pseudohomotopy} once in 
		reverse to form a triple point and then again in the forwards time direction but mirrored to break up the triple point 
		in the opposite manner (see~\cite[Figure 10]{RuSt}). Therefore it suffices to ensure that property (\ref{i:qtrans}) is 
		satisfied throughout the move shown in Figure~\ref{fig:pseudohomotopy}. 
		Indeed, throughout this move, whenever a pair of pseudolines have height difference less 
		than $\delta$ (recall that the spacing between the heights of the strands at the left and right ends of the figure is $2\delta$), 
		both pseudolines are linear in this interval. The difference of pairwise slopes whenever $|f_i(x,t)-f_j(x,t)|<\delta$ is always greater 
		than $\eta$ throughout this family,
		because each crossing  changes the difference in $f_i-f_j$ by at least $2\delta$ across the interval,
		whereas the interval has length at most $2M$. Moreover, this move preserves the property 
		that there is a unique interval at which a given pair satisfies $|f_i(x,t)-f_j(x,t)|<\delta$.
	\end{proof}

\begin{proof}[Proof of Proposition~\ref{algebraicapprox}]
	We use the functions $\{f_i(x,t)\}$, representing a family of pseudolines through their graphs at a fixed time $t$, 
	and approximate these by real polynomials intersecting in somewhat controlled ways. 
	We assume that $x\in[-M,M]$ and that $M\geq 1$. Recall that our final pseudoline arrangement is given by $\ell_i = \{y=f_i(x,1)\}$.  Let $x_1,\dots, x_n$ be the points at which $f_i(x_k,1) = f_j(x_k,1)$ for some $i\neq j$.
	
	Let $\varepsilon>0$. Let $\zeta=\min\{1,\min_{i\neq j}\{|x_i-x_j|\} \}$. In particular, $\zeta\leq 1$.
	
	Using the Stone-Weierstrass approximation theorem, choose polynomials $\widetilde{p}_i(x,t)$ such that
	$$\left| \frac{\partial f_i}{\partial x}(x,t) - \widetilde{p}_i(x,t)  \right| < \frac{\varepsilon \zeta^{n-1}}{4n^2(2M)^{n}}.$$
	Then by integrating $\widetilde{p}_i(x,t)$ and shifting by a constant, we can find $\overline{p}_i(x,t)$ such that $\frac{\partial \overline{p}_i}{\partial x}(x,t) = \widetilde{p}_i(x,t)$ and 
	$$\left|\overline{p}_i(x,t) - f_i(x,t)\right| < \frac{\varepsilon\zeta^{n-1}}{4n^2(2M)^{n-1}}.$$
	Now for $k=1,\dots, n$ let
	$$a_k^i = \frac{\left(f_i(x_k,1) - \overline{p}_i(x_k,1)\right)}{(x_k-x_1)\cdots(x_k-x_{k-1})(x_k-x_{k+1})\cdots (x_k-x_n)}.$$
	Let $a_0^i = \overline{p}_i(0,0)$.
	Define
	\begin{align*}
	p_i(x,t) = \overline{p}_i(x,t) +a_0^i(t-1) &+ a_1^it(x-x_2)\cdots (x-x_n)\\
	&+ a_2^i t(x-x_1)(x-x_3)\cdots (x-x_n)+ \cdots + a_n^it(x-x_1)\cdots (x-x_{n-1}). 
	\end{align*}
	Then we have that for every $k=1,\dots, n$, $p_i(x_k,1) = f_i(x_k,1)$ and $p_i(0,0)=p_j(0,0)=0$ for all $i,j$. 
	In particular, for every multi-intersection point of the pseudolines $\ell_1,\dots, \ell_m$ there is a multi-intersection point of 
	the corresponding $\{p_1(x,1)=0\},\dots, \{p_m(x,1)=0\}$. We will show that 
	the curves $\gamma_1^{t_0}:=\{p_1(x,t_0)=0\},\dots, \gamma_m^{t_0}:=\{p_m(x,t_0)=0\}$ form a pseudoline arrangement at each time $t_0$ (namely every pair of components intersects exactly once). In particular, this suffices to show that at $t_0=1$, 
	the algebraic arrangement has the same intersections as the smooth pseudoline arrangement. For this, we use the following bounds:
	\begin{eqnarray*}
	\left| p_i(x,t) - f_i(x,t)\right | &\leq& \left| p_i(x,t) - \overline{p}_i(x,t) \right| + \left| \overline{p}_i(x,t) - f_i(x,t)\right|\\
	& \leq & a_0^i+ \sum_{k=1}^n a_k^i (2M)^{n-1} + \frac{\varepsilon\zeta^{n-1}}{4n^2(2M)^{n-1}}\\
	& \leq & \frac{\varepsilon\zeta^{n-1}}{4n^2(2M)^{n-1}} + \sum_{k=1}^n \frac{\varepsilon}{4n^2(2M)^{n-1}} \cdot (2M)^{n-1} + \frac{\varepsilon \zeta^{n-1}}{4n^2(2M)^{n-1}}\\
	& < &\varepsilon.
	\end{eqnarray*}
	We can similarly bound the difference of the derivatives with respect to $x$:
		$$\left| \frac{\partial p_i}{\partial x}(x,t) - \frac{\partial f_i}{\partial x}(x,t) \right| \leq a_0^i+
		\sum_{k=1}^n a_k^i n(2M)^{n-2} +\frac{\varepsilon\zeta^{n-1}}{4n^2(2M)^{n-1}} <\varepsilon.$$
	
	Now we want to show that the graphs $\lambda_i^t:=\{y=p_i(x,t) \mid x\in [-M,M] \}$ provide a family of algebraic pseudoline arrangements 
	whose incidences agree with $\{\ell_i\}$ at $t=1$, and agree with the incidences of a pencil at $t=0$. 
	We will use the intersection and quantitative transversality properties of Lemma~\ref{l:pseudofam}, to verify that for each time $t_0\in [0,1]$, 
	there is a unique transverse intersection between $\lambda_i^{t_0}$ and $\lambda_j^{t_0}$ where $p_i(x,t_0)=p_j(x,t_0)$ for $x\in [-M,M]$.
	
	Since we could choose $\varepsilon>0$ arbitrarily in the argument above, we now set $\varepsilon = \min\{\delta/3, \eta/3\}$. For each $t_0\in [0,1]$ and each pair $i\neq j$, there is an interval $(a,b)$ such that for $x\in [-M,M]\setminus (a,b)$, $|f_i(x,t_0)-f_j(x,t_0)|\geq \delta$. By the triangle inequality, for $x\in [-M,M]\setminus (a,b)$,
	$$|p_i(x,t_0)-p_j(x,t_0)|\geq |f_i-f_j|-|f_i-p_i|-|p_j-f_j| > \delta - 2\varepsilon \geq \delta/3>0.$$
	Therefore $p_i(x,t_0)\neq p_j(x,t_0)$ for $x\in [-M,M]\setminus (a,b)$. Now for $x\in(a,b)$, we have that $|f_i(x,t_0)-f_j(x,t_0)|<\delta$ so
	by the last property of Lemma~\ref{l:pseudofam},
	$$\left| \frac{\partial f_i}{\partial x}(x,t_0) - \frac{\partial f_j}{\partial x}(x,t_0) \right| > \eta.$$
	Again by the triangle inequality and the bounds above we get that
	$$\left| \frac{\partial p_i}{\partial x}(x,t_0) - \frac{\partial p_j}{\partial x}(x,t_0) \right| > \eta/3.$$
	Since the difference of the derivatives is bounded away from zero, this implies that there can be \emph{at most} one value $x\in (a,b)$ such that $p_i(x,t_0) = p_j(x,t_0)$. 
	
	Because $f_i(x,t_0)$ and $f_j(x,t_0)$ intersect once in the interval $(a,b)$ and their distance is $\delta$ at the end-points $a$ and $b$, up to switching $i$ and $j$, we have $f_i(a,t_0)-f_j(a,t_0) = \delta = f_j(b,t_0)-f_i(b,t_0)$. Since $|p_i(x,t)-f_i(x,t)|<\delta/3$ and $|p_j(x,t)-f_j(x,t)|<\delta/3$, this implies that $p_i(a,t_0)>p_j(a,t_0)$ and $p_j(b,t_0)>p_i(b,t_0)$. Therefore there must exist \emph{at least} one value $x\in (a,b)$ such that $p_i(x,t_0) = p_j(x,t_0)$. Therefore the arrangement $\{\lambda_i^{t_0}\}_{i=1}^m$ is a pseudoline arrangement for all $t_0\in [0,1]$.
	
	Finally, view the variable $x$ as a complex variable. Let $B = [-M,M]\times i[-\alpha,\alpha]\times D_R \subset \C^2$ where $D_R$ 
	is a disk of sufficiently large radius $R$ such that all $|p_i(x,t)|<R$ for $x\in [-M,M]\times i[-\alpha,\alpha]$. We consider 
	the locus $\{\prod_{i=1}^m (y-p_i(x,t))=0\}\subset B$ for each $t\in [0,1]$, and label its irreducible components as
	$\Gamma_i^t = \{ y-p_i(x,t)=0 \mid (x,y)\in B \}$. If $\alpha>0$ is chosen sufficiently small, then all of
	the intersections where $p_i(x,t)=p_j(x,t)$ with $x\in [-M,M]\times i[-\alpha,\alpha]$ occur at real values of $x$. 
	Therefore this complexification of the $\lambda_i^{t_0}$ restricted to $B$ gives an algebraic family of curves, which for 
	any $t_0\in [0,1]$ is a symplectic line arrangement, at $t_0=0$ has the incidences of a pencil, and at $t_0=1$ has the incidences of the original pseudoline arrangement $\{\ell_i\}$.
\end{proof}

\begin{remark} To prove Proposition~\ref{algebraicapprox}, we started with a particular smooth homotopy between the given pseudoline arrangement and the pencil; this homotopy 
was provided by Lemma~\ref{l:pseudofam}. Note that the same argument applies to an {\em arbitrary} smooth graphical homotopy that has the properties stated 
in  Lemma~\ref{l:pseudofam}. In many examples such as those in Section~\ref{s:examples}, a homotopy with required properties can be easily constructed directly, thus we can 
find its polynomial approximation without resolving all multiple intersections into double points as required by the algorithm of  Lemma~\ref{l:pseudofam}. However, we are 
unable to do the polynomial approximation while preserving all the incidence relations during the homotopy (we only guarantee the required incidences agree with those of the homotopy for $t=0$ and $t=1$ 
but not for $0<t<1$.) 

\end{remark}

\subsection{Smooth graphical homotopies imitating picture deformations}

Even without the algebraic condition, we can define a subclass of smooth graphical homotopies which produce Stein fillings 
constrained in a similar way as Milnor fibers. We now isolate these key properties of a picture deformation needed to detect 
 the examples of unexpected Stein fillings in Section~\ref{s:examples}.

We can describe a smooth graphical homotopy with branches $C_k^t \subset \C^2$ via equations 
\begin{equation} \label{eq:param}
f_k(x_1, x_2, t) - y=0.
\end{equation}
where $(x,y)$ are the complex coordinates on $\C^2$, $x=x_1+i x_2$, and $t$ is the real homotopy parameter.
At~$t=0$, we assume that $\cup_{i=1}^k C_k^0=\mathcal{C}$ is the germ of a complex algebraic curve where each branch passes through the origin. 
In particular, $f_k(0, 0, 0)=0$ for all $k$. Additionally, any two branches of $\mathcal{C}$ have positive total algebraic intersection number, so any two deformed branches $C_i^t$, $C_j^t$ intersect for small $t>0$. Composing the homotopy with a $t$-dependent translation, we can also assume that the first two branches always intersect at the origin, $C_1^t \cap C_2^t =0$. 

As before, we will assume that the deformed branches $C^t_k$ are not all concurrent for $t>0$. 
This means that for $t>0$, at least one of the functions $f_k(0, 0, t)$, $k>2$, is non-zero. We need a non-degenerate version of non-concurrence:
\begin{equation} \label{nonzero-deriv}
\exists k \in \{3, \dots, m\}, \exists r>0 \text{ such that }  \frac{\d^r f_k}{\d t^r}(0, 0, 0) \neq 0.
\end{equation}
In other words, if we set $\ord_t f_k = \min \{r:  \frac{\d^r f_k}{\d t^r}(0, 0, 0) \neq 0\}$, then $\ord_t f_k$ is finite for at least some values $k=3, \dots, m$. 
Intuitively, this condition says that the branches move away from being concurrent at the infinitesimal level.

In addition to the above non-degeneracy hypothesis, assume that for all $t>0$ the arrangements $\{C_1^t, C_2^t, \dots, C_m^t\}$ are
topogically equivalent. It follows that each curvetta $C_i^t$ has a finite number of intersections with the other curvettas $C_j^t$, $i \neq j$; the incidence pattern, and the number of intersections, remains constant during the homotopy. We can add decorations so that  all intersection points on  $\cup_{i=1}^m C^t_i$ are marked; as 
for picture deformations, we allow free marked points as well. Let $w_k$ be the total number of marked points on the branch $C^t_k$, for any $t>0$, and set
$w=(w_1, w_2, \dots, w_m)$. We will use the term {\em small smooth deformation} to refer to a smooth graphical homotopy of the decorated germ $(\mathcal{C},w)$
with special properties as above. Small smooth deformation  mimic picture deformations in the 
smooth category, using smooth graphical instead of algebraic curvettas: they preserve the topology of the curvetta arrangement and
satisfy the same weight restrictions and positivity of intersection properties.

\begin{prop} \label{p:smallsmooth} Lemma~\ref{get-lines} holds for small smooth deformations of plane curve germ $\mathcal{C}$ with smooth branches. 
\end{prop}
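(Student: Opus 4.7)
The plan is to transplant the proof of Lemma~\ref{get-lines} into the smooth category by replacing convergent power series expansions with finite-order Taylor expansions and uniform $C^1$ estimates. The key observation is that the rescaling argument in the original proof is entirely local and uses only finitely many derivatives of the defining functions; the smoothness of $f_k(x_1,x_2,t)$ together with the complex-analyticity of $f_k(\cdot,0)$ provides all the regularity needed.

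Concretely, I would first expand each $f_k$ around $(0,0,0)$ in the Wirtinger variables $x,\bar x$:
\[
f_k(x,t) \;=\; b_k(t) \,+\, a_k(t)\,x \,+\, \alpha_k(t)\,\bar x \,+\, R_k(x,t),\qquad R_k(x,t)=O(|x|^2)
\]
uniformly in $t$ on compact $t$-intervals. Because $f_k(\cdot,0)$ is holomorphic, $\alpha_k(0)=0$, and the distinct-tangent hypothesis gives pairwise distinct complex slopes $a_k(0)$. The normalization $C_1^t\cap C_2^t=\{0\}$ forces $b_1\equiv b_2\equiv 0$. By hypothesis~(\ref{nonzero-deriv}), the integer $r := \min_{k\geq 3}\ord_t b_k$ is finite and at least $1$, and Taylor's theorem for smooth functions gives $b_k(t)=t^r\bar b_k(t)$ with $\bar b_k$ smooth and $\bar b_{k_0}(0)\neq 0$ for some $k_0\geq 3$.

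Next, for $t>0$ I would perform the real rescaling $x=t^r x'$, $y=t^r y'$, so that $\bar x = t^r\overline{x'}$. The rescaled defining equation of $C_k^t$ becomes
\[
y' \;=\; a_k(t)\,x' \,+\, \alpha_k(t)\,\overline{x'} \,+\, \bar b_k(t) \,+\, t^{-r}R_k(t^r x',t).
\]
The last term is $O(t^r|x'|^2)$ and therefore tends to $0$ uniformly on compact sets of $x'$; similarly $\alpha_k(t)\to 0$. Thus the rescaled branches $\widehat C_k^t$ converge in $C^1_{\mathrm{loc}}$ to the complex lines
\[
L_k \;=\; \{\,y' = a_k(0)\,x' + \bar b_k(0)\,\},
\]
which have pairwise distinct slopes and, because $\bar b_{k_0}(0)\neq 0$ while $\bar b_1(0)=\bar b_2(0)=0$, are not all concurrent.

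Finally, I would verify that every incidence among the $\{C_{i_1}^t,\dots, C_{i_\ell}^t\}$ lifts to an incidence of $\{L_{i_1},\dots, L_{i_\ell}\}$. The topological-equivalence hypothesis implies that for all $t>0$ there is a common point $p_t\in C_{i_1}^t\cap\cdots\cap C_{i_\ell}^t$; its rescaled image $p_t/t^r$ lies on each $\widehat C_{i_j}^t$. The $C^1_{\mathrm{loc}}$ convergence of $\widehat C_{i_j}^t$ to $L_{i_j}$, combined with the pairwise distinct slopes, forces the intersection of two $C^1$-close curves to lie close to the unique transverse intersection of the corresponding lines, so $p_t/t^r$ stays bounded; extracting a convergent subsequence $p_t/t^r\to P$ yields $P\in L_{i_1}\cap\cdots\cap L_{i_\ell}$. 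The main obstacle, and the only place where the smooth case requires genuine care beyond a direct transcription of the algebraic argument, is this last step: one must rule out the rescaled intersection points escaping to infinity and must ensure their limits genuinely lie on every $L_{i_j}$. Both are handled by the persistence of transverse intersections under $C^1_{\mathrm{loc}}$ perturbation, which is where the distinct-tangent hypothesis is used essentially.
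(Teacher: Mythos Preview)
Your proposal is correct and follows essentially the same approach as the paper: both arguments replace the power-series expansion of Lemma~\ref{get-lines} by a Taylor expansion, perform the rescaling $x = t^r x'$, $y = t^r y'$, and pass to the limit $t\to 0$ to obtain the complex line arrangement $y' = a_k(0)\,x' + \bar b_k(0)$. The only organizational difference is that you expand to first order in $x$ with $t$-dependent Wirtinger coefficients $a_k(t), \alpha_k(t)$ (using $\alpha_k(0)=0$ from holomorphicity at $t=0$), whereas the paper expands jointly in $(x_1,x_2,t)$ to order $r$ with constant coefficients and Peano remainder; these are equivalent reorganizations of the same estimate. Your explicit treatment of why the rescaled intersection points $p_t/t^r$ stay bounded---via the distinct limiting slopes and the $C^1_{\mathrm{loc}}$ persistence of transverse intersections---is in fact more careful than the paper, which simply asserts that the incidences pass to the limit.
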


\begin{proof} 
The proof remains almost the same, but we  have to use Taylor approximations of smooth functions instead of power series for analytic functions.

In complex coordinates $(x, y)$ on $\C^2$, the complex tangent line to $C_k$ at $0$ has the form $a_k x - y =0$  for $a_k \in \C$. 
Setting $x=x_1+ix_2$ and identifying $\C^2$ with $\R^2 \times \C$, the complex tangent line becomes the 2-plane $a_k x_1 + i a_k x_2 - y =0$.  
Set $b_k(t)=f_k(0, 0, t)$ and  $g_k(x, y, t)= f_k(x, y, t) - a_k x_1 - i a_k x_2 - b_k(t)$.  Since  $g_k(0, 0, t)=0$ for all $t$, 
	we have $\frac{\d^\gamma g_k}{\d t^ \gamma}(0, 0, 0)=0$ for all $\gamma$; additionally, $\frac{\d g_k}{\d x}(0, 0, 0)=0$ and $\frac{\d g_k}{\d y}(0, 0, 0)=0$. 
	Equation~(\ref{eq:param}) for the deformed branch $C^t_k$ becomes 
	\begin{equation}
	a_k x_1 + i a_k x_2 + b_k(t) + g_k(x_1, x_2, t) -y=0.
	\end{equation}
Using~\ref{nonzero-deriv},  $r=\min_k \ord_t b_k(t)= \ord_t b_{k_0}(t) < +\infty$, and write $b_k(t) = t^r \bar{b}_k(t)$ for all $k$. 

We now use the Taylor formula for each function $g_k(x_1, x_2, t)$ at $(0, 0, 0)$, writing out the terms up to $r$-th order, followed by the remainder.  	
	This gives 
	\begin{equation}  \label{eq:taylor}
	\begin{split}
	a_k x_1 + i a_k x_2 + t^r \bar{b}_k(t) + & \sum_{\substack{1<\alpha+\beta+\gamma\leq r \\ \alpha>0 \text{ or } \beta>0}} 
	\frac{\d^{\alpha+\beta+\gamma}g_k}{\d x_1^{\alpha} \d x_2^{\beta} \d t^\gamma} (0,0,0) x_1^\alpha x_2^\beta t^\gamma \\
	+&\sum_{\substack{\alpha+\beta+\gamma= r \\ \alpha>0 \text{ or } \beta>0}} h_{k; \alpha, \beta, \gamma}(x_1, x_2, t) x_1^\alpha x_2^\beta t^\gamma + 
	h_{k; 0, 0, r}(0, 0, t) t^r - y =0. 
	\end{split}
	\end{equation}
	The remainder function $h_{k; \alpha, \beta, \gamma}$ is continuous
	for each $(k; \alpha, \beta, \gamma)$, and 
	$h_{k; \alpha, \beta, \gamma}(x_1, x_2, t) \to 0$ when $(x_1, x_2, t) \to (0, 0, 0)$.
	Now make a change of variables
	$$
	x_1= t^r x_1', \qquad x_2 = t^r x_2', \qquad y= t^r y'.
	$$
	It is not hard to see that, as in Lemma~\ref{get-lines}, after the change of variables we can divide Equation~(\ref{eq:taylor}) $t^r$ for $t \neq 0$ and take
	the limit as $t \to 0$. The result is an arrangement of non-concurrent complex lines given by equations   
	$a_k x' + \bar{b}_k(0)- y'=0$. Since we have assumed that the incidence relations for $C^t_1, \dots, C^t_m$ remain the same for all $t\neq 0$, 
	the same relations must hold for the lines. 
\end{proof}

	
	

As a consequence, small smooth deformations cannot 
produce the unexpected symplectic line arrangements that gave unexpected Stein fillings in Section~\ref{s:examples}.
In such examples, to obtain deformations which produce only Milnor fibers, the algebraic condition on the curves and deformation 
is less important than keeping the topology of the curves constant for $t\neq 0$. For rational singularities with
reduced fundamental cycle, small smooth deformations give a symplectic analogue of smoothings, picking out the Stein fillings 
which are ``closest'' to the singularity and its resolution.

\subsection{Smooth graphical homotopies changing topology} \label{subs:change-top}

The key difference between picture deformations and smooth graphical homotopies in Table~\ref{table} is that
the topology of the union of the curves is allowed to change multiple times during a smooth graphical homotopy (for picture deformations, 
the only change happens at time $0$).  In other words, the types of singularities where the curves intersect can vary during the homotopy. 

Here we provide an explicit example to illustrate the topology change in the family of Lefschetz fibrations. Our 
example is related to the configuration $\mathcal{Q}$ from Example~\ref{example-orevkov}, but with a careful choice of weights.

\begin{figure}[htb]
	\centering
	\bigskip
	\labellist
	\small\hair 2pt
	\pinlabel $\text{algebraic deformation}$ at 145 302
	\pinlabel $\text{line-bending homotopy}$ at 157 150
	\pinlabel $\text{line-bending}$ at 303 167
	\pinlabel $\text{homotopy}$ at 303 160
	\endlabellist
	\includegraphics[scale=1.1]{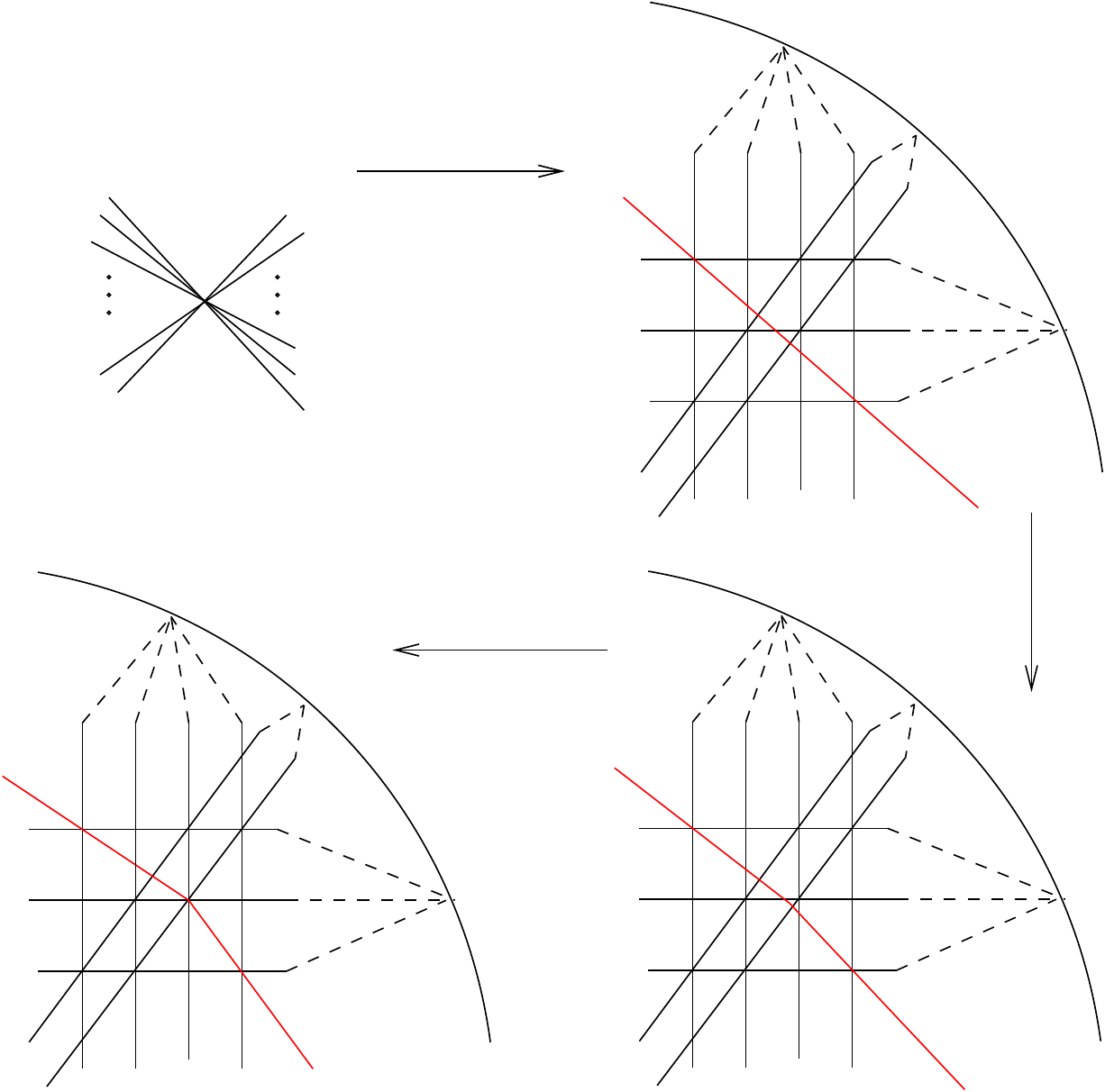}
	\caption{A long-term homotopy from a pencil of lines to $\mathcal{Q}$.}
	\label{fig:orline2}
\end{figure}

\begin{example} \label{long-homotopy} Consider the pencil of 11 lines indexed from $0$ to $10$, with weights $w_0=4$, $w_1=w_2=w_3=w_4=w_5=w_7=5$, $w_6=w_8=w_9=6$, and $w_{10}=8$. Observe that any arrangement of straight lines is related to the pencil by linear deformation (scaling the constant terms of the linear equations to $0$).  
	Using such a deformation, let   
	$\mathcal{Q}_{t_0}$ be the arrangement shown in Figure~\ref{fig:orline2}, where  $\ell_{10}$ is a straight line. Unlike the arrangement $\mathcal{Q}$, $\ell_{10}$ 
	does not pass through the intersection point $b$ of $\ell_3$, $\ell_6$ and $\ell_9$.  The corresponding picture deformation of the weighted 
	pencil gives a deformation of the surface singularity. We can extend the picture deformation to a smooth graphical homotopy which for $t_0<t<1$ bends the pseudoline $\ell_{10}$ towards the intersection $\ell_3\cap \ell_6\cap \ell_9$, and at $t=1$ realizes the configuration $\mathcal{Q}$. (We implicitly use Proposition~\ref{symp-config} to symplectify the family of pseudolines to a smooth graphical homotopy of symplectic line arrangements.)
		
	Now, consider the Stein fillings $W_t$ correspond to the arrangements $\mathcal{Q}_t$, $0 \leq t \leq 1$. For $0< t<1$, the Stein fillings 
	are diffeomorphic to Milnor fibers of the corresponding smoothings of the singular complex surface. 
	Indeed, the Lefschetz fibrations given by Lemma~\ref{construct-fibration} are all equivalent, and for $t$ close to $0$ the smooth graphical homotopy is a picture deformation.
	When $t=1$, Lemma~\ref{Yorevkov} says that the Stein filling $W$ arising from $\mathcal{Q}$ is not strongly diffeomorphic to any Milnor fiber. 
	The topology of $W$ is different from that 
	of $W_t$: as a smooth manifold, $W_t$ for $t<1$ is obtained from $W$ by rational blow-down.
	The corresponding Lefschetz fibrations are related via the positive monodromy substitution 
	given by the daisy relation~\cite{HMVHM}, see Figure~\ref{daisy}.
\end{example}
\begin{figure}[htb]
	\centering
	\includegraphics[scale=1.1]{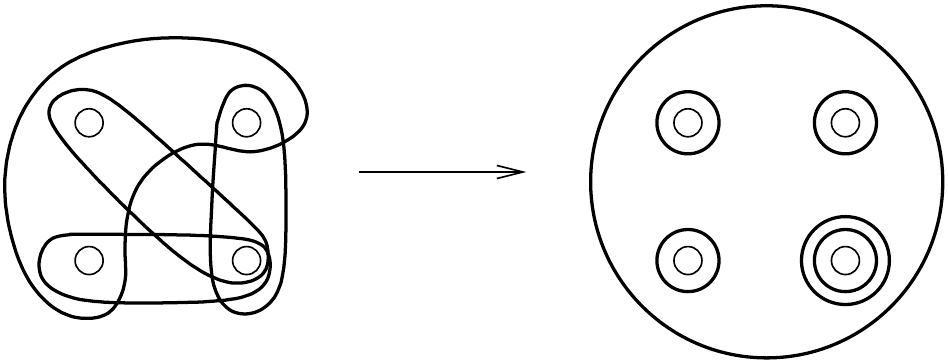}
	\caption{The Stein filling $W$ is related to the Milnor fibers $W_t$ by the monodromy substitution as shown.}
	\label{daisy}
\end{figure}

\subsection{Violating positivity of intersections and weight constraints} \label{subs:weight}

Although we have seen that we can produce many examples of unexpected Stein fillings using smooth graphical deformations which satisfy positivity of intersections and the weight constraints, we also can construct examples where a Stein filling arises from a configuration of curves such that every smooth graphical homotopy from the germ curvetta violates the weight constraints.

\begin{example}\label{example:weights}
	Consider again the configuration $\mathcal{Q}$ from Example~\ref{example-orevkov} of $11$ symplectic lines $\{L_k\}_{k=1}^{11}$. We compare this to a pencil of lines with weights 
	\begin{equation} \label{eq:weights}
	w_0=w_3=4, \quad w_1=w_2=w_4=w_5=w_6=w_7=w_9=5, \quad w_8=w_{10}=6.
	\end{equation}
	These are chosen such that $w_k=w(L_k)$, so they are the minimal possible weights satisfying the hypotheses of Corollary~\ref{cor:nonMilnor}. 
	We can show that there is no smooth graphical homotopy from this pencil to $\mathcal{Q}$ satisfying these weight constraints.
\end{example}

\begin{prop} The arrangement $\mathcal{Q}$ cannot be obtained from the pencil of lines by a smooth graphical homotopy satisfying the weight constraints as above,
	if we consider homotopies that are analytic in $t$ or satisfy a non-degeneracy condition such as (\ref{nonzero-deriv}).
\end{prop}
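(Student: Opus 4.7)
The plan is to reduce the problem to Proposition~\ref{p:smallsmooth} by combining the tightness of the weights at $\mathcal{Q}$ with the analyticity/non-degeneracy hypothesis, and then derive a contradiction from the unexpected property of $\mathcal{Q}$ established in Proposition~\ref{orevkov-exotic}. First I would observe that because the weights $(w_k)$ are the minimal ones compatible with Corollary~\ref{cor:nonMilnor}, every marked point of $L_k = C_k^1$ is an intersection point with another line and the total count equals $w_k$, so the weight constraints are saturated at $t=1$. Any smooth graphical homotopy satisfying $w(C_k^t)\leq w_k$ throughout $t\in(0,1]$ is therefore forbidden to split a multi-point of $\mathcal{Q}$ into simpler intersections: the lines meeting at such a multi-point would gain extra marked points, and because the count is already saturated there is no room for a compensating merger of other intersections involving those same lines. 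Consequently, every concurrence of $\mathcal{Q}$ must persist for all $t\in(0,1]$, which means the incidence structure of $\{C_1^t,\dots,C_m^t\}$ contains the incidence structure of $\mathcal{Q}$ at each positive time.

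Next I would invoke the non-degeneracy hypothesis. Under analyticity in $t$ (or condition~(\ref{nonzero-deriv})), the incidence structure of $\{C_1^t,\dots,C_m^t\}$ is locally constant on $(0,1]$ away from a discrete set of degenerate times, so there is an initial subinterval $(0,t_\ast)$ on which the topology is a fixed arrangement $\mathcal{T}$ whose incidences contain those of $\mathcal{Q}$. Condition~(\ref{nonzero-deriv}) forces some branch $C_k$, $k\geq 3$, to move away from the common point of the pencil, so $C_1^t,\dots,C_m^t$ are not all concurrent for small $t>0$ and $\mathcal{T}$ is not a pencil. The homotopy restricted to $[0,t_\ast]$ therefore satisfies the definition of a small smooth deformation of the decorated germ associated to the pencil.

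Applying Proposition~\ref{p:smallsmooth} to this restriction yields a complex line arrangement $\mathcal{A}=\{L'_0,\dots,L'_{10}\}$ in $\C^2$ such that no two lines of $\mathcal{A}$ coincide, not all lines of $\mathcal{A}$ are concurrent, and $\mathcal{A}$ realizes every incidence of $\mathcal{T}$, and therefore every incidence of $\mathcal{Q}$. By Proposition~\ref{orevkov-exotic}, $\mathcal{Q}$ is unexpected, so any complex line arrangement realizing the incidences of $\mathcal{Q}$ must be a pencil. This contradicts the non-concurrency of $\mathcal{A}$ and completes the proof.

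The hard part is the first step: verifying that the saturation of the weight bounds actually precludes \emph{any} splitting of a multi-intersection of $\mathcal{Q}$ during the homotopy, ruling out the a priori possibility that a split at one multi-point is exactly compensated by a simultaneous merge at another along the same lines. I expect this to require a careful line-by-line accounting for each of the multi-intersections $V$, $H$, $P$, $a'$, $b$, $c$, using that the total number of marked points on every $L_k$ is exhausted by participation in either these six points or in generic double points, which leaves no slack for trading one concurrence for another without violating $w(C_k^t)\leq w_k$.
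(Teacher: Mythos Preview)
Your outline is correct and matches the paper's strategy: establish that under the weight constraints the homotopy must keep the incidence structure of $\mathcal{Q}$ all the way down to where it becomes a pencil, then invoke Proposition~\ref{p:smallsmooth} and the unexpectedness of $\mathcal{Q}$ to reach a contradiction. The paper packages your ``hard part'' as a separate combinatorial lemma stated immediately after the proposition, which carries out the line-by-line accounting you anticipate and proves the sharper dichotomy that $\mathcal{Q}_t$ is either combinatorially equivalent to $\mathcal{Q}$ or a pencil, for every $t$.

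Two small differences are worth noting. First, the paper's lemma does not need analyticity for this step; your Step~2 appeal to analyticity for an initial interval of constant topology is unnecessary. The point is that the ``collapse'' half of the proof of Proposition~\ref{orevkov-exotic}---the part using only Observation~\ref{3points}---applies verbatim to any family of curves that pairwise intersect in exactly one point, not only to complex lines. So once the incidences of $\mathcal{Q}$ persist at $t_0$ (which the paper obtains simply by taking limits from $t>t_0$, since incidences are closed conditions), any \emph{additional} incidence already forces the whole arrangement to a pencil. This yields the dichotomy directly and hence constant topology on $(\tau,1]$; the analyticity or non-degeneracy hypothesis is then only used at the very end, to feed into Proposition~\ref{p:smallsmooth} near $t=\tau$. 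Second, both your sketch and the paper's lemma implicitly need that each pair $C_i^t, C_j^t$ intersect in exactly one point throughout the homotopy, i.e.\ positivity of intersections at all times; the paper states this explicitly as a hypothesis of its lemma, and your weight-saturation argument also relies on it.
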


This statement follows from the following lemma, which shows that for combinatorial reasons, there are no ``intermediate'' arrangements between the pencil and $\mathcal{Q}$, 
so if a homotopy existed, it would have to deform the pencil immediately into an arrangement with the same incidence relations as $\mathcal{Q}$.

\begin{lemma} Let  $\mathcal{Q}_t = \cup_{k=0}^{10} L^t_k$ be a smooth graphical homotopy such that $\mathcal{Q}_0$ is a 
	pencil of 11 lines, and $\mathcal{Q}_1=\mathcal{Q}$ (after an appropriate choice of coordinates).  Suppose that all intersections 
	$L_i^t \cdot L_j^t$ are positive, and each $L_k^t$ has no more than $w_k$ intersection points  at all times $t \in [0,1]$. Then, the homotopy $\mathcal{Q}_t$ immediately deforms
	the pencil of lines into an arrangement combinatorially equivalent to $\mathcal{Q}$, perhaps after restricting to a smaller time interval: there exists $\tau\geq 0$ such 
	that $\mathcal{Q}_\tau$ is a pencil, and  $\mathcal{Q}_t$ is combinatorially equivalent to $\mathcal{Q}$ for all $t \in (\tau, 1]$.
\end{lemma}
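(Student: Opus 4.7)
The strategy is combinatorial: exploit the tightness of the weights to show that any intermediate configuration between the pencil and $\mathcal{Q}$ would violate the per-line weight bound. The plan is to enumerate all combinatorial types of arrangements of $11$ graphical disks with pairwise positive transverse intersections and with per-line intersection count bounded by~$w_k$, and to show there are only two such types: the pencil type and $\mathcal{Q}$'s type. Combined with analyticity in~$t$ (or the vanishing-order condition~\eqref{nonzero-deriv}), this yields the lemma, since under those hypotheses the combinatorial type of $\mathcal{Q}_t$ is locally constant on $(0,1)$, changing only at finitely many exceptional times where intersection points collide or split.

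First, I would set up the counting. For $11$ pairwise-positively-transversely-intersecting graphical disks, the $55 = \binom{11}{2}$ pairs of lines distribute into multi-intersection points of multiplicities $m_j$ with $\sum_j \binom{m_j}{2} = 55$; moreover $\sum_j m_j = \sum_k n_k(t)$, where $n_k(t)$ denotes the number of distinct intersection points on $L_k^t$. A direct verification in $\mathcal{Q}$ shows that $n_k(1) = w_k$ for every $k$: the weight bound is saturated on each line, so the weight budget has no slack. The pencil trivially satisfies $n_k \equiv 1 \leq w_k$.

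Next I would rule out every other combinatorial type. Starting from the pencil's single $11$-tuple, any partial splitting groups the $11$ lines into smaller multi-intersections plus possibly additional double points. A direct case check shows that as soon as the splitting is strictly coarser than $\mathcal{Q}$'s pattern, at least one line acquires $n_k > w_k$; for instance, peeling~$\ell_{10}$ off the pencil produces $n_{10} = 10 > 6 = w_{10}$, and peeling off $\ell_0$ gives $n_0 = 10 > 4 = w_0$. More generally, any merger of two of $\mathcal{Q}$'s multi-intersections into one pulls in pairs of lines previously meeting at separate double points; lines belonging to only one of the merged groups lose a multi-intersection membership but gain new separate doubles elsewhere, pushing $n_k$ above~$w_k$ for some~$k$. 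A useful conceptual shortcut is provided by Proposition~\ref{p:smallsmooth}: if an intermediate combinatorial type persisted on an open subinterval of $(0,1)$ with not all lines concurrent, then restricting the homotopy to that subinterval would constitute a small smooth deformation of a decorated pencil germ, producing via Proposition~\ref{p:smallsmooth} a non-pencil complex line arrangement realizing those intermediate incidences. Any such complex arrangement can be further specialized to impose the remaining incidences of~$\mathcal{Q}$, yielding a non-pencil complex realization of $\mathcal{Q}$'s full incidence pattern and contradicting the unexpectedness established in Proposition~\ref{orevkov-exotic}.

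The hard part will be making the case analysis watertight: systematically eliminating every partial splitting of the pencil intermediate between the pencil's and $\mathcal{Q}$'s combinatorial types, and in particular checking the subtle cases concentrated along the line $\ell_{10}$ passing through the triples $a', b, c$, where non-realizability by complex lines is forced by the Desargues-type argument underlying Proposition~\ref{orevkov-exotic}. Once this enumeration is complete, the combinatorial types collapse to exactly two, and the homotopy $\mathcal{Q}_t$ transitions between them at a single time~$\tau$, giving $\mathcal{Q}_t$ combinatorially equivalent to~$\mathcal{Q}$ for all $t \in (\tau, 1]$, as claimed.
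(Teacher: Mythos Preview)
Your proposal contains a genuine gap, and misses the key simplification that the paper exploits.

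\textbf{The shortcut via Proposition~\ref{p:smallsmooth} does not work.} A small smooth deformation, by definition, starts at the germ (the pencil at $t=0$) and keeps the topology constant for all $t>0$. Restricting the given homotopy to an arbitrary open subinterval of $(0,1)$ does not give a deformation of the pencil germ; it starts at some already-deformed arrangement. Even if you restrict to an initial interval $(0,b)$ on which the combinatorial type is constant, you then need the step ``further specialize the resulting complex line arrangement to impose the remaining incidences of~$\mathcal{Q}$''. This step is not justified anywhere, and in fact cannot work in general: imposing additional incidences on a complex line arrangement may force it to degenerate to a pencil. Indeed, that is precisely what the unexpectedness of $\mathcal{Q}$ means.

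\textbf{The full combinatorial enumeration is neither carried out nor needed.} You state that the enumeration of all weight-admissible labeled arrangements of $11$ symplectic lines yields exactly two types (pencil and~$\mathcal{Q}$), but you do not prove this, and it is far from clear that it is even true. The sample splittings you check (``peeling off $\ell_{10}$'', ``peeling off $\ell_0$'') are a tiny fraction of the possibilities, and there is no reason an intermediate arrangement needs to arise as a partial splitting interpolating between the pencil and~$\mathcal{Q}$ in any ordered sense.

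\textbf{What the paper does instead.} The paper argues backward from $t=1$. Since every line of~$\mathcal{Q}$ already carries exactly $w_k$ intersection points, the set $\{t:\mathcal{Q}_t\text{ is combinatorially equivalent to }\mathcal{Q}\}$ is open near $t=1$. At the infimum~$t_0$ of this set, limits of intersection points are intersection points, so $\mathcal{Q}_{t_0}$ automatically satisfies \emph{all} of $\mathcal{Q}$'s incidences; the only question is whether some of them have merged. But the chase via Observation~\ref{3points} in the proof of Proposition~\ref{orevkov-exotic} (which is purely incidence-theoretic and holds for pseudolines, since any two of them meet exactly once) shows that merging any two of $\mathcal{Q}$'s intersection points forces the whole arrangement to collapse to a pencil. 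The paper organizes this by counting intersection points on the weight-$4$ line $L_0^{t_0}$: four points means no merges (still~$\mathcal{Q}$), while three points already forces the pencil. No global enumeration of weight-admissible arrangements is required.
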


\begin{proof} 
	Any two lines in the pencil have algebraic intersection number 1.  Since intersections remain inside the Milnor ball during the homotopy and remain positive at all times, throughout the homotopy any two components  $L_i^t$ and $L_j^t$ of $\mathcal{Q}_t$ intersect exactly once. This allows us to work 
	with $\mathcal{Q}_t$ as with pseudoline arrangements in  Proposition~\ref{orevkov-exotic}.

	We examine possible combinatorics of an arrangement with the weight restrictions as above. The analysis below works at any time $t$.  
	For each individual line $L_k$, we write $L_k^t$ for its image under the homotopy at time $t$. For $t=0$, the lines $L_k^0$ form a pencil; for $t=1$, we have 
	$\mathcal{Q}=\cup L^1_k$. 
	
	In the arrangement $\mathcal{Q}$, the line $L_0$ 
	contains 4 intersection points. These are points where $L_0$ meets the pencil $L_1, L_2, L_3, L_4$ of vertical lines, the pencil $L_5, L_6, L_7$ of horizontal lines, 
	the two diagonal lines $L_8, L_9$, and the bent line $L_{10}$.
	The weight condition then implies that $L^t_0$ can never have more than 
	4 intersection points. Note that $L_3$ also has only 4 intersection points, so the same is true for $L^t_3$.  It follows that  at most one intersection point on $L^t_0$ can have multiplicity 5 or greater:
	if there are two such points, there would be two pencils of 5 or more lines. Even if $L^t_3$ is in one of these pencils, it would intersect the lines of the other pencil
	in 5 or more distinct points, a contradiction. 
	Next, observe that no line has more than 6 intersection points, so no pencil can contain more than 6 lines unless all the 
	lines are concurrent. We conclude that $L^t_0$ must have at least 3 intersection points for all~$t$, because it is not possible to distribute the 10 other lines 
	into two intersection points on $L_0$ subject to  these conditions. 
	
	Observe that $\mathcal{Q}_t$ must be combinatorially equivalent to $\mathcal{Q}$ for $t$ close to $1$. Indeed, for $t$ sufficiently close to 1, the four distinct 
	intersection points on $L_0$ remain distinct on $L^t_0$. Similarly, for $t$ close to 1, each of $L^t_5$, $L^t_6$, and $L_7^t$ have {\em at least} 5 distinct intersection points 
	with the other curves in the arrangement $\mathcal{Q}_t$. On the other hand, due to weight restrictions each of these curves has {\em at most} 5 intersection points. 
	It follows that $L^t_5$, $L^t_6$, and $L_7^t$ have exactly 5 intersection points each, and the curves of $\mathcal{Q}_t$ meeting at each intersection have the same 
	incidence relations as the corresponding lines in $\mathcal{Q}$. Thus, the incidences involving $L_0^t$, as well as the incidences for 
	the ``grid'' intersections between $L^t_1, L^t_2, L^t_3, L^t_4$ and   $L^t_5$, $L^t_6$, $L_7^t$, are the same as in $\mathcal{Q}$ for $t$ close to 1. All the remaining 
	intersections in $\mathcal{Q}_t$ are double points, and they cannot merge with other intersections if $t$ is sufficiently close to 1.  
	
	The above argument shows that $\{t\in [0,1]: \mathcal{Q}_t \text{ is combinatorially equivalent to } \mathcal{Q} \}$ is open. Now, suppose that  
	$\mathcal{Q}_t$ is equivalent to $\mathcal{Q}$ for $t >t_0$. We examine the combinatorial possibilities for $\mathcal{Q}_{t_0}$, assuming that 
	this arrangement is not a pencil. Consider two cases, 1) $L_0^{t_0}$ has 4 distinct intersection points, and 2) $L_0^{t_0}$ has 3 distinct intersection points. 
	In the first case, it follows that $\mathcal{Q}_{t_0}$ must be combinatorially equivalent 
	to $\mathcal{Q}$. This is because all the incidence relations valid for $t>t_0$ still hold by taking a limit as $t \to t_0$.
	As in the proof of Proposition~\ref{orevkov-exotic}, we see that no two intersection points can collapse (if they do, all the curves must be concurrent).
	It follows that in this case, all the incidence relations  in $\mathcal{Q}_{t_0}$  are the same as in $\mathcal{Q}$. 
	
	In the second case, there are 3 intersection points on $L_0$. Again, because all incidences hold after taking limits as $t \to t_0$, the arrangement 
	$\mathcal{Q}_{t_0}$ satisfies all the incidence relations of $\mathcal{Q}$. Additionally, two of the intersection points on $L_0$ collapse. It follows from the proof of 
	Proposition~\ref{orevkov-exotic} that in this case $\mathcal{Q}_{t_0}$ must be a pencil, contradicting the assumption that $L_0^{t_0}$ has 3 distinct intersection points.

	We conclude that if  $\mathcal{Q}_t$ is combinatorially equivalent to $\mathcal{Q}$ for all $1\geq t>t_0$, and   $\mathcal{Q}_{t_0}$ is different, 
	then $\mathcal{Q}_{t_0}$ must be a pencil. 
\end{proof}

We have just seen that there are examples of Stein fillings arising from graphical smooth homotopies which do not satisfy the weight constraint 
(and such that there is no possible graphical smooth homotopy which does satisfy the weight constraint). On the other hand, 
we do not have examples of Stein fillings associated to a configuration of graphical curves which cannot 
be related to the curvetta germ by a smooth graphical homotopy satisfying positivity of intersections between the curve components. 
We suspect that in fact, there may always be a smooth graphical homotopy maintaining positivity of intersections.

\begin{question}\label{question:positive} Suppose  $C^0=\{ C^0_1, C^0_2, \dots, C^0_m \}$ and $C^1=\{ C^1_1, C^0_2, \dots, C^1_m \}$ are two collections of symplectic disks in $B^4_r$ such that $C^t_i$ intersects $C^t_j$ positively transversally or with a local holomorphic model. Further assume that the boundaries of $C^0$ and $C^1$ are isotopic braids in  $S^3_r$.  Does 
	there exists a continuous family $\{ C^t_1, C^t_2, \dots, C^t_m \}$ of symplectic disks, all with isotopic boundary braid for $t\in [0,1]$ 
	extending this pair of arrangements, such that for each $t$, $C^t_i$ and $C^t_{i'}$ have positive intersections?
\end{question}

To prove existence of such a homotopy, one could realize $C^0$ and $C^1$ as $J_0$ and $J_1$-holomorphic curves respectively, for
almost complex structures $J_0$ and $J_1$ which are compatible with the standard symplectic structure, with appropriate convexity conditions at the boundary of the ball. One could connect $J_0$ and $J_1$ through a family $J_t$ of almost complex structures with the same properties, and then try to find a family $C^t_i$ of $J_t$ holomorphic disks interpolating between $C^0_i$ and $C^1_i$ for each $i$. The difficulty arises in analyzing the moduli spaces of $J$-holomorphic curves with appropriately chosen boundary conditions (either using an SFT set-up or a totally-real boundary condition). Compactness issues in the moduli space must be overcome to obtain a positive answer to Question~\ref{question:positive}. Because such techniques are far beyond the scope of this article, and the answer to the question is not central to our investigations, we leave this open.

\begin{remark} 
	If a smooth graphical homotopy fails to satisfy the weight constraints or positivity of intersections, we cannot construct a sequence of Stein fillings
	using Lemma~\ref{construct-fibration}. However, we can ``connect'' the singular complex surface $(X, 0)$ to the Stein filling $W$ via a family 
	of {\em achiral} Lefschetz fibrations 
	(see \cite[Section 8.4]{GS}).
	
	Consider Example~\ref{example:weights}. We will use the homotopy of pseudoline arrangements given in Example~\ref{long-homotopy}. For $0<t<1$, the pseudolines $\ell_3$, $\ell_6$, $\ell_9$ and $\ell_{10}$ have more intersection points than the weights~(\ref{eq:weights}) allow. We need to compensate the higher weights to obtain the required open book monodromy, so we place {\em negative} free marked points on these lines:  $\ell_3$, $\ell_6$, $\ell_9$ need one negative marked point each to compensate for one extra positive intersection, and $\ell_{10}$ needs 2 negative points. In the open book monodromy, every negative marked point contributes a negative Dehn twist around the corresponding hole. It follows from the proof of Lemma~\ref{same-monodromy} that with these additional negative twists, the resulting open book supports $(Y, \xi)$. The corresponding vanishing cycles determine an achiral Lefschetz fibration. The negative Dehn twists corresponds to a ``negative'' blow-up in the smooth category (the 4-manifold changes by taking a connected sum with $\cptwo$).
\end{remark}

\bibliography{references}
\bibliographystyle{alpha}

\end{document}